\newcommand{\Ncal}{\mathcal{N}}
\newcommand{\Lcal}{\mathcal{L}}
\newcommand{\cb}{\mathbf{c}}
\newcommand{\R}{\mathbb{R}}
\newcommand{\E}{\mathbb{E}}
\newcommand{\X}{\mathbbm X}
\newcommand{\V}{\rm V\it}
\newcommand{\1}{\mathbbm 1}
\newcommand{\Ccal}{\mathcal{C}}
\newcommand{\N}{\mathbb{N}}
\newtheorem{theorem}{Theorem}
\newtheorem{proposition}[theorem]{Proposition}
\newtheorem{lemma}[theorem]{Lemma}
\newtheorem{corollary}[theorem]{Corollary}
\newtheorem{definition}[theorem]{Definition}
\newtheorem{example}[theorem]{Example}
\newtheorem{remark}[theorem]{Remark}
\newtheorem*{theorem*}{Theorem}
\newtheorem*{proposition*}{Proposition}
\newtheorem*{lemma*}{Lemma}
\newtheorem*{corollary*}{Corollary}
\newtheorem*{definition*}{Definition}
\newtheorem*{example*}{Example}
\newtheorem{algorithm}{Algorithm}
\definecolor{Gray}{gray}{0.9}
\title{Robust Bregman Clustering}
\author{
  Br\'echeteau, Claire\\
  \texttt{claire.brecheteau@univ-rennes2.fr}\\
  Universit\'e Rennes 2 -- IRMAR
  \and
  Fischer, Aur\'elie\\
  \texttt{aurelie.fischer@lpsm.paris}\\
  Universit\'e de Paris -- LPSM
  \and
  Levrard, Cl\'ement\\
  \texttt{clement.levrard@lpsm.paris}\\
  Universit\'e de Paris -- LPSM
}
\date{}
\begin{document}
\maketitle

\begin{abstract}
Clustering with Bregman divergences encompasses a wide family of clustering procedures that are well-suited to mixtures of distributions from exponential families \cite{Ban05}. However these techniques are highly sensitive to noise. To address the issue of clustering data with possibly adversarial noise, we introduce a robustified version of Bregman clustering based on a trimming approach. We investigate its theoretical properties, showing for instance that our estimator converges at a sub-Gaussian rate $1/\sqrt{n}$, where $n$ denotes the sample size, under mild tail assumptions. We also show that it is robust to a certain amount of noise, stated in terms of Breakdown Point. We also derive a Lloyd-type algorithm with a trimming parameter, along with a heuristic to select this parameter and the number of clusters from sample. Some numerical experiments assess the performance of our method on simulated and real datasets.  
\end{abstract}
%
\section{Introduction}

Clustering is the problem of classifying data in groups of similar points, so that the groups are as homogeneous and at the same time as well separated as possible \cite{DHS}. There are no labels known in advance, so clustering is an unsupervised learning task. To perform clustering,  a distance-like function $d(\cdot,\cdot)$ is often needed to assess a notion of  proximity between points and the separation between clusters. 

Suppose that we know such a natural distance $d$, and assume that the points we want to cluster, say $X_1,\dots,X_n$, are i.i.d., drawn from an unknown distribution $P$, and take values in $\R^d$. For $k \geq 1$, designing $k$ meaningful classes with respect to $d$ can be achieved via minimizing the so-called empirical distortion
\[
R_n(\cb)=\frac{1}{n}\sum_{i=1}^{n}\min_{j\in[\![1,k]\!]}d(X_i,c_j),
\]
over all possible cluster centers or codebooks $\cb=(c_1,\dots,c_k)$, with notation $[\![1,k]\!]$ for $\{1,2,\dots,k\}$. This results in a set of $k$ codepoints. Clusters are then given by the sets of sample points that have the same closest codepoint.  

A classical choice of $d$ is the squared Euclidean distance, leading to the standard $k$-means clustering algorithm (see, e.g., \cite{Llo}). However, some of the desirable properties of the Euclidean distance can be extended to a broader class of dissimilarity functions, namely Bregman divergences. These distance-like functions, denoted by $d_\phi$ in the sequel, are indexed by strictly convex functions $\phi$. Introduced by \cite{Br67}, they are useful in a wide range of areas, among which statistical learning and data mining (\cite{Ban05}, \cite{CBL}), computational geometry \cite{Nie07}, natural sciences, speech processing and information theory \cite{GBG}. Squared Euclidean, Mahalanobis, Kullback-Leibler and $L^2$ distances are all particular cases of Bregman divergences.

A Bregman divergence is not necessarily a true metric, since it may be asymmetric or fail to satisfy the triangle inequality. However, Bregman divergences fulfill an interesting projection property which generalizes the Hilbert projection on a closed convex set \cite{Br67}. They also satisfy non-negativity and separation, convexity in the first argument and linearity (see \cite{Ban05}, \cite{Nie07}). Moreover, Bregman divergences are closely related to exponential families \cite{Ban05}. In fact, they are a natural tool to measure proximity between observations arising from a mixture of such distributions. Consequently, clustering with Bregman divergences is particularly well-suited in this case. 

Clustering with Bregman divergences allows to state the clustering problem within a contrast minimization framework. Namely, through minimizing $R_n(\cb)=P_n d_\phi(u,\cb)$, where $P_n$ denotes the empirical distribution associated with $\{X_1, \dots, X_n\}$ and $Qf(u)$ means integration of $f$ with respect to the measure $Q$, we intend to find a codebook $\hat{\cb}_n$ whose ``real'' distortion $P d_\phi(u,\hat{\cb}_n)$ is close to the optimal $k$-points distortion $R^*_k := \inf_\cb P d_\phi(u,\cb)$.
The convergence properties of empirical distortion minimizers are now quite well understood when the source distribution $P$ is assumed to have a finite support \cite{Linder02,Fischer10}, even in infinite-dimensional cases \cite{Biau08,Levrard15}. In real data sets, the source signal is often corrupted by noise, violating in most cases the bounded support assumption. In practice, data are usually pre-processed via an outlier-removal step that requires an important quantity of expertise. From a theoretical viewpoint, this corruption issue might be tackled by winsorizing or trimming classical estimators, or by introducing some new and robust estimators that adapt to heavy-tailed cases. Such estimators can be based on PAC-Bayesian or Median of Means techniques \cite{Catoni18,Joly15,Lecue17} for instance. In a nutshell, these estimators succeed in achieving sub-Gaussian deviation bounds under mild tail conditions such as bounded variances and expectations, and they are also provably robust to a certain amount of noise \cite{Lecue17}.

In the clustering framework, it is straightforward that the $k$-means procedure suffers from the same drawback as the empirical mean: only one adversarial datapoint is needed to drive both the empirically optimal codebook $\hat{\cb}_n$ and its distortion arbitrarily far from the optimal. In fact we show that it is the case with every possible Bregman divergence. 
Up to our knowledge, the only theoretically grounded attempt to robustify clustering procedures is to be found in \cite{Cuesta97}, where a trimmed $k$-means heuristic is introduced. See also \cite{Garcia08} for trimmed clustering with Mahalanobis distances. In some sense, this paper extends this trimming approach to the general framework of clustering with Bregman divergences. 

       We introduce some notation, background and fundamental properties for trimmed clustering with Bregman divergences in Section \ref{sec:clustering_with_bregman_div}. This will lead to the description of our robust clustering technique, based on the computation of a trimmed empirically optimal codebook $\hat{\cb}_{n,h}$, for a fixed trim level $h$. 
       
       Theoretical properties of our trimmed empirical codebook $\hat{\cb}_{n,h}$ are exposed in Section \ref{sec:theoretical_results}. To be more precise, we investigate convergence towards a trimmed optimal codebook $\cb^*_h$ in terms of distance and distortion, showing for instance that the excess distortion achieves a sub-Gaussian convergence rate of $O(1/\sqrt{n})$ in terms of sample size,  under a mild bounded variance assumption. This shows that our procedure can be thought of as robust whenever noisy situations are modeled as a signal corrupted with heavy-tailed additive noise. We also assess robustness of $\hat{\cb}_{n,h}$ in terms of Finite-sample Breakdown Point (see, e.g., \cite{Maronna06}), showing that our procedure can theoretically  endure a positive proportion of adversarial noise. A precise bound on this proportion is given, that illustrates the possible confusion between too small clusters and noise.
       
        Then, a modified Lloyd's type algorithm is proposed in Section \ref{sec:numerical_experiments}, along  with a heuristic to select both the trim level $h$ and the number $k$ of clusters  from data. The numerical performances of our algorithm are then investigated. We compare our method to trimmed $k$-means \cite{Cuesta97}, tclust \cite{Fritz12}, ToMATo \cite{Chazal_Oudot}, dbscan \cite{DBSCAN} and a trimmed version of $k$-median \cite{Cardot13}. 
Our algorithm with the appropriate Bregman divergence outperforms other methods on samples generated from Gaussian, Poisson, Binomial, Gamma and Cauchy mixtures.
We then investigate the performances of our method on real datasets. First, we consider daily rainfall measurements for January and September in Cayenne, from 2007 to 2017, and try to cluster data according to the month.  As suggested by \cite{Coe82}, our method with the divergence associated with Gamma distributions turns out to be the most accurate one. Second, we intend to cluster chunks of $5000$ words    picked from novels corresponding to $4$ different authors, based on stylometric descriptors \cite[Section 10]{Arnold15}, corrupted by noise. Following \cite{Evert04}, we show that our method with Poisson divergence is particularly well adapted for this framework.

At last, proofs are gathered in Sections \ref{sec:proofs_section_clusteringwithBregmanD} and  \ref{sec:proofs_section_theoretical_results}. Proofs of technical intermediate results are deferred to Sections \ref{tecsec:proofs_definition_DTM}, \ref{tecsec_proofs_sec_theoretical_results}, \ref{tecsec:inter_results_section_clustering_with_BD}, and \ref{tecsec:proofs_inter_theoretical_results}, along with some additional figures and results for Section \ref{sec:numerical_experiments}.

\section{Clustering with trimmed Bregman divergence}\label{sec:clustering_with_bregman_div}
\subsection{Bregman divergences and distortion}\label{sec:bregman_div}
A Bregman divergence is defined as follows.
\begin{definition}
	\label{def:bregman divergence pour les codebooks}
	Let $\phi$ be a strictly convex $\mathcal{C}_1$ real-valued function defined on a convex set $\Omega\subset\R^d$. The \textit{ {Bregman divergence}}  {$d_{\phi}$} is defined for all $x,y\in\Omega$ by
	\[d_{\phi}(x,y)=\phi(x)-\phi(y)-\langle\nabla_y\phi,x-y\rangle.\]
\end{definition}	
Observe that, since $\phi$ is strictly convex, for all $x,y\in\R^d$, $d_{\phi}(x,y)$ is non-negative and equal to zero if and only if $x=y$ (see \cite[Theorem 25.1]{Ro}). Note that by taking $\phi:x\mapsto\|x\|^2$, with $\|\cdot\|$ the Euclidean norm on $\R^d$, one gets $d_{\phi}(x,y)=\|x-y\|^2$. 
Let us present a few other examples:
\begin{enumerate}
	\item Exponential loss:  $\phi:x\mapsto e^x$, from $\R$ to $\R$, leads to $d_\phi(x,y)=e^x-e^y-(x-y)e^y$.
	\item  Logistic loss: $\phi:x\mapsto x\ln x+(1-x)\ln(1-x)$, from $[0,1]$ to $\R$, leads to $d_\phi(x,y)= x\ln\frac x y+(1-x)\ln\big(\frac{1-x}{1-y}\big)$.
	\item Kullback-Leibler: $\phi:x\mapsto \sum_{\ell=1}^dx_\ell\ln x_\ell$, from the $(d-1)-$simplex to $ \R$, leads to $d_\phi(x,y)=\sum_{\ell=1}^dx_\ell\ln \frac{x_\ell}{y_\ell}$.
\end{enumerate}
For any compact set $K \subset \Omega$, and $x \in \Omega$, we also define
\begin{align*}
d_\phi(K,x)  = \min_{y \in K} d_\phi(y,x) \quad \mbox{and} \quad 
d_\phi(x,K) = \min_{y \in K} d_\phi(x,y).
\end{align*} 
For every codebook $\cb=(c_1,c_2,\dots,c_k)\in\Omega^{(k)}$, 
	$d_{\phi}(x,\cb)$ is defined by $d_{\phi}(x,\cb)=\min_{i\in[\![1,k]\!]}d_{\phi}(x,c_i)$.  
The main property of Bregman divergences is that means are always minimizers of Bregman inertias, as exposed below. For a distribution $Q$ and a function $f$, we denote by $Qf(u)$ the integration of $f$ with respect to $Q$.
\begin{proposition}{\cite[Theorem 1]{Ban051}}\label{prop:bregman_bias_variance}
Let $P$ be a probability distribution, and let $\phi$ be a strictly convex $\mathcal{C}_1$ real-valued function defined on a convex set $\Omega\subset\R^d$. Then, for any $x$ $\in$ $\Omega$, 
\[
P d_\phi(u,x) = P d_\phi(u,Pu) + d_\phi(Pu,x).
\]
\end{proposition}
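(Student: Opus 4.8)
The argument is purely computational: expand every Bregman divergence in the statement through Definition~\ref{def:bregman divergence pour les codebooks} and use linearity of integration against $P$. Write $m:=Pu$ for the mean of $P$. I take for granted, as is implicit in the statement, that $P$ is carried by $\Omega$, that $P\phi(u)$ and $P\|u\|$ are finite (so every term below is finite), and --- the only point that really uses a hypothesis --- that $m\in\Omega$, which I comment on at the end.

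First I would rewrite the left-hand side. Since $x$ is fixed, $\phi(x)$ and $\nabla_x\phi$ pull out of the integral, so
\[
P d_\phi(u,x)=P\bigl(\phi(u)-\phi(x)-\langle\nabla_x\phi,u-x\rangle\bigr)=P\phi(u)-\phi(x)-\langle\nabla_x\phi,m-x\rangle.
\]
Next I would expand the two terms on the right-hand side. For the ``variance'' term the linear part, evaluated at the mean, vanishes:
\[
P d_\phi(u,m)=P\phi(u)-\phi(m)-\langle\nabla_m\phi,m-m\rangle=P\phi(u)-\phi(m),
\]
while for the ``bias'' term the definition gives at once
\[
d_\phi(m,x)=\phi(m)-\phi(x)-\langle\nabla_x\phi,m-x\rangle.
\]
Adding these last two displays, the terms $\pm\phi(m)$ cancel and what is left is precisely the expression obtained for $P d_\phi(u,x)$, which proves the identity.

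There is no real obstacle beyond this bookkeeping; in particular strict convexity of $\phi$ plays no role in the equality itself, only the $\mathcal{C}_1$ regularity needed to make $\nabla\phi$ meaningful. The single step deserving a word is that $m=Pu$ belongs to $\Omega$, so that $\phi(m)$, $\nabla_m\phi$ and $d_\phi(m,x)$ make sense: this is exactly where convexity of $\Omega$ enters, together with $P$ being supported on $\Omega$ with a finite first moment, since the barycentre of a probability measure carried by a convex subset of $\R^d$ lies in that subset.
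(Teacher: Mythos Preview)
Your proof is correct and is the standard computation; the paper itself does not prove this proposition but merely cites it from \cite[Theorem~1]{Ban051}, so there is nothing to compare against. Your remarks on which hypotheses are actually used (only $\mathcal{C}^1$ for the identity, convexity of $\Omega$ to place $Pu$ in the domain) are accurate.
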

 As mentioned in \cite{Ban05}, this property allows to design iterative Bregman clustering algorithms that are similar to Lloyd's algorithm. 
Let $P$ be a distribution on $\R^d$, and $\cb$ a codebook. The clustering performance of $\cb$ will be measured via its \textit{distortion}, namely
\begin{align*}
R(\cb) = P d_\phi (u,\cb).
\end{align*}
When only an i.i.d. sample $\X_n = \{X_1, \dots, X_n\}$ is available, we denote by $R_n(\cb)$ the corresponding empirical distortion (associated with $P_n$). When $P$ is a mixture of distributions belonging to an exponential family, there exists a natural choice of Bregman divergence, as detailed in Section \ref{sec:numerical_experiments}. Standard Bregman clustering intends to infer a minimizer of $R$ via minimizing $R_n$, and works well in the bounded support case \cite{Fischer10}.

\subsection{Trimmed optimal codebooks}

     As for classical mean estimation, plain $k$-means is sensitive to outliers. An attempt to address this issue is proposed in \cite{Cuesta97,Gordaliza91}: for a trim level $h \in (0,1]$, both a codebook and  a subset of $P$-mass not smaller than $h$ (trimming set) are pursued. This heuristic can be generalized to our framework as follows. 
     
      For a measure $Q$ on $\R^d$, we write $Q \ll P$ (i.e., $Q$ is a sub-measure of $P$) if $Q(A) \leq P(A)$ for every Borel set $A$. Let $\mathcal{P}_h$ denote the set $\mathcal{P}_h = \{ \frac{1}{h} Q\mid Q \ll P, Q(\R^d)=h \}$, and $\mathcal{P}_{+h} = \cup_{s \geq h} \mathcal{P}_s$. By analogy with \cite{Cuesta97}, optimal trimming sets and codebooks are designed to achieve the optimal $h$-trimmed $k$-variation,  
      \begin{align*}
      V_{k,h}  :=   \inf_{\tilde{P} \in \mathcal{P}_{+h}} \inf_{\cb \in \Omega^{(k)}} R(\tilde{P},\cb),
      \end{align*}
where $R(\tilde{P},\cb)= \tilde{P}  d_\phi(u,\cb)$. In other words, $V_{k,h}$ is the best possible $k$-point distortion based on a normalized sub-measure of $P$. Intuitively speaking, the $h$-trimmed $k$-variation may be thought of as the $k$-points optimal distortion of the best "denoised" version of $P$, with denoising level $1-h$. For instance, in a mixture setting, if $P = \gamma P_0 + (1-\gamma) N$, where $P_0$ is a signal supported by $k$ points and $N$ is a noise distribution, then, provided that $h \leq \gamma$, $V_{k,h}=0$.

If $\cb$ is a fixed codebook, we denote by $B_\phi(\cb,r)$ (resp $\bar{B}_\phi(\cb,r)$) the open (resp. closed) Bregman ball with radius $r$, $\left \{x\mid \sqrt{d_\phi(x,\cb)} < r \right \}$ (resp. $\leq$), and by $r_h(\cb)$ the smallest radius $r\geq 0$ such that
\begin{align}\label{eq: def rayon r}
   P (B_\phi(\cb,r)) \leq h \leq P(\bar{B}_\phi(\cb,r)).
\end{align} 

We denote this radius by $r_{n,h}(\cb)$ when the distribution is $P_n$. Note that $r_{n,h}(\cb)^2$ is the Bregman divergence to the $\lceil nh \rceil$ $d_\phi$-nearest-neighbor of $\cb$ in $\X_n$.
Now, if $\mathcal{P}_h(\cb)$ is defined as the set of measures $\tilde{P}$ in $\mathcal{P}_h$ that coincide with $\frac{P}{h}$ on $B_\phi(\cb,r_h(\cb))$, with support included in $\bar{B}_\phi(\cb,r_h(\cb))$, a straightforward result is the following.
\begin{lemma}
\label{lm: ecriture DTM}
For all $\cb\in\Omega^{(k)}$, $h\in(0,1]$, $\tilde P\in \mathcal{P}_h$ and $\tilde{P}_\cb \in \mathcal{P}_h(\cb)$,
\[R(\tilde{P}_\cb,\cb) \leq R(\tilde{P},\cb).\]
Equality holds if and only if $\tilde P\in \mathcal{P}_h(\cb)$.
\end{lemma}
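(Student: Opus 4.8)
The plan is a direct mass-rearrangement argument exploiting that $d_\phi(\cdot,\cb)$ is strictly below $r_h(\cb)^2$ inside the open Bregman ball $B_\phi(\cb,r_h(\cb))$, equal to $r_h(\cb)^2$ on its bounding sphere, and strictly above it outside the closed ball. Set $r=r_h(\cb)$ and $f(u)=d_\phi(u,\cb)$; since $\phi\in\mathcal C_1$, $f$ is continuous, hence Borel, and we partition $\Omega$ into $B=B_\phi(\cb,r)=\{f<r^2\}$, $S=\bar{B}_\phi(\cb,r)\setminus B=\{f=r^2\}$ and $C=\Omega\setminus\bar{B}_\phi(\cb,r)=\{f>r^2\}$. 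We may assume $r<\infty$: if $r=\infty$ then $B=\Omega$, forcing $\tilde P_\cb=\tilde P=\tfrac Ph$ and the statement is trivial. As $\tilde P_\cb$ is supported on $\bar{B}_\phi(\cb,r)$ we have $R(\tilde P_\cb,\cb)\le r^2<\infty$. Write $\tilde P=\tfrac1h Q$ and $\tilde P_\cb=\tfrac1h Q_\cb$ with $Q,Q_\cb$ sub-measures of $P$ of total mass $h$; by definition of $\mathcal P_h(\cb)$, $Q_\cb$ coincides with $P$ on $B$ and $Q_\cb(C)=0$.

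The key is to recentre the integrand at the threshold value $r^2$. Using $Q(\R^d)=h$ and $f-r^2\equiv0$ on $S$,
\[
h\,R(\tilde P,\cb)=\int f\,dQ = r^2 h + \int_B (f-r^2)\,dQ + \int_C (f-r^2)\,dQ .
\]
On $C$ we have $f-r^2>0$, so the last term is $\ge0$. On $B$ we have $f-r^2<0$, and since $Q\le P$ on $B$, $\int_B(f-r^2)\,dQ\ge\int_B(f-r^2)\,dP=\int_B(f-r^2)\,dQ_\cb$. Substituting these two bounds, and then noting $r^2 h+\int_B(f-r^2)\,dQ_\cb=\int f\,dQ_\cb$ (again because $f\equiv r^2$ on $S$ and $Q_\cb(C)=0$), we get
\[
h\,R(\tilde P,\cb)\ \ge\ r^2 h + \int_B(f-r^2)\,dQ_\cb\ =\ \int f\,dQ_\cb\ =\ h\,R(\tilde P_\cb,\cb),
\]
which is the claimed inequality.

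For the equality case, the only inequalities used above are $\int_C(f-r^2)\,dQ\ge0$ and $\int_B(r^2-f)\,d(P-Q)\ge0$. Since $f-r^2>0$ strictly on $C$ and $Q\ge0$, the first is an equality iff $Q(C)=0$, i.e.\ $\tilde P$ has support in $\bar{B}_\phi(\cb,r)$. Since $r^2-f>0$ strictly on $B$ and $(P-Q)$ is a nonnegative measure, the second is an equality iff $(P-Q)$ vanishes on $B$, i.e.\ $Q$ coincides with $P$ on $B$, equivalently $\tilde P$ coincides with $\tfrac Ph$ on $B$. Together these two conditions are exactly $\tilde P\in\mathcal P_h(\cb)$; conversely any $\tilde P\in\mathcal P_h(\cb)$ turns both inequalities into equalities (and the same computation then shows $R(\tilde P,\cb)$ equals the common value $r^2+\tfrac1h\int_B(f-r^2)\,dP$, so in particular $R$ is constant on $\mathcal P_h(\cb)$). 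The degenerate case $r=0$, where $B=\emptyset$, is covered verbatim: the condition on $B$ is vacuous and the argument reduces to the support condition.

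I do not foresee a genuine obstacle; the argument is elementary once the integrand is recentred at $r^2$. The only points requiring a little care are the strict sign of $f-r^2$ on $B$ and on $C$ — immediate from the definitions of open and closed Bregman balls — and the elementary measure-theoretic fact that a nonnegative measure dominated by $P$ on $B$ and carrying the same total mass on $B$ must agree with $P$ on $B$. Measurability and finiteness of the integrals involved are harmless, as noted above.
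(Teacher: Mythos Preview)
Your proof is correct and takes a genuinely different route from the paper's. The paper argues via quantile functions: it writes $R(\tilde P_\cb,\cb)=\int_0^1 F_\cb^{-1}(hu)\,du$, where $F_\cb^{-1}$ is the quantile function of $d_\phi(X,\cb)$ under $P$, and then observes that for any $\tilde P\in\mathcal P_h$ the sub-measure constraint $h\tilde P\ll P$ forces the quantile function of $d_\phi(\tilde X,\cb)$ under $\tilde P$ to dominate $u\mapsto F_\cb^{-1}(hu)$ pointwise, whence the inequality by integration. Your argument bypasses quantiles entirely: recentring the integrand at the threshold $r_h(\cb)^2$ and splitting $\Omega$ into $\{f<r^2\}$, $\{f=r^2\}$, $\{f>r^2\}$ reduces the comparison to two monotonicity facts about nonnegative measures. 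This is more elementary and arguably cleaner; in particular your treatment of the equality case is more explicit than the paper's, which simply asserts that equality of quantile functions a.e.\ is equivalent to $\tilde P\in\mathcal P_h(\cb)$ without spelling out why. The quantile approach, on the other hand, makes the connection with order statistics and with the integral representation $R_h(\cb)=\int_0^h F_\cb^{-1}(u)\,du$ transparent, which the paper reuses in the proof of Lemma~\ref{lm: dtm decreasing}.
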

This lemma is a straightforward generalisation of results in \cite[Lemma 2.1]{Cuesta97}, \cite{Gordaliza91} or \cite{Merigot}. A short proof is given in Section \ref{tecsec_proof_lemma_ecriture_DTM}. As a consequence, for any codebook $\cb \in \Omega^{(k)}$ we may restrict our attention to sub-measures in $\mathcal{P}_h(\cb)$.
\begin{definition}\label{def:trimmed_distortion}
For $\cb \in \Omega^{(k)}$, the $h$-trimmed distortion of $\cb$ is defined by
\[
R_h(\cb) = h R(\tilde{P}_\cb,\cb),
\]
where $\tilde{P}_\cb \in \mathcal{P}_h(\cb)$.
\end{definition}
Note that since $R(\tilde{P}_\cb,\cb)$ does not depend on the choice of $\tilde{P}_\cb$ whenever $\tilde{P}_\cb \in \mathcal{P}_h(\cb)$, $R_h(\cb)$ is well-defined. As well, $R_{n,h}(\cb)$ will denote the trimmed distortion corresponding to the distribution $P_n$.
Another simple property of sub-measures can be translated in terms of trimmed distortion. 

\begin{lemma}
\label{lm: dtm decreasing}
Let $0<h<h'<1$ and $\cb\in\Omega^{(k)}$. Then
\[
{R_h(\cb)}/{h} \leq {R_{h'}(\cb)}/{h'}.
\]
Moreover, equality holds if and only if $P(B_\phi(\cb,r_{h'}(\cb)))=0$.
\end{lemma}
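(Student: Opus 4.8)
The plan is to reduce everything to the explicit description of the trimmed distortion furnished by Lemma~\ref{lm: ecriture DTM} and Definition~\ref{def:trimmed_distortion}. Set $r = r_h(\cb)$ and $r' = r_{h'}(\cb)$. First I would record the monotonicity $r \le r'$: the radius $r_s(\cb)$ is the smallest $\rho$ with $P(B_\phi(\cb,\rho)) \le s \le P(\bar{B}_\phi(\cb,\rho))$, which by right-continuity of $\rho \mapsto P(\bar{B}_\phi(\cb,\rho))$ coincides with the quantile $\inf\{\rho \ge 0 : P(\bar{B}_\phi(\cb,\rho)) \ge s\}$, a non-decreasing function of $s$. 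Next, writing $R_h(\cb) = (h\tilde{P}_\cb)\,d_\phi(u,\cb)$ for any $\tilde{P}_\cb \in \mathcal{P}_h(\cb)$, I would pick representatives $\tilde{P}_\cb \in \mathcal{P}_h(\cb)$ and $\tilde{P}'_\cb \in \mathcal{P}_{h'}(\cb)$ for which $h\tilde{P}_\cb \ll h'\tilde{P}'_\cb$; this is possible because both measures coincide with $P$ on $B_\phi(\cb,r) \subseteq B_\phi(\cb,r')$, and on the sphere $\{\,\sqrt{d_\phi(\cdot,\cb)} = r\,\}$ one may take $h\tilde{P}_\cb$ and $h'\tilde{P}'_\cb$ both proportional to the restriction of $P$ (legitimate since $R_h(\cb)$ does not depend on the chosen representative, by Lemma~\ref{lm: ecriture DTM}).

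Then $\nu := h'\tilde{P}'_\cb - h\tilde{P}_\cb$ is a non-negative measure of total mass $h' - h$, and it vanishes on $B_\phi(\cb,r)$, so $d_\phi(u,\cb) \ge r^2$ for $\nu$-almost every $u$. Hence
\[
R_{h'}(\cb) \;=\; R_h(\cb) + \int d_\phi(u,\cb)\,d\nu(u) \;\ge\; R_h(\cb) + r^2\,(h'-h).
\]
On the other hand $h\tilde{P}_\cb$ is carried by $\bar{B}_\phi(\cb,r)$, where $d_\phi(u,\cb) \le r^2$, so $R_h(\cb) \le h\,r^2$, i.e.\ $r^2 \ge R_h(\cb)/h$. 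Inserting this into the display gives $R_{h'}(\cb) \ge R_h(\cb) + \tfrac{R_h(\cb)}{h}(h'-h) = \tfrac{h'}{h}\,R_h(\cb)$, which is the claimed inequality.

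For the equality case I would trace back through both inequalities. Equality forces $r^2 = R_h(\cb)/h$, i.e.\ $R_h(\cb) = h\,r^2$; since $d_\phi(u,\cb) < r^2$ strictly on $B_\phi(\cb,r)$ while $h\tilde{P}_\cb$ agrees with $P$ there, this can only happen if $P(B_\phi(\cb,r)) = 0$. Equality also forces $\int d_\phi(u,\cb)\,d\nu(u) = r^2(h'-h)$, i.e.\ $\nu$ concentrated on $\{\,\sqrt{d_\phi(\cdot,\cb)} = r\,\}$. If $r < r'$, then $\nu$ necessarily charges the outer sphere $\{\,\sqrt{d_\phi(\cdot,\cb)} = r'\,\}$ (with mass $h' - P(B_\phi(\cb,r'))$) and the set $B_\phi(\cb,r') \setminus \bar{B}_\phi(\cb,r)$, on both of which $d_\phi(\cdot,\cb) > r^2$; concentration then forces $P(B_\phi(\cb,r')\setminus\bar{B}_\phi(\cb,r)) = 0$ and $P(B_\phi(\cb,r')) = h'$, whence $P(\bar{B}_\phi(\cb,r)) = h'$, contradicting the minimality defining $r' = r_{h'}(\cb)$. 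Therefore $r = r'$ and $P(B_\phi(\cb,r_{h'}(\cb))) = P(B_\phi(\cb,r)) = 0$. Conversely, if $P(B_\phi(\cb,r_{h'}(\cb))) = 0$, then (again by minimality of the radius) $r_h(\cb) = r_{h'}(\cb) = r$, and $R_h(\cb)$, $R_{h'}(\cb)$ reduce to $h\,r^2$, $h'\,r^2$ respectively, so equality holds.

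The expected main obstacle is precisely this equality analysis: one must handle carefully the behaviour of $P$ on the two boundary spheres, and invoke the minimality in the definition of the trimming radius to rule out $r_h(\cb) < r_{h'}(\cb)$ under equality, so that the characterisation is correctly stated in terms of $r_{h'}(\cb)$ and not the a priori weaker condition $P(B_\phi(\cb,r_h(\cb))) = 0$.
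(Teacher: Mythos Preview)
Your argument is correct. The inequality step is sound: once you have $h\tilde P_\cb \ll h'\tilde P'_\cb$ (in the paper's sub-measure sense), the difference $\nu$ is non-negative, vanishes on $B_\phi(\cb,r)$, and the two elementary bounds $\int d_\phi\,d\nu \ge r^2(h'-h)$ and $R_h(\cb)\le hr^2$ combine to give the result. Your equality analysis is also complete; the case split $r<r'$ versus $r=r'$ and the appeal to minimality of $r_{h'}(\cb)$ are exactly what is needed to upgrade $P(B_\phi(\cb,r_h(\cb)))=0$ to the stated condition on $r_{h'}(\cb)$.

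The paper, however, proceeds differently and more succinctly. It writes $R_s(\cb)/s = \int_0^1 F_\cb^{-1}(su)\,du$, where $F_\cb^{-1}$ is the quantile function of $d_\phi(X,\cb)$ (so $F_\cb^{-1}(u)=r_u^2(\cb)$). Monotonicity of $F_\cb^{-1}$ then gives $F_\cb^{-1}(hu)\le F_\cb^{-1}(h'u)$ pointwise, hence the inequality in one line. For equality, $F_\cb^{-1}(hu)=F_\cb^{-1}(h'u)$ a.e.\ forces $F_\cb^{-1}$ to be constant on $(0,h')$ by a short iteration $l\mapsto (h/h')l$, which is precisely $P(B_\phi(\cb,r_{h'}(\cb)))=0$. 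Your approach trades this quantile representation for a direct measure decomposition; it is slightly longer (especially in the equality case, where you must argue away $r<r'$ by hand), but has the merit of being entirely self-contained and not invoking the fact that $R_h(\cb)/h$ equals the integrated quantile. Either route is perfectly acceptable.
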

As well, this lemma generalizes previous results in \cite{Cuesta97, Gordaliza91}. A proof can be found in Section \ref{sec:proof_lem_dtm_decreasing}. Lemma \ref{lm: ecriture DTM} and Lemma \ref{lm: dtm decreasing} ensure that for a given $\cb$, optimal $\tilde{P}$ in $\mathcal{P}_{+h}$ for $R(\tilde{P}, \cb)$ can be found in $\mathcal{P}_h(\cb)$. Thus, 
the optimal $h$-trimmed $k$-variation may be achieved via minimizing the $h$-trimmed distortion.
\begin{proposition}\label{prop:equiv_distortion_variation} For every positive integer $k$ and $0<h<1$,
\[
hV_{k,h}=\inf_{\cb\in\Omega^{(k)}} R_h(\cb).
\]
\end{proposition}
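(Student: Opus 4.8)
The plan is to prove, for each fixed codebook $\cb \in \Omega^{(k)}$, the per-codebook identity $\inf_{\tilde P \in \mathcal{P}_{+h}} h\,R(\tilde P,\cb) = R_h(\cb)$, and then to conclude by exchanging the two infima: since $hV_{k,h} = \inf_{\tilde P \in \mathcal{P}_{+h}} \inf_{\cb \in \Omega^{(k)}} h\,R(\tilde P,\cb) = \inf_{\cb \in \Omega^{(k)}} \inf_{\tilde P \in \mathcal{P}_{+h}} h\,R(\tilde P,\cb)$, the per-codebook identity immediately gives $hV_{k,h} = \inf_{\cb} R_h(\cb)$.

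To get the bound $\inf_{\tilde P \in \mathcal{P}_{+h}} h\,R(\tilde P,\cb) \leq R_h(\cb)$, I would simply use the competitor $\tilde P_\cb \in \mathcal{P}_h(\cb) \subseteq \mathcal{P}_h \subseteq \mathcal{P}_{+h}$, for which $h\,R(\tilde P_\cb,\cb) = R_h(\cb)$ by Definition \ref{def:trimmed_distortion}. For the reverse bound, I would fix an arbitrary $\tilde P \in \mathcal{P}_{+h}$, say $\tilde P \in \mathcal{P}_s$ with $s \in [h,1]$, and bound $h\,R(\tilde P,\cb)$ from below in two steps. First, Lemma \ref{lm: ecriture DTM} applied at trim level $s$ gives $R(\tilde P,\cb) \geq R(\tilde P^{(s)}_\cb,\cb) = R_s(\cb)/s$ for any $\tilde P^{(s)}_\cb \in \mathcal{P}_s(\cb)$. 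Second, Lemma \ref{lm: dtm decreasing} gives $R_s(\cb)/s \geq R_h(\cb)/h$ for all $s \in [h,1)$. The endpoint $s=1$, i.e. $\tilde P = P$ (recall $\mathcal{P}_1 = \{P\}$ since $Q \ll P$ with $Q(\R^d) = 1$ forces $Q = P$), has to be treated separately: writing $P = h\tilde P_\cb + P'$ with $P' := P - h\tilde P_\cb \geq 0$ of total mass $1-h$ and supported on $\R^d \setminus B_\phi(\cb,r_h(\cb))$ (since $h\tilde P_\cb$ agrees with $P$ on that open ball), one has $P' d_\phi(u,\cb) \geq (1-h)\,r_h(\cb)^2 \geq (1-h)\,R(\tilde P_\cb,\cb)$, the last inequality because $\tilde P_\cb$ is supported on $\bar B_\phi(\cb,r_h(\cb))$; hence $R(\cb) = h\,R(\tilde P_\cb,\cb) + P'd_\phi(u,\cb) \geq R(\tilde P_\cb,\cb) = R_h(\cb)/h$. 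In all cases $h\,R(\tilde P,\cb) \geq R_h(\cb)$, so taking the infimum over $\tilde P \in \mathcal{P}_{+h}$ completes the per-codebook identity.

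Taking the infimum over $\cb \in \Omega^{(k)}$ and using the exchange of nested infima above then yields the Proposition. The only mildly delicate point is the boundary trim level $s=1$, where Lemma \ref{lm: dtm decreasing} is stated just short of the endpoint; as indicated, this gap is closed by the short direct computation above, and everything else is bookkeeping with Lemmas \ref{lm: ecriture DTM} and \ref{lm: dtm decreasing} together with the elementary rearrangement of infima.
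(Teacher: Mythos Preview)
Your proof is correct and follows essentially the same approach as the paper, which simply notes that Lemmas \ref{lm: ecriture DTM} and \ref{lm: dtm decreasing} ensure that for fixed $\cb$ the optimal $\tilde P\in\mathcal P_{+h}$ for $R(\tilde P,\cb)$ lies in $\mathcal P_h(\cb)$, and then swaps the two infima. Your write-up is actually more careful: you explicitly handle the boundary case $s=1$ (where Lemma \ref{lm: dtm decreasing} is not stated), a detail the paper leaves implicit.
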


This proposition is an extension of \cite[Proposition 2.3]{Cuesta97}. In other words, Proposition \ref{prop:equiv_distortion_variation} assesses the equivalence between minimization of our robustified distortion $R_h$, and the original robust clustering criterion depicted in \cite{Cuesta97} (extended to Bregman divergences). Thus, a good  codebook in terms of trimmed $k$-variation can be found by minimizing $R_h$.
\begin{definition}
An \textit{ {$h$-trimmed $k$-optimal codebook}} is any element  {$\cb^*$} in $\arg\min_{\cb\in\Omega^{(k)}}R_h(\cb)$.
\end{definition}
Under mild assumptions on
$P$ and $\phi$,  trimmed $k$-optimal codebooks exist.
\begin{theorem}\label{thm:existence_optimal} Let $0<h<1$, assume that $P \|u\| < + \infty$, $\phi$ is $\mathcal{C}^2$ and strictly convex and $F_0 = \overline{conv(supp(P))} \subset \mathring{\Omega}$, 
that is, the closure of the convex hull of the support of $P$ is a subset of the interior of $\Omega$. 
Then, the set $\arg\min_{\cb\in\Omega^{(k)}} R_h(\cb)$ is not empty.
\end{theorem}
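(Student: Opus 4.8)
The plan is to follow the classical scheme for existence of optimal quantizers, adapted to the trimmed Bregman setting: first establish that $\cb\mapsto R_h(\cb)$ is lower semicontinuous on $\Omega^{(k)}$, then show that a minimizing sequence can be confined — after discarding codepoints that run off to infinity or to the boundary of $\Omega$ — to a compact subset of $(\mathring{\Omega})^{(k)}$, so that a limit point exists and, by lower semicontinuity, realizes the infimum. Two preliminary facts from the set-up will be used repeatedly: for any $\cb$ we have the identity $R_h(\cb)=P\!\left[\min\!\big(d_\phi(u,\cb),r_h(\cb)^2\big)\right]-(1-h)\,r_h(\cb)^2$, and passing from $\cb$ to a codebook $\cb'$ obtained by adding a codepoint never increases $R_h$ (so $\inf_{\Omega^{(k)}}R_h\le\inf_{\Omega^{(k-1)}}R_h$, and a $(k-1)$-codebook can be padded into a $k$-codebook with the same $R_h$ by duplicating a point).

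I would then record two reductions. (i) \emph{Reduction to $F_0$.} Applying Proposition \ref{prop:bregman_bias_variance} to $\tilde P_\cb$ conditioned on each trimmed Voronoi cell together with Lemma \ref{lm: ecriture DTM}, replacing each codepoint $c_j$ by the $\tilde P_\cb$-barycenter $m_j=\tilde P_\cb(u\,\1_{\mathrm{cell}_j})/\tilde P_\cb(\mathrm{cell}_j)$ of its cell (or deleting $c_j$ if that cell is $\tilde P_\cb$-null) yields $\cb'$ with $R_h(\cb')\le R_h(\cb)-h\,\tilde P_\cb(\mathrm{cell}_j)\,d_\phi(m_j,c_j)$ and with all codepoints in $F_0=\overline{conv(supp(P))}$; hence $\inf_{\Omega^{(k)}}R_h=\inf_{F_0^{(k)}}R_h$, and the barycenter satisfies $\|m_j\|\le \tfrac1h P\|u\|/\tilde P_\cb(\mathrm{cell}_j)$ — this is where $P\|u\|<+\infty$ enters. (ii) \emph{Boundedness of Bregman balls.} Since $\phi$ is strictly convex and $\mathcal{C}^2$, the function $x\mapsto\phi(x)-\langle\nabla\phi(c_j),x\rangle$ is strictly convex, minimal at $c_j$, and coercive along every ray from $c_j$ (a one-dimensional restriction argument, using that $t\mapsto\langle\nabla\phi(c_j+tv),v\rangle$ is nondecreasing and cannot return to its value at $t=0$), so $\bar B_\phi(\cb,r)$ is bounded; consequently $r_h(\cb)$ and $\mathrm{supp}(\tilde P_\cb)$ are controlled once $\cb$ is. Lower semicontinuity of $R_h$ then follows by combining continuity of $\cb\mapsto\min(d_\phi(u,\cb),\cdot)$ for fixed $u$ (legitimate because $F_0\subset\mathring{\Omega}$ and $\phi\in\mathcal{C}^2$), Fatou's lemma, and the semicontinuity of $\cb\mapsto r_h(\cb)$, the last point requiring care where $P$ charges a Bregman sphere $\partial B_\phi(\cb,r_h(\cb))$ (this is the technical lemma of Section \ref{tecsec:proofs_definition_DTM}).

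The heart of the proof, and the step I expect to be the main obstacle, is the compactness argument, which I would phrase as an induction on $k$. Take a minimizing sequence $(\cb^{(m)})$, WLOG in $F_0^{(k)}$; along a subsequence, each coordinate $c_j^{(m)}$ either converges in $\mathring{\Omega}$ (then its limit lies in $F_0\subset\mathring{\Omega}$), or escapes. Since $\nabla\phi$ is a homeomorphism from $\mathring{\Omega}$ onto an open set (injectivity from strict convexity, openness from invariance of domain), escaping means $\nabla\phi(c_j^{(m)})$ leaves every compact subset of that image, and along a further subsequence $d_\phi(\cdot,c_j^{(m)})=\phi(\cdot)+\phi^*(\nabla\phi(c_j^{(m)}))-\langle\nabla\phi(c_j^{(m)}),\cdot\rangle$ converges to a limit $D_j(\cdot)\in(0,+\infty]$. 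If $D_j\equiv+\infty$ on $supp(P)$, the codepoint is asymptotically irrelevant and is deleted, reducing $k$. The delicate case is $D_j$ finite on a set of positive $P$-mass: I claim it cannot occur along a minimizing sequence, because replacing such a $c_j^{(m)}$ by the barycenter $m_j^{(m)}$ of its trimmed cell decreases $R_h$ by $h\,\tilde P_{\cb^{(m)}}(\mathrm{cell}_j)\,d_\phi(m_j^{(m)},c_j^{(m)})$, and in this regime both factors are bounded away from $0$ ($d_\phi(m_j^{(m)},c_j^{(m)})\to D_j(\lim m_j^{(m)})>0$ and the cell mass converges to the positive mass of the region it serves), producing a uniform strict improvement over $\inf_{\Omega^{(k)}}R_h$ — a contradiction. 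Hence, after deletions, the minimizing sequence has only converging coordinates; its limit $\cb^\ast\in(\mathring{\Omega})^{(k')}$ with $k'\le k$ satisfies $R_h(\cb^\ast)\le\liminf_m R_h(\cb^{(m)})=\inf_{\Omega^{(k)}}R_h$ by lower semicontinuity, and padding $\cb^\ast$ back to $k$ points concludes. The genuine work is the bookkeeping behind this last paragraph: constructing the limiting trimmed measure and its Voronoi cells, proving that cell masses and barycenters pass to the limit, and controlling the interaction between deleting codepoints and re-solving the trimming problem — and it is precisely the finite first moment, the hypothesis $F_0\subset\mathring{\Omega}$, and the boundedness of Bregman balls that keep these limits well-behaved.
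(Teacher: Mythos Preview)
Your strategy—lower semicontinuity plus compactness of a minimizing sequence, driven by the centroid condition and an induction on $k$—is the right shape, and the paper follows it too, but organized around Lemma~\ref{lm: key lemma h- h+}: the induction on $j\le k$ is run \emph{simultaneously with a two-sided perturbation of the trimming level}, establishing $R^*_{j-1,h^-}-R^*_{j,h^+}>0$ for some $h^-<h<h^+$. That gap forces every trimmed Voronoi cell of any near-optimal $j$-codebook, at \emph{any} level $s\in[h^-,h^+]$, to carry mass at least $h-h^-$; the centroid codebook is then bounded by $P\|u\|/(h-h^-)$, and continuity of $(s,\cb)\mapsto R_s(\cb)$ on bounded sets (Lemma~\ref{lm: continuite de VPhc}) plus compactness of $(\bar B(0,K)\cap F_0)^{(k)}$ closes. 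The degenerate case $R^*_{k-1,h}=R^*_{k,h}$ is disposed of separately and trivially.

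The genuine gap in your sketch is the ``delicate case''. The assertion that ``the cell mass converges to the positive mass of the region it serves'' is not justified: even when $D_j$ is finite on a set of positive $P$-mass, another (converging) codepoint may capture that region in the limit, so the trimmed cell of the escaping $c_j^{(m)}$ can have mass $\epsilon_m\to 0$, and your strict-improvement contradiction evaporates. You would then delete $c_j^{(m)}$—but a codepoint whose trimmed cell has mass $\epsilon_m$ leaves, after deletion, a $(k-1)$-codebook that is near-optimal only at trimming level $h-\epsilon_m$, not at level $h$. To reduce to the degenerate case you need $R^*_{k-1,h-\epsilon_m}\to R^*_{k-1,h}$, i.e.\ left-continuity of $s\mapsto R^*_{k-1,s}$, and that already requires uniformly bounded $(k-1)$-minimizers over a left-neighborhood of $h$—which is circular unless you have built it into the induction. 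This is exactly what the paper's $[h^-,h^+]$ device supplies, and it is the substantive content of Lemma~\ref{lm: key lemma h- h+}, not bookkeeping. Your plan can be completed, but only by importing that uniformity-over-trimming-levels into the inductive hypothesis.
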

A proof of Theorem \ref{thm:existence_optimal} is given in Section \ref{sec:proof_thm_existence_optimal}. Note that Theorem \ref{thm:existence_optimal} only requires $P\|u\|< + \infty$. This can be compared with the standard squared Euclidean distance case, where $P\|u\|^2 < +\infty$ is required for $R:\cb\mapsto P\|u-\cb\|^2$ to have minimizers. From now on we denote by $\cb^*_h$ a minimizer of $R_h$, and by $\hat{\cb}_{n,h}$ a minimizer of the empirical trimmed distortion $R_{n,h}$.

\subsection{Bregman-Voronoi cells and centroid condition}\label{sec: Bregman-Voronoi cells and centroid condition}

Similarly to the Euclidean case, the clustering associated with a codebook $\cb$ will be given by a tesselation of the ambient space. To be more precise, for $\cb \in \Omega^{(k)}$ and $i \in [\![1,k]\!]$, the Bregman-Voronoi cell associated with $c_i$ is $V_i(\cb) = \{ x\mid\forall j \neq i \quad d_\phi(x,c_i) \leq d_\phi(x,c_j)\}$. Some further results on the geometry of Bregman Voronoi cells might be found in \cite{Nie07}. Since the $V_i(\cb)$'s do not form a partition, $W_i(\cb)$ will denote a subset of $V_i(\cb)$ so that $\{W_1(\cb), \dots, W_k(\cb)\}$ is a partition of $\R^d$ (for instance break the ties of the $V_i$'s with respect to the lexicographic rule). Proposition \ref{prop:centroid} below extends the so-called centroid condition in the Euclidean case to our Bregman setting.

\begin{proposition}\label{prop:centroid}
    Let $\cb \in \Omega^{(k)}$ and $\tilde P_\cb \in \mathcal{P}_h(\cb)$. Assume that for all $i \in [\![1, k]\!]$, $\tilde P_\cb(W_i(\cb)) >0$, and denote by $\mathbf{m}$ the codebook of the local means of $\tilde P_\cb$. In other words, $m_i = \tilde P_\cb (u\mathbbm{1}_{W_i(\cb)}(u))/\tilde P_\cb(W_i(\cb))$. Then
    \begin{align*}
    R_h(\cb) \geq R_h(\mathbf{m}), 
\end{align*}         
    with equality if and only if for all $i$ in $[\![1,k]\!]$, $c_i=m_i$.
\end{proposition}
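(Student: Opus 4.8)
The plan is to reduce the comparison of $R_h(\cb)$ and $R_h(\mathbf{m})$ to a cell-by-cell application of the bias-variance decomposition for Bregman divergences (Proposition \ref{prop:bregman_bias_variance}). First I would fix a trimming measure $\tilde{P}_\cb \in \mathcal{P}_h(\cb)$ realizing the trimmed distortion of $\cb$, so that by Definition \ref{def:trimmed_distortion} we have $R_h(\cb) = h R(\tilde{P}_\cb, \cb) = h \sum_{i=1}^k \tilde{P}_\cb\bigl( d_\phi(u, c_i) \mathbbm{1}_{W_i(\cb)}(u) \bigr)$, using that $\{W_1(\cb), \dots, W_k(\cb)\}$ partitions $\R^d$ and that on $W_i(\cb)$ the nearest codepoint is $c_i$. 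On each cell I would normalize: writing $p_i = \tilde{P}_\cb(W_i(\cb)) > 0$ and letting $\tilde{P}_\cb^{(i)}$ be the conditional distribution of $\tilde{P}_\cb$ on $W_i(\cb)$, Proposition \ref{prop:bregman_bias_variance} applied to $\tilde{P}_\cb^{(i)}$ (whose mean is exactly $m_i$) gives $\tilde{P}_\cb^{(i)} d_\phi(u, c_i) = \tilde{P}_\cb^{(i)} d_\phi(u, m_i) + d_\phi(m_i, c_i)$. Multiplying by $p_i$ and summing over $i$ yields
\[
R(\tilde{P}_\cb, \cb) = \sum_{i=1}^k \tilde{P}_\cb\bigl( d_\phi(u, m_i)\mathbbm{1}_{W_i(\cb)}(u)\bigr) + \sum_{i=1}^k p_i\, d_\phi(m_i, c_i).
\]

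Next I would bound the first sum from below by the distortion of $\mathbf{m}$ against the \emph{whole} codebook: since $d_\phi(u, m_i) \geq d_\phi(u, \mathbf{m}) = \min_{j} d_\phi(u, m_j)$ pointwise, the first sum is at least $\tilde{P}_\cb\, d_\phi(u, \mathbf{m}) = R(\tilde{P}_\cb, \mathbf{m})$. The second sum is non-negative because every Bregman divergence is non-negative. Hence $R(\tilde{P}_\cb, \cb) \geq R(\tilde{P}_\cb, \mathbf{m})$, and therefore $R_h(\cb) = h R(\tilde{P}_\cb, \cb) \geq h R(\tilde{P}_\cb, \mathbf{m})$. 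To finish the inequality I need $h R(\tilde{P}_\cb, \mathbf{m}) \geq R_h(\mathbf{m})$; this is exactly Lemma \ref{lm: ecriture DTM} applied with codebook $\mathbf{m}$, since $\tilde{P}_\cb \in \mathcal{P}_h$ is a valid (not necessarily optimal) trimming measure for $\mathbf{m}$, and $R_h(\mathbf{m}) = h\inf_{\tilde{P} \in \mathcal{P}_h} R(\tilde{P}, \mathbf{m}) = h R(\tilde{P}_{\mathbf{m}}, \mathbf{m})$ by Definition \ref{def:trimmed_distortion}, Lemma \ref{lm: ecriture DTM}, and Proposition \ref{prop:equiv_distortion_variation}. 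Chaining these gives $R_h(\cb) \geq R_h(\mathbf{m})$.

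For the equality case I would trace back through each inequality. Equality forces: (a) $\sum_i p_i d_\phi(m_i, c_i) = 0$, hence $m_i = c_i$ for every $i$ (using $p_i > 0$ and separation of $d_\phi$); (b) $\tilde{P}_\cb(\{u : d_\phi(u, m_i) > d_\phi(u, \mathbf{m}) \text{ on } W_i(\cb)\}) = 0$ — but this is automatic once $m_i = c_i$, since $W_i(\cb) \subset V_i(\cb)$; and (c) $h R(\tilde{P}_\cb, \mathbf{m}) = R_h(\mathbf{m})$, which once $\mathbf{m} = \cb$ reduces to $\tilde{P}_\cb$ being optimal for $\cb$, true by our choice of $\tilde{P}_\cb$. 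Conversely, if $c_i = m_i$ for all $i$ then $\mathbf{m} = \cb$ and equality is trivial. The only delicate point is making sure the normalization on cells is legitimate — that $p_i > 0$ (given by hypothesis) so the conditional means $m_i$ are well defined, and that $\tilde{P}_\cb$ integrates $\|u\|$ so that Proposition \ref{prop:bregman_bias_variance} applies; the latter follows since $\tilde{P}_\cb \ll \frac{1}{h}P$ and $P\|u\| < \infty$ is part of the standing assumptions. I expect this bookkeeping around the conditional measures, rather than any single estimate, to be the main thing to get right.
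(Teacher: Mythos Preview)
Your proof is correct and is exactly the natural expansion of what the paper indicates: it states only that Proposition \ref{prop:centroid} ``is a straightforward consequence of Proposition \ref{prop:bregman_bias_variance}'', and your cell-by-cell bias--variance decomposition followed by Lemma \ref{lm: ecriture DTM} is precisely that straightforward argument. The invocation of Proposition \ref{prop:equiv_distortion_variation} in step 6 is superfluous (Definition \ref{def:trimmed_distortion} and Lemma \ref{lm: ecriture DTM} alone give $R_h(\mathbf{m}) \leq h R(\tilde{P}_\cb,\mathbf{m})$), but otherwise the proof and the equality discussion are complete.
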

Proposition \ref{prop:centroid} is a straightforward consequence of Proposition \ref{prop:bregman_bias_variance}, that emphasizes the key property that Bregman divergences are minimized by expectations (this is not the case for the $L_1$ distance for instance). In addition, it can be proved that Bregman divergences are the only loss functions satisfying this property \cite{Ban051}. In line with \cite{Ban05} for the non-trimmed case, Proposition \ref{prop:centroid} provides an iterative scheme to minimize $R_h$, that is detailed in Section \ref{sec:numerical_experiments}.

\section{Theoretical results}\label{sec:theoretical_results}
\subsection{Convergence of a trimmed empirical distortion minimizer}

This section is devoted to investigate the convergence of a minimizer $\hat{\cb}_{n,h}$ of the empirical trimmed distortion $R_{n,h}$. Throughout this section $\phi$ is assumed to be $\mathcal{C}^2$, and $F_0=\overline{conv(supp(P))}\subset\mathring{\Omega}$.  
We begin with a generalization of \cite[Theorem 3.4]{Cuesta97}, assessing the almost sure convergence of optimal empirical trimmed codebooks. 
\begin{theorem}
	\label{thm: convergence ps des codebooks}
	Assume that $P$ is absolutely continuous with respect to the Lebesgue measure and satisfies $P\|u\|^p<\infty$ for some $p>2$, then there exists $\cb^*_h$ an optimal codebook such that
	\[
	\lim_{n\rightarrow +\infty} R_{n,h}(\hat{\cb}_{n,h})=R_h(\cb^*_h)\text{ a.e.}.
	\]
	Moreover, up to extracting a subsequence, we have 
	\[\lim_{n\rightarrow+\infty}D(\hat{\cb}_{n,h},\cb^*_h)=0\text{ a.e.},\]
	where $D(\cb,\cb')=\min_{\sigma\in\Sigma_k}\max_{i\in[\![1,k]\!]}|c_i-c'_{\sigma(i)}|$ and $\Sigma_k$ denotes the set of all permutations of $[\![1,k]\!]$. At last, if $\cb^*_h$ is unique, then $\lim_{n\rightarrow+\infty}D(\hat{\cb}_{n,h},\cb^*_h)=0\text{ a.e.}$ (without taking a subsequence).
	
\end{theorem}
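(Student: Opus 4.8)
The plan is to adapt Pollard's classical consistency argument for distortion minimizers to the trimmed Bregman setting, following the template of \cite[Theorem~3.4]{Cuesta97}. The three ingredients are an upper bound on $\limsup_n R_{n,h}(\hat\cb_{n,h})$, a uniform strong law of large numbers for the trimmed distortion over compact sets of codebooks, and a ``no escape to infinity'' statement confining $\hat\cb_{n,h}$ to a fixed compact subset of $\mathring\Omega$. A convenient device throughout is the variational reformulation of the trimmed distortion
\[
R_h(\cb)=\min_{t\ge 0}\Big(P\big(d_\phi(u,\cb)\wedge t\big)-(1-h)\,t\Big),
\]
the minimizing $t$ being $r_h(\cb)^2$ (this is a restatement of Lemma~\ref{lm: ecriture DTM}, in the spirit of the distance-to-measure), together with the analogous identity for $R_{n,h}$ with $P$ replaced by $P_n$; its virtue is that the integrand $d_\phi(u,\cb)\wedge t$ is bounded. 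Absolute continuity of $P$ is used repeatedly: it makes the cumulative distribution function of $d_\phi(u,\cb)$ continuous, so $\cb\mapsto r_h(\cb)$ and $\cb\mapsto R_h(\cb)$ are continuous and $r_{n,h}(\cb)\to r_h(\cb)$ a.s. \textbf{Step 1 (upper bound).} For any fixed $\cb_0$ the integrand $d_\phi(u,\cb_0)\wedge t$ is bounded, so the strong law gives $R_{n,h}(\cb_0)\to R_h(\cb_0)$ a.s.; since $P\|u\|^p<\infty$ implies $P\|u\|<\infty$, an $h$-trimmed $k$-optimal codebook $\cb^*_h$ exists by Theorem~\ref{thm:existence_optimal}, and choosing $\cb_0=\cb^*_h$ together with $R_{n,h}(\hat\cb_{n,h})\le R_{n,h}(\cb^*_h)$ yields $\limsup_n R_{n,h}(\hat\cb_{n,h})\le R_h(\cb^*_h)=hV_{k,h}$ a.s.\ (Proposition~\ref{prop:equiv_distortion_variation}).

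\textbf{Step 2 (uniform SLLN on compacts).} For every compact $K\subset\mathring\Omega$ I would prove $\sup_{\cb\in K^{(k)}}|R_{n,h}(\cb)-R_h(\cb)|\to 0$ a.s. Fix $t\ge 0$: the class $\{d_\phi(\cdot,\cb)\wedge t:\cb\in K^{(k)}\}$ is uniformly bounded by $t$, and $\cb\mapsto d_\phi(x,\cb)\wedge t$ is continuous (indeed locally Lipschitz) in $\cb$ with a modulus unaffected by the position of a far-away $x$, since $d_\phi(x,\cb)\wedge t\equiv t$ as a function of $\cb$ once $x$ lies far enough from $K$; hence a standard Glivenko--Cantelli argument gives $\cb\mapsto P_n(d_\phi(u,\cb)\wedge t)\to \cb\mapsto P(d_\phi(u,\cb)\wedge t)$ uniformly on $K^{(k)}$ a.s., and uniformly also in $t$ over any bounded interval $[0,T]$. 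Restricting the infimum over $t$ to such a $[0,T]$ is legitimate because $\sup_{\cb\in K^{(k)}}r_h(\cb)^2<\infty$ by continuity and compactness and $\sup_{\cb\in K^{(k)}}r_{n,h}(\cb)^2$ is eventually $\le T$ a.s.; taking $\inf_{t\in[0,T]}$ then preserves uniform convergence.

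\textbf{Step 3 (confinement --- the main obstacle).} This is the delicate part. I would show that a.s., for $n$ large, $\hat\cb_{n,h}$ lies in a fixed compact $\mathcal{K}\subset\mathring\Omega$, combining two facts. First, replacing each codepoint by its Bregman projection onto the closed convex set $F_0=\overline{conv(supp(P))}$ does not increase the trimmed distortion, empirical or population, by the generalized Pythagorean inequality for Bregman divergences together with the fact that the sample points lie in $F_0$ a.s.; so one may assume all codepoints lie in $F_0$, and since $F_0\subset\mathring\Omega$ they stay away from $\partial\Omega$. Second, no codepoint can escape to infinity: writing $\bar R^{(m)}_h=\inf_{\cb\in\Omega^{(m)}}R_h(\cb)$ (so $\bar R^{(k)}_h=hV_{k,h}$), a Pollard-type coercivity argument shows that the sublevel sets $\{\cb:R_h(\cb)\le hV_{k,h}+\delta\}$, and a.s.\ for $n$ large the sets $\{\cb:R_{n,h}(\cb)\le hV_{k,h}+\delta\}$, are contained in a common ball, because any codebook in which a center escapes or several centers collapse performs no better, up to $o(1)$, than an optimal $(k-1)$-point trimmed codebook, which is strictly worse than $hV_{k,h}$ whenever $\bar R^{(k-1)}_h>\bar R^{(k)}_h$; in the degenerate case $\bar R^{(k-1)}_h=\bar R^{(k)}_h$ one recurses on $k-1$ and tracks only the essential centers, exactly as in \cite{Cuesta97}. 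The subtlety, absent from the Euclidean case, is that for a general admissible $\phi$ the map $d_\phi(x,\cdot)$ need not blow up at infinity, so one must genuinely show that a finite relocation strictly beats an escaped center; here the moment assumption $P\|u\|^p<\infty$ with $p>2$ controls the empirical tails and the mass an escaped center could capture, so that the population argument transfers to $P_n$. Combining the two facts, the codepoints of $\hat\cb_{n,h}$ eventually lie in $F_0\cap \bar B(0,M)$, a compact subset of $\mathring\Omega$.

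\textbf{Step 4 (conclusion).} Work on the a.s.\ event on which Steps~1--3 hold. Given any subsequence, Step~3 confines $(\hat\cb_{n,h})$ to a compact set, so along a further subsequence $\hat\cb_{n,h}\to\bar\cb\in\Omega^{(k)}$; Step~2 gives $R_{n,h}(\hat\cb_{n,h})\to R_h(\bar\cb)$ along it, while Step~1 forces $R_h(\bar\cb)\le hV_{k,h}=\inf_\cb R_h(\cb)$, so $\bar\cb$ is $h$-trimmed $k$-optimal and $R_{n,h}(\hat\cb_{n,h})\to hV_{k,h}$ along the sub-subsequence. As $hV_{k,h}$ is a fixed number, every subsequence of $\big(R_{n,h}(\hat\cb_{n,h})\big)$ has a further subsequence converging to it, hence the whole sequence does, which is the first assertion with $R_h(\cb^*_h)=hV_{k,h}$. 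The same reasoning shows that every subsequence of $(\hat\cb_{n,h})$ admits a further subsequence converging in $D$ to some optimal codebook, i.e.\ $D(\hat\cb_{n,h},\cb^*_h)\to 0$ a.s.\ up to extraction; and if the $h$-trimmed $k$-optimal codebook is unique, all sub-subsequential limits coincide with it, so $D(\hat\cb_{n,h},\cb^*_h)\to 0$ a.s.\ along the whole sequence.
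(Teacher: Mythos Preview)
Your overall scheme is correct and follows the same Pollard-type template as the paper, but the execution in Steps~2--3 differs substantially from the paper's route, and your Step~3 contains a real soft spot.

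For the lower bound and uniform convergence, the paper does not use your variational identity $R_h(\cb)=\min_{t\ge 0}\big(P(d_\phi(u,\cb)\wedge t)-(1-h)t\big)$ nor a Glivenko--Cantelli argument. Instead it passes through Skorokhod's representation: on a common probability space with $X_n\to X$ a.s., one shows $d_\phi(X_n,\cb)\to d_\phi(X,\cb)$ a.s.\ (pointwise in $\cb$, or along the random sequence $\hat\cb_{u(n)}\to\cb$ via Lemma~\ref{lm:eq_normes_compact}), then uses Lemma~\ref{lemme: r_borne} to bound the empirical Bregman radii $r_{n,h}$ uniformly, and finally applies dominated convergence to the bounded integrands $\tau_n(X_n)\,d_\phi(X_n,\cdot)$. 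Your variational device is arguably cleaner and avoids Skorokhod, but the paper's route stays closer to \cite{Cuesta97}.

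The genuine divergence is Step~3. Your Bregman-projection argument to force codepoints into $F_0$ is not the paper's, and as stated it is fragile: the generalized Pythagorean inequality concerns the projection $\pi_C(y)=\arg\min_{z\in C}d_\phi(z,y)$ and yields $d_\phi(x,y)\ge d_\phi(x,\pi_C(y))$ for $x\in C$ only under Legendre-type hypotheses on $\phi$ that the paper does not assume; moreover existence of the projection onto the possibly unbounded $F_0$ is not automatic. The paper bypasses this entirely by exploiting the \emph{centroid condition} (Proposition~\ref{prop:centroid}): any empirical minimizer has codepoints equal to empirical means of its trimmed Voronoi cells, hence automatically lies in $F_0$, and $\|\hat c_{n,h,j}\|\le P_n\|u\|/\hat p_j$ where $\hat p_j$ is the cell mass. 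A recursive argument (Proposition~\ref{prop:bounded_empirical_codebooks}) then shows each cell carries mass at least $\eta/k>0$ with probability $\ge 1-n^{-p/2}$; since $p>2$ makes $\sum_n n^{-p/2}<\infty$, Borel--Cantelli turns this into almost-sure eventual confinement. This is where the moment condition $p>2$ is actually used, which your sketch (``controls the empirical tails'') does not make precise. If you keep your approach, you should replace the projection step by the centroid argument, or else spell out which additional structure on $\phi$ makes the Bregman projection onto $F_0$ well-defined and contractive in the sense you need.
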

Note that, contrary to \cite[Theorem 3.4]{Cuesta97}, uniqueness of trimmed optimal codebooks is not required in Theorem \ref{thm: convergence ps des codebooks}. A proof is given in Section \ref{sec:proof_thm_convergence ps des codebooks}. Interestingly, slightly milder conditions are required for the trimmed distortion of $\hat{\cb}_{n,h}$ to converge towards the optimal at a parametric rate. 

\begin{theorem}\label{thm:slow_rates}
	Assume that $P\|u\|^p < \infty $, where $p \geq 2$. Further, if $R^*_{k,h}$ denotes the $h$-trimmed optimal distortion with $k$ points, assume that $R^*_{k-1,h} - R^*_{k,h} >0$. Then, for $n$ large enough, with probability larger than $1 - n^{-\frac{p}{2}} - 2 e^{-x}$, we have
	\begin{align*}
		R_h(\hat{\cb}_{n,h}) - R_h(\cb^*_h) \leq \frac{C_P}{\sqrt{n}}(1 + \sqrt{x}).
	\end{align*}
\end{theorem}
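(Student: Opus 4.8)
The plan is to reduce, on a high‑probability event, the excess distortion $R_h(\hat{\cb}_{n,h}) - R_h(\cb^*_h)$ to a uniform deviation $\sup_{\cb\in\mathcal{C}}|R_{n,h}(\cb)-R_h(\cb)|$ over a \emph{fixed} compact set $\mathcal{C}$ of codebooks, and then to control that deviation by a bounded empirical process. I would first establish the localization: there exist a compact $\mathcal{C}=\mathcal{C}(P,\phi,h,k)\subset\mathring{\Omega}$ and an event $\mathcal{A}_n$ with $\mathbb{P}(\mathcal{A}_n)\geq 1-n^{-p/2}$ (for $n$ large) on which any minimizer $\hat{\cb}_{n,h}$ of $R_{n,h}$ may be chosen with all centers in $\mathcal{C}$, and on which also $\cb^*_h\in\mathcal{C}$. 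The two ingredients are (i) a deterministic inequality showing that a codebook with a center outside a large ball, or with only $k-1$ ``effective'' centers, has trimmed distortion exceeding $R^*_{k,h}$ by at least $\tfrac12(R^*_{k-1,h}-R^*_{k,h})$ — this is exactly where the margin assumption $R^*_{k-1,h}-R^*_{k,h}>0$ is used — and (ii) the transfer to $P_n$: on $\mathcal{A}_n$, quantities such as $P_n\|u\|$ and $P_n(\bar{B}(0,M)^c)$ stay within a constant factor of their population values, so $R_{n,h}(\hat{\cb}_{n,h})\leq R_{n,h}(\cb^*_h)\approx R_h(\cb^*_h)=R^*_{k,h}$ rules out the bad configurations. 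Obtaining $\mathbb{P}(\mathcal{A}_n)\geq 1-O(n^{-p/2})$ rests on a Rosenthal‑type moment inequality controlling $P_n\|u\|$ (and $P_n[\|u\|\mathbbm{1}_{\|u\|>M}]$), valid because $P\|u\|^p<\infty$ with $p\geq 2$; this is where the exponent $n^{-p/2}$ appears.

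Next I would exploit the structure of $\mathcal{P}_h(\cb)$ (Lemma~\ref{lm: ecriture DTM}) together with the elementary concavity fact behind the distance‑to‑measure function to write, for every $\cb$,
\begin{align*}
R_h(\cb) = \sup_{r\geq 0}\Bigl\{ P\bigl[\min(d_\phi(u,\cb),r^2)\bigr] - (1-h)\,r^2 \Bigr\},
\end{align*}
with the supremum attained at $r=r_h(\cb)$, and the same identity for $R_{n,h}$ with $P$ replaced by $P_n$ (supremum at $r_{n,h}(\cb)$). Since $|\sup_r a_r-\sup_r b_r|\leq\sup_r|a_r-b_r|$, this gives $|R_{n,h}(\cb)-R_h(\cb)|\leq\sup_{r\geq 0}|(P_n-P)[\min(d_\phi(u,\cb),r^2)]|$. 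On $\mathcal{A}_n$ and for $\cb\in\mathcal{C}$, both $r_h(\cb)$ and $r_{n,h}(\cb)$ are bounded by a constant $R_{\max}$ (the $h$‑quantile of $d_\phi(u,\cb)$ is bounded uniformly over the compact $\mathcal{C}$ because $P\|u\|<\infty$, and the associated empirical nearest‑neighbour radius concentrates), so $r$ may be restricted to $[0,R_{\max}]$ and
\begin{align*}
\sup_{\cb\in\mathcal{C}^{(k)}}|R_{n,h}(\cb)-R_h(\cb)| \leq \sup_{f\in\mathcal{F}}|(P_n-P)f|, \quad \mathcal{F}=\bigl\{ u\mapsto\min(d_\phi(u,\cb),r^2)\,:\,\cb\in\mathcal{C}^{(k)},\ r\in[0,R_{\max}] \bigr\},
\end{align*}
a class of functions taking values in $[0,R_{\max}^2]$.

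Because $\mathcal{F}$ is uniformly bounded, the bounded‑differences inequality yields $\sup_{f\in\mathcal{F}}|(P_n-P)f|\leq\mathbb{E}\sup_{f\in\mathcal{F}}|(P_n-P)f|+R_{\max}^2\sqrt{2x/n}$ with probability $\geq 1-e^{-x}$; applying this to each of the two relevant one‑sided suprema (the one for the $\hat{\cb}_{n,h}$ term and the one for the $\cb^*_h$ term) accounts for the $2e^{-x}$. For the expectation I would use symmetrization together with the fact that $\mathcal{F}$ is a bounded subset of the $(dk+1)$‑dimensional family parametrised by $(\cb,r)$, on which $(\cb,r)\mapsto\min(d_\phi(u,\cb),r^2)$ is Lipschitz with a constant bounded on the region where this function is non‑constant (here $\phi\in\mathcal{C}^2$ is used); a standard covering‑number / chaining bound then gives $\mathbb{E}\sup_{f\in\mathcal{F}}|(P_n-P)f|\leq C_P/\sqrt{n}$. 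Finally, on the intersection of $\mathcal{A}_n$ with these events, since $R_{n,h}(\hat{\cb}_{n,h})\leq R_{n,h}(\cb^*_h)$ and $\cb^*_h,\hat{\cb}_{n,h}\in\mathcal{C}^{(k)}$,
\begin{align*}
R_h(\hat{\cb}_{n,h})-R_h(\cb^*_h) \leq \bigl[R_h(\hat{\cb}_{n,h})-R_{n,h}(\hat{\cb}_{n,h})\bigr] + \bigl[R_{n,h}(\cb^*_h)-R_h(\cb^*_h)\bigr] \leq 2\sup_{\cb\in\mathcal{C}^{(k)}}|R_{n,h}(\cb)-R_h(\cb)|,
\end{align*}
which is at most $C_P(1+\sqrt{x})/\sqrt{n}$ after renaming constants.

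The hard part will be the localization step: making the argument rigorous in the presence of trimming — ``garbage'' centers that are nearest to no retained point, and a trimming set that itself depends on the codebook — while extracting exactly the probability $1-n^{-p/2}$ from the $p$‑th moment hypothesis. The margin condition $R^*_{k-1,h}-R^*_{k,h}>0$ is what prevents the empirical optimum from sending centers to infinity or letting two of them coalesce, and turning this into a clean explicit compact set $\mathcal{C}$ that also captures $\cb^*_h$ is the main technical burden; the empirical‑process bound of the last two steps is then comparatively routine.
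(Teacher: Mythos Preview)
Your proposal is correct, and the overall architecture — localize $\hat{\cb}_{n,h}$ to a fixed compact by the margin assumption and a $p$-th moment (Rosenthal) argument giving the $n^{-p/2}$ probability, then run a bounded empirical process over that compact — coincides with the paper's. The paper's localization is exactly your step, carried out through Lemma~\ref{lm: key lemma h- h+} and Proposition~\ref{prop:bounded_empirical_codebooks} (with Lemma~\ref{lm: Marcinkiewicz} supplying the Rosenthal bound), and then Lemma~\ref{lem:bound_radius_balls} bounds the radii $r_h(\cb),r_{n,h}(\cb)$ uniformly on the compact, which you also invoke.

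Where you genuinely diverge is in the empirical process step. The paper works with the explicit trimming functions $\tau_h(\cb)=\mathbbm{1}_{B_\phi(\cb,r_h(\cb))}+\delta_h(\cb)\mathbbm{1}_{\partial B_\phi(\cb,r_h(\cb))}$ and $\hat{\tau}_h(\cb)$, and decomposes $R_h(\hat{\cb}_{n,h})-R_h(\cb^*_h)$ into four pieces, controlled by two pairs of deviation bounds (Proposition~\ref{prop:deviations}): $(P-P_n)$ acting on indicators of Bregman balls and their boundaries, and $(P-P_n)$ acting on $d_\phi(u,\cb)$ times these indicators. The two pairs are where the $2e^{-x}$ comes from. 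Your route replaces this with the variational identity
\[
R_h(\cb)=\sup_{t\geq 0}\bigl\{P[\min(d_\phi(u,\cb),t)]-(1-h)t\bigr\},
\]
(which is correct: differentiate in $t$ to see the sup is attained at $t=r_h(\cb)^2$, and the boundary atom is handled automatically), so that $|R_{n,h}(\cb)-R_h(\cb)|$ is dominated by a single truncated, bounded class $\mathcal{F}=\{\min(d_\phi(\cdot,\cb),r^2):\cb\in\mathcal{C}^{(k)},\,r\leq R_{\max}\}$. This is cleaner: one class instead of four, no separate treatment of ball boundaries, and the $L_2$-Lipschitz covering argument (using Lemma~\ref{lm:eq_normes_compact} and $P\|u\|^2<\infty$) is essentially the same one the paper needs anyway in the proof of Proposition~\ref{prop:deviations}. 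Either way yields $C_P(1+\sqrt{x})/\sqrt{n}$; your variant would in fact give $1-n^{-p/2}-e^{-x}$ with a single bounded-differences application, so your justification for the $2e^{-x}$ is a little artificial, but harmless.
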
 
The requirement $R^*_{k-1,h} - R^*_{k,h} >0$ ensures that optimal codebooks will not have empty cells. Note that if $R^*_{k-1,h} - R^*_{k,h} =0$, then there exists a subset $A$ of $\R^d$ satisfying $P(A) \geq h$ and such that the restriction of $P$ to $A$ is supported by at most $k-1$ points, that allows optimal $k$-points codebooks with at least one empty cell. It is worth mentioning that Theorem \ref{thm:slow_rates} does not require a unique trimmed optimal codebook, and only requires an order $2$ moment condition for $\hat{\cb}_{n,h}$ to achieve a sub-Gaussian rate in terms of trimmed distortion. This condition is in line with the order $2$ moment condition required in \cite{Joly15} for a robustified estimator of $\cb^*$ to achieve similar guarantees, as well as the finite-variance condition required in \cite{Catoni18} in a mean estimation framework. A proof of Theorem \ref{thm:slow_rates} is given in Section \ref{sec:proof_thm_slow_rates}. To derive results in expectation, a technical additional condition is needed.

\begin{corollary}\label{cor:slow_rates_expectation}
	Assume that there exists a non-decreasing convex function $\psi$ such that 
	\[\sup_{c \in B(0,t) \cap F_0} \| \nabla_c \phi \| \leq \psi(t).\]
	Assume that $P\|u\|^p < \infty$, with $p\geq 2$, and let $q=p/(p-1)$ be the harmonic conjugate of $p$. If $P \|u\|^q \psi^q \left ( \frac{k\|u\|}{h} \right )< \infty$, then
	\begin{align*}
		\E \left ( R_h(\hat{\cb}_{n,h}) - R_h(\cb^*_h) \right ) \leq \frac{C_P}{\sqrt{n}}.
	\end{align*}
\end{corollary}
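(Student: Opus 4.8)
The plan is to deduce the bound in expectation from the high-probability bound of Theorem~\ref{thm:slow_rates} by integrating the tail, the extra hypotheses serving only to control a $q$-th moment of the excess distortion on the far tail, where the crude probability term $n^{-p/2}$ in Theorem~\ref{thm:slow_rates} is too weak.

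Set $Z_n = R_h(\hat{\cb}_{n,h}) - R_h(\cb^*_h)$, which is non-negative since $\cb^*_h$ minimises $R_h$, so that $\E[Z_n] = \int_0^\infty \mathbb{P}(Z_n > t)\,dt$. I split this at a threshold $t^\star = \frac{C_P}{\sqrt n}(1+\sqrt{x^\star})$. On $[0,t^\star]$, I bound $\mathbb{P}(Z_n>t)\le 1$ for $t\le C_P/\sqrt n$ and invoke Theorem~\ref{thm:slow_rates} with $x = (\sqrt n\, t/C_P - 1)^2$ for $C_P/\sqrt n \le t \le t^\star$; the substitution $t = \frac{C_P}{\sqrt n}(1+\sqrt x)$ turns the term $\int 2e^{-x}\,dt$ into $\frac{C_P}{\sqrt n}\int_0^\infty x^{-1/2}e^{-x}\,dx = \frac{C_P\sqrt\pi}{\sqrt n}$, whereas the $n^{-p/2}$ term contributes at most $t^\star n^{-p/2}$. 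On $[t^\star,\infty)$, I use Markov's inequality $\mathbb{P}(Z_n>t)=\mathbb{P}(Z_n^q>t^q)\le \E[Z_n^q]\,t^{-q}$, giving $\int_{t^\star}^\infty \mathbb{P}(Z_n>t)\,dt \le \E[Z_n^q]/\big((q-1)(t^\star)^{q-1}\big)$. Altogether,
\[
\E[Z_n] \;\le\; \frac{C_P(1+\sqrt\pi)}{\sqrt n} \;+\; t^\star\, n^{-p/2} \;+\; \frac{\E[Z_n^q]}{(q-1)\,(t^\star)^{q-1}} .
\]
With $t^\star = n^{p/(2q)}$ the last two terms balance: since $q=p/(p-1)$, one has $\frac{p}{2q}-\frac p2=-\tfrac12$ and $\frac{p(q-1)}{2q}=\frac p2\big(1-\tfrac1q\big)=\tfrac12$, so both are $O(n^{-1/2})$, provided $\E[Z_n^q]$ stays bounded as $n\to\infty$.

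It therefore remains to show $\sup_n \E[Z_n^q] < \infty$, and since $0\le Z_n\le R_h(\hat{\cb}_{n,h})$ it is enough to bound $\E[R_h(\hat{\cb}_{n,h})^q]$. Now $R_h(\hat{\cb}_{n,h}) = h\,\tilde P_{\hat{\cb}_{n,h}}d_\phi(u,\hat{\cb}_{n,h}) \le r_h(\hat{\cb}_{n,h})^2$, which by \eqref{eq: def rayon r} is the $h$-quantile of $u\mapsto\min_i d_\phi(u,\hat c_i)$ under $P$. The crucial step is to localise the centres of $\hat{\cb}_{n,h}$. By the centroid condition (Proposition~\ref{prop:centroid}) applied at the minimiser $\hat{\cb}_{n,h}$ — discarding cells receiving no trimmed point, which does not alter $R_{n,h}(\hat{\cb}_{n,h})$ — every remaining centre equals the mean of the $X_j$'s in its cell; pigeonholing the $\lceil nh\rceil$ trimmed points over the at most $k$ nonempty cells, some centre $\hat c_{i_0}$ is the average of at least $nh/k$ of the $X_j$'s, so $\|\hat c_{i_0}\| \le \frac{k}{h}\cdot\frac1n\sum_{j=1}^n\|X_j\| =: \Delta_n$, and $\hat c_{i_0}\in F_0$ by convexity of $F_0$. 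Fixing $\rho_0$ with $P(B(0,\rho_0))\ge h$, and estimating $d_\phi(u,\hat c_{i_0})$ for $u\in B(0,\rho_0)\cap F_0$ via $\|\nabla_{\hat c_{i_0}}\phi\|\le\psi(\Delta_n)$ (the hypothesis on $\psi$) together with elementary bounds on $\phi$ and $\nabla\phi$ along a segment joining $\hat c_{i_0}$ to a fixed point of $F_0$, one obtains $\min_i d_\phi(u,\hat c_i)\le d_\phi(u,\hat c_{i_0})\le a+b\,(1+\Delta_n)\,\psi(c+\Delta_n)$ for constants $a,b,c$ depending only on $\rho_0,\phi,F_0$; since the set where this holds has $P$-mass at least $P(B(0,\rho_0))\ge h$, this upper-bounds $r_h(\hat{\cb}_{n,h})^2$, hence $R_h(\hat{\cb}_{n,h})$.

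Finally, $t\mapsto\big[(1+t)\psi(c+t)\big]^q$ is convex and non-decreasing on $[0,\infty)$ (as $\psi$ is convex non-decreasing and $q\ge1$), while $\Delta_n$ is an empirical average of the i.i.d.\ variables $\frac kh\|X_j\|$; Jensen's inequality then gives $\E[R_h(\hat{\cb}_{n,h})^q]\lesssim 1+P\big[(1+\tfrac kh\|u\|)^q\,\psi^q(c+\tfrac kh\|u\|)\big]$, which is finite under the assumption $P\|u\|^q\psi^q(k\|u\|/h)<\infty$ (split $\{\|u\|\le1\}$, a bounded region, from $\{\|u\|>1\}$, where the bracket is controlled by $\|u\|^q\psi^q(k\|u\|/h)$ up to multiplicative constants). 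Hence $\sup_n\E[Z_n^q]<\infty$, and the display above yields $\E[Z_n]\le C_P/\sqrt n$ for $n$ large, the finitely many smaller $n$ being absorbed into $C_P$. The main obstacle is the localisation step — extracting from the optimality of $\hat{\cb}_{n,h}$ a data-dependent control of the norms of its centres with exactly the $k/h$ scaling that the moment hypothesis is tailored to; this is where Proposition~\ref{prop:centroid} and the structure of optimal trimming sets are essential. Once the uniform $q$-th moment bound is in hand, the rest is a routine tail computation.
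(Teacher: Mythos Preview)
Your proof is correct and follows essentially the same route as the paper: both reduce the expectation bound to (i) the sub-Gaussian tail from Theorem~\ref{thm:slow_rates} on the high-probability region and (ii) a uniform bound $\sup_n \E[R_h(\hat{\cb}_{n,h})^q]<\infty$, which you obtain exactly as in the paper's Lemma~\ref{lem:expected_distortion_tail} via the centroid condition, the pigeonhole estimate $\|\hat c_{i_0}\|\le \frac{k}{h}P_n\|u\|$, and Jensen applied to the convex non-decreasing envelope built from $\psi$. The only organisational difference is that the paper splits $\E[Z_n]=\E[Z_n\1_A]+\E[Z_n\1_{A^c}]$ along the good event $A$ of Proposition~\ref{prop:bounded_empirical_codebooks} and applies H\"older directly, $\E[Z_n\1_{A^c}]\le(\E Z_n^q)^{1/q}\mathbb{P}(A^c)^{1/p}\le C_P n^{-1/2}$, which makes the role of the harmonic conjugate $q=p/(p-1)$ more transparent than your threshold-balancing of the tail integral; the two computations are equivalent.
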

A proof of Corollary \ref{cor:slow_rates_expectation} is given in Section \ref{sec:proof_cor_slow_rates_expectation}. Note that such a function $\psi$ exists in most of the classical cases. The requirement $P \|u\|^q \psi^q ( {k\|u\|}/{h} )$ roughly ensures that $P d_\phi^q(u,\hat{\cb}_{n,h})$ remains bounded whenever the event described in Theorem \ref{thm:slow_rates} does not occur. The moment condition required by Corollary \ref{cor:slow_rates_expectation} might be quite stronger than the order $2$ condition of Theorem \ref{thm:slow_rates}, as illustrated below.
\begin{enumerate}
\item In the $k$-means case $\phi(x) = \|x\|^2$ and $\Omega = \R^d$, we can choose $\psi(t) = 2t$. The condition of Corollary \ref{cor:slow_rates_expectation} is satisfied for $P\|u\|^3 < + \infty$.

\item For $\phi(x) = \exp(x)$, $\Omega = \R$, we may choose $\psi(t) = \exp(t)$. The condition of Corollary \ref{cor:slow_rates_expectation} may be written for $p=2$ as $P u^2 \exp\left(\frac{2k|u|}{h}\right) < + \infty$.
\end{enumerate}

\subsection{Robustness properties of trimmed empirical distortion minimizers}
This section is devoted to discuss to what extent the trimming procedure we propose implies robustness of our estimator to adversarial contamination. First we choose to assess robustness via the so-called Finite-sample Breakdown Point \cite{Donoho83}, that seizes what proportion of adversarial noise can be added to a dataset without making estimators getting arbitrarily large. To be more precise, for an amount $s$ of adversarial points $\{x_1, \dots, x_s\}$, we denote by $P_{n+s}$ the empirical distribution associated with $\X_n \cup \{x_1, \dots, x_s\}$ and by $\hat{\cb}_{n+s,h}$ a minimizer of $\cb \mapsto {R}_{n+s,h}(\cb)$. We may then define the Finite-sample Breakdown Point (FSBP) as follows:
\begin{align*}
\widehat{BP}_{n,h} := \inf \left \{ \frac{s}{n+s} \mid \sup_{\{x_1, \dots, x_s\}} \| \hat{\cb}_{n+s,h}\| = + \infty \right \}.
\end{align*}
To give an intuition, the standard mean (minimizer of the $1$-trimmed empirical distortion for $k=1$, $\phi(u) = \|u\|^2$) has breakdown point $\frac{1}{n+1}$, whereas $h$-trimmed means have breakdown point roughly $1-h$ (see, e.g., \cite[Section 3.2.5]{Maronna06}). According to \cite[Theorem 1]{Vandev93}, this is also the case whenever $\phi$ is strictly convex (but still $k=1$). In the case $k>1$, as noticed in \cite{Cuesta97} for trimmed $k$-means, the breakdown point may be much smaller than $1-h$. Note that if an $h$-trimmed optimal codebook has a too small cluster, then adding an adversarial cluster with greater weight might switch the roles between noise and signal, resulting in an $h$-trimmed codebook that allocates one point to the adversarial cluster and trims the too small optimal cluster. To quantify this intuition, we introduce the following discernability factor $B_h$.
\begin{definition}\label{def:discernability_factor}
Let $h \in ]0,1[$, and, for $b \leq h$, denote by $h_b^- = (h-b)/(1-b)$, $h_b^+ = h/(1-b)$. The discernability factor $B_h$ is defined as 
\[
B_h=\sup \left \{b\geq 0 \mid b \leq h \wedge (1-h)\text{ and }\min_{j\in[\![2, k]\!]} R^*_{j-1,h_b^-} - R^*_{j,h_b^+} >0 \right \}. 
\]    
\end{definition}
In fact, $h - h_{B_h}^- = (1-h)B_h/(1-B_h)$ is the portion of mass in an optimal $k$-points $h$ trimming set that may be considered as noise by an optimal $k-1$-points $h_{B_h}^-$ trimming set. As exposed in the following proposition, $B_h$ is related to the minimum cluster weight of optimal $h$-trimmed codebooks. 
\begin{proposition}\label{prop:properties_discernability_factor}
Assume that the requirements of Theorem \ref{thm:existence_optimal} are satisfied. If $R^*_{k-1,h}-R^*_{k,h}>0$, then $B_h >0$.

Moreover, for any $j\in[\![1,k]\!]$, if $\cb^{*,(j)}$ is a $j$-points $h$-trimmed optimal codebook and $p_{j,h} = h \min_{p\in[\![1,j]\!]} \tilde{P}_{\cb^{*,(j)}} \left ( W_p(\cb^{*,(j)}) \right )$, with $\tilde{P}_{\cb^{*,(j)}} \in \mathcal{P}_h(\cb^{*,(j)})$, then
\[
B_h(1-(h-p_{j,h})) \leq p_{j,h}.
\]
\end{proposition}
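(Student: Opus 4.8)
\textbf{Proof plan for Proposition \ref{prop:properties_discernability_factor}.}

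The plan is to establish the two assertions separately, relying on the monotonicity and continuity behaviour of the optimal trimmed distortions $R^*_{j,h}$ in both the codebook-size parameter $j$ and the trimming level $h$. For the first assertion, namely that $R^*_{k-1,h} - R^*_{k,h} > 0$ implies $B_h > 0$, I would argue by a continuity/compactness argument: the function $b \mapsto \min_{j \in [\![2,k]\!]} \left( R^*_{j-1,h_b^-} - R^*_{j,h_b^+} \right)$ is, at $b = 0$, equal to $\min_{j} \left( R^*_{j-1,h} - R^*_{j,h} \right)$, which is strictly positive: for $j = k$ this is the hypothesis, and for $j < k$ one uses that $R^*_{j-1,h} \geq R^*_{j,h}$ always, with strict inequality because $R^*_{k-1,h} - R^*_{k,h} > 0$ forces optimal codebooks at every level $j \le k$ to have nonempty cells (otherwise a $j$-point codebook with an empty cell would give $R^*_{j-1,h} = R^*_{j,h}$ and by a splitting argument one contradicts the gap at level $k$). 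Then I need the maps $b \mapsto h_b^-$ and $b \mapsto h_b^+$ to be continuous (which is immediate, being rational in $b$ on $[0, h \wedge (1-h)]$) and the map $(j,t) \mapsto R^*_{j,t}$ to be continuous in $t$; continuity of $R^*_{j,t}$ in $t$ follows from Lemma \ref{lm: dtm decreasing}, which gives that $t \mapsto R^*_{j,t}/t$ is nondecreasing, hence $R^*_{j,t}$ is monotone and has one-sided limits, combined with the existence result of Theorem \ref{thm:existence_optimal} and a sub-measure interpolation argument to rule out jumps. Once continuity at $b=0$ is secured, the set $\{b \ge 0 : b \le h \wedge (1-h),\ \min_j(R^*_{j-1,h_b^-} - R^*_{j,h_b^+}) > 0\}$ contains an interval $[0,\varepsilon)$, so its supremum $B_h$ is strictly positive.

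For the second assertion I would proceed by a direct comparison/contradiction argument linking the minimum cluster weight $p_{j,h}$ to the definition of $B_h$. Fix $j \in [\![1,k]\!]$, let $\cb^{*,(j)}$ be a $j$-point $h$-trimmed optimal codebook, and let $p$ realize the minimum cell mass, so $p_{j,h} = h\,\tilde{P}_{\cb^{*,(j)}}(W_p(\cb^{*,(j)}))$. The idea is: removing this lightest cell from the trimming set and using the remaining $j-1$ centers produces a $(j-1)$-point codebook together with a sub-measure whose total mass is $h - p_{j,h}/h \cdot h = h - p_{j,h}$ — more precisely, after renormalising, one is led to compare $R^*_{j-1,h'}$ for the level $h' = (h - p_{j,h})/(1 - p_{j,h}/h \cdot \text{something})$; I would match this $h'$ to $h_b^-$ for the specific choice $b$ with $b/(1) $ corresponding to the removed mass, i.e. $b$ solving $h_b^- = (h - p_{j,h})$ in the appropriate normalisation, which rearranges to $b = h - h_b^- \cdot(1-b)$ and ties back to the quantity $h - h_{B_h}^- = (1-h)B_h/(1-B_h)$ highlighted in the text. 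Since this particular $b$ yields, by optimality of $\cb^{*,(j)}$ at level $h$, that $R^*_{j-1,h_b^-} - R^*_{j,h_b^+} \le 0$ (the lightest-cell removal cannot strictly improve the trimmed distortion, else $\cb^{*,(j)}$ would not be optimal), the defining condition of $B_h$ fails at this $b$, hence $b \ge B_h$. Unwinding $b \ge B_h$ through the relation $b = $ (removed mass)$ = h - h_b^-(1-b)$ and the monotonicity $h_b^-$ decreasing in $b$ gives precisely $B_h(1 - (h - p_{j,h})) \le p_{j,h}$ after the algebra.

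The main obstacle I anticipate is the bookkeeping in the second part: carefully identifying which trimming level $h_b^-$ and weight $b$ correspond to "remove the lightest cell of an $h$-trimmed optimal $j$-point codebook and renormalise", and verifying that the resulting sub-measure is admissible (lies in $\mathcal{P}_{+h_b^-}$) so that $R^*_{j-1,h_b^-}$ genuinely bounds its distortion from below — this requires Lemma \ref{lm: ecriture DTM} to pass from an arbitrary sub-measure to one in $\mathcal{P}_{h_b^-}(\cdot)$, and Proposition \ref{prop:equiv_distortion_variation} to translate between $V_{k,h}$ and $R_h$. A secondary but real difficulty in the first part is proving continuity of $t \mapsto R^*_{j,t}$ without assuming uniqueness of optimal codebooks; I would handle this by a standard argument interpolating sub-measures between levels $t$ and $t'$ and invoking the compactness of the relevant set of codebooks guaranteed by Theorem \ref{thm:existence_optimal} (together with the growth control on $R_h$ used in its proof), rather than anything delicate.
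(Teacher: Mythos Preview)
Your strategy is broadly right for both parts, but in each case you are working harder than necessary and, in the second part, the justification you give for the key step is not the correct one.

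For the first assertion, the paper does not argue via continuity of $t\mapsto R^*_{j,t}$. It simply invokes Lemma~\ref{lm: key lemma h- h+}, which already furnishes $h^-<h<h^+$ with $\min_{j}\bigl(R^*_{j-1,h^-}-R^*_{j,h^+}\bigr)\ge \alpha/2>0$. Then one only needs \emph{monotonicity}: for $b$ small enough that $h^-\le h_b^-<h<h_b^+\le h^+$, the nondecrease of $t\mapsto R^*_{j,t}$ gives $R^*_{j-1,h_b^-}\ge R^*_{j-1,h^-}$ and $R^*_{j,h_b^+}\le R^*_{j,h^+}$, hence the defining condition of $B_h$ holds. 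Your continuity route would work in principle, but proving left/right continuity of $R^*_{j,\cdot}$ without uniqueness is essentially the content of the (lengthy) proof of Lemma~\ref{lm: key lemma h- h+}, so you would be re-deriving that lemma rather than using it.

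For the second assertion, your ``remove the lightest cell'' idea is exactly right, but the bookkeeping is much simpler than your proposal suggests, and your stated reason for the key inequality is off. There is no renormalisation by $1-(\text{something})$: dropping the contribution of the lightest cell $W_1$ from $R^*_{j,h}=\sum_{l} P\,\tau_{j,h}\mathbbm{1}_{W_l}\,d_\phi(u,c^{*,(j)}_l)$ leaves a nonnegative sum that is the distortion of the $(j-1)$-point codebook $(c^{*,(j)}_2,\ldots,c^{*,(j)}_j)$ against a sub-measure of mass exactly $h-p_{j,h}$, hence
\[
R^*_{j,h}\ \ge\ R^*_{j-1,\,h-p_{j,h}}.
\]
This is not because ``otherwise $\cb^{*,(j)}$ would not be optimal''; it is just that you discarded a nonnegative term and then took an infimum. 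From here, observe that $b\mapsto R^*_{j-1,h_b^-}-R^*_{j,h_b^+}$ is nonincreasing (again by monotonicity of $t\mapsto R^*_{j,t}$, since $h_b^-$ decreases and $h_b^+$ increases with $b$), so the set defining $B_h$ is an interval. For any $b$ with $h_b^-\le h-p_{j,h}$ one gets $R^*_{j-1,h_b^-}\le R^*_{j-1,h-p_{j,h}}\le R^*_{j,h}\le R^*_{j,h_b^+}$, so the condition fails and $b\ge B_h$. Taking $b$ with $h_b^-=h-p_{j,h}$, i.e.\ $b=p_{j,h}/(1-(h-p_{j,h}))$, yields $B_h\le p_{j,h}/(1-(h-p_{j,h}))$, which is the claimed inequality. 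The obstacles you anticipate (admissibility of the sub-measure, matching $h'$ to $h_b^-$) dissolve once you see that the comparison level is plainly $h-p_{j,h}$.
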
 
A proof of Proposition \ref{prop:properties_discernability_factor} is given in Section \ref{sec:proof_prop_properties_discernability_factor}. Theorem \ref{thm:FSBP_2} below makes connection between this discernability factor and robustness properties of optimal $k$-points $h$-trimmed codebook, stated in terms of Bregman radius.
\begin{theorem}\label{thm:FSBP_2}
For $\ell \geq 1$, let $R^*_{\ell,h}$ denote the $\ell$-points $h$-trimmed optimal distortion. Assume that $P\|u\|^p < +\infty$, for some $p \geq 2$. Moreover, assume that $R^*_{k-1,h}-R^*_{k,h} >0$. Let $b<B_h$, and assume that $s/(n+s) \leq b$. Then, for $n$ large enough, with probability larger than $1-n^{-\frac{p}{2}}$,
\[
\max_{j\in[\![1,k]\!]} d_\phi\left ( B(0,C_{P,b}) ,\hat{c}_{n+s,h,j} \right ) \leq K_{P,b},
\]
where $C_{P,b}$ and $K_{P,b}$ do not depend on $n$ nor $s$.
\end{theorem}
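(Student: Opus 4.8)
The plan is to show that whenever the adversarial fraction $s/(n+s) \leq b < B_h$, the contaminated optimal trimmed codebook $\hat{\cb}_{n+s,h}$ cannot place any codepoint far from a fixed ball, by a competitor argument. First I would record the key deterministic inequality relating trimmed distortions across different trim levels and sample sizes: if we write $P_{n+s} = (1-\beta) P_n + \beta Q_s$ with $\beta = s/(n+s) \leq b$ and $Q_s$ the (discrete) adversarial distribution, then a trimming set of mass $h$ for $P_{n+s}$ can be compared to trimming sets of masses $h_b^- = (h-b)/(1-b)$ and $h_b^+ = h/(1-b)$ for $P_n$: dropping all $s$ adversarial points costs at most going from $h$-trimming to $h_b^-$-trimming of $P_n$ (on the sample side), while keeping adversarial mass inflates the effective denoising on $P_n$ by at most $h_b^+$. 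This is exactly the content built into Definition~\ref{def:discernability_factor}, so that $\min_{j}(R^*_{j-1,h_b^-} - R^*_{j,h_b^+}) > 0$ for $b < B_h$; I would first prove the empirical analogue of this gap, i.e. that for $n$ large enough, with probability $\geq 1 - n^{-p/2}$, one has $\min_{j\in[\![2,k]\!]}(R^*_{n,j-1,h_b^-} - R^*_{n,j,h_b^+}) > \varepsilon > 0$ for some $\varepsilon$ not depending on $n,s$. This uses the a.s.\ convergence of empirical trimmed distortions (Theorem~\ref{thm: convergence ps des codebooks}, or the uniform deviation control underlying Theorem~\ref{thm:slow_rates}) together with $P\|u\|^p < \infty$ to get the $n^{-p/2}$ failure probability via a truncation/Markov argument on $\|X_i\|$.

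The core of the argument is then the following dichotomy. Suppose, for contradiction, that some $\hat c_{n+s,h,j}$ has $d_\phi(B(0,C_{P,b}),\hat c_{n+s,h,j})$ very large. Because $\phi$ is $\mathcal{C}^2$ strictly convex, large $d_\phi$-distance to a fixed large ball forces $\|\hat c_{n+s,h,j}\|$ to be large, hence the Bregman-Voronoi cell of $\hat c_{n+s,h,j}$ intersected with the retained ($h$-trimmed) mass of $P_{n+s}$ must be small — otherwise the trimmed distortion $R_{n+s,h}(\hat{\cb}_{n+s,h})$ would already exceed what a good $(k-1)$-point codebook achieves. Quantitatively, I would show: either the retained mass in cell $j$ is $\leq h - h_b^-$ (so that deleting codepoint $j$ and re-trimming only loses mass that can be absorbed within the $h_b^-$-trimming budget of $P_n$, giving a $(k-1)$-point competitor with distortion $\leq R^*_{n,k-1,h_b^-}$ up to the contamination), or that mass is $> h - h_b^-$, in which case a positive $P_n$-mass (at least $h-h_b^- - \beta$, which is $>0$ for $b<B_h$ by the identity $h - h_{B_h}^- = (1-h)B_h/(1-B_h)$) lies in that far-away cell, forcing $R_{n,k,h_b^+}$ to be huge, contradicting the fact that the optimal codebook $\hat{\cb}_{n+s,h}$ has distortion bounded by that of a fixed reasonable competitor (e.g.\ an optimal $h_b^+$-trimmed $k$-codebook for $P_n$, which is itself bounded on the high-probability event). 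Combining the two cases with the empirical gap $R^*_{n,k-1,h_b^-} - R^*_{n,k,h_b^+} > \varepsilon$ yields a contradiction once $C_{P,b}$ and $K_{P,b}$ are chosen large enough (depending only on $P$, $b$, $h$, $k$, $\phi$), which is what we want.

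The main obstacle I anticipate is the careful bookkeeping of how trimming masses transfer between $P_{n+s}$ and $P_n$: one must be precise that removing the $s$ adversarial points from an $h$-trimming set of $P_{n+s}$ yields a sub-measure of $P_n$ of mass at least $(1-\beta)h \geq$ (after renormalizing) $h_b^-$, and symmetrically that an $h$-trimming set of $P_{n+s}$ restricted to $P_n$ never exceeds an $h_b^+$-trimming budget — these are the inequalities that make Definition~\ref{def:discernability_factor} the right notion, and getting the ceiling/floor issues ($\lceil (n+s)h\rceil$ versus $\lceil n h_b^\pm\rceil$) to wash out for $n$ large is delicate. A secondary technical point is translating "$d_\phi$ to a fixed ball is large" into "Euclidean norm is large" and then into "empirical distortion contribution is large", which needs a lower bound on $d_\phi(x,c)$ in terms of $\|c\|$ uniformly for $x$ in a fixed compact set; this follows from strict convexity and $\mathcal{C}^2$-regularity of $\phi$ on $\mathring\Omega \supset F_0$ but must be stated with care near $\partial\Omega$. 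Once these two ingredients are in place, the competitor argument and the probabilistic gap estimate combine routinely to give the stated bound with constants $C_{P,b}, K_{P,b}$ independent of $n$ and $s$.
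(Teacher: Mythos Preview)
Your overall architecture matches the paper's: the paper first proves (Lemma~\ref{lem:enough_weight_optimalcells}) that every Bregman--Voronoi cell of $\hat{\cb}_{n+s,h}$ carries uncorrupted $P_n$-mass at least $\beta_1>0$ --- this is exactly your Case~1, ruled out via the gap $R^*_{k-1,h_{b_1}^-}-R^*_{k,h_{b_1}^+}>0$ and the same $h_b^\pm$ bookkeeping you outline. So far so good.

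The gap is in your Case~2. You propose to go from ``$d_\phi(B(0,C),\hat c_j)$ large'' to ``$\|\hat c_j\|$ large'' and then to ``distortion contribution from cell $j$ large'', invoking ``a lower bound on $d_\phi(x,c)$ in terms of $\|c\|$ uniformly for $x$ in a fixed compact set''. That second implication is \emph{false} in general: the paper explicitly notes right after Theorem~\ref{thm:FSBP_2} that $c\mapsto d_\phi(x,c)$ need not be a proper map (e.g.\ $\phi(x)=e^x$ gives $d_\phi(0,c)\to 1$ as $c\to-\infty$), so there is no such lower bound. This is precisely why the theorem is stated as a bound on $d_\phi(B(0,C_{P,b}),\hat c_j)$ rather than on $\|\hat c_j\|$, and why Corollary~\ref{cor:FSBP_3} needs a separate argument. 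Your detour through the Euclidean norm therefore breaks. A secondary issue is that even if the detour worked, the $P_n$-points in cell $j$ need not lie in a fixed compact set, so you would still need an extra Markov step.

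The paper avoids both problems by using the Bregman bias--variance identity (Proposition~\ref{prop:bregman_bias_variance}) instead of a contradiction: once every cell has $P_n$-mass $\geq\beta_1$, let $m_j$ be the $P_n$-mean of cell $j$ (so $\|m_j\|\leq P_n\|u\|/\beta_1\leq C_P$ automatically), and decompose
\[
\hat R_{n,h_b^-}(\hat{\cb}_{n+s,h})\;\geq\;\sum_{j} P_n\bigl(\hat\tau_{h_b^-}\1_{W_j}\bigr)\,d_\phi(m_j,\hat c_{n+s,h,j})\;+\;\hat R_{n,h_b^-}(\mathbf{m}).
\]
Sandwiching with the upper bound $\hat R_{n+s,h}(\hat{\cb}_{n+s,h})\leq \frac{n}{n+s}(R^*_{h_b^+}+\alpha_n)$ and the lower bound $\hat R_{n,h_b^-}(\mathbf{m})\geq R^*_{h_b^-}-\alpha_n$ gives directly $\beta_1\,d_\phi(m_j,\hat c_{n+s,h,j})\leq C$, hence $d_\phi(B(0,C_P),\hat c_{n+s,h,j})\leq C/\beta_1$. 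No Euclidean norm, no properness, no compact-set restriction on the sample points --- the bias--variance identity does all the work. This is the missing ingredient in your plan.
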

A proof of Theorem \ref{thm:FSBP_2} is given in Section \ref{sec:proof_thm_FSBP2}. Theorem \ref{thm:FSBP_2} guarantees that the proposed trimming procedure is robust in terms of Bregman divergence, that is, the corrupted empirical distortion minimizer belongs to some closed Bregman ball, provided the proportion of noise is smaller than the discernability factor introduced in Definition \ref{def:discernability_factor}. Unfortunately Bregman balls might not be compact sets if $c \mapsto d_\phi(x,c)$ is not a proper map. For instance, with $\phi(x) = e^x$ and $\Omega = \mathbb{R}$, we have $] - \infty, 0 ] \subset\{c \mid d_\phi(0,c) \leq 1 \}$. In the proper map case, Theorem \ref{thm:FSBP_2} entails that the FSBP is larger than $B_h$, with high probability, for $n$ large enough. In the other case, Corollary \ref{cor:FSBP_3} below ensures that this breakdown point is positive, provided that $p>2$.
\begin{corollary}\label{cor:FSBP_3}
Assume that $P\|u\|^p < +\infty$, for $p>2$. Under the assumptions of Theorem \ref{thm:FSBP_2}, there exists $c>0$ such that, almost surely, for $n$ large enough, $\widehat{BP}_{n,h} \geq c$.

In addition, if, for every $x \in \Omega$, $c \mapsto d_\phi(x,c)$ is a proper map, then almost surely, for $n$ large enough $\widehat{BP}_{n,h} \geq B_h$.
\end{corollary}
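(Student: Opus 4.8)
The plan is to deduce Corollary~\ref{cor:FSBP_3} from Theorem~\ref{thm:FSBP_2} by upgrading its high-probability, finite-$n$ statement into an almost-sure, eventually-true statement, and then translating the Bregman-radius bound into a genuine norm bound. First I would fix $b<B_h$; Theorem~\ref{thm:FSBP_2} with this $b$ gives, for $n$ large enough, an event $A_n$ of probability at least $1-n^{-p/2}$ on which $\max_{j}d_\phi(B(0,C_{P,b}),\hat c_{n+s,h,j})\le K_{P,b}$ uniformly over all corruptions with $s/(n+s)\le b$. Since $p>2$, $\sum_n n^{-p/2}<\infty$, so by Borel--Cantelli the complements $A_n^c$ occur only finitely often; hence almost surely, for $n$ large enough, the conclusion of Theorem~\ref{thm:FSBP_2} holds. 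Setting $c = b$ (or anything strictly below $B_h$, say $b/2$, to keep a strict inequality) will then give $\widehat{BP}_{n,h}\ge c$ once we know the codebook stays bounded.

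The second ingredient is turning ``$d_\phi(B(0,C_{P,b}),\hat c_{n+s,h,j})\le K_{P,b}$'' into ``$\|\hat c_{n+s,h,j}\|<\infty$'', which is exactly where the properness hypothesis enters. In general, for each center $\hat c_{n+s,h,j}$ there is a point $y_j\in\overline{B(0,C_{P,b})}$ with $d_\phi(y_j,\hat c_{n+s,h,j})\le K_{P,b}$, i.e.\ $\hat c_{n+s,h,j}$ lies in the sublevel set $\{c\in\Omega \mid d_\phi(y_j,c)\le K_{P,b}\}$ for some $y_j$ in a fixed compact set. If for every $x\in\Omega$ the map $c\mapsto d_\phi(x,c)$ is proper, these sublevel sets are compact, and in fact $\bigcup_{\|y\|\le C_{P,b}}\{c \mid d_\phi(y,c)\le K_{P,b}\}$ is bounded (one can argue by a compactness/continuity argument on $\{(y,c) \mid \|y\|\le C_{P,b},\ d_\phi(y,c)\le K_{P,b}\}$, using that $\phi\in\mathcal C^2$ so $d_\phi$ is jointly continuous, or invoke properness locally and cover the compact ball). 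Hence $\sup_{\{x_1,\dots,x_s\}}\|\hat c_{n+s,h,j}\|$ is finite for every $j$, so no breakdown occurs as long as $s/(n+s)\le b<B_h$; letting $b\uparrow B_h$ yields $\widehat{BP}_{n,h}\ge B_h$ almost surely for $n$ large.

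For the first, unconditional statement (merely $\widehat{BP}_{n,h}\ge c$ for some $c>0$, no properness) I would not need the full compactness of sublevel sets: it suffices to exhibit a single $b>0$, $b<B_h$ — which exists since $B_h>0$ by Proposition~\ref{prop:properties_discernability_factor} under the hypothesis $R^*_{k-1,h}-R^*_{k,h}>0$ — for which the centers are forced to stay bounded. Here the key observation is that $R_{n+s,h}(\hat c_{n+s,h})\le R_{n+s,h}(\cb)$ for any competing codebook, and if some $\|\hat c_{n+s,h,j}\|\to\infty$ along a sequence of corruptions, then, because a fixed fraction $1-b$ of the mass is the genuine sample $\X_n$ (whose empirical measure converges and has the integrability $P_n\|u\|^p<\infty$ eventually a.s.), the trimmed distortion would blow up unless that center's cell receives essentially no retained mass — which is precisely the situation the discernability factor $B_h$ rules out for $b<B_h$. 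So the same argument underlying Theorem~\ref{thm:FSBP_2} already delivers a uniform-in-$s$ bound on $d_\phi(B(0,C_{P,b}),\hat c_{n+s,h,j})$, and one extracts from it a crude but finite bound on $\|\hat c_{n+s,h,j}\|$ valid on the good event; Borel--Cantelli then makes it almost sure for $n$ large, giving $\widehat{BP}_{n,h}\ge c:=b$.

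The main obstacle is the passage from a Bregman-radius bound to a norm bound in the non-proper case, i.e.\ making the first statement genuinely work without properness: one must check that the conclusion of Theorem~\ref{thm:FSBP_2}, \emph{as proved}, actually pins $\hat c_{n+s,h,j}$ into a bounded Euclidean region and not merely into a possibly-unbounded Bregman sublevel set. Concretely, this requires revisiting the proof of Theorem~\ref{thm:FSBP_2} to see that when $p>2$ the extra moment lets one control $P_n\,d_\phi^{\,q}(u,\hat\cb_{n+s,h})$ (as in Corollary~\ref{cor:slow_rates_expectation}), which together with the ``non-degenerate cell'' consequence of $b<B_h$ forces $\|\hat c_{n+s,h,j}\|$ to stay in a fixed ball; the borderline case $p=2$ is exactly the one where this can fail, which is why $p>2$ is assumed. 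Everything else — Borel--Cantelli, the limit $b\uparrow B_h$, and the compact-sublevel-set argument under properness — is routine.
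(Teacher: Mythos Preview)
Your Borel--Cantelli step and your treatment of the proper-map case are correct and match the paper. The genuine gap is in the non-proper case, and your diagnosis of both the obstacle and the remedy is off.

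You speculate that the passage from a Bregman-radius bound to a norm bound should come from revisiting Theorem~\ref{thm:FSBP_2} and using the extra moment $p>2$ to control something like $P_n d_\phi^q(u,\hat{\cb}_{n+s,h})$, in the spirit of Corollary~\ref{cor:slow_rates_expectation}. This is not what the paper does, and it would not work: in the non-proper case one can have $\|\hat c_j\|\to\infty$ while $d_\phi(x,\hat c_j)$ stays bounded (the paper's own example $\phi(x)=e^{-x}$ shows exactly this), so no distortion or moment argument by itself forces a norm bound. Relatedly, you misread the role of $p>2$: it is used \emph{only} for Borel--Cantelli (summability of $n^{-p/2}$), not for any moment control of the distortion.

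The paper's actual device is purely geometric and does not touch moments. From \eqref{eq:PSBP_bregman_closeness} one has, for every $b<B_h$ and a.s.\ for $n$ large,
\[
d_\phi\big(K,\hat c_{n+s,h,j}\big)\ \le\ \frac{2\big[2\alpha_n + R^*_{h_b^+ + \beta_n} - R^*_{h_b^- - \beta_n}\big]}{b(1-\kappa_1)\,(h\wedge(1-h))},
\]
with $K$ a fixed compact set containing the $m_j$'s. The right-hand side can be made \emph{arbitrarily small} by choosing $b$ small (since $h_b^\pm\to h$ and $R^*_{\cdot}$ is continuous) and $n$ large. The second ingredient is the lemma that for any compact $K\subset\Omega$ there exists $r>0$ such that
\[
\{x\in\Omega : d_\phi(K,x)\le r\}\ \subset\ B(K,1).
\]
This is proved by contradiction using only convexity of $\phi$: if $d_\phi(x_0,v_n)\to 0$ with $\|v_n-x_0\|>1$, then letting $v_n'=x_0+(v_n-x_0)/\|v_n-x_0\|$ lie on the unit sphere around $x_0$, monotonicity of $t\mapsto d_\phi(x_0,x_0+t(v_n-x_0))$ gives $d_\phi(x_0,v_n)\ge d_\phi(x_0,v_n')\ge\inf_{\|u-x_0\|=1}d_\phi(x_0,u)>0$, a contradiction. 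Combining the two pieces: pick $b$ small enough that the Bregman bound is below $r$, and conclude $\hat c_{n+s,h,j}\in B(K,1)$; the breakdown-point lower bound $c$ is this small $b$, which need not be close to $B_h$ in the non-proper case.
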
   
A proof of Corollary \ref{cor:FSBP_3} can be found in Section \ref{sec:proof_cor_FSBP3}. Corollary \ref{cor:FSBP_3} guarantees that our trimmed Bregman clustering procedure is asymptotically robust in the usual sense to a certain proportion of adversarial noise, contrary to plain Bregman clustering whose FSBP is $1/(n+1)$. However this unknown authorized proportion depends on both the choice of Bregman divergence and the discernability factor $B_h$. In the proper map case, the FSBP is larger than $B_h$. Note that for $x \in \Omega$, $c \mapsto d_\phi(x,c)$ is proper whenever $\phi$ is strictly convex, that is the case for trimmed $k$-means \cite{Cuesta97}. For this particular Bregman divergence, the result of Corollary \ref{cor:FSBP_3} is provably tight. 
\begin{example}\label{Ex:FSBP}
Let $\phi_1=\|.\|^2$, $\phi_2 = \exp(-.)$, $\Omega = \R$, $P = (1-p) \delta_{-1} +  p\delta_{1}$, with $p\leq 1/2$. Then, for $\phi = \phi_j$, $j \in \{1,2\}$, $k=2$ and $h > (1-p)$, we have $B_h = \frac{h+p-1}{p} \wedge (1-h)$. Let $Q_{\gamma,N} = (1-\gamma)P + \gamma \delta_{N}$. The following holds.
\begin{itemize}
\item If $(1+p)h>1$, $B_h = 1-h$, and for every $\gamma > 1-h$, any sequence of optimal $2$-points $h$-trimmed codebook $\cb^{*}_2(Q_{\gamma,N})$ for $Q_{\gamma,N}$ satisfies 
\[
\lim_{ N \rightarrow + \infty} \|\cb^{*}_2 (Q_{\gamma,N})\| = + \infty.
\]
\item If $(1+p)h \leq 1$, then $B_h = \frac{h+p-1}{p}$, and, for $\gamma=B_h$, $(-1,N)$ is an optimal $2$-points $h$-trimmed codebook for $Q_{\gamma,N}$.
\end{itemize}
\end{example}
The calculations pertaining to Example \ref{Ex:FSBP} may be found in Section \ref{tecsec:proof_ex_FSBP}. Note that upper bounds on the FSBP when $n \rightarrow + \infty$ may be derived for Example \ref{Ex:FSBP} using standard deviation bounds. Example \ref{Ex:FSBP} illustrates the two situations that can be encountered when some adversarial noise is added, depending on the balance between trim level and smallest optimal cluster. If the trim level is high enough compared to the smallest mass of an optimal cluster (first case), then the breakdown point is simply $1-h$, that is the amount of points that can be trimmed. This corresponds to the breakdown point of the trimmed mean (see, e.g., \cite{Vandev93}). When the trim level becomes small compared to the smallest mass of an optimal cluster (second case), optimal codebooks for the perturbed distribution can be codebooks that allocate one point to the noise and trim the small optimal cluster, leading to a breakdown point possibly smaller than $1-h$. This corresponds to the situation exposed in Proposition \ref{prop:properties_discernability_factor}. In both cases, the breakdown point is smaller than $B_h$, thus, according to Corollary \ref{cor:FSBP_3}, it is equal to $B_h$. 

As mentioned in \cite{Cuesta97} for the trimmed $k$-means, in practice, breakdown point and choice of the correct number of clusters are closely related questions. This point is illustrated in Section \ref{sec:author_clustering}, where the correct number of clusters depends on what is considered as noise. From a  theoretical viewpoint, this question is tackled  by Corollary \ref{cor:FSBP_3} and Example \ref{Ex:FSBP}, in the proper map case.

\section{Numerical experiments}\label{sec:numerical_experiments}
\subsection{Description of the algorithm}
The algorithm we introduce is inspired by the trimmed version of Lloyd's algorithm \cite{Cuesta97}, and is also a generalization of the Bregman clustering algorithm \cite[Algorithm 1]{Ban05}. 
We assume that  we observe $\{X_1, \dots, X_n\} = \X_n$, and that the mass parameter $h$ equals $\frac{q}{n}$ for some positive integer $q$. We also let $C_j$ denote the subset of $[\![1,n]\!]$ corresponding to the $j$-th cluster. 

\begin{algorithm}{Bregman trimmed $k$-means}\label{algo:BTKM} 
\begin{itemize}
\item \textbf{Input:}  $\{X_1, \dots, X_n\} = \X_n$, $q$, $k$.
\item \textbf{Initialization:}
Sample $c_1$, $c_2$,\dots $c_k$ from $\X_n$ without replacement, $\cb^{(0)} \leftarrow (c_1, \hdots, c_k)$.

\item \textbf{Iterations:} Repeat until stabilization of $\cb^{(t)}$. 
\begin{itemize}
\item $NN_q^{(t)} \leftarrow$ indices of the $q$ smallest values of  $d_\phi(x,\cb^{(t-1)})$, $x \in \X_n$.
\item For $j\in[\![1,k]\!]$, $C_j^{(t)} \leftarrow W_j(\cb^{(t-1)}) \cap NN_q^{(t)}$. 
\item For $j\in[\![1,k]\!]$, $c_j^{(t)} \leftarrow \left (\sum_{x\in C_j^{(t)}} x \right )/{\left | C_j^{(t)} \right |}$.
\end{itemize}

\item \textbf{Output:} $\cb^{(t)}$, $C_1^{(t)}, \dots, C_k^{(t)}$.
\end{itemize}
\end{algorithm}

As for every EM-type algorithm, initialization may be crucial. This  point  will not be theoretically investigated in this paper. In practice, several random starts will be proceeded. More sophisticated strategies, such as \texttt{$k$-means ++} \cite{Arthur07}, could be an efficient way to address the initialization issue.
An easy consequence of Proposition \ref{prop:centroid} for the empirical measure $P_n$ associated with $\X_n$ is the following. For short we denote by $R_{n,h}$ the trimmed distortion associated with $P_n$.
\begin{proposition}
\label{prop: convergence algo trimmed k-means Bregman}
Algorithm \ref{algo:BTKM} converges to a local minimum of the function $R_{n,h}$.
\end{proposition}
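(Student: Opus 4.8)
The plan is to follow the classical monotonicity argument for Lloyd-type schemes: introduce an auxiliary objective that does not increase along either half-step of the iteration, deduce termination from a finiteness argument, and identify the limiting configuration as a local minimizer of $R_{n,h}$ via Proposition~\ref{prop:bregman_bias_variance}. For a codebook $\cb\in\Omega^{(k)}$, a subset $A\subset[\![1,n]\!]$ with $|A|=q$, and a partition $\mathcal{C}=(C_1,\dots,C_k)$ of $A$, I would set $\Phi(\cb,A,\mathcal{C})=\frac1n\sum_{j=1}^k\sum_{i\in C_j}d_\phi(X_i,c_j)$ and record two elementary facts. (i) For fixed $\cb$, $\Phi(\cb,A,\mathcal{C})$ is minimized by taking $A$ to be the indices of the $q$ smallest values of $d_\phi(\cdot,\cb)$ over $\X_n$ and $C_j=W_j(\cb)\cap A$; since $h=q/n$ and $r_{n,h}(\cb)^2$ is the divergence from $\cb$ to its $q$-th $d_\phi$-nearest neighbour in $\X_n$, this minimal value equals $R_{n,h}(\cb)$, so $\Phi(\cb,A,\mathcal{C})\ge R_{n,h}(\cb)$ for every admissible $(A,\mathcal{C})$. (ii) For fixed $(A,\mathcal{C})$ with all parts non-empty, applying Proposition~\ref{prop:bregman_bias_variance} to the empirical distribution on each $C_j$ shows that $\cb\mapsto\Phi(\cb,A,\mathcal{C})$ is uniquely minimized by the codebook of cellwise empirical means.

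One iteration of Algorithm~\ref{algo:BTKM} is precisely: with $\cb^{(t-1)}$ fixed, replace $(A,\mathcal{C})$ by the minimizing pair $(NN_q^{(t)},\mathcal{C}^{(t)})$ of $\Phi(\cb^{(t-1)},\cdot,\cdot)$, of value $R_{n,h}(\cb^{(t-1)})$ by (i); then, with $(NN_q^{(t)},\mathcal{C}^{(t)})$ fixed, replace $\cb^{(t-1)}$ by $\cb^{(t)}$, which can only decrease $\Phi$ by (ii). Chaining, $R_{n,h}(\cb^{(t)})\le\Phi(\cb^{(t)},NN_q^{(t)},\mathcal{C}^{(t)})\le\Phi(\cb^{(t-1)},NN_q^{(t)},\mathcal{C}^{(t)})=R_{n,h}(\cb^{(t-1)})$, so $(R_{n,h}(\cb^{(t)}))_t$ is non-increasing. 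Since $(NN_q^{(t)},\mathcal{C}^{(t)})$ ranges over a finite set and entirely determines $\cb^{(t)}$, the iterates lie in a finite set, whence a non-increasing real sequence with finitely many values must be eventually constant; and once $R_{n,h}(\cb^{(t)})=R_{n,h}(\cb^{(t-1)})$ both inequalities above are equalities, so the uniqueness in (ii) yields $\cb^{(t)}=\cb^{(t-1)}$. Hence the iterates stabilize at a codebook $\cb^\star$, and $c_j^\star$ is the empirical mean of $\{X_i:i\in W_j(\cb^\star)\cap NN_q(\cb^\star)\}$ for every $j$.

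Finally I would argue that $\cb^\star$ is a local minimizer of $R_{n,h}$. Under the mild general-position condition that no data point is $d_\phi$-equidistant from two codepoints of $\cb^\star$ and that the $q$-th and $(q{+}1)$-th smallest values of $d_\phi(\cdot,\cb^\star)$ over $\X_n$ are distinct, both the trimming set $A^\star=NN_q(\cb^\star)$ and, for each $X_i\in A^\star$, the index of the Bregman--Voronoi cell containing $X_i$ are locally constant near $\cb^\star$; hence on a neighbourhood of $\cb^\star$ one has $R_{n,h}(\cb)=\frac1n\sum_{j}\sum_{i\in C_j^\star}d_\phi(X_i,c_j)=:g(\cb)$, and Proposition~\ref{prop:bregman_bias_variance} gives $g(\cb)\ge g(\cb^\star)=R_{n,h}(\cb^\star)$, with equality only at $\cb^\star$ on that neighbourhood. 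The routine parts are monotonicity and termination; the step I expect to be the main obstacle is this last one — showing that the trimming set and the Voronoi assignment are frozen near $\cb^\star$, which forces one to exclude ties (and, for a completely clean statement, to handle the possibility of empty cells occurring during the iteration). If one is content with $\cb^\star$ being a local minimum only in the weaker sense that no admissible Lloyd update strictly improves it, this already follows from (i)--(ii) with no general-position hypothesis.
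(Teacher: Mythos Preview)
Your argument is correct and matches the paper's approach: the paper does not spell out a proof but simply declares the proposition ``an easy consequence of Proposition~\ref{prop:centroid}'' (the centroid condition for the trimmed distortion), which is exactly what you unpack via the two-step monotonicity argument using Proposition~\ref{prop:bregman_bias_variance}. Your treatment is in fact more careful than the paper's, since you flag the general-position/tie-breaking issues needed to upgrade ``fixed point of the Lloyd update'' to ``local minimizer of $R_{n,h}$'' in the genuine topological sense, a point the paper leaves implicit.
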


It is worth mentioning that in full generality the output of Algorithm \ref{algo:BTKM} is not a global minimizer of $R_{n,h}$. However, suitable clusterability assumptions as in \cite{Kumar10, Tang16, Levrard18} might lead to further guarantees on such an output.

\subsection{Exponential Mixture Models}
In this section we describe the generative models onto which Algorithm \ref{algo:BTKM} will be applied. Namely, we consider mixtures of distributions belonging to some exponential family. As presented in \cite{Ban05}, a distribution from an exponential family may be associated to a Bregman divergence via Legendre duality of convex functions. For a particular distribution, the corresponding Bregman divergence is more adapted for the clustering than other divergences \cite{Ban05}.

Recall that an exponential family associated to a proper closed convex function $\psi$ defined on an open parameter space $\Theta\subset\R^d$ is a family of distributions $\mathcal F_{\psi}=\left\{P_{\psi,\theta}\mid\theta\in\Theta\right\}$, such that, for all $\theta\in\Theta$, $P_{\psi,\theta}$,  defined on  $\R^d$, is absolutely continuous with respect to some distribution $P_0$, with Radon-Nikodym density $p_{\psi,\theta}$ defined for all $x\in\Omega$ by
\[p_{\psi,\theta}(x)=\exp(\langle x,\theta\rangle-\psi(\theta)).\]
The function $\psi$ is called the cumulant function and $\theta$ is the natural parameter.
For this model, the expectation of $P_{\psi, \theta}$ may be expressed as $\mu(\theta)=\nabla_\theta\psi$.
We define $$\phi(\mu)=\sup_{\theta\in\Theta}\left\{\langle\mu,\theta\rangle-\psi(\theta)\right\}.$$
By Legendre duality, for all $\mu$ such that $\phi$ is defined, we get $\phi(\mu)=\langle\theta(\mu),\mu\rangle-\psi(\theta(\mu))$,
with $\theta(\mu)=\nabla_\mu\phi$.
The density of $P_{\psi,\theta}$ with respect to $P_0$ can be rewritten using the Bregman divergence associated to $\phi$ as follows:
\[p_{\psi,\theta}(x)=\exp(-d_{\phi}(x,\mu)+\phi(x)).\]

In the next experiments, we use Gaussian, Poisson, Binomial and Gamma mixture distributions and the corresponding Bregman divergences. Table \ref{fig:tabdistr} presents the 4 densities together with the functions $\psi$ and $\phi$, as well as the associated Bregman divergences $d_\phi$.

\begin{table}[H]
	\begin{tabular}{l c c c}
		\hline
		Distribution & $p_{\psi,\theta}(x)$ & $\theta$ & $\psi(\theta)$\\
		\hline
		\rowcolor{Gray}
		\cellcolor{white}Gaussian & $\frac{1}{\sqrt{2\pi\sigma^2}}\exp\left(-\frac{(x-a)^2}{2\sigma^2}\right)$ & $\frac{a}{\sigma^2}$ & $\frac{\sigma^2}{2}\theta^2$\\
		Poisson & $\frac{\lambda^x\exp(-\lambda)}{x!}$ & $\log(\lambda)$ & $\exp(\theta)$\\
		\rowcolor{Gray}
		\cellcolor{white}Binomial & $\frac{N!}{x!(N-x)!}q^x(1-q)^{N-x}$ & $\log\left(\frac{q}{1-q}\right)$ & $N\log\left(1+\exp(\theta)\right)$\\
		Gamma & $\frac{x^{k-1}\exp(-\frac{x}{b})}{\Gamma(k)b^k}$ & $-\frac{k}{\mu}$ & $k\log\left(-\frac1\theta\right)$\\
		\hline
	\end{tabular}

	\begin{tabular}{l c c c}
		\hline
		Distribution & $\mu$ & $\phi(\mu)$ & $d_{\phi}(x,\mu)$\\
		\hline
		\rowcolor{Gray}
		\cellcolor{white}Gaussian & $a$ & $\frac{1}{2\sigma^2}\mu^2$ & $\frac{1}{2\sigma^2}(x-\mu)^2$\\
		Poisson & $\lambda$ & $\mu\log(\mu)-\mu$ & $x\log\left(\frac x\mu\right)-(x-\mu)$\\
		\rowcolor{Gray}
		\cellcolor{white}Binomial & $Nq$ & $\mu\log\left(\frac\mu N\right)+(N-\mu)\log\left(\frac{N-\mu}{N}\right)$ & $x\log\left(\frac x\mu\right)+(N-x)\log\left(\frac{N-x}{N-\mu}\right)$\\
		Gamma & $kb$ & $-k+k\log\left(\frac{k}{\mu}\right)$ & $\frac{k}{\mu}\left(\mu\log\left(\frac{\mu}{x}\right)+x-\mu\right)$\\
		\hline
	\end{tabular}

\smallskip

	\caption{Exponential family distributions and associated Bregman divergences.}\label{fig:tabdistr}
\end{table}
As emphasized in \cite{Ban05}, clustering with Bregman divergence may be thought of as a hard-threshold model-based clustering scheme, where components of the model are assumed to belong to some exponential family. The following Remark \ref{ex:bregman_clustering_MM} gives an illustration of this connection in a simple case. 
\begin{remark}\label{ex:bregman_clustering_MM}
We let $k=2$, $\theta_1\neq \theta_2$, $z_1^*, \dots, z_n^*$ be hidden labels in $\{1,2\}$, and $X_1,\dots,X_n$ be an independent sample such that $X_i$ has density
\[
\1_{z_i^*=1}p_{\psi,\theta_1}(x) + \1_{z_i^*=2}p_{\psi,\theta_2}(x),
\]
where $p_{\psi,\theta_j}(x)=\exp(-d_\phi(x,\mu_j)+\phi(x))$, for $j \in \{1,2\}$. The parameters of this model are $(z_i^*)_{i\in[\![1,n]\!]}, \theta_1, \theta_2$. This model slightly differs from a classical mixture model since the labels are not assumed to be drawn at random. 

Let $z_{i,j}$, $i\in[\![1,n]\!]$, $j \in \{1,2\}$, denote assignment variables, that is such that $z_{i,j}=1$ if $X_i$ is assigned to class $j$ and 0 otherwise. Also denote by $m=\sum_{i=1}^nz_{i,1}$, $n-m=\sum_{i=1}^nz_{i,2}$, $\bar{X}_1 = \sum_{i=1}^n X_i z_{i,1}/m$, $\bar{X}_2 = \sum_{i=1}^n X_i z_{i,2} /(n-m)$. Maximizing the log-likelihood of the observations boils down to maximizing in $(z_{i,j})_{i,j}$:
\begin{multline*}
\ln \prod_{i=1}^{n}\exp \left [ - z_{i,1} d_\phi\left(X_i,\bar{X}_1 \right ) - z_{i,2} d_\phi\left(X_i,\bar{X}_2\right) +\phi(X_i) \right ]
\\=-\sum_{i=1}^{n}z_{i,1}d_\phi\left(X_i,\bar{X}_1\right)-\sum_{i=1}^{n}z_{i,2}d_\phi\left(X_i,\bar{X}_2\right)+\sum_{i=1}^{n}\phi(X_i).
		\end{multline*}
On the other hand, since optimal codebooks are local means of their Bregman-Voronoi cells (Proposition \ref{prop:centroid}), minimizing $P_n d_\phi(.,\cb)$ is equivalent to minimizing
$\sum_{i=1}^{n}z_{i,1}d_\phi\left(X_i,\bar{X}_1\right) +\sum_{i=1}^{n}z_{i,2}d_\phi\left(X_i,\bar{X}_2\right)$. Thus, clustering with Bregman divergences is the same as maximum likelihood clustering based on this model. 
Further, if we assume that $\mu_1$ and $\mu_2$ are known, then the Bregman assignment rule $x\mapsto\arg\min_{j  \in \{1,2\}} d_\phi(x,\mu_j)$ is the Bayes rule.
\end{remark}

\subsection{Calibration of trimming parameter and number of clusters}
\label{sub: trimming parameter}

 When the number of clusters $k$ is known beforehand, we propose the following heuristic to select the trimming parameter $q$, that is, the number of points in the sample which are assigned to a cluster and not considered as noise. We let  $q$ vary from $1$ to the sample size $n$,  plot the curve $q \mapsto cost[q]$ where $cost[q]$ denotes the optimal empirical distortion at trimming level $q$, and choose $q^\star$ by seeking for a cut-point on the curve. Indeed, when the parameter $q$ gets large enough, it is likely that the procedure begins to assign outliers to clusters, which dramatically deprecates the empirical distortion.

Whenever both $k$ (number of clusters) and $q$ are unknown, we propose to select these two parameters following the same principle as the algorithm \texttt{tclust} \cite{Fritz12}. First we draw, for different values of $k$, the cost curves $q \mapsto cost_k[q]$, for $1 \leq q \leq n$. For each curve, the $q$'s for which there is an abrupt slope increase can correspond to cases where outliers are assigned to clusters, or where some small clusters are included in the set of signal points (if $k$ is chosen too small). In the sequel, we split $[\![1,n]\!]$ into several bins $[\![q_{j},q_{j+1}]\!]$. On every such bin, we select a $k$ that provides a significant cost decrease, as well as the $q$ yielding a slope jump. Note that this heuristic may result in several possible pairs $(k,q)$, corresponding to different point of views, depending on what data point are considered as outliers or not. An illustration of this fact is given in Section \ref{sec:author_clustering}, where outliers consist in small additional clusters.

\subsection{Comparative performances of  Bregman clustering for mixtures with noise}
\label{sec: Comparative performances of  Bregman clustering for mixtures with noise}

To assess the good behavior of our procedure with respect to outliers, we replicate some experiments in \cite{Ban05}, with additional noise. We consider mixture models of Gaussian, Poisson, Binomial, Cauchy and Gamma distributions in $\mathbb{R}^2$. Namely, we sample $100$ points from $X=(X^1,X^2)$, where $X^1$ and $X^2$ are independent, distributed according to a mixture distribution with $3$ components. In each case, the means of the components are set to $10,20,40$. The weights of the components are $(1/3,1/3,1/3)$. We also consider a mixture of 3 different components in $\mathbb{R}^2$: Gamma, Gaussian and Binomial, with respective means $10$, $20$ and $40$. In the Gaussian case, the standard deviations of the components are set to $5$, in the Binomial case, the number of trials are set to $100$ and in the Gamma case the shape parameters are set to $40$. Since Gaussian and Cauchy distributions take negative values, we force the points from each components to lie respectively in the squares $[0,20]^2$, $[0,40]^2$ and $[0,80]^2$. 
$20$ outliers are added, uniformly sampled on $[0,60]^2$.
 
First, we use Algorithm \ref{algo:BTKM} with $20$ random starts for each of these noisy  mixture distributions, using the corresponding divergence, and also make the same experiment for the Cauchy distribution with squared Euclidean distance. 
     For these procedures, we select $k$ and $q$ following the heuristic exposed in Section \ref{sub: trimming parameter}. According to Figure \ref{fig:choicekq_simul}, this leads to the choice $k=3$, $q = 104$ for the Gaussian mixture and $q = 110$ for the other mixtures. The resulting partitions for the selected parameters are depicted in Figure \ref{fig:Clustering2}. 
     
      	 \begin{figure}[H]
	     \begin{minipage}[h]{.3\linewidth}
		\centering\includegraphics[scale = 0.14]{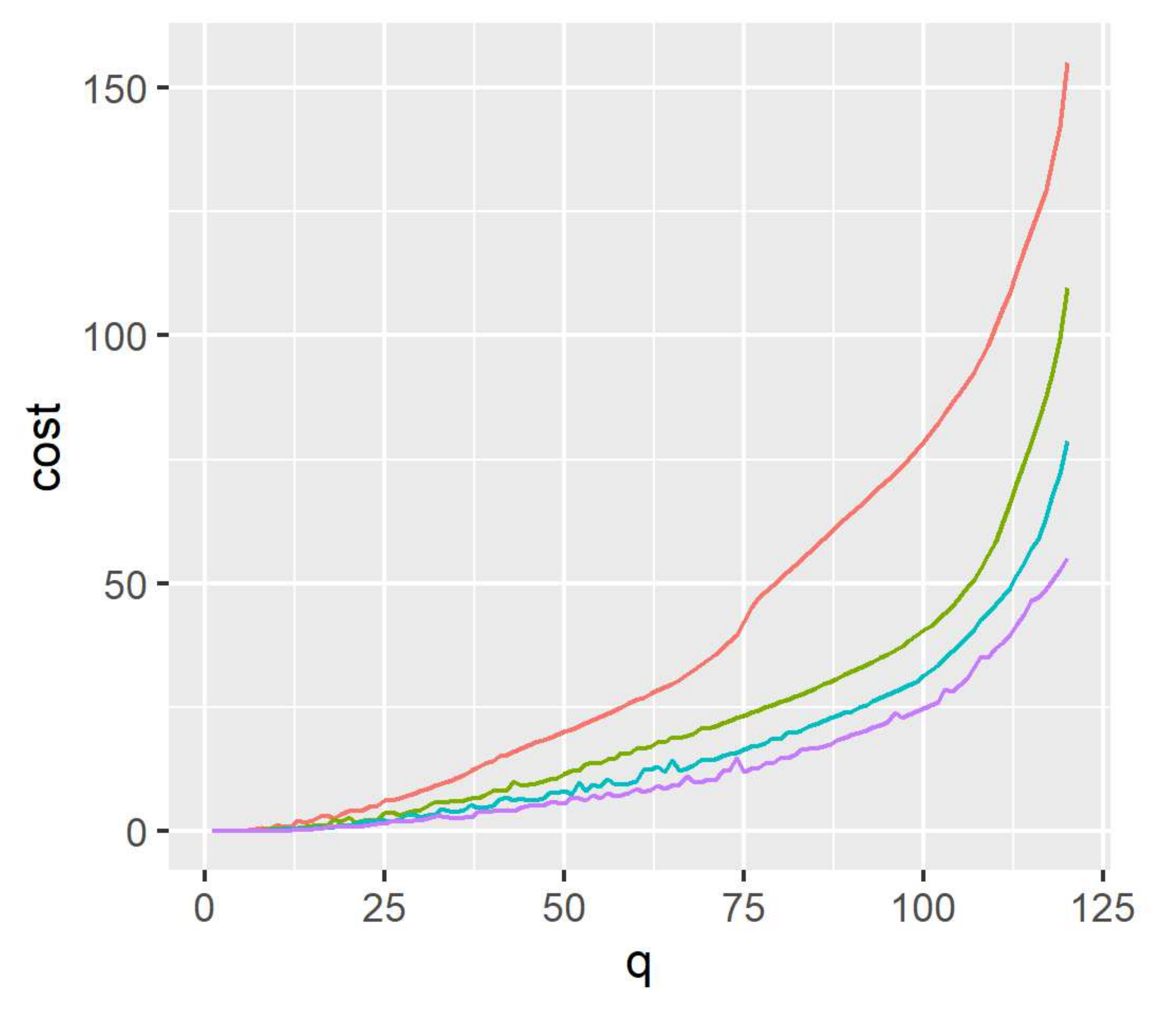}\\
		\footnotesize{Gaussian mixture}
	\end{minipage}\hfill  
	\begin{minipage}[h]{.3\linewidth}
		\centering\includegraphics[scale = 0.14]{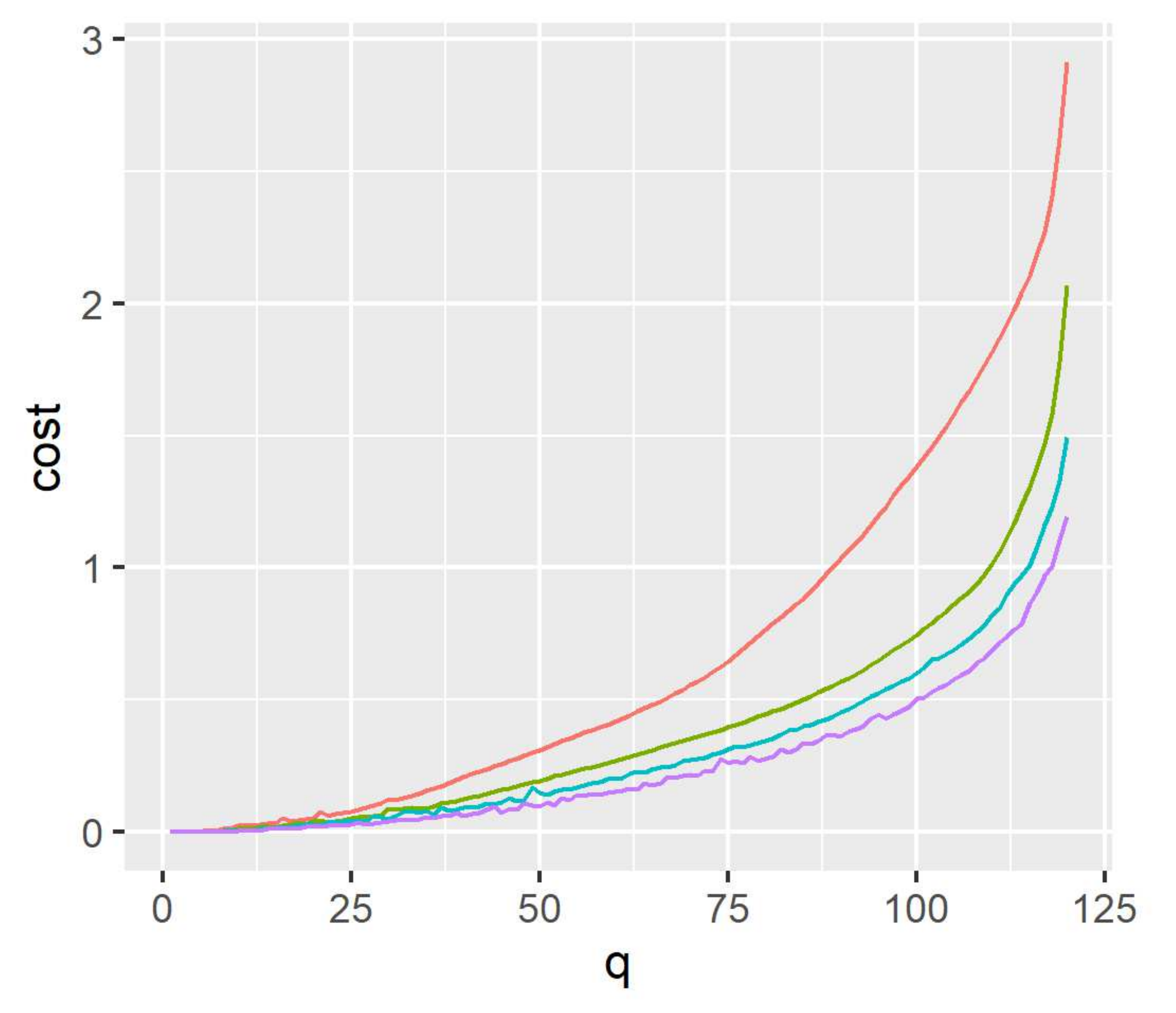}\\
		\footnotesize{Poisson mixture}
		\end{minipage}\hfill
     \begin{minipage}[h]{.36\linewidth}
		\centering\includegraphics[scale = 0.14]{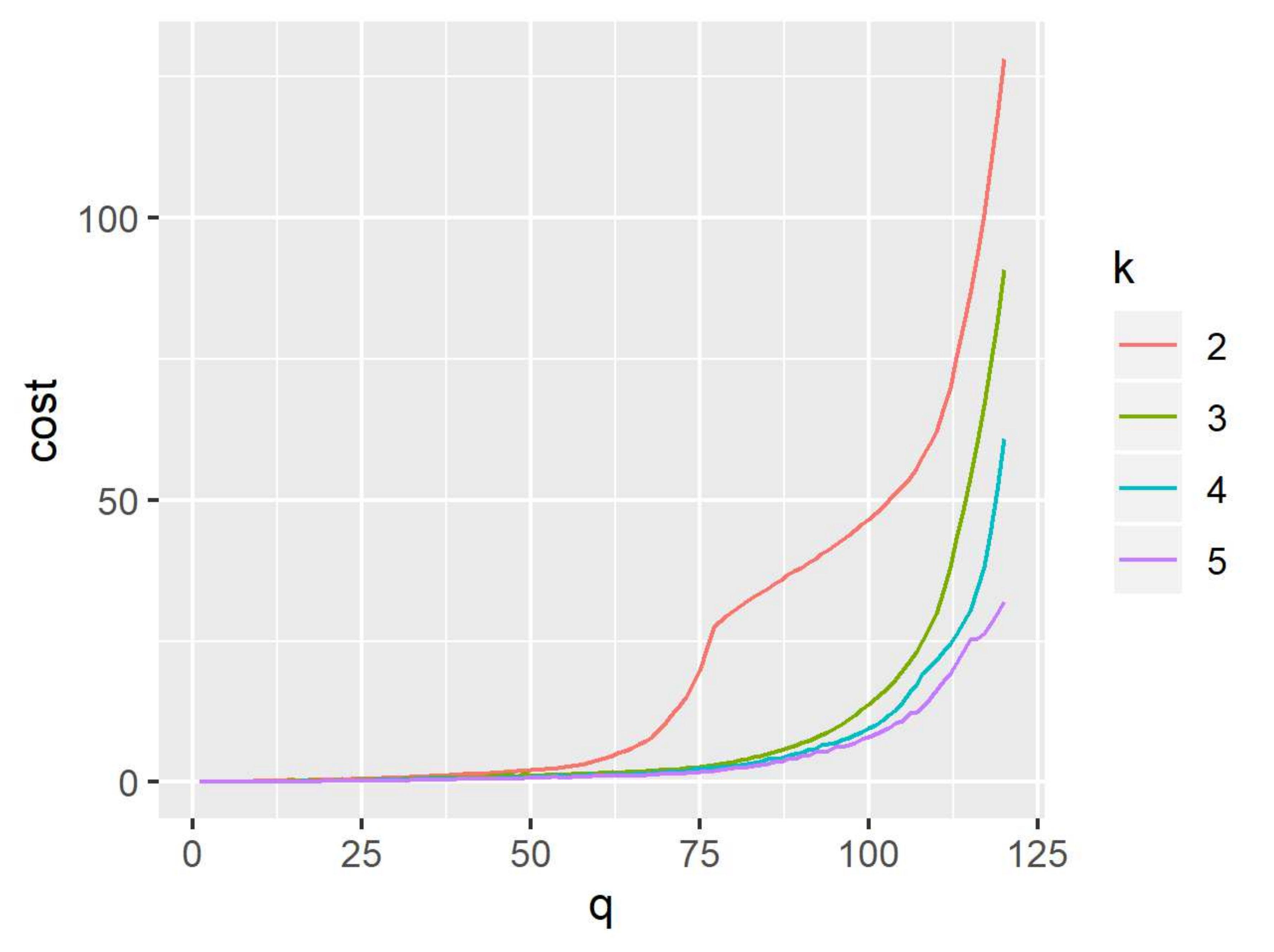}\\
		\footnotesize{Cauchy mixture}
		\end{minipage}
		\caption{Cost curves for selection of $k$ and $q$.}
		\label{fig:choicekq_simul}
		\end{figure}
		
		\begin{figure}[h]
	\begin{minipage}[h]{.3\linewidth}
		\centering\includegraphics[scale=0.14]{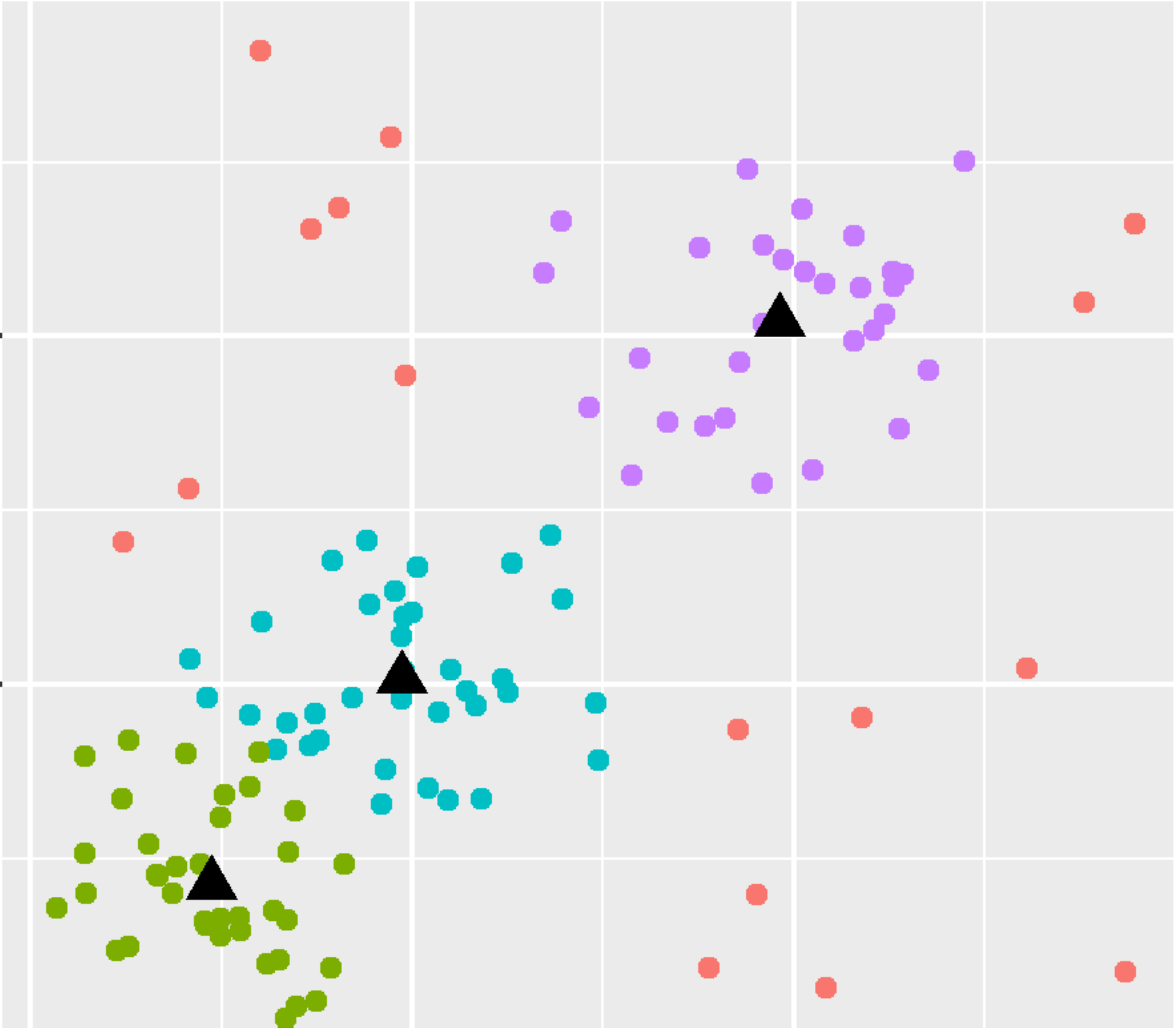}\\
		Gaussian
	\end{minipage}  \hfill
	\begin{minipage}[h]{.3\linewidth}
		\centering\includegraphics[scale=0.14]{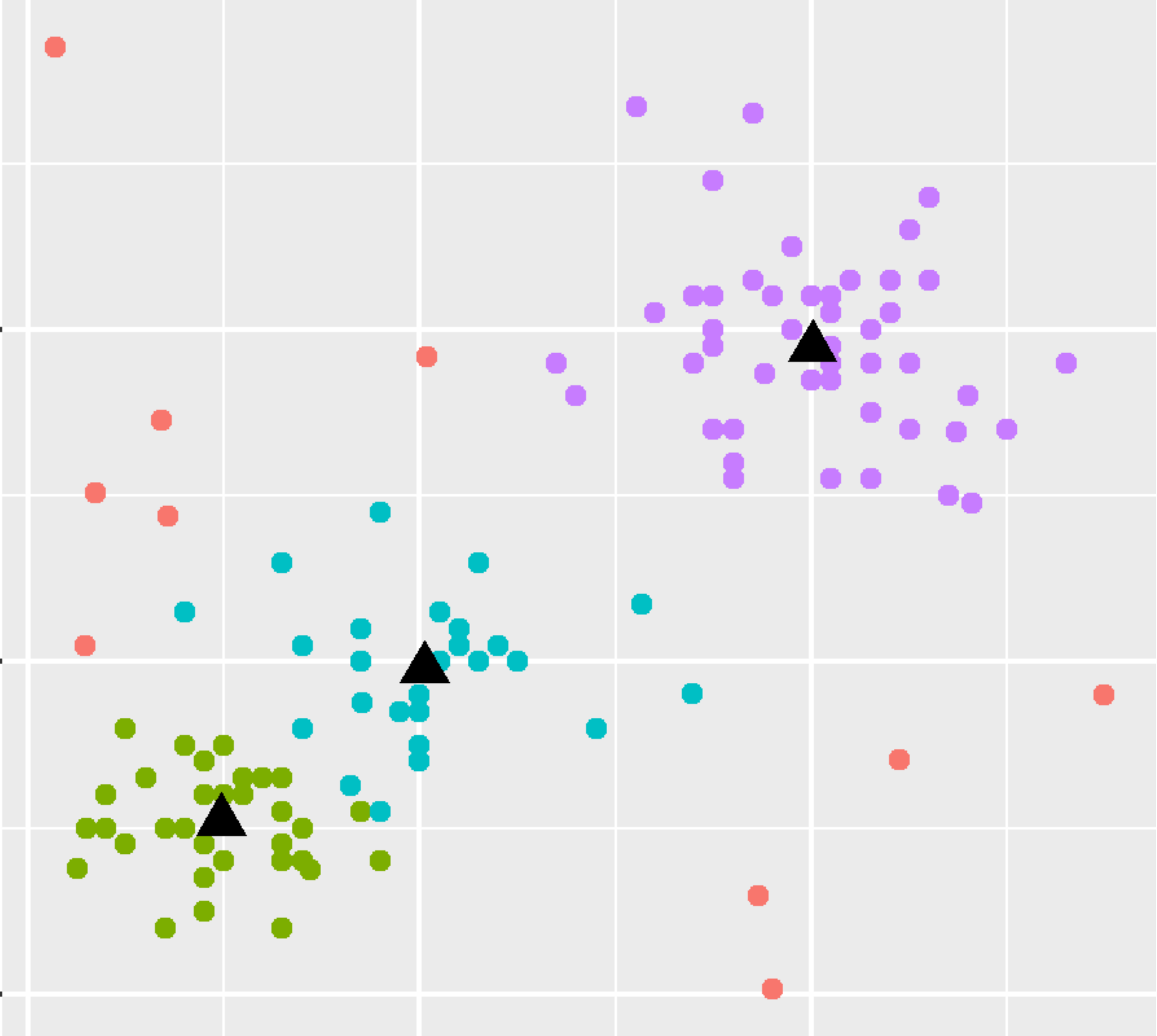}\\
		Poisson
	\end{minipage} \hfill
	\begin{minipage}[h]{.36\linewidth}
		\centering\includegraphics[scale=0.14]{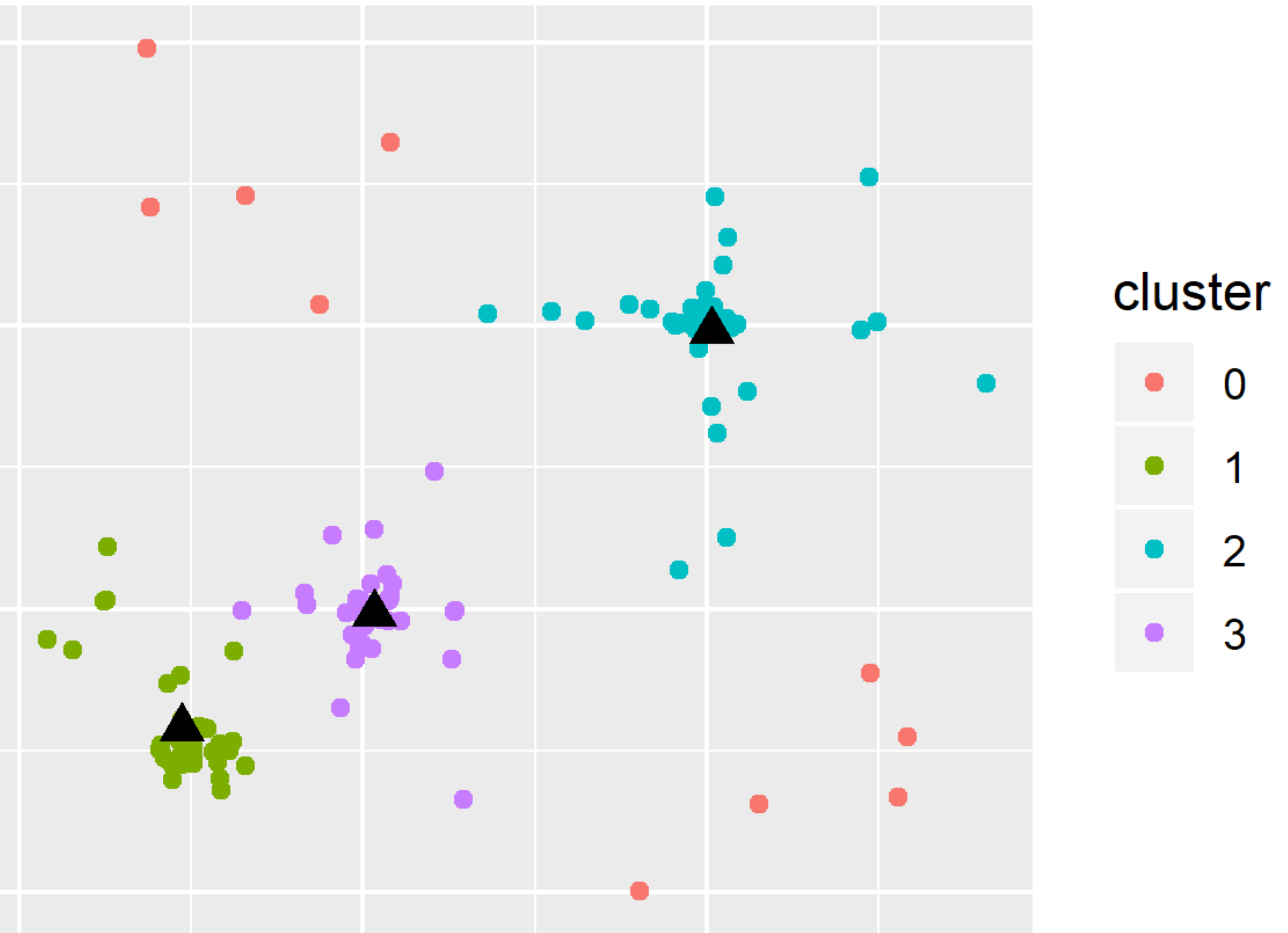}\\
		Cauchy
	\end{minipage}
	\caption{Clustering associated to the selected parameters $k$ and $q$, where cluster $0$ refers to noise.}\label{fig:Clustering2}
\end{figure}
Then we compare the proposed method, in every case, to clustering with other Bregman divergences (including trimmed $k$-means \cite{Cuesta97}), trimmed $k$-median \cite{Cardot13}, \texttt{tclust} \cite{Fritz12}, and density/distance functions-based clustering schemes such as a robustified version of the classical single linkage procedure, the ToMATo algorithm \cite{Chazal_Oudot} with the inverse of the distance-to-measure function \cite{Chazal11} and \texttt{dbscan} \cite{DBSCAN}. Details concerning these methods are available in Section \ref{sec: Details sur les procedures de clustering}. Quality of partitions is assessed via the normalized mutual information (NMI, \cite{SG}) with respect to the ground truth clustering, where the ``noise" points are assigned to one same cluster. 


\begin{figure}
\includegraphics[scale = 0.55]{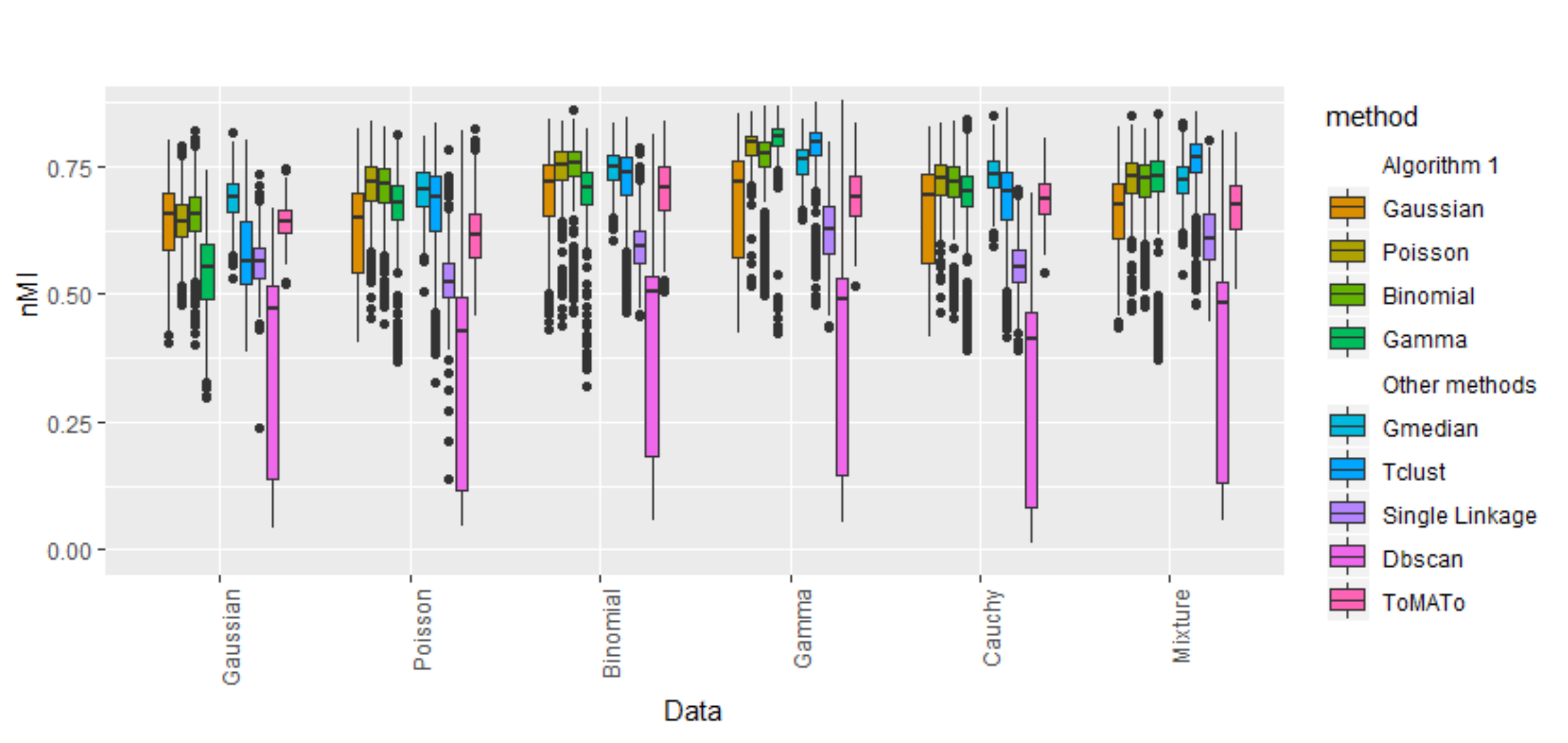}
\caption{Comparison of robust clustering methods, for mixtures of Binomial, Gamma, Gaussian, Poisson, Cauchy, and heterogeneous distributions.}
\label{fig:boxplot_NMI}
\end{figure}
This experiment is repeated $1000$ times, the results in terms of NMI's are exposed in Figure \ref{fig:boxplot_NMI}: Algorithm \ref{algo:BTKM} refers to our method with $q=110$ and $k=3$. For the Cauchy and heterogeneous distributions, the Gaussian divergence is less efficient, since the 3 clusters have increasing variances. This divergence is well suited for clusters with the same variance, the Gamma and Poisson divergence for data with increasing variance, the Binomial divergence for data with increasing and then decreasing variances, for the proper parameter $N$. It is also possible to choose a different Bregman divergence for the different coordinates. Further explanations and numerical illustrations are available in Section \ref{sec: Discussion about the choice of the Bregman divergence}. Choosing between a Gamma or a Poisson divergence depends on the knowledge on the data, as illustrated in the following section with two different real datasets. Note that Algorithm \ref{algo:BTKM} with the proper Bregman divergence (almost) systematically outperforms other clustering schemes. This point is confirmed in Section \ref{sec: Les echantillons de taille 12000} for large datasets ($n=12000$). 


\subsection{Daily waterfall data}

We consider the daily rainfalls (expressed in mm) for january ($241$ data points) and september ($88$ data points), from 2007 to 2017, in Cayenne/Rochambeau. Datapoints are defined as the amount of rain within a rainy day. According to \cite{Coe82}, the positive daily rainfalls within one month are often modeled as Gamma distribution with parameters depending on the month. We experiment Algorithm \ref{algo:BTKM} with the Gamma and the Gaussian Bregman divergences (the latter is plain trimmed $k$-means). The NMI's between the true labels (i.e. the month from which the datapoint was extracted) and the labels returned by the algorithm for different trimming parameters $q$ are depicted in the right panel of Figure \ref{fig:waterfall_clusterings_NMIS}. When $q$ is small, the Gaussian divergence yields better NMI's than Gamma. In this case, outliers are considered as a significant cluster in the computation of the NMI. Thus, the ``outlier'' cluster associated with Gaussian divergence seems closer to a real cluster than the Gamma one. When $q$ is large enough (small amount of outliers), the clustering associated with Gamma divergence outperforms the Gaussian clustering. The left panel of Figure \ref{fig:waterfall_clusterings_NMIS} depicts the associated clustering, for $q=300$. Of course we cannot expect a perfect clustering since the true clusters are not well-separated. However, it seems that the Gamma divergence clustering allows to consider small precipitations as outliers, contrary to the Gaussian case.  
This point can be further exploited to choose in practice an appropriate Bregman divergence for to the data to be clustered. For instance, in the case of positive data points, if noise points are expected close to zero, then a Poisson or Gamma divergence might be more suitable than a Gaussian one. Again, the choice of an appropriate Bregman divergence depends on prior knowledge on the structure of data and noise. 
\begin{figure}[H]
	\begin{minipage}[h]{.28\linewidth}
\centering\includegraphics[scale=0.070]{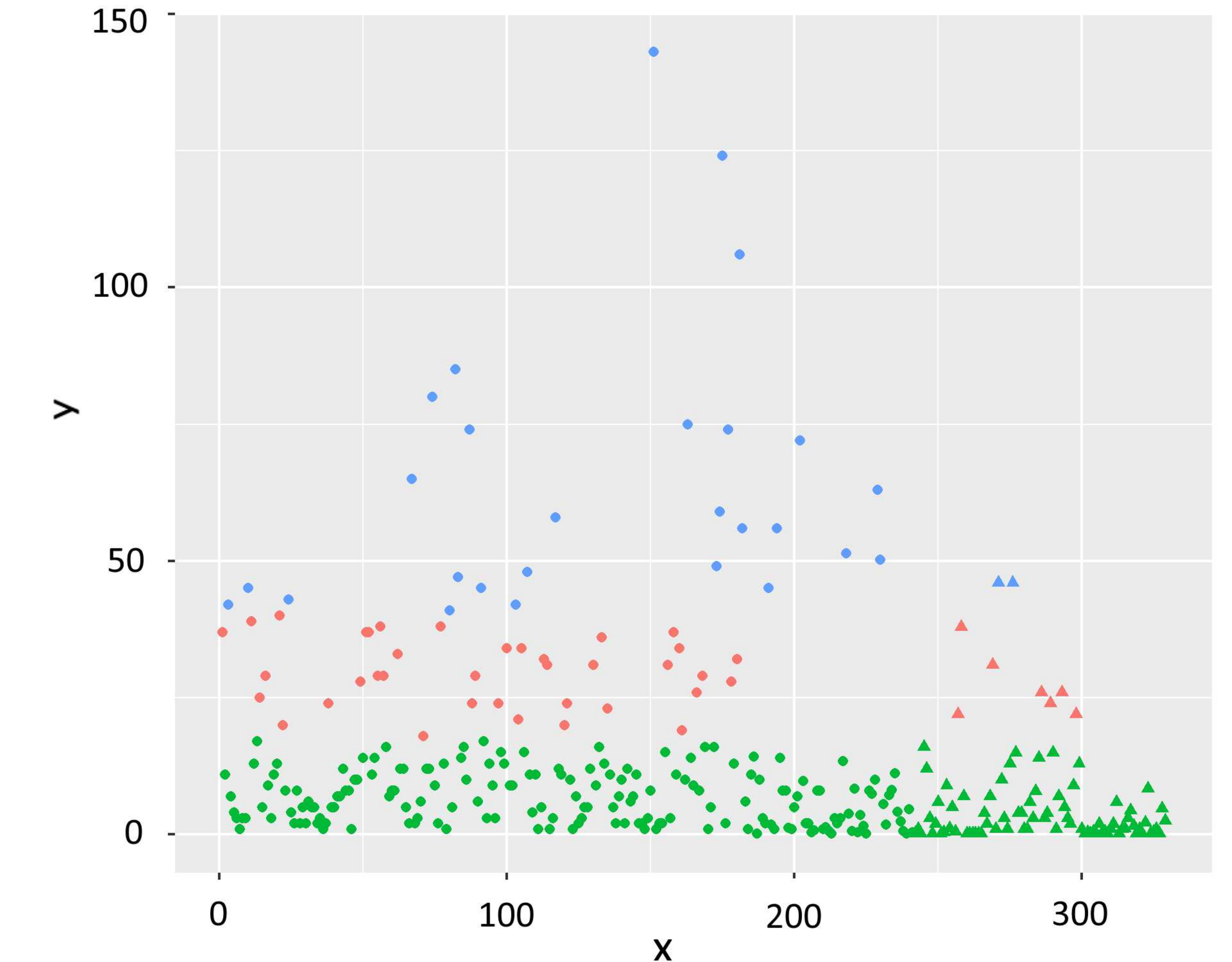}
	\end{minipage}  \hfill
	\begin{minipage}[h]{.35\linewidth}
		\centering\includegraphics[scale=0.070]{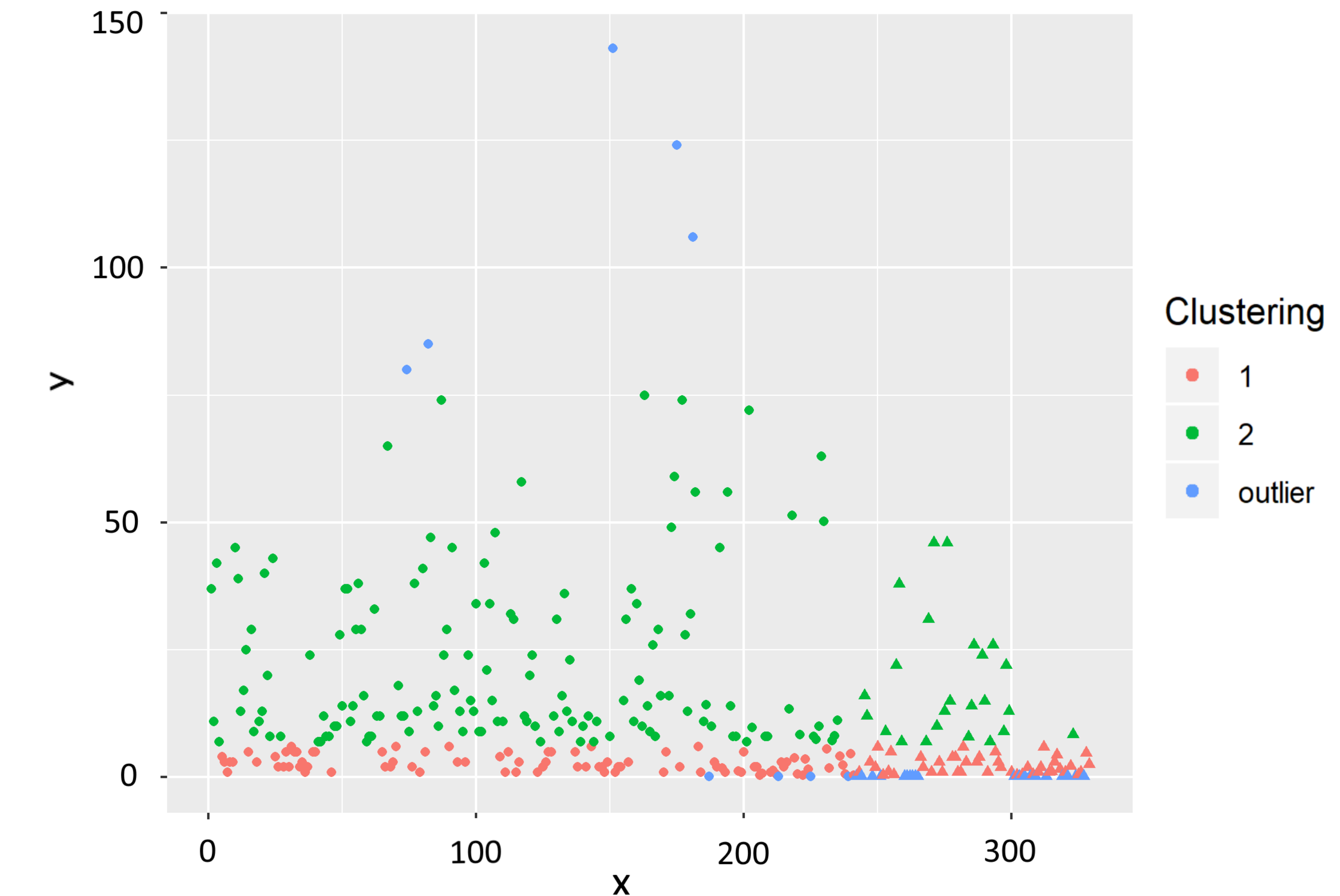}\\
	\end{minipage} \hfill
	\begin{minipage}[h]{.35\linewidth}
\centering\includegraphics[scale=0.070]{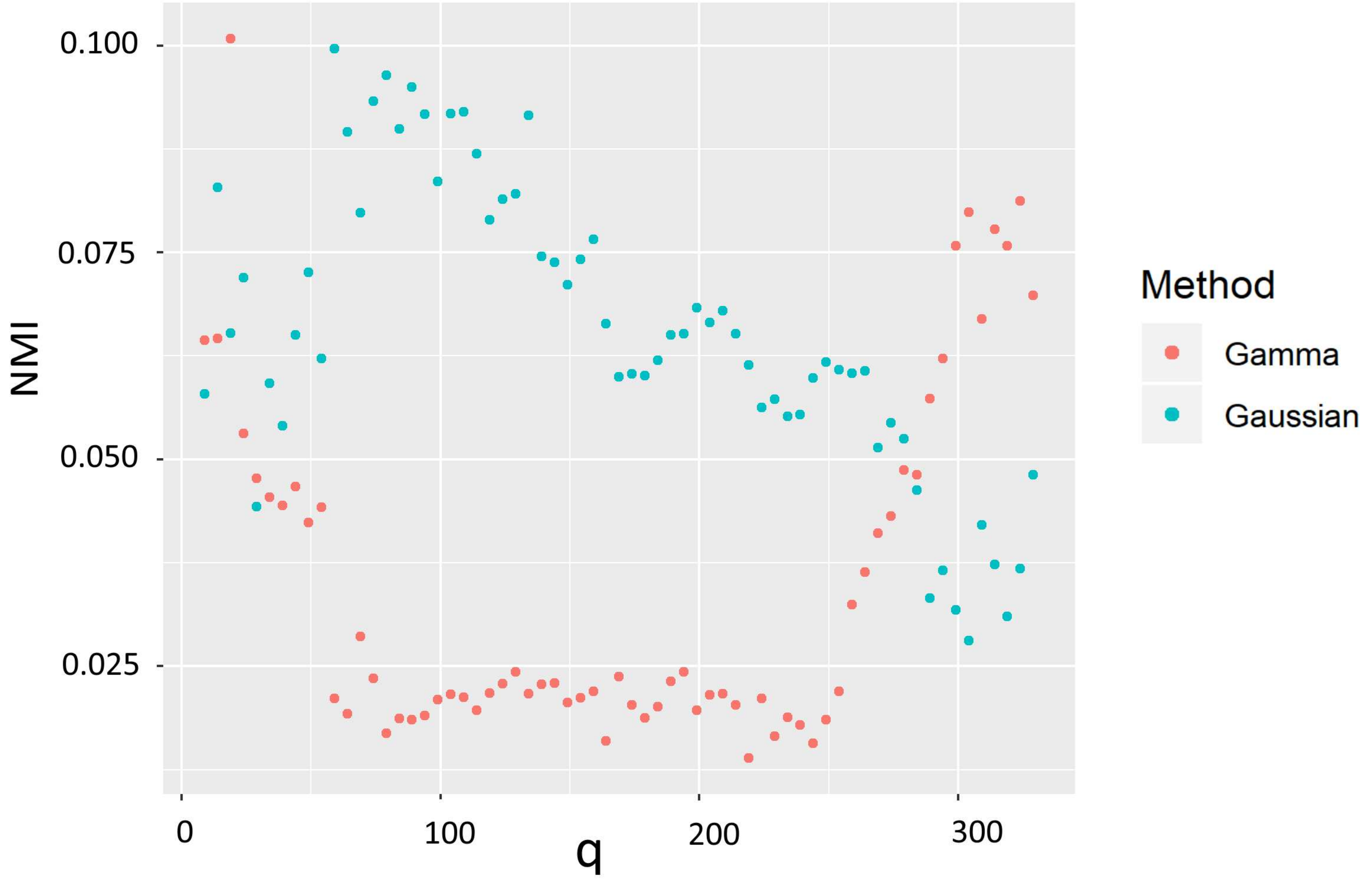}
	\end{minipage}  \hfill
	\caption{From left to right: Clustering with Gaussian divergence and $q=300$.  Clustering with Gamma divergence and $q=300$.  NMI as a function of the trimming parameter $q$.}
	\label{fig:waterfall_clusterings_NMIS}
\end{figure}

\subsection{Authors stylometric clustering}\label{sec:author_clustering}

In this Section we perform clustering on texts based on stylometric descriptors exposed in \cite[Section 10]{Arnold15}. To be more precise, raw data consist in $26$ annotated texts from $4$ authors (Mark Twain, Sir Arthur Conan Doyle, Nathaniel Hawthorne and Charles Dickens). These texts are available as supplementary material for \cite{Arnold15}, and are framed as a sequence of lemmatized string characters (for instance "be" and "is" are instances of the same lemma "be"). Following \cite{Arnold15}, we base our stylometric comparison on lemmas corresponding to nouns, verbs and adverbs, and split every original text in chunks of size $5000$ of such lemmas that will be considered as data points. Then the $50$ overall most frequent lemmas are chosen, and every chunk is described as the vector of counts of these lemmas within it. Thus, signal points consists of $189$ count vectors with dimension $50$, originating from $4$ different authors. 

The signal points are corrupted using the same process for the $8$ State of the Union Addresses given by Barack Obama (available in \texttt{obama} dataset from package CleanNLP in R), resulting in $5$ additional points, and for the King James Version of the Bible (available on Project Gutenberg) that we preliminary lemmatize using the CleanNLP package, resulting in $15$ more additional points. Our final dataset consists of the $189$ signal points and the $20$ outlier points described above. Slightly anticipating, these $20$ outliers might also be thought of  as two additional small clusters with size $5$ and $15$.

Since every individual lemma count can be modeled as a Poisson random variable in the random character sequence model \cite{Evert04}, the appropriate Bregman divergence for this dataset is likely to be the Poisson divergence. In the following, we compare our method with Poisson divergence to trimmed $k$-means, trimmed $k$-medians, and $t$-clust.
\begin{figure}[h]
	\begin{minipage}[h]{.9\linewidth}
		\centering\includegraphics[scale=0.13]{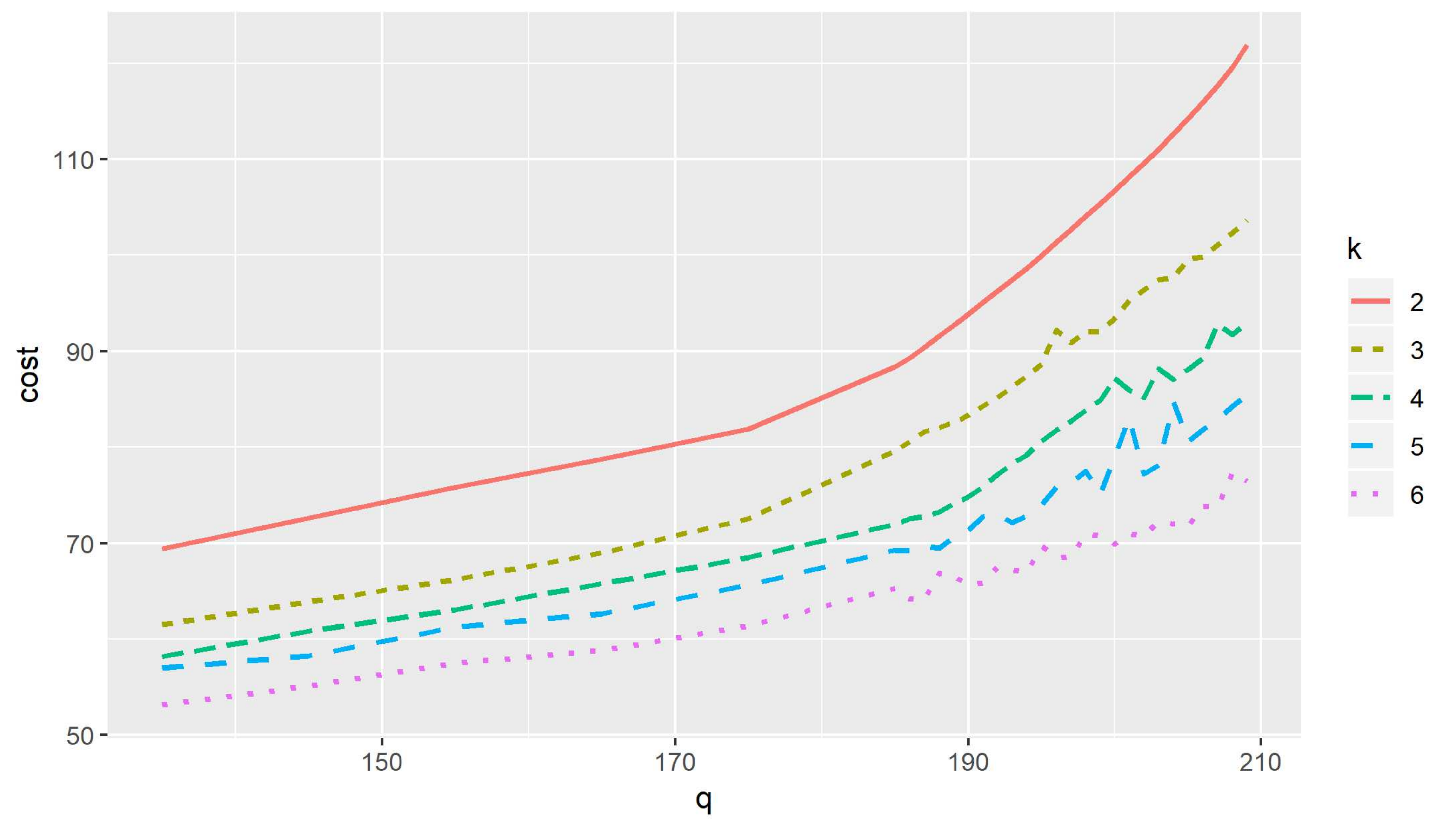}\\
	\end{minipage}
\caption{Cost curves for authors clustering with Poisson divergence.\label{Poisson divergence for authors _ cost}}
\end{figure}
 
In Figure \ref{Poisson divergence for authors _ cost}, we draw the cost of our method as a function of $q$, for different cluster numbers $k$. According to this figure, several choices of $k$ and $q$ are possible. For values of $q$ up to $175$, the significant jumps in the risk function are for $k=3$ and $k=6$. For $k=3$, the slope heuristic yields $q=175$, whereas for $k=6$ the slope heuristic suggests that no data points might be considered as outliers. When $q$ ranges between $175$ and $193$, the significant distortion jumps are for $k=4$ and $k=6$, another possible choice is then $k=4$ and $q=188$. When $q$ is larger than $193$, the only significant jump is for $k=6$. To summarize, the pairs $(k=3, q=175)$, $(k=4, q=188)$, $(k=6, q=n=209)$ seem reasonable. These three solutions correspond  to the 3 natural trimmed partitions: clustering only $3$ authors writings (Twain writings being considered as outliers), clustering the 4 authors writings and removing the outliers from the Bible and B. Obama addresses, and at last clustering the six sources of writings (none of them being considered as noise). The two latter situations are depicted in Figure \ref{Our Author clustering with Poisson divergence}, in the $2$-dimensional basis given by a linear discriminant analysis of the proposed clustering.

\begin{figure}[h]
		\centering\includegraphics[width=.5\linewidth]{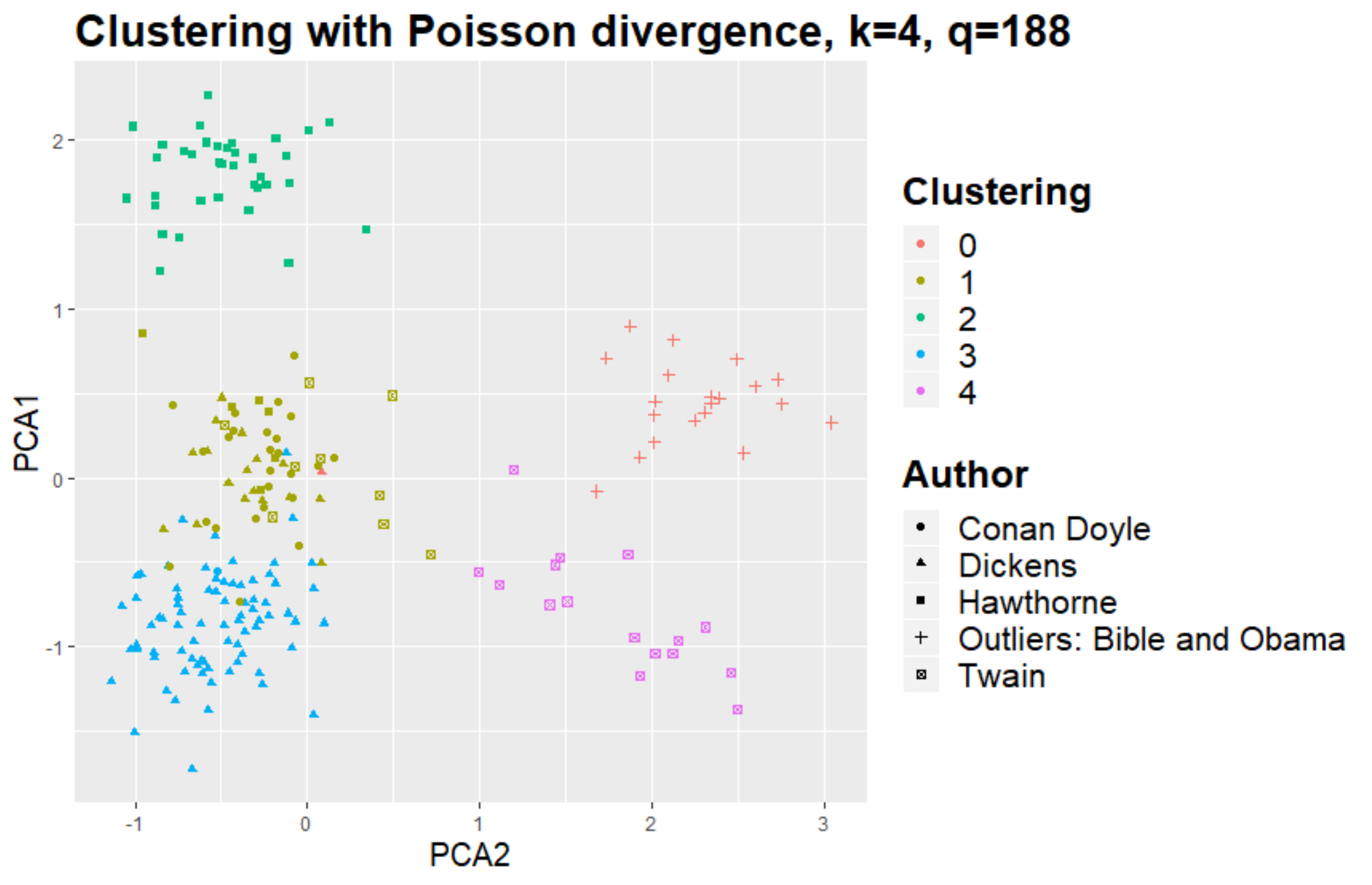}\hfill
		\includegraphics[width=.5\linewidth]{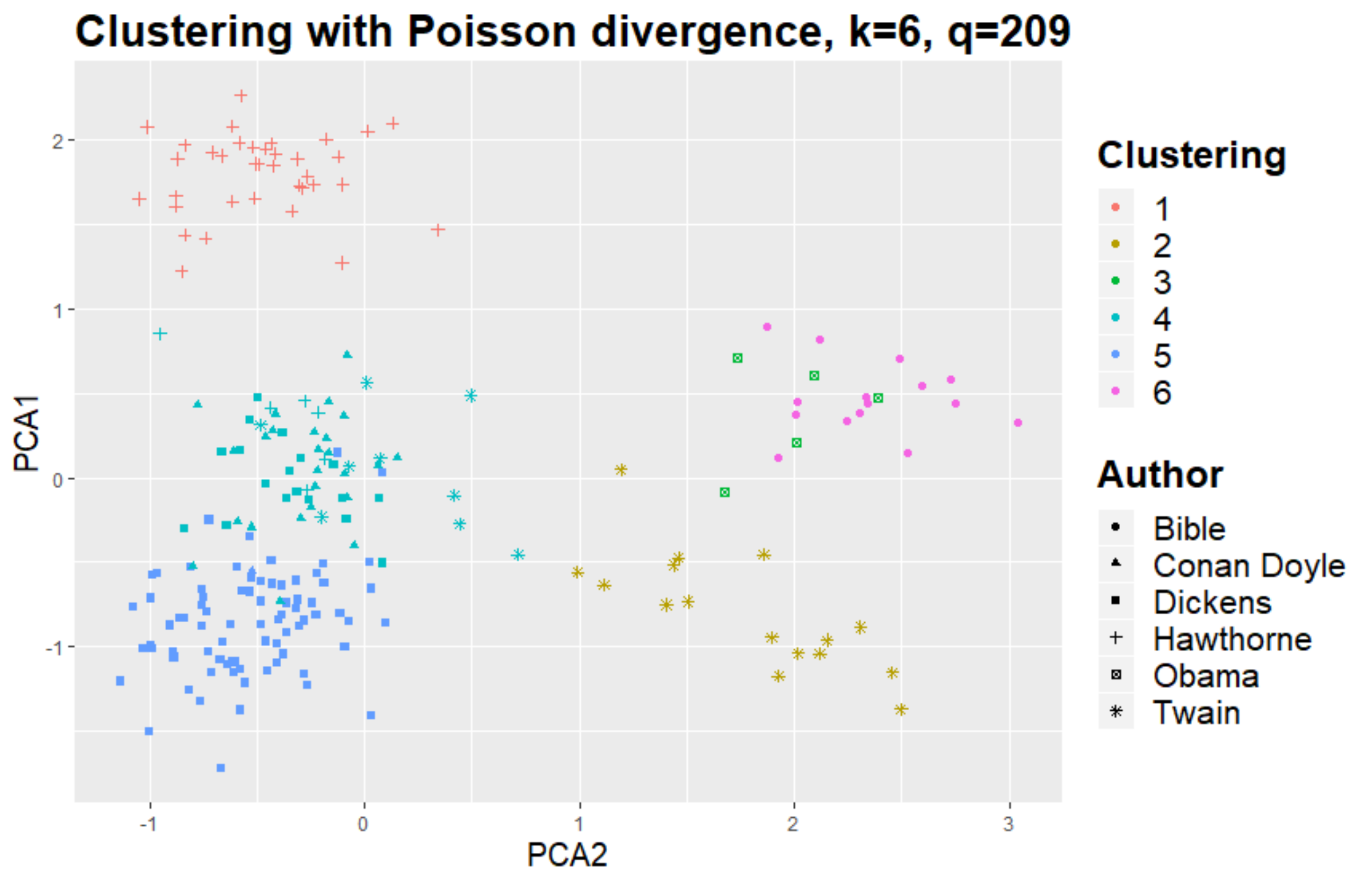}
\caption{Author stylometric clustering with Poisson divergence.\label{Our Author clustering with Poisson divergence}}
\end{figure}
For $k=6$ and $q=209$, our clustering globally retrieves the corresponding author. 
 When $k=4, q=188$ is chosen, outliers are correctly identified and only one sample text from C. Dickens is labeled as outlier. The sample points seem on the whole well classified, that is assessed by a NMI of $0.7347$. 
 This performance is compared with the other clustering algorithms in Table \ref{fig:author_NMI}. Note that values of $q$ have been chosen to minimize the NMI, leading to $q=190$ for trimmed $k$-means, $q=202$ for trimmed $k$-medians, and $q=184$ for \texttt{tclust}. The NMI curves may be found in Section \ref{tecsec:supp_author}.

\begin{table}[h]
	\begin{tabular}{l c c c c}
		\hline
		Method & trimmed $4$-means & trimmed $4$-medians & tclust & Poisson \\
		\hline
		\rowcolor{Gray}
		\cellcolor{white}NMI & $0.5336$ & $0.4334$ & $0.4913$ & $0.7347$\\
		\hline
		\end{tabular}
		\smallskip		
\caption{Comparison of robust clustering methods for Author retrieving.}	
\label{fig:author_NMI}	
\end{table}
     The associated partitions for $k$-median and \texttt{tclust} are depicted in Figure \ref{fig:author_clusterings_4_kmedian_tclust}, showing that these two methods fail in correctly identifying  outliers.		
\begin{figure}[H]
		\centering\includegraphics[width=0.5\linewidth]{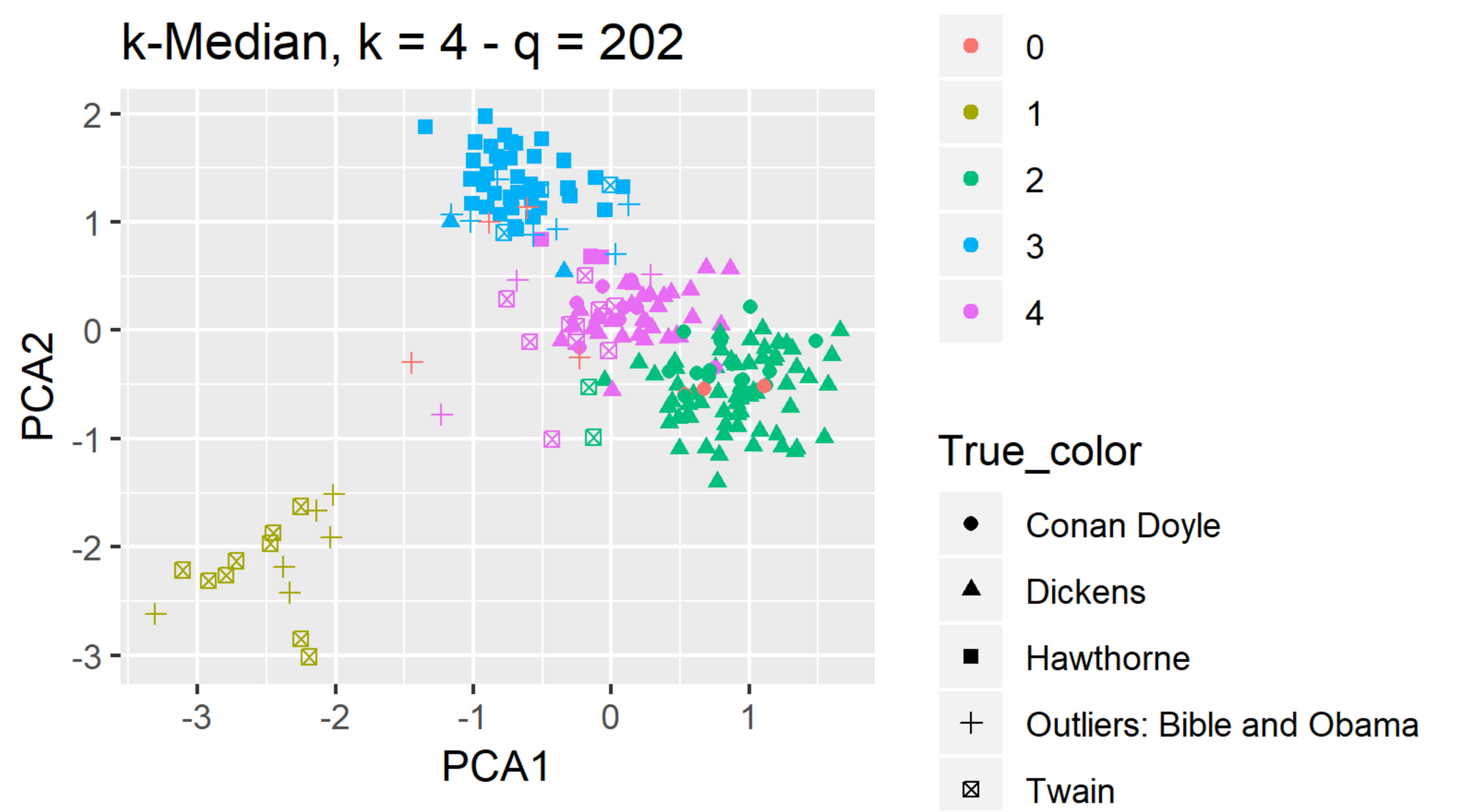}\hfill
\includegraphics[width=0.5\linewidth]{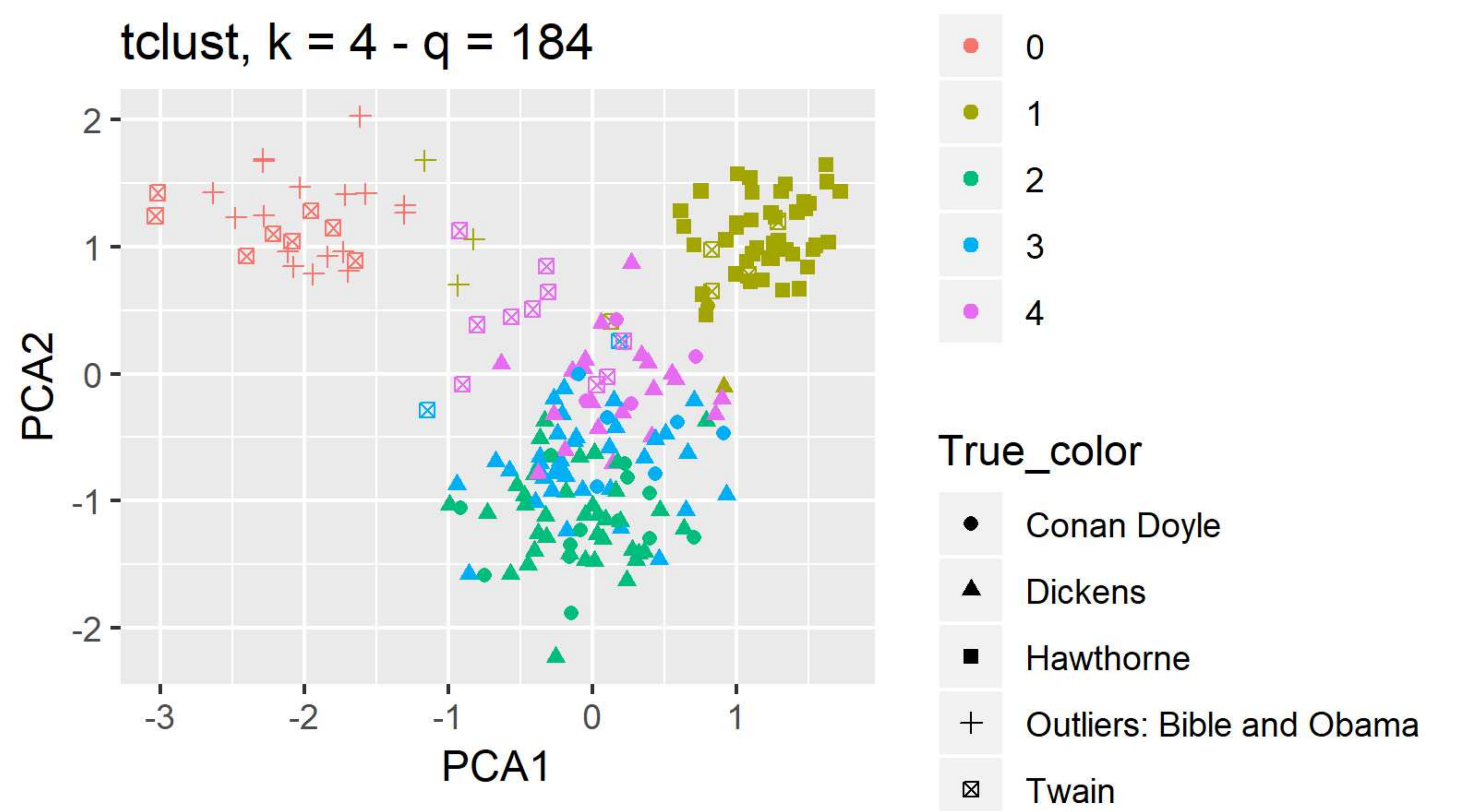}
\caption{Author stylometric clustering with trimmed $k$-median and tclust.}\label{fig:author_clusterings_4_kmedian_tclust}	
\end{figure}

\section{Proofs for Section \ref{sec:clustering_with_bregman_div}}\label{sec:proofs_section_clusteringwithBregmanD}
\subsection{Intermediate results}
The proofs of Theorem \ref{thm:existence_optimal}, Theorem \ref{thm: convergence ps des codebooks} and Theorem \ref{thm:slow_rates} make extensive use of  the following lemmas, whose proofs are deferred to Section \ref{tecsec:inter_results_section_clustering_with_BD}. The first of them is a global upper bound on the radii $r_h(c)$, when $c$ is in a compact subset of $\Omega$.

\begin{lemma}\label{lem:bound_radius_balls}
Assume that $\phi$ is $\mathcal{C}^2$ and $F_0 = \overline{conv(supp(P))}\subset\mathring{\Omega}$. Then, for every $h\in(0,1)$ and $K>0$, there exists $r^+<\infty$ such that 
\begin{align*}
\sup_{c \in F_0 \cap \bar{B}(0,K), s \leq h} r_{s}(c) \leq r^+.
\end{align*}
As a consequence, if $\cb$ is a codebook with a codepoint $c_{j_0} \in F_0$ satisfying $\|c_{j_0}\| \leq K$ and $s\leq h$, then $r_{s}(\cb) \leq r^+$.
\end{lemma}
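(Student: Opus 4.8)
The plan is to reformulate the radius as a \emph{one-sided} infimum, deduce monotonicity in the mass parameter, and then run a compactness argument; the only genuine subtlety is that $F_0$ itself may be unbounded.

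First I would record the elementary regularity of the two mass functions: for fixed $c\in\mathring{\Omega}$, the map $r\mapsto P(\bar B_\phi(c,r))=P(d_\phi(u,c)\le r^2)$ is nondecreasing and right-continuous, while $r\mapsto P(B_\phi(c,r))$ is nondecreasing and left-continuous with value $0$ at $r=0$ (using $B_\phi(c,\rho)=\bigcup_{r<\rho}\bar B_\phi(c,r)$ and continuity of $P$ along monotone sequences of sets). A routine check then shows that $\rho:=\inf\{r\ge 0 : P(\bar B_\phi(c,r))\ge s\}$ already satisfies both inequalities in \eqref{eq: def rayon r}, and that no smaller radius does, so that $r_s(c)=\inf\{r\ge 0 : P(\bar B_\phi(c,r))\ge s\}$. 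This is the key simplification: it makes $s\mapsto r_s(c)$ manifestly nondecreasing, hence $r_s(c)\le r_h(c)$ for all $s\le h$, and it reduces the statement to bounding $\sup_{c\in F_0\cap\bar B(0,K)}r_h(c)$. The same reformulation holds verbatim with a codebook $\cb$ in place of $c$, using $d_\phi(u,\cb)=\min_i d_\phi(u,c_i)$, which will take care of the ``As a consequence'' part.

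Next, since $supp(P)\subseteq F_0$ we have $P(F_0)=1$, so choosing $M$ with $P(\bar B(0,M))>h$ and setting $L:=F_0\cap\bar B(0,M)$ yields a \emph{compact} subset of $\mathring{\Omega}$ with $P(L)=P(\bar B(0,M))>h$. Both $F_0\cap\bar B(0,K)$ and $L$ are compact subsets of $\mathring{\Omega}$, and $(c,u)\mapsto d_\phi(u,c)=\phi(u)-\phi(c)-\langle\nabla_c\phi,u-c\rangle$ is continuous on $\mathring{\Omega}\times\mathring{\Omega}$ because $\phi$ is $\mathcal{C}^2$ (so $\phi$ and $\nabla\phi$ are continuous there); hence it attains a finite maximum $D$ on $(F_0\cap\bar B(0,K))\times L$ (the assertion being vacuous if this product is empty). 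Then for every $c\in F_0\cap\bar B(0,K)$ we have $\bar B_\phi(c,\sqrt D)\supseteq L$, so $P(\bar B_\phi(c,\sqrt D))\ge P(L)>h\ge s$, and the reformulation of Step~1 gives $r_s(c)\le \sqrt D=:r^+$ for all $s\le h$. This is the first claim.

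For the consequence, if $\cb$ has a codepoint $c_{j_0}\in F_0$ with $\|c_{j_0}\|\le K$, then $d_\phi(\cdot,\cb)\le d_\phi(\cdot,c_{j_0})$ pointwise, hence $\bar B_\phi(\cb,r^+)\supseteq\bar B_\phi(c_{j_0},r^+)\supseteq L$, so $P(\bar B_\phi(\cb,r^+))\ge P(L)>h\ge s$, and the codebook version of the reformulation yields $r_s(\cb)\le r^+$. The main obstacle to anticipate is precisely the possible non-compactness of $F_0$: one cannot bound $d_\phi$ uniformly over all of $supp(P)$. The resolution is the observation underlying Step~2 — because $s\le h<1$, a Bregman ball only needs to capture mass strictly larger than $h$, which the compact set $L$ provides — after which everything reduces to continuity of $d_\phi$ on compacta.
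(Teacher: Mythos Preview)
Your proof is correct and follows essentially the same route as the paper: pick a Euclidean ball of $P$-mass larger than $h$, intersect with $F_0$ to get a compact subset $L\subset\mathring\Omega$, bound $d_\phi$ uniformly on the relevant compact product, and conclude that a single Bregman radius captures mass $>h$ around every admissible center. The only cosmetic differences are that the paper obtains the uniform bound via the mean value theorem (a Lipschitz estimate $d_\phi(x,y)\le C_+\|x-y\|$ on $F_0\cap\bar B(0,2K+K_+)$) rather than your extreme-value argument, and that you spell out the quantile reformulation $r_s(c)=\inf\{r:P(\bar B_\phi(c,r))\ge s\}$ and the monotonicity $r_s\le r_h$, which the paper uses implicitly.
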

Next, the following lemma makes connections between the difference of Bregman divergences and distance between codebooks.
      \begin{lemma}\label{lm:eq_normes_compact}
      Assume that $F_0 \subset\mathring{\Omega}$ and $\phi$ is $\mathcal{C}^2$ on $\Omega$.
     Then, for every $K>0$, there exists $C_K >0$ such that for every $\cb$ and $\cb'$ in $\bar{B}(0,K) \cap F_0$, and $x \in \Omega$, 
     \[
     \left | d_{\phi}(x,\cb) - d_{\phi}(x,\cb') \right | \leq C_K D(\cb,\cb') \left ( 1 + \|x\| \right ),
     \]
     where $D(\cb,\cb')=\min_{\sigma\in\Sigma_k}\max_{j\in[\![1,k]\!]}|c_j-c'_{\sigma(j)}|$ (cf. Theorem \ref{thm: convergence ps des codebooks}).
      \end{lemma}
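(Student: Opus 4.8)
The plan is to reduce the statement about codebooks to the corresponding statement for single codepoints and then control $|d_\phi(x,c) - d_\phi(x,c')|$ by a Taylor-type argument using the $\mathcal{C}^2$ regularity of $\phi$ on the compact set $\bar B(0,K)\cap F_0$. First I would recall that $d_\phi(x,c) = \phi(x) - \phi(c) - \langle \nabla_c\phi, x - c\rangle$, so that the $\phi(x)$ terms cancel in the difference and
\[
d_\phi(x,c) - d_\phi(x,c') = \bigl(\phi(c') - \phi(c)\bigr) + \langle \nabla_{c'}\phi, x - c'\rangle - \langle \nabla_c\phi, x - c\rangle.
\]
Rearranging, this equals $\bigl(\phi(c') - \phi(c) - \langle \nabla_c\phi, c' - c\rangle\bigr) + \langle \nabla_{c'}\phi - \nabla_c\phi,\, x - c'\rangle = d_\phi(c',c) + \langle \nabla_{c'}\phi - \nabla_c\phi,\, x - c'\rangle$. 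On the compact convex-hull neighbourhood $\bar B(0,K)\cap F_0\subset\mathring\Omega$ (using $F_0\subset\mathring\Omega$ to get a slightly larger compact set on which $\phi\in\mathcal{C}^2$), the Hessian $\nabla^2\phi$ is bounded, say by $M_K$; the mean value inequality then gives $|d_\phi(c',c)| \leq \tfrac{M_K}{2}\|c-c'\|^2 \leq M_K K \|c-c'\|$ and $\|\nabla_{c'}\phi - \nabla_c\phi\| \leq M_K\|c-c'\|$. Combining, and bounding $\|x-c'\| \leq \|x\| + K$, I obtain $|d_\phi(x,c) - d_\phi(x,c')| \leq M_K(K + 1 + \|x\|)\|c-c'\|$, which is of the claimed form with a constant depending only on $K$ (for the single-point case).

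Next I would pass to codebooks. Fix the optimal permutation $\sigma$ realizing $D(\cb,\cb')$, and let $j(x)\in[\![1,k]\!]$ be an index achieving $d_\phi(x,\cb) = d_\phi(x,c_{j(x)})$. Then
\[
d_\phi(x,\cb) - d_\phi(x,\cb') \geq d_\phi(x,c_{j(x)}) - d_\phi(x,c'_{\sigma(j(x))}) \geq -\,C_K\,|c_{j(x)} - c'_{\sigma(j(x))}|\,(1+\|x\|) \geq -\,C_K\,D(\cb,\cb')(1+\|x\|),
\]
using the single-point bound and the fact that both $c_{j(x)}$ and $c'_{\sigma(j(x))}$ lie in $\bar B(0,K)\cap F_0$. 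By symmetry (swapping the roles of $\cb$ and $\cb'$ and using $\sigma^{-1}$) the reverse inequality holds, giving $|d_\phi(x,\cb) - d_\phi(x,\cb')| \leq C_K D(\cb,\cb')(1+\|x\|)$ as desired.

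The main technical point to be careful about is the use of $F_0\subset\mathring\Omega$: one needs that segments joining $c$ and $c'$ stay inside $\Omega$ (true since $F_0$ is convex, being a closed convex hull) and that $\phi$ together with its first and second derivatives are uniformly bounded on a compact neighbourhood of $\bar B(0,K)\cap F_0$ contained in $\Omega$, so that the constants $M_K$ and hence $C_K$ are finite; the openness of $\mathring\Omega$ guarantees such a neighbourhood exists. Everything else is a routine Taylor estimate, so I do not expect a genuine obstacle here — the lemma is essentially the statement that $c\mapsto d_\phi(x,c)$ is locally Lipschitz with a Lipschitz constant growing at most linearly in $\|x\|$, uniformly over $c$ in a compact set.
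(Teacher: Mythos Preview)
Your proof is correct and follows essentially the same approach as the paper: a mean-value/Taylor bound on the compact convex set $\bar B(0,K)\cap F_0$ to control $|d_\phi(x,c)-d_\phi(x,c')|$ for single codepoints, followed by passage to the minimum over the codebook. Your decomposition $d_\phi(x,c)-d_\phi(x,c')=d_\phi(c',c)+\langle\nabla_{c'}\phi-\nabla_c\phi,\,x-c'\rangle$ is a slightly cleaner regrouping of the same terms the paper splits termwise, and your codebook step (fix the optimal permutation, pick the minimizing index, argue by symmetry) is more explicit than the paper's one-line conclusion, but the substance is identical.
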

We will also need a continuity result on the function $(s,\cb) \mapsto R_s(\cb)$.
\begin{lemma}
\label{lm: continuite de VPhc}
Assume that $F_0 \subset\mathring{\Omega}$, $P\|u\|<\infty$ and $\phi$ is $\mathcal{C}^2$ on $\Omega$.
Then the map $(s,\cb)\rightarrow R_s(\cb)$ is continuous. Moreover, for every $h\in(0,1)$, $\epsilon>0$ and $K>0$, there is $s_0<h$ such that
\[\forall s_0<s<h,\,\sup_{\cb\in \left(F_0\cap\bar{B}(0,K)\right)^{(k)}}R_h(\cb)-R_s(\cb)\leq\epsilon.\]
\end{lemma}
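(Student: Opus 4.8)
\textbf{Plan for the proof of Lemma \ref{lm: continuite de VPhc}.}

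The plan is to first establish continuity of $(s,\cb) \mapsto R_s(\cb)$ and then deduce the uniform bound on $R_h(\cb) - R_s(\cb)$ as a consequence. For continuity, I would fix a point $(s_0,\cb_0)$ with $s_0 \in (0,1)$ and split the increment
\[
R_s(\cb) - R_{s_0}(\cb_0) = \bigl( R_s(\cb) - R_{s_0}(\cb) \bigr) + \bigl( R_{s_0}(\cb) - R_{s_0}(\cb_0) \bigr),
\]
controlling the two terms separately. For the second term (fixed trim level, moving codebook), I would use the variational characterisation of $R_{s_0}$ coming from Lemma \ref{lm: ecriture DTM}: for any $\tilde P_\cb \in \mathcal{P}_{s_0}(\cb)$ and $\tilde P_{\cb_0} \in \mathcal{P}_{s_0}(\cb_0)$, both $R_{s_0}(\cb) \leq s_0 \tilde P_{\cb_0} d_\phi(u,\cb)$ and $R_{s_0}(\cb_0) \leq s_0 \tilde P_\cb d_\phi(u,\cb_0)$, so that
\[
|R_{s_0}(\cb) - R_{s_0}(\cb_0)| \leq s_0 \max\bigl( \tilde P_{\cb_0}|d_\phi(u,\cb) - d_\phi(u,\cb_0)|,\ \tilde P_{\cb}|d_\phi(u,\cb) - d_\phi(u,\cb_0)|\bigr),
\]
and then bound the integrand via Lemma \ref{lm:eq_normes_compact} by $C_K D(\cb,\cb_0)(1+\|u\|)$; since $\tilde P_\cb \ll P/s_0$ and $P\|u\| < \infty$, this is $O(D(\cb,\cb_0))$, uniformly in the choice of optimal trimming measures. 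For the first term (fixed codebook, moving trim level), I would use the monotonicity Lemma \ref{lm: dtm decreasing} — the map $s \mapsto R_s(\cb)/s$ is monotone — together with the fact that, because $P$ need not be atomless, $R_s(\cb)$ is still continuous in $s$: explicitly, writing $R_s(\cb)$ in terms of the radius $r_s(\cb)$ and the mass of $P$ on Bregman balls, one checks that letting $s \uparrow s_0$ or $s \downarrow s_0$ moves $r_s(\cb)$ monotonically and any jump of $P(\bar B_\phi(\cb,r))$ at the limiting radius is exactly compensated by the proportion of mass reallocated on the sphere in the definition of $\mathcal P_s(\cb)$. A clean way to package this is: $R_s(\cb) = \inf\{ s\,\tilde P d_\phi(u,\cb) : \tilde P \in \mathcal P_{+s}\}$ is a concave (as an infimum of linear functions of the "budget" $s$, after rescaling) and monotone function of $s$, hence continuous on $(0,1)$.

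Once joint continuity is in hand, the second assertion follows quickly. Fix $h \in (0,1)$, $\epsilon > 0$, $K > 0$, and set $\Ccal = (F_0 \cap \bar B(0,K))^{(k)}$, which is compact. By Lemma \ref{lm: dtm decreasing}, for each $\cb$ the difference $R_h(\cb) - R_s(\cb) \geq 0$ and is non-increasing as $s \uparrow h$ (indeed $R_s(\cb) \geq (s/h) R_h(\cb)$ gives $R_h(\cb) - R_s(\cb) \leq (1 - s/h) R_h(\cb)$, but more to the point the map $s \mapsto R_h(\cb)-R_s(\cb)$ decreases to $0$). By joint continuity, $(s,\cb) \mapsto R_h(\cb) - R_s(\cb)$ is continuous on $[h/2, h] \times \Ccal$ (extending by $0$ at $s = h$), vanishes on $\{h\} \times \Ccal$, and is monotone in $s$; Dini's theorem then gives uniform convergence to $0$ on $\Ccal$ as $s \uparrow h$, which is exactly the claimed existence of $s_0 < h$. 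Alternatively one argues directly by compactness: $\sup_{\cb \in \Ccal}(R_h(\cb) - R_s(\cb))$ is attained at some $\cb_s$, extract a convergent subsequence $\cb_{s_n} \to \cb_\infty$ as $s_n \uparrow h$, and use joint continuity to conclude the sup tends to $R_h(\cb_\infty) - R_h(\cb_\infty) = 0$.

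The main obstacle is the continuity of $s \mapsto R_s(\cb)$ at a fixed $\cb$ \emph{without} assuming $P$ absolutely continuous: one must verify that the possible atom of $P$ on the Bregman sphere $\{\sqrt{d_\phi(\cdot,\cb)} = r_s(\cb)\}$ does not create a discontinuity. This is handled by the explicit structure of $\mathcal P_s(\cb)$ from Lemma \ref{lm: ecriture DTM} (coincide with $P/s$ inside the open ball, support in the closed ball): the "slack" mass $s - P(B_\phi(\cb,r_s(\cb)))$ is distributed on the sphere, and as $s$ varies this contributes the term $\bigl(s - P(B_\phi(\cb,r_s(\cb)))\bigr) r_s(\cb)^2$ to $R_s(\cb)$, which is manifestly continuous in $s$ since $r_s(\cb)$ is the (monotone, hence at worst countably-discontinuous, but here continuous because the sphere-mass term absorbs the jumps) generalized inverse of $r \mapsto P(\bar B_\phi(\cb,r))$. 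Making this bookkeeping precise — essentially rewriting $R_s(\cb) = s_0 \int_0^\infty \mathbbm{1}\{\text{mass below level}\} \ldots$, or more simply invoking the concavity/monotonicity representation above — is the one place where care is needed; everything else is a routine application of Lemmas \ref{lm: ecriture DTM}, \ref{lm: dtm decreasing} and \ref{lm:eq_normes_compact} together with $P\|u\| < \infty$.
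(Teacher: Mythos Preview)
Your approach is correct, and the treatment of continuity in $\cb$ (via the variational characterisation from Lemma \ref{lm: ecriture DTM} and the Lipschitz bound from Lemma \ref{lm:eq_normes_compact}) is exactly what the paper does. The difference lies in how you handle the $s$-variable and the uniform statement.

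The paper bypasses both the atom bookkeeping and the Dini/compactness argument with a single elementary inequality. Using the quantile representation from the proof of Lemma \ref{lm: ecriture DTM}, one has $R_s(\cb)=\int_0^s F_{\cb}^{-1}(v)\,dv$ with $F_{\cb}^{-1}(v)=r_v^2(\cb)$ non-decreasing, hence for $s<h$
\[
0\le R_h(\cb)-R_s(\cb)=\int_s^h F_{\cb}^{-1}(v)\,dv \le r_h^2(\cb)\,(h-s).
\]
This already gives continuity in $s$ (indeed local Lipschitz-ness), with no need to worry about mass on Bregman spheres. For the uniform assertion, the paper then invokes Lemma \ref{lem:bound_radius_balls} to bound $\sup_{\cb\in(F_0\cap\bar B(0,K))^{(k)}} r_h(\cb)\le r^+$, so that $R_h(\cb)-R_s(\cb)\le (r^+)^2(h-s)$ uniformly, and one simply takes $s_0=h-\epsilon/(r^+)^2$.

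What your route buys is a clean conceptual decomposition (joint continuity $+$ monotonicity $\Rightarrow$ Dini), which would generalise to settings where an explicit quantile formula is unavailable. What the paper's route buys is brevity and a quantitative bound: the dependence on $h-s$ is explicit and linear, which is actually used later (e.g.\ in the proof of Lemma \ref{lm: key lemma h- h+}). Your concern about atoms, while legitimate in principle, turns out to be a non-issue once one writes $R_s$ as an integral of the quantile function.
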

\subsection{Proof of Lemma \ref{lm: dtm decreasing}}\label{sec:proof_lem_dtm_decreasing}
Set $0<h<h'<1$, and recall that $F_\cb^{-1}(u)=r^2_{u}(\cb)$ denotes the $u$-quantile of the random variable $d_{\phi}(X,\cb)$ for $X\sim P$ and $u\in[0,1]$. Since $F_\cb^{-1}$ is non-decreasing, we may write
\begin{align*}
\frac{R_h(\cb)}{h} =\int_{0}^1F_\cb^{-1}(hu) du
\leq\int_{0}^1F_\cb^{-1}(h'u) d u
=\frac{R_{h'}(\cb)}{h'}.
\end{align*}
Equality holds if and only if $F_\cb^{-1}(hu)=F_\cb^{-1}(h'u)$ for almost all $u\in[0,1]$. Since $F_\cb^{-1}$ is non-decreasing, $L_\cb := \lim_{l'\rightarrow 0}F_\cb^{-1}(l')$ exists. Moreover,  for $l<h'$, $F_\cb^{-1}(hu)=F_\cb^{-1}(h'u)$ a.s., and $F_\cb^{-1}(l)=\lim_{n \rightarrow \infty}F_\cb^{-1} \left ( (h/h')^n l \right ) = L_\cb$, that is, $r^2_{l}(\cb)=\lim_{l'\rightarrow 0}r^2_{l'}(\cb)$. From \eqref{eq: def rayon r}, it follows that $P(B_\phi(\cb,r_{h'}(\cb)))=0$. Conversely, equality holds when $P(B_\phi(\cb,r_{h'}(\cb)))=0$. \qed

\subsection{Proof of Theorem \ref{thm:existence_optimal}}\label{sec:proof_thm_existence_optimal}
The intuition behind the proof of Theorem \ref{thm:existence_optimal} is that optimal codebooks satisfy a so-called centroid condition, namely their code points are means of their trimmed Bregman-Voronoi cells. Thus, provided that optimal Bregman-Voronoi cells have enough weight, the assumption $P\|u\| < + \infty$ leads to a bound on the norm of these code points. This idea is summarized by the following lemma, that is also a key ingredient in the proofs of the results of Section \ref{sec:theoretical_results}.
     \begin{lemma}
\label{lm: key lemma h- h+}
Assume that the requirements of Theorem \ref{thm:existence_optimal} are satisfied. For every $k\geq 2$,
if $R^*_{k-1,h} - R^*_{k,h} >0$, then
\[
\alpha := \min_{j\in[\![2,k]\!]} R^*_{j-1,h} - R^*_{j,h} >0.
\]
Moreover there exist $h^-,h^+\in(0,1)$ with $h\in(h^-,h^+)$ such that, for every $j \in [\![2,k]\!]$, $R^*_{j-1,h^-} - R^*_{j,h^+} \geq \frac{\alpha}{2}$.

For every $b\in(0,h\wedge(1-h)]$, set $h_b^- = (h-b)/(1-b)$ and $h_b^+ = h/(1-b)$. Let $b$ be such that $\min_{j\in[\![2,k]\!]} R^*_{j-1,h_b^-} - R^*_{j,h_b^+} >0$, and, for $\kappa_1$ in $(0,1)$, set $b_1 = \kappa_1b$. Then, for every $s \in [h_{b_1}^-,h_{b_1}^+]$ and $j \in [\![1,k]\!]$, there exists a minimizer $\cb^*_{j,s}$ of $R_{j,s}$ satisfying
\[
\forall p \in [\![1,j]\!], \quad \| c^*_{j,s,p} \| \leq \frac{P \|u\|}{b(1-h)(1-\kappa_1)}. 
\] 
\end{lemma}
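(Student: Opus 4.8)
The plan is to prove the three assertions of Lemma~\ref{lm: key lemma h- h+} in turn, each relying on the continuity result of Lemma~\ref{lm: continuite de VPhc} and the centroid condition of Proposition~\ref{prop:centroid}.

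First, for the claim that $\alpha := \min_{j\in[\![2,k]\!]} R^*_{j-1,h} - R^*_{j,h} >0$: I would argue that the sequence $j \mapsto R^*_{j,h}$ is non-increasing (adding a codepoint cannot increase the trimmed distortion), so the only thing to rule out is $R^*_{j-1,h} = R^*_{j,h}$ for some $j < k$. If that happened, one could show — essentially by the same argument noted after Theorem~\ref{thm:slow_rates}, that an optimal $j$-points codebook would have a superfluous (empty-cell-like) codepoint, and by a padding/duplication argument this would force $R^*_{k-1,h} = R^*_{k,h}$, contradicting the hypothesis. So $\alpha>0$.

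Second, to produce $h^- < h < h^+$ with $R^*_{j-1,h^-} - R^*_{j,h^+} \ge \alpha/2$ for all $j$: here I invoke Lemma~\ref{lm: continuite de VPhc}, or rather its consequence that $s \mapsto R^*_{j,s} = \inf_{\cb} R_s(\cb)$ is continuous (as an infimum of equicontinuous functions on a suitable compact set — this is where one must be slightly careful, restricting to codebooks with bounded norm using Lemma~\ref{lem:bound_radius_balls} and the centroid condition to justify that the infimum is attained there, or using Proposition~\ref{prop:equiv_distortion_variation}). Since there are finitely many indices $j\in[\![2,k]\!]$, uniform continuity near $h$ gives a common neighborhood $(h^-,h^+)$ on which $R^*_{j-1,h^-} \ge R^*_{j-1,h} - \alpha/4$ and $R^*_{j,h^+} \le R^*_{j,h} + \alpha/4$, hence $R^*_{j-1,h^-} - R^*_{j,h^+} \ge \alpha - \alpha/2 = \alpha/2$.

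Third, and this is the main obstacle, the bound on the norm of the codepoints of $\cb^*_{j,s}$. The idea is the centroid argument sketched before the lemma: by Proposition~\ref{prop:centroid} an optimal codebook $\cb^*_{j,s}$ has each codepoint $c^*_{j,s,p}$ equal to the mean of $\tilde P_{\cb}$ restricted to its cell $W_p$, so $\|c^*_{j,s,p}\| \le \tilde P_{\cb}(\|u\|\mathbbm 1_{W_p})/\tilde P_{\cb}(W_p) \le \frac{1}{s}\, P\|u\| / \tilde P_{\cb}(W_p)$, and it remains to lower-bound the cell mass $\tilde P_{\cb}(W_p)$. The point is that if some cell had mass below the threshold $b_1/s \sim \kappa_1 b$, one could delete that codepoint, obtaining a $(j-1)$-points codebook whose distortion at the slightly smaller trim level $h_{b_1}^-$ differs from $R_{j,s}(\cb^*_{j,s})$ by a controlled amount; combined with the second assertion (applied with the $b$-shifted levels) this would contradict the gap $R^*_{j-1,h_{b_1}^-} - R^*_{j,h_{b_1}^+} > 0$. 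Quantifying ``controlled amount'' is the delicate computation: one must track how trimming mass $h$ down to $h_{b_1}^-$ interacts with removing a light cell, and verify the $\tilde P_{\cb}(W_p) \ge b(1-h)(1-\kappa_1)$ bound falls out, which after dividing $P\|u\|$ by $s \ge h_{b_1}^-$ and this cell mass yields exactly $\frac{P\|u\|}{b(1-h)(1-\kappa_1)}$. I would carry out this bookkeeping carefully using the relations $h_b^\pm$, $h_{b_1}^\pm$ and Lemma~\ref{lm: dtm decreasing} to compare trimmed distortions at nearby levels; this is the step I expect to consume most of the work.
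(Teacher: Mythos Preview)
Your overall plan matches the paper's, but there is a circularity in your handling of the second assertion. To obtain $h^-<h$ with $R^*_{j-1,h^-}\ge R^*_{j-1,h}-\alpha/4$ you need left-continuity of $s\mapsto R^*_{j-1,s}$ at $h$. Lemma~\ref{lm: continuite de VPhc} gives uniform continuity of $s\mapsto R_s(\cb)$ \emph{only for $\cb$ ranging over a fixed compact set}, so to pass to the infimum you must already know that $(j-1)$-point minimizers $\cb^*_{j-1,s}$, for $s$ near $h$, lie in some common compact set. Neither Lemma~\ref{lem:bound_radius_balls} (which bounds Bregman radii, not codebook norms) nor Proposition~\ref{prop:equiv_distortion_variation} supplies this; the only available route is the centroid condition together with a lower bound on cell masses --- precisely the third assertion, at level $j-1$, which you have not yet proved. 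The paper breaks this loop by an \emph{induction on $j$} (its Lemma~\ref{lem:keylemmah-h+_part1}): at stage $j$ the inductive hypothesis furnishes a uniform bound $C_{h^{--},h^{++}}$ on all $(j-1)$-point optimal codebooks over a neighborhood of $h$, which makes Lemma~\ref{lm: continuite de VPhc} applicable and yields the required $h^-$; the cell-mass argument then gives boundedness of $j$-point optimal codebooks, closing the induction. The upper side $R^*_{j,h^+}\le R^*_{j,h}+\alpha/4$ is easier and needs no boundedness at level $j$: one fixes a single $\alpha/8$-minimizer of $R^*_{j,h}$ and uses pointwise continuity in $s$. Without this inductive scaffolding your second and third parts depend on each other.

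Your third part is the paper's Lemma~\ref{lem:keylemmah-h+_part2}, and the contradiction is the right one, but the threshold is not ``$b_1/s\sim\kappa_1 b$''. The correct lower bound is on the \emph{untrimmed} cell mass, $P\tau_s(\cb^*_{j,s})\mathbbm{1}_{W_p}\ge b(1-h)(1-\kappa_1)$; this quantity is chosen because a direct computation gives $b(1-h)(1-\kappa_1)<h_{b_1}^- - h_b^-$, so deleting a lighter cell reduces the trim level from $s\ge h_{b_1}^-$ to at least $h_b^-$, whence $R^*_{j,h_b^+}\ge R^*_{j,s}\ge R^*_{j-1,h_b^-}$, the contradiction. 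There is also no extra factor of $s$ in the final bound: since $\tilde P_\cb(W_p)=P(\tau_s\mathbbm{1}_{W_p})/s$, your expression $\frac{1}{s}P\|u\|/\tilde P_\cb(W_p)$ simplifies to $P\|u\|/P(\tau_s\mathbbm{1}_{W_p})\le P\|u\|/\bigl(b(1-h)(1-\kappa_1)\bigr)$ directly.
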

The proof of Lemma \ref{lm: key lemma h- h+} is deferred to Section \ref{tecsec:inter_results_section_clustering_with_BD}. When $R^*_{k-1,h} - R^*_{k,h} >0$, Theorem \ref{thm:existence_optimal} follows from Lemma \ref{lm: key lemma h- h+}. In the case where $R^*_{k-1,h} - R^*_{k,h} =0$, there exists a set $A$ with $P(A) \geq h$ such that the restriction of $P$ to $A$ is supported by at most $k-1$ points. These $k-1$ points provide an optimal $k$-points codebook. Hence the result of Theorem \ref{thm:existence_optimal}. \qed

\subsection{Proof of Proposition \ref{prop:properties_discernability_factor}}\label{sec:proof_prop_properties_discernability_factor}
The first part of Proposition \ref{prop:properties_discernability_factor} follows from Lemma \ref{lm: key lemma h- h+}. Indeed, if $h^-$ and $h^+$ are such that $\min_{j\in[\![2,k]\!]} R^*_{j-1,h^-} - R^*_{j,h^+} >0$, then for $b$ small enough so that $h^- \leq h_b^- < h < h_b^+ \leq h^+$, we have $\min_{j\in[\![2,k]\!]} R^*_{j-1,h_b^-} - R^*_{j,h_b^+} \geq \min_{j\in[\![2,k]\!]} R^*_{j-1,h^-} - R^*_{j,h^+} >0$. 

We turn to the second part of Proposition \ref{prop:properties_discernability_factor}. Let $\cb^{*,(j)}$ be a $j$-points $h$-trimmed optimal codebook, and $p_{j,h} = h \min_{l\in[\![1,j]\!]} \tilde{P}_{\cb^{*,(j)}} \left ( W_l(\cb^{*,(j)}) \right )$, where $\tilde{P}_{\cb^{*,(j)}} \in \mathcal{P}_h(\cb^{*,(j)})$. Let $\tau_{j,h}$ denote the $[0,1]$-valued function such that $h\tilde{P}_{\cb^{*,(j)}} = P \tau_{j,h}$. Assume that $p_{j,h} = P\tau_{j,h}(u) \mathbbm{1}_{W_1(\cb^{*,(j)})}(u)$, without loss of generality. Then we have
\begin{align*}
R^*_{j,h} \geq  \sum_{l=2}^k P d_\phi(u,c^{*,(j)}_l)(u) \tau_{j,h}(u) \mathbbm{1}_{W_l(\cb^{*,(j)})}(u) 
               \geq R^*_{j-1,h-p_{j,h}}. 
\end{align*}   
Thus, $(h-B_{h})/(1-B_h) \geq h-p_{j,h}$, that entails $(1-(h-p_{j,h}))B_h \leq p_{j,h}$.
\section{Proofs for Section \ref{sec:theoretical_results}}\label{sec:proofs_section_theoretical_results}
\subsection{Intermediate results}

      Theorem \ref{thm: convergence ps des codebooks} and \ref{thm:slow_rates} require some additional probabilistic results that are gathered in this subsection. Some of them are applications of standard techniques, their proofs are thus deferred to Section \ref{tecsec:proofs_inter_theoretical_results}, for the sake of completeness. We begin with  deviation bounds.

\begin{proposition}\label{prop:deviations}
With probability larger than $1-e^{-x}$, we have, for $k \geq 2$,
\begin{align}\label{eq:deviationVCballs}
\sup_{\cb \in (\R^d)^{(k)}, r \geq 0} \left | (P - P_n) \mathbbm{1}_{B_\phi(\cb,r)} \right | & \leq C \sqrt{\frac{k(d+1) \log(k)}{n}} + \sqrt{\frac{2x}{n}}, \\
\sup_{\cb \in (\R^d)^{(k)}, r \geq 0} \left | (P - P_n) \mathbbm{1}_{\partial B_\phi(\cb,r)} \right | & \leq C \sqrt{\frac{k(d+1) \log(k)}{n}} + \sqrt{\frac{2x}{n}},\notag
\end{align}
where $\partial B_\phi(\cb,r)$ denotes $\left \{ x \mid d_\phi(x,\cb)=r^2 \right \}$ and $C$ denotes a universal constant. Moreover, if $r^+$ and $K$ are fixed, and $P \|u\|^2 \leq M_2^2<\infty$, we have, with probability larger than $1-e^{-x}$,
\begin{align}\label{eq:deviationcontrastfunction}
\sup_{\cb \in \left(\bar{B}(0,K)\cap F_0\right)^{(k)}, r \leq r^+}\left | (P - P_n) d_{\phi}(.,\cb) \mathbbm{1}_{B_\phi(\cb,r)} \right |  & \leq (r^+)^2 \left [C_{K,r^+,M_2} \frac{\sqrt{kd \log(k)}}{\sqrt{n}} +  \sqrt{\frac{2x}{n}} \right ], \\
\sup_{\cb \in \left(\bar{B}(0,K)\cap F_0\right)^{(k)}, r \leq r^+}\left | (P - P_n) d_{\phi}(.,\cb) \mathbbm{1}_{\partial B_\phi(\cb,r)} \right | & \leq (r^+)^2 \left [C_{K,r^+,M_2} \frac{\sqrt{kd\log(k)}}{\sqrt{n}} +  \sqrt{\frac{2x}{n}} \right ], \notag
\end{align}
where we recall that $\overline{conv(supp(P))}=F_0 \subset \mathring{\Omega}$.
\end{proposition}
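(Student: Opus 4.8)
The plan is to establish Proposition \ref{prop:deviations} by uniform deviation (VC-type and Rademacher) arguments applied to two families of functions: the indicator family $\{\mathbbm{1}_{B_\phi(\cb,r)}\}$ together with the sphere family $\{\mathbbm{1}_{\partial B_\phi(\cb,r)}\}$, and the truncated contrast family $\{d_\phi(\cdot,\cb)\mathbbm{1}_{B_\phi(\cb,r)}\}$. The overarching strategy in each case is the standard bounded-differences (McDiarmid) concentration of the supremum around its expectation, followed by symmetrization and a bound on the Rademacher complexity of the relevant class.

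For the first two bounds, the key structural observation is that a Bregman ball $B_\phi(\cb,r) = \{x : \min_j (\phi(x) - \phi(c_j) - \langle \nabla_{c_j}\phi, x - c_j\rangle) < r^2\}$ is, as a subset of $\R^d$, an intersection/union of sublevel sets of affine functions of $x$ (once we absorb the common $\phi(x)$ term): for fixed $\cb$ and $r$, $x \in B_\phi(\cb,r)$ iff $\phi(x) - r^2 < \min_j (\phi(c_j) + \langle\nabla_{c_j}\phi, x - c_j\rangle)$, i.e. iff $x$ lies in a union of $k$ halfspaces shifted by the scalar $\phi(x)$. Lifting $x \mapsto (x, \phi(x)) \in \R^{d+1}$, the class $\{B_\phi(\cb,r)\}$ pulls back to unions of $k$ halfspaces in $\R^{d+1}$, whose VC dimension is $O(k(d+1)\log k)$; the spheres $\partial B_\phi(\cb,r)$ similarly pull back to unions of $k$ hyperplanes. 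Then Vapnik–Chervonenkis / Dudley chaining (or the textbook VC bound for the Rademacher complexity of a VC class) gives the $\sqrt{k(d+1)\log k / n}$ term, and McDiarmid — each indicator is $\{0,1\}$-valued so changing one sample point moves the empirical average by at most $1/n$ — adds the $\sqrt{2x/n}$ term. I would state this cleanly as: $\E \sup |(P-P_n)\mathbbm{1}_{B_\phi(\cb,r)}| \le C\sqrt{k(d+1)\log k/n}$, then concentrate.

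For the contrast bounds \eqref{eq:deviationcontrastfunction}, the new feature is that the summands $d_\phi(X_i,\cb)\mathbbm{1}_{B_\phi(\cb,r)}(X_i)$ are not bounded by a universal constant but by $(r^+)^2$ on the ball (by definition of the ball), so the envelope is $(r^+)^2$; this is where the restriction $r\le r^+$ and $\cb\in(\bar B(0,K)\cap F_0)^{(k)}$ enters. For McDiarmid this envelope gives bounded differences $(r^+)^2/n$, hence the $(r^+)^2\sqrt{2x/n}$ term. For the expectation of the supremum I would symmetrize and then peel the two sources of complexity: write $d_\phi(x,\cb)\mathbbm{1}_{B_\phi(\cb,r)} = \sum_j d_\phi(x,c_j)\mathbbm{1}_{W_j(\cb)\cap B_\phi(\cb,r)}$, use the contraction/comparison inequality after noting that on $\bar B(0,K)\cap F_0 \subset \mathring\Omega$ with $\phi\in\mathcal C^2$ the map $c\mapsto d_\phi(x,c)$ is Lipschitz in $\cb$ with constant controlled by $C_K(1+\|x\|)$ (this is exactly Lemma \ref{lm:eq_normes_compact}), and combine with the VC complexity of the Voronoi-ball index sets from the first part; the moment bound $P\|u\|^2\le M_2^2$ is used to turn the $(1+\|x\|)$ factors into the constant $C_{K,r^+,M_2}$. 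The dependence on the constants $K$, $r^+$, $M_2$ is thereby made explicit, while the $k,d,n$ dependence remains $\sqrt{kd\log k/n}$.

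The main obstacle, and the step I would be most careful about, is the Rademacher complexity bound for the truncated contrast class: the product structure $d_\phi(\cdot,\cb)\,\mathbbm{1}_{B_\phi(\cb,r)}$ couples an unbounded (but envelope-controlled) Lipschitz part with a VC-indicator part, and the truncation boundary itself depends on $(\cb,r)$, so a naive contraction argument does not directly apply. I would handle this either by a direct chaining argument on the metric $\rho((\cb,r),(\cb',r')) = \|d_\phi(\cdot,\cb)\mathbbm{1}_{B_\phi(\cb,r)} - d_\phi(\cdot,\cb')\mathbbm{1}_{B_\phi(\cb',r')}\|_{L^2(P_n)}$, bounding the covering numbers of this metric by those of the parameter space $(\bar B(0,K)\cap F_0)^{(k)}\times[0,r^+]$ (a Euclidean ball of dimension $k(d+1)$, using Lemma \ref{lm:eq_normes_compact} and the fact that the extra mass a small change in $r$ can add to the ball is controlled), or by decomposing into a ``Lipschitz-in-$\cb$ with indicator fixed'' term plus a ``$\cb$ fixed, index set varies'' term and applying contraction to the first and the VC bound of the first part to the second; the price is only in the constants $C_{K,r^+,M_2}$, not in the rate. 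Everything else is routine symmetrization plus McDiarmid; the deferral of the full computation to Section \ref{tecsec:proofs_inter_theoretical_results} is appropriate.
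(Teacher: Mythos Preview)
Your proposal is correct and follows essentially the same route as the paper: VC dimension of single Bregman balls via the lift $x\mapsto(x,\phi(x))$ to halfspaces, then the union bound of van der Vaart--Wellner for $k$-fold unions, McDiarmid for the $\sqrt{2x/n}$ fluctuation, and for \eqref{eq:deviationcontrastfunction} symmetrization followed by a covering-number/Dudley argument combining the VC complexity of the ball indicators with the Lipschitz control of $c\mapsto d_\phi(x,c)$ from Lemma~\ref{lm:eq_normes_compact} and the second-moment bound.

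Two small points of divergence worth noting. First, the paper handles $\partial B_\phi(\cb,r)$ not as a hyperplane class but simply via $\mathbbm{1}_{\partial B}=\mathbbm{1}_{\bar B}-\mathbbm{1}_{B}$ and the triangle inequality, which avoids any separate VC computation for spheres. Second, for the contrast class the paper does not decompose over Voronoi cells or invoke contraction; it normalizes to $[0,1]$ and uses the clean product bound $\mathcal N(\Gamma_0,\varepsilon,L_2(P_n))\le \mathcal N(\Gamma_1,\varepsilon/2)\cdot\mathcal N(\Gamma_2,\varepsilon/2)$ with $\Gamma_1=\{d_\phi(\cdot,\cb)/(r^+)^2\wedge 1\}$ (covered via Lemma~\ref{lm:eq_normes_compact} and Euclidean parameter covering) and $\Gamma_2=\{\mathbbm{1}_{B_\phi(\cb,r)}\}$ (covered via Mendelson's VC bound), then feeds this into Dudley's integral. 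This is precisely your ``direct chaining'' option, executed via factorized covering numbers, and is somewhat cleaner than the Voronoi-cell decomposition you sketched first; your alternative would also work but introduces unnecessary bookkeeping.
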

A key intermediate result shows that, on the probability events defined above, empirical risk minimizers must have bounded codepoints.
\begin{proposition}\label{prop:bounded_empirical_codebooks}
Assume that $P\|u\|^p < + \infty $ for some $p \geq 2$, and let $b>0$ be such that $\min_{j\in[\![2,k]\!]} R^*_{j-1,h_b^-}-R^*_{j,h_b^+} >0$, where $h_b^- = (h-b)/(1-b)$, $h^+_b=h/(1-b)$, as in Lemma \ref{lm: key lemma h- h+}. Let $\kappa_2<1$, and denote by $b_2 = \kappa_2 b$.
 Then there exists $C_{P,h,k,\kappa_2,b}$ such that, for $n$ large enough, with probability larger than $1-n^{-\frac{p}{2}}$, we have, for all $j\in[\![2,k]\!]$, and $i\in[\![1,j]\!]$, 
\begin{align*}
\sup_{ h_{b_2^-} \leq s \leq h} \|\hat{c}_{j,s,i}\| \leq C_{P,h,k,\kappa_2,b},
\end{align*}
where $\hat{\cb}_{j,s}$ denotes a $j$-points empirical risk minimizer with trimming level $s$.
\end{proposition}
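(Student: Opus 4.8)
\textbf{Proof proposal for Proposition \ref{prop:bounded_empirical_codebooks}.}

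The plan is to mimic the deterministic argument of Lemma \ref{lm: key lemma h- h+}, but carried out for the empirical distribution $P_n$, with the deviation bounds of Proposition \ref{prop:deviations} used to transfer the necessary population-level inequalities ($R^*_{j-1,h_b^-} - R^*_{j,h_b^+} > 0$ and the centroid condition) to $P_n$ at a controlled cost. First I would fix $\kappa_2 < \kappa_1 < 1$, choose $b_1 = \kappa_1 b$ as in Lemma \ref{lm: key lemma h- h+}, and observe that on the event of Proposition \ref{prop:deviations} we have $\sup_{\cb, r}|(P-P_n)\mathbbm{1}_{B_\phi(\cb,r)}| \leq \varepsilon_n$ and $\sup_{\cb,r}|(P-P_n)\mathbbm{1}_{\partial B_\phi(\cb,r)}| \leq \varepsilon_n$ with $\varepsilon_n = O(\sqrt{\log n / n})$ after choosing $x = x_n \asymp \log n$ so that $e^{-x_n} \leq n^{-p/2}$. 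This controls how much the empirical trimming radius $r_{n,s}(\cb)$ can differ from $r_s(\cb)$, and in particular lets me sandwich an empirical $s$-trimmed sub-measure between population $h_{b_1}^-$- and $h_{b_1}^+$-trimmed sub-measures up to an error $O(\varepsilon_n)$ in total mass, uniformly over codebooks.

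Next I would reproduce the key mechanism of Lemma \ref{lm: key lemma h- h+}: if an empirical $j$-points $s$-trimmed optimal codebook $\hat{\cb}_{j,s}$ had a cell of tiny $\tilde{P}_n$-mass, one could delete that codepoint and re-trim, obtaining a $(j-1)$-points codebook with empirical $s'$-trimmed distortion close to $R_{n,j,s}(\hat{\cb}_{j,s})$; by the deviation bound \eqref{eq:deviationcontrastfunction} this is close to $R_{j,s}(\hat{\cb}_{j,s}) \geq R^*_{j,h_{b_1}^+}$, while the $(j-1)$-points object costs at least $R^*_{j-1,h_{b_1}^-} - O(\varepsilon_n)$, contradicting $R^*_{j-1,h_{b_1}^-} - R^*_{j,h_{b_1}^+} > 0$ once $n$ is large enough that $O(\varepsilon_n)$ is smaller than half this gap. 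Hence every cell of every $\hat{\cb}_{j,s}$ has empirical trimmed mass bounded below by an absolute constant $c_0 = c_0(P,h,k,\kappa_1,b) > 0$. Then the centroid condition (Proposition \ref{prop:centroid} applied to $P_n$) forces each codepoint to be a local mean of a set of $P_n$-mass at least $c_0$; bounding $\|\hat{c}_{j,s,i}\| \leq P_n\|u\| / c_0$, and noting $P_n\|u\| \to P\|u\| < \infty$ a.s. (or controlling it on a further high-probability event using $P\|u\|^p < \infty$ with $p \geq 2$), yields the uniform bound $C_{P,h,k,\kappa_2,b}$. The uniformity over $s \in [h_{b_2}^-, h]$ comes for free because all the estimates above are uniform in $s$ over this range once $b_2 < b_1$, and uniformity over $j \in [\![2,k]\!]$ is handled by taking the worst constant over finitely many values.

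The main obstacle I anticipate is the bookkeeping needed to make the ``delete a light codepoint and re-trim'' step rigorous at the empirical level: one must show that removing a codepoint whose empirical trimmed cell is light changes the radius $r_{n,s}$ and the induced sub-measure in a quantitatively controlled way, and that the resulting $(j-1)$-points sub-measure is genuinely in $\mathcal{P}_{+h_{b_1}^-}$ (possibly after absorbing the freed mass elsewhere), all uniformly over codebooks with a light cell — this is where both deviation bounds of Proposition \ref{prop:deviations} (for balls and for sphere boundaries, the latter handling the atoms $X_i$ lying exactly on $\partial B_\phi$) get used simultaneously. The secondary technical point is choosing the scale of $x$ in Proposition \ref{prop:deviations} to match the target probability $n^{-p/2}$ while keeping $\varepsilon_n \to 0$ fast enough that ``$n$ large enough'' absorbs the gap $\alpha/2$; since $\varepsilon_n = O(\sqrt{\log n/n}) \to 0$ this is automatic, but it must be stated carefully so that the threshold on $n$ depends only on $P, h, k, \kappa_2, b$ and not on the particular sample.
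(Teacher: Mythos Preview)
Your strategy matches the paper's closely: set up a high-probability event combining the ball/sphere deviations of Proposition \ref{prop:deviations}, a bound on $P_n\|u\|$ (the paper does this via Lemma \ref{lm: Marcinkiewicz}, which you allude to), and Lemma \ref{lem:bound_radius_balls}; then argue that a cell of $\hat{\cb}_{j,s}$ with too little empirical trimmed mass would, after deletion, yield a $(j-1)$-points empirical distortion sandwiched between $R^*_{j,h_{b_1}^+}+o(1)$ and $R^*_{j-1,h_{b_1}^-}-o(1)$, a contradiction; conclude via the centroid condition. This is exactly what the paper does.

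There is, however, one genuine structural gap in your outline. The deviation bound \eqref{eq:deviationcontrastfunction} is stated only for codebooks in $(\bar B(0,K)\cap F_0)^{(k)}$: you cannot apply it to $\hat{\cb}_{j,s}$ itself (that is what you are trying to bound), nor to the $(j-1)$-points empirical minimizer $\hat{\cb}_{j-1,s'}$ until you know the latter is bounded. In the paper, the upper bound on $\hat R_s(\hat{\cb}_{j,s})$ is obtained by comparing to the \emph{population} optimum $\cb^*_{j,h_{b_1}^+}$, which is bounded by Lemma \ref{lm: key lemma h- h+}; the lower bound passes through $\hat{\cb}_{j-1,s'}$, and \eqref{eq:deviationcontrastfunction} can be applied to it only because it is already known to be bounded by the \emph{induction hypothesis} at level $j-1$. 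So the proof must proceed by induction on $j\in[\![1,k]\!]$, and your remark that ``uniformity over $j$ is handled by taking the worst constant over finitely many values'' is not sufficient: the cases are not independent, the case $j$ genuinely uses the conclusion for $j-1$. Relatedly, the paper tracks a shrinking window $h_{b_1}^-+\eta+j\eta/k\le s\le h$ to absorb the mass lost at each inductive step; you will need some analogous bookkeeping.

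A minor point: your sentence ``by the deviation bound \eqref{eq:deviationcontrastfunction} this is close to $R_{j,s}(\hat{\cb}_{j,s}) \geq R^*_{j,h_{b_1}^+}$'' has the inequality the wrong way round (this is the small side of the sandwich, so you want $\le R^*_{j,h_{b_1}^+}+o(1)$, obtained via the bounded population codebook as above, not via $\hat{\cb}_{j,s}$).
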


To prove Theorem \ref{thm: convergence ps des codebooks}, a more involved version of Markov's inequality is needed, stated below.
\begin{lemma}
\label{lm: Marcinkiewicz}
If $P\|u\|^p<\infty$ for some $p\geq 2$, then there exists some positive constant $C$ such that with probability larger than $1-n^{-\frac{p}{2}}$, $P_n\|u\|\leq C$.
\end{lemma}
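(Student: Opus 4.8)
The plan is to reduce the claim to a $p$-th moment estimate for the centered empirical average $P_n\|u\| - P\|u\|$, and then conclude by Markov's inequality. First I would observe that $P\|u\|^p < \infty$ with $p \ge 2$ forces $M_1 := P\|u\| < \infty$, so it is enough to produce a constant $C$, independent of $n$, such that $|P_n\|u\| - M_1| \le C$ on an event of probability at least $1 - n^{-p/2}$.

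Next I would set $Y_i = \|X_i\| - M_1$; the $Y_i$ are i.i.d., centered, and $\E |Y_i|^p \le 2^{p-1}\bigl(P\|u\|^p + M_1^p\bigr) =: \sigma_p^p < \infty$ by convexity of $t \mapsto |t|^p$. I would then apply the Marcinkiewicz--Zygmund inequality, which gives a constant $C_p$ depending only on $p$ such that
\[
\E\Bigl| \sum_{i=1}^n Y_i \Bigr|^p \le C_p\, \E\Bigl( \sum_{i=1}^n Y_i^2 \Bigr)^{p/2} \le C_p\, n^{p/2-1} \sum_{i=1}^n \E|Y_i|^p \le C_p\, n^{p/2}\, \sigma_p^p,
\]
the second inequality being the power-mean (Jensen) inequality applied to $(Y_i^2)_i$ with exponent $p/2 \ge 1$. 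Dividing by $n^p$ yields
\[
\E\bigl| P_n\|u\| - M_1 \bigr|^p \le C_p\, \sigma_p^p\, n^{-p/2}.
\]

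Finally, Markov's inequality applied to the nonnegative variable $|P_n\|u\| - M_1|^p$ gives, for every $t>0$, $\mathbb{P}\bigl(|P_n\|u\| - M_1| > t\bigr) \le C_p\, \sigma_p^p\, t^{-p}\, n^{-p/2}$; choosing $t = 2 C_p^{1/p}\sigma_p$ makes the right-hand side at most $2^{-p} n^{-p/2} < n^{-p/2}$, so that with probability larger than $1 - n^{-p/2}$ one has $P_n\|u\| \le M_1 + 2 C_p^{1/p}\sigma_p =: C$, a constant that does not depend on $n$. I do not expect a genuine obstacle here: the only points needing care are the finiteness of $\E|Y_1|^p$ (immediate from the hypothesis) and the $n$-independence of $C$ (clear since $C_p$, $\sigma_p$ and $M_1$ are all independent of $n$). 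An alternative would be to invoke Rosenthal's inequality instead of Marcinkiewicz--Zygmund, or to run a direct truncation argument on $\|X_i\|\mathbbm{1}_{\|X_i\|\le n^{1/2}}$, but the route above is the most economical.
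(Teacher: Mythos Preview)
Your proof is correct and follows essentially the same route as the paper: apply the Marcinkiewicz--Zygmund inequality to the centered variables $Y_i=\|X_i\|-P\|u\|$, bound $\E\bigl(\sum Y_i^2\bigr)^{p/2}$ by $n^{p/2}\E|Y_1|^p$, and conclude with Markov's inequality and an explicit choice of the deviation level. The only cosmetic differences are that the paper uses Minkowski's inequality (rather than your power-mean/Jensen step) to control $\E\bigl(\sum Y_i^2\bigr)^{p/2}$, and bounds $\E|Y_1|^p$ slightly differently; both lead to the same $n^{-p/2}$ rate.
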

At last, a technical lemma on empirical quantiles of  Bregman divergences will be needed.
\begin{lemma}
\label{lemme: r_borne}
Let $(P_n)_{n\in\N}$ be a sequence of probabilities that converges weakly to a distribution $P$. Assume that $supp(P_n)\subset supp(P)\subset\R^d$, $F_0 = \overline{conv(supp(P))}\subset\mathring{\Omega}$ and $\phi$ is $\Ccal_2$ on $\Omega$. Then, for every $h\in(0,1)$ and $K>0$, there exists $K_+>0$ such that for every $\cb\in\Omega^{(k)}$ satisfying $|c_i|\leq K$ for some $i\in[\![1,k]\!]$ and every $n\in\N$,
\[r_{n,h}(\cb)\leq r_+ = \sqrt{4(2K+K_+)\sup_{c\in F_0\cap\bar{B}(0,2K+K_+)}\|\nabla_c\phi\|}.\]
\end{lemma}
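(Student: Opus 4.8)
The plan is to bound the empirical radius $r_{n,h}(\cb)$ by exhibiting an explicit Bregman ball, centered at the codepoint $c_i$ with $|c_i| \le K$, that is guaranteed to contain at least a fraction $h$ of the mass of $P_n$ — in fact, \emph{all} of $\operatorname{supp}(P_n)$ if we make the ball big enough. Since $\operatorname{supp}(P_n) \subset \operatorname{supp}(P) \subset F_0$ and $P_n$ converges weakly to $P$, the first step is to extract from the weak convergence (together with $\operatorname{supp}(P_n) \subset \operatorname{supp}(P)$) a uniform bound on the spread of the $P_n$'s: I want a radius $K_+ > 0$, independent of $n$, such that $P_n\left(\bar B(0, 2K + K_+)\right) \ge h$ for all $n$. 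The natural route is to use tightness of the weakly convergent sequence — pick a radius $\rho$ large enough that $P(\bar B(0,\rho)) > h$ with room to spare, then weak convergence (via the portmanteau theorem applied to the open ball, or to a slightly larger closed ball whose boundary has $P$-measure zero, which we can always arrange by choosing $\rho$ outside the at-most-countable set of atoms of $t \mapsto P(\partial B(0,t))$) gives $P_n(\bar B(0,\rho)) \ge h$ for $n$ large; the finitely many remaining $n$ are handled individually since each $P_n$ is a probability measure supported in $F_0$. Absorbing $\rho$ into $2K + K_+$ fixes $K_+$.

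The second step is purely deterministic: I claim that every $x \in F_0 \cap \bar B(0, 2K + K_+)$ satisfies $d_\phi(x, c_i) \le 4(2K + K_+)\sup_{c \in F_0 \cap \bar B(0, 2K+K_+)}\|\nabla_c\phi\|$, so that such $x$ lies in the closed Bregman ball $\bar B_\phi(c_i, r_+)$ with $r_+$ as in the statement. To see this, write the definition $d_\phi(x,c_i) = \phi(x) - \phi(c_i) - \langle \nabla_{c_i}\phi, x - c_i\rangle$ and, using that $\phi$ is $\mathcal C^2$ (hence $\mathcal C^1$) on the convex set containing the segment $[c_i, x] \subset F_0 \cap \bar B(0, 2K+K_+)$, apply the mean value / Taylor estimate: $\phi(x) - \phi(c_i) = \langle \nabla_{\xi}\phi, x - c_i\rangle$ for some $\xi$ on that segment, whence $d_\phi(x,c_i) = \langle \nabla_\xi\phi - \nabla_{c_i}\phi, x - c_i\rangle$, and Cauchy--Schwarz bounds this by $2\sup_{c}\|\nabla_c\phi\| \cdot \|x - c_i\|$. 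Since both $x$ and $c_i$ lie in $\bar B(0, 2K+K_+)$ we have $\|x - c_i\| \le 2(2K+K_+)$, giving the bound $4(2K+K_+)\sup_c\|\nabla_c\phi\|$ — exactly $r_+^2$. Note $F_0 \cap \bar B(0, 2K+K_+)$ is compact and $\nabla\phi$ is continuous, so the supremum is finite.

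Combining the two steps: for every $\cb$ with some $|c_i| \le K$ and every $n$, the ball $\bar B_\phi(c_i, r_+)$ contains $F_0 \cap \bar B(0, 2K+K_+) \supset \operatorname{supp}(P_n) \cap \bar B(0, 2K+K_+)$, and the latter carries $P_n$-mass at least $h$ by step one; a fortiori $\bar B_\phi(\cb, r_+)$ does too, since $d_\phi(x,\cb) = \min_j d_\phi(x,c_j) \le d_\phi(x,c_i)$. By the definition of $r_{n,h}(\cb)$ as the smallest $r$ with $P_n(\bar B_\phi(\cb, r)) \ge h$ (the companion inequality $P_n(B_\phi(\cb,r)) \le h$ being automatic once $r$ is the smallest such radius), we conclude $r_{n,h}(\cb) \le r_+$. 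The only delicate point is step one — making the spread bound genuinely uniform in $n$ from weak convergence alone — but this is a standard tightness argument and poses no real obstacle; everything else is a one-line Taylor estimate. I would write the proof in that order: uniform mass bound, then the Bregman-ball containment, then assemble.
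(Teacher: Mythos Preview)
Your proposal is correct and follows essentially the same approach as the paper: tightness of $(P_n)_n$ (the paper invokes Prokhorov, you argue it directly via portmanteau plus a finite adjustment) yields a uniform Euclidean radius containing mass $>h$, and then the mean-value estimate $d_\phi(x,c_i)\le 2\sup_c\|\nabla_c\phi\|\cdot\|x-c_i\|$ on the convex compact $F_0\cap\bar B(0,2K+K_+)$ converts this into the Bregman radius bound $r_+$. One small remark: your containment $[c_i,x]\subset F_0\cap\bar B(0,2K+K_+)$ tacitly assumes $c_i\in F_0$, which the lemma statement does not literally require (it only asks $\cb\in\Omega^{(k)}$); the paper's own proof makes the same implicit assumption, and it is harmless since in every application of the lemma the codepoints are local means and hence lie in $F_0$.
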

   \subsection{Proof of Theorem \ref{thm: convergence ps des codebooks}}\label{sec:proof_thm_convergence ps des codebooks}  
The proof of Theorem \ref{thm: convergence ps des codebooks} is an adaptation of the proof of \cite[Theorem 3.4]{Cuesta97}. First note that since $\phi$ is strictly convex and continuous, $\psi:x\mapsto \phi(x)-\langle x,a\rangle + b$ is also strictly convex and continuous, for every $a$, $b$. Thus $\psi^{-1}(\{0\})$ is a closed set. 
Moreover, since $\psi$ is strictly convex, any line that contains $0$ contains at most two points of $\psi^{-1}(\{0\})$. Thus, the Lebesgue measure of $\psi^{-1}(\{0\})$ is $0$. Since $P$ is absolutely continuous, it follows that boundaries of Bregman balls have $P$-mass equal to $0$.

According to Proposition \ref{prop:bounded_empirical_codebooks}, provided that $P\|u\|^p < + \infty$, for some $p>2$, 
there exists $C_P>0$ such that for some $N\in\N$, $\sum_{n\geq N}P(\max_{i\in[\![1,k]\!]}\left\|\hat c_{n,h,i}\right\|>C_P)<\infty$. Thus, 
the Borel-Cantelli Lemma ensures that, a.s. for $n$ large enough, for every $i\in[\![1,k]\!]$, $\left\|\hat c_{n,h,i}\right \|\leq C_P$.
According to the Skorokhod's representation theorem in the Polish space $\R^d$, there exists a measured space $(\tilde\Omega,\tilde{\mathcal{F}},\tilde P)$ and a sequence of random variables $(X_n)_{n\in\N}$ along with a random variable $X$ on $(\tilde\Omega,\tilde{\mathcal{F}},\tilde P)$ such that $X_n\sim P_n$, $X\sim P$ and $X_n$ converges to $X$ $\tilde P$-a.s.

Denote by $\cb^*$ a minimizer of $\cb\mapsto\V^P_{\phi,h}(\cb)$, $r'_n=r_{n,h}(\cb^*)$ and $\tau'_n$ a $[0,1]$-valued measurable function such that $hP_{n,\cb^*,h}=P_n\tau'_n$, that is, such that $P_n\tau'_n(u)=h$ and
\[\1_{B_\phi(\cb^*,r'_n)}\leq\tau'_n\leq\1_{\bar{B}_\phi(\cb^*,r'_n)}.\]

According to Lemma \ref{lemme: r_borne}, with $K=\|c^*_1\|$ for instance, it comes $r'_n\leq r^+$, for some finite $r^+$.
Thus, up to extracting a subsequence, we may assume that $r'_n\rightarrow r'_0$ for some $r'_0\leq r_+$.
Moreover, it holds
\begin{multline}
\label{eq: borne des DFI}
\left|d_{\phi}(X_n,\cb^*)-d_{\phi}(X,\cb^*)\right|\leq |\phi(X_n)-\phi(X)| 
 +\max_{j\in[\![1,k]\!]}\|\nabla_{c^*_j}\phi\||X_n-X|.
\end{multline}
Thus, $d_{\phi}(X_n,\cb^*) \rightarrow d_{\phi}(X,\cb^*)$ a.e. when $n\rightarrow\infty$.
As a consequence, $\tau'_n(X_n)\rightarrow\1_{B_\phi(\cb^*,r'_0)}(X)$ $\tilde P$-a.e.
The dominated convergence theorem yields $h=P_n\tau'_n(u)\rightarrow P(B_\phi(\cb^*,r'_0))$. Thus, $\1_{B_\phi(\cb^*,r'_0)} = \tau_0$ $P$-a.e where $\tau_0$ denotes the trimming set associated with $\cb^*$ and $P$.
Moreover, since $\tau'_n(X_n)d_{\phi}(X_n,\cb^*)$ is bounded by $r_+$ and converges to $\tau_0(X)d_{\phi}(X,\cb^*)$ a.e., the dominated convergence theorem entails 
\[
R_{n,h}(\hat{\cb}_n) \leq R_{n,h}(\cb^*)\leq \E\left[\tau'_n(X_n)d_{\phi}(X_n,\cb^*)\right]\rightarrow \E\left[\tau_0(X)d_{\phi}(X,\cb^*)\right].
\]
Thus, $\limsup_{n\rightarrow\infty} R_{n,h}(\hat{\cb}_n) \leq R_{k,h}^*$.

Since, for $n\geq N$ and every $i\in[\![1,k]\!]$, $\|\hat c_{n,i}\|\leq C_P$, we have $\hat c_{u(n),i}\rightarrow c_i$ for some $c_i\in F_0\cap\bar{B}(0,C_P)$, where $\hat{\cb}_{u(n)}$ is a subsequence. Set $\cb=(c_1,c_2,\dots,c_k)$. Again, according to Lemma \ref{lemme: r_borne} with $K=C_P$, it comes that $r_{u(n),h}(\hat\cb_{u(n)})\rightarrow r$ for some $r\geq 0$.  Therefore, from \eqref{eq: borne des DFI}, Lemma \ref{lm:eq_normes_compact} and the continuity of $P$,
\[\lim_{n\rightarrow\infty}\tau_{u(n)}(X_{u(n)}) = \1_{B_\phi(\cb,r)}(X) \quad a.e.,\]
where $\tau_{u(n)} = \mathbbm{1}_{B_\phi(\hat{\cb}_{u(n)},r_{u(n),h}(\hat{\cb}_{u(n)}))}$. 
According to the dominated convergence theorem, we have $h = P(B_\phi(\cb,r)) = P_{u(n)}(\tau_{u(n)}(u))$.
Again, the dominated convergence theorem implies that
\[
\lim\inf_{n\rightarrow\infty} R_{u(n),h}(\hat{\cb}_{u(n)}) \geq P \1_{B_\phi(\cb,r)}(u)d_\phi(u,\cb) = R_{h}(\cb) \geq R^*_{k,h}.
\]

As a consequence, $\lim_{n \rightarrow \infty} R_{u(n),h}(\hat{\cb}_{u(n)}) = R_{h}(\cb) = R_{k,h}^*$ and $\cb$ is an optimal trimmed codebook.
Since, given a subsequence of $(\hat\cb_n)_{n\in\N}$, we may find a subsequence of indices $u(n)$ such that $\lim_{n \rightarrow \infty} R_{u(n),h}(\hat{\cb}_{u(n)}) =  R_{k,h}^*$, we deduce that $\lim_{n \rightarrow + \infty} R_{n,h}(\hat{\cb}_n) = R^*_{k,h}$. 
 
 Now assume that $\cb^*_h$ is unique. Then, for every subsequence of $(\hat\cb_n)_{n\in\N}$, there exists  $u(n)$ such that $\hat{\cb}_{u(n)} \rightarrow \cb = \cb^*_h$. Thus, a.e., $\hat{\cb}_n \rightarrow \cb^*_h$. \qed

\subsection{Proof of Theorem \ref{thm:slow_rates}}\label{sec:proof_thm_slow_rates}
For $h>0$ and a codebook $\cb$, we denote by $\tau_h(\cb)$ the trimming function $\1_{B_\phi(\cb,r_h(\cb))} + \delta_h(\cb)\1_{\partial B_\phi(\cb,r_h(\cb))}$, so that $P \tau_h(\cb)/h \in \mathcal{P}_h(\cb)$. We also denote by $\hat{\tau}_h(\cb)$ its empirical counterpart. Note that $\delta_h(\cb)$ and $\hat{\delta}_h(\cb)$ are smaller than $1$. It follows that
\begin{align*}
|P \tau_h(\cb) - P \hat{\tau}_h(\cb) | & = |(P-P_n) \hat{\tau}_h(\cb)| \\ & \leq |(P-P_n)B_\phi(\cb,r_{n,h}(\cb))| + \hat{\delta}(\cb)|(P-P_n)\partial B_\phi(\cb,r_{n,h}(\cb))| \\
 & \leq |(P-P_n)B_\phi(\cb,r_{n,h}(\cb))| + |(P-P_n)\partial B_\phi(\cb,r_{n,h}(\cb))|.
\end{align*}
As well, we bound $|P_n \tau_h(\cb) - P_n \hat{\tau}_h(\cb) |$ the same way. Combining Lemma \ref{lm: key lemma h- h+} and Proposition \ref{prop:bounded_empirical_codebooks}, we consider a probability event onto which, for all $j$, $\| \hat{\cb}_{n,j} \| \leq C_P$ and $\sup_{\cb \in (F_0 \cap \bar{B}(0,C_P))^{(k)}} r_{n,h}(\cb) \vee r_{h}(\cb)\leq r^+$. This occurs with probability at least $1 - n^{-p/2}$ for $C_P$ and $r^+$ large enough (more details are given in  Section \ref{tecsec:Proof_Proposition_bounded_empirical_codebook}). We also assume that the deviation bounds of Proposition \ref{prop:deviations} hold, with parameter $C_P$ and $r_+$, to define a global probability event with mass larger than $1-n^{-p/2}-2e^{-x}$. On this event, we have
\begin{align*}
R_h(\hat\cb_n)-R^*_{k,h}  & = \begin{multlined}[t] P d_{\phi}(u,\hat{\cb}_n) \hat{\tau}_h(\hat{\cb}_n) - P d_{\phi}(u,\cb^*) \hat{\tau}_h(\cb^*) 
 \\ + P d_{\phi}(u,\hat{\cb}_n) (\tau_h(\hat{\cb}_n) - \hat{\tau}_h(\hat{\cb}_n)  - (P d_{\phi}(u,\cb^*) (\tau_h(\cb^*) - \hat{\tau}_h(\cb^*)) 
 \end{multlined} \\
& \leq 2 \sup_{\cb \in (F_0 \cap \bar{B}(0,C_P))^{(k)}, r \leq r^+} |(P-P_n) d_{\phi}(u,\cb)\1_{B_\phi(\cb,r)}(u)| \\
 & \quad + 2 (r^+)^2 \sup_{ \cb \in \Omega^{(k)}, r \geq 0} |(P-P_n) B_{\phi}(\cb,r)| \\
 & \quad + 2 \sup_{\cb \in (F_0 \cap \bar{B}(0,C_P))^{(k)}, r \leq r^+} |(P-P_n) d_{\phi}(u,\cb)\1_{\partial B_\phi(\cb,r)}(u)| \\
 & \quad + 2 (r^+)^2 \sup_{ \cb \in \Omega^{(k)}, r \geq 0} |(P-P_n) \partial B_{\phi}(\cb,r)|. 
\end{align*}  
Therefore, $R_h(\hat\cb_n)-R^*_{k,h} \leq {C_P(1 + \sqrt{x})}/{\sqrt{n}}$,  for some constant $C_P$. \qed

\subsection{Proof of Corollary \ref{cor:slow_rates_expectation}}\label{sec:proof_cor_slow_rates_expectation}
Denote by $A$ the intersection of the probability events described in Proposition \ref{prop:bounded_empirical_codebooks}, that has probability larger than $1-n^{-\frac{p}{2}}$. Decomposing the excess risk as in the proof of Theorem \ref{thm:slow_rates} yields
\begin{align*}
R_h(\hat{\cb}_n) - R^*_{k,h} & = (R_h(\hat{\cb}_n) - R^*_{k,h}) \mathbbm{1}_{A} + (R_h(\hat{\cb}_n) - R^*_{k,h})\mathbbm{1}_{A^c}.
\end{align*}
According to Proposition \ref{prop:deviations}, we have $\mathbb{E} ((R_h(\hat{\cb}_n) - R^*_{k,h}) \mathbbm{1}_{A}) \leq C_P/\sqrt{n}$.
It only remains to bound the expectation of the second term. This is the aim of the following Lemma, whose proof is deferred to Section \ref{tecsec:proof_lem_expected_distortion_tail}.
\begin{lemma}\label{lem:expected_distortion_tail}
Assume that $P\|u\|^q \psi^q(k\|u\|/h) < \infty$. Then there exists a constant $C_q$ such that $\E R_h^q(\hat{\cb}_n) \leq C_P^q$. 
\end{lemma}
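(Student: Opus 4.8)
The plan is to dominate $R_h(\hat{\cb}_n)$ by a single Bregman term $P\,d_\phi(u,\hat{c}_{n,j_0})$ attached to one carefully chosen codepoint whose norm is controlled by $\frac{k}{h}P_n\|u\|$, and then to integrate, taking care that $\psi$ is \emph{never} evaluated at an argument larger than $\frac{k}{h}\|X_i\|$, so that the hypothesis $P\|u\|^q\psi^q(k\|u\|/h)<\infty$ applies verbatim. We may assume $\psi\ge 1$ (replace it by $\max(\psi,1)$, still non-decreasing and convex), recall that $q=p/(p-1)\in(1,2]$ so $t\mapsto t^q$ is convex non-decreasing and $P\|u\|^q<\infty$, and note $k/h\ge 1$, hence $\psi(t)\le\psi\!\big(\frac{k}{h}t\big)$ for all $t\ge 0$.

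First I would extract a bounded ``anchor'' codepoint. Writing $\tilde P_{\hat{\cb}_n}\in\mathcal{P}_h(\hat{\cb}_n)$, the cells $W_1(\hat{\cb}_n),\dots,W_k(\hat{\cb}_n)$ partition $\R^d$, so some cell $j_0$ has $\tilde P_{\hat{\cb}_n}(W_{j_0})\ge 1/k>0$; in particular it is nonempty. Since $\hat{\cb}_n$ minimizes $R_{n,h}$, replacing $\hat{c}_{n,j_0}$ by the local mean $m_{j_0}$ of $\tilde P_{\hat{\cb}_n}$ on $W_{j_0}$ while keeping the same trimming set and partition would, by Proposition~\ref{prop:bregman_bias_variance}, strictly decrease $R_{n,h}$ unless $\hat{c}_{n,j_0}=m_{j_0}$; hence $\hat{c}_{n,j_0}=m_{j_0}$ is a convex combination of the $X_i$'s whose coefficients are each at most $\frac{1}{nh\,\tilde P_{\hat{\cb}_n}(W_{j_0})}\le\frac{k}{nh}$, so $\|\hat{c}_{n,j_0}\|\le\frac{k}{nh}\sum_{i=1}^n\|X_i\|=\frac{k}{h}P_n\|u\|$. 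Moreover, by Definition~\ref{def:trimmed_distortion} and Lemma~\ref{lm: ecriture DTM}, $R_h(\hat{\cb}_n)=h\,R(\tilde P_{\hat{\cb}_n},\hat{\cb}_n)\le P\,d_\phi(u,\hat{\cb}_n)\le P\,d_\phi(u,\hat{c}_{n,j_0})$, since $h\tilde P_{\hat{\cb}_n}\ll P$ and $d_\phi\ge 0$; note this uses none of the other codepoints, so possibly empty cells cause no difficulty.

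Next I would use the symmetrized identity $d_\phi(u,c)+d_\phi(c,u)=\langle\nabla_u\phi-\nabla_c\phi,\,u-c\rangle$, which with $d_\phi(c,u)\ge 0$ and the assumption on $\psi$ yields, for $u\in supp(P)$ and $c\in F_0$, the $\phi(u)$-free bound $d_\phi(u,c)\le(\psi(\|u\|)+\psi(\|c\|))(\|u\|+\|c\|)$. Applying this with $c=\hat{c}_{n,j_0}$, using $\psi$ non-decreasing and $\|\hat{c}_{n,j_0}\|\le\frac{k}{h}P_n\|u\|$, then Jensen (valid for $q\ge1$) in the $P$-integral over $u$ and in $R_h(\hat{\cb}_n)^q\le(P\,d_\phi(u,\hat{c}_{n,j_0}))^q\le P\,d_\phi(u,\hat{c}_{n,j_0})^q$, I would expand the product, apply $(a+b+c+d)^q\le 4^{q-1}(a^q+b^q+c^q+d^q)$, take $\E_{\X_n}$ and use Fubini. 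The four terms are then each finite, uniformly in $n$: the term involving only $\psi(\|u\|),\|u\|$ is at most $P[\|u\|^q\psi^q(\tfrac{k}{h}\|u\|)]<\infty$; the mixed terms factorize into a $u$-part ($P\|u\|^q$ or $P\psi^q(\|u\|)\le P\psi^q(\tfrac{k}{h}\|u\|)$, both finite) times a sample part ($\E[(\tfrac{k}{h}P_n\|u\|)^q]\le(\tfrac{k}{h})^qP\|u\|^q$, or $\E[\psi^q(\tfrac{k}{h}P_n\|u\|)]\le P\psi^q(\tfrac{k}{h}\|u\|)$ by convexity of $\psi^q$ and Jensen); and the last term uses that $t\mapsto t\psi(t)$ is convex non-decreasing on $[0,\infty)$ (second derivative $2\psi'(t)+t\psi''(t)\ge0$), hence $\E[(\tfrac{k}{h}P_n\|u\|\,\psi(\tfrac{k}{h}P_n\|u\|))^q]\le(\tfrac{k}{h})^qP[\|u\|^q\psi^q(\tfrac{k}{h}\|u\|)]<\infty$, which is exactly the hypothesis. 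Collecting these bounds gives $\E R_h^q(\hat{\cb}_n)\le C_q$, a constant depending only on $p,k,h,P,\phi$.

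The main obstacle, I expect, is the first step: rigorously justifying that a global minimizer of $R_{n,h}$ is a local mean on each nonempty Voronoi cell in spite of the coupling between the trimming radius, the partition, and the codepoints (and in spite of possibly empty cells), so as to produce a single codepoint of norm $\le\frac{k}{h}P_n\|u\|$. The remainder is bookkeeping, whose one real constraint — keeping every occurrence of $\psi$ at an argument $\le\frac{k}{h}\|X_i\|$ — is precisely why the $\phi(u)$-free divergence bound and the convexity of $t\mapsto t\psi(t)$ are needed.
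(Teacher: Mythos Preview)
Your argument follows the paper's route almost exactly: anchor to a codepoint $\hat c_{n,j_0}$ lying in a cell of trimmed mass at least $1/k$, invoke the centroid condition to get $\|\hat c_{n,j_0}\|\le\frac{k}{h}P_n\|u\|$, dominate $R_h(\hat{\cb}_n)$ by $P\,d_\phi(u,\hat c_{n,j_0})$, move the $q$-th power inside by Jensen, and finish with convexity of $\psi^q$ and of $t\mapsto t\psi(t)$. The one substantive difference is your pointwise bound: you use the symmetrized identity $d_\phi(u,c)+d_\phi(c,u)=\langle\nabla_u\phi-\nabla_c\phi,\,u-c\rangle$ to obtain the $\phi$-free estimate $d_\phi(u,c)\le(\psi(\|u\|)+\psi(\|c\|))(\|u\|+\|c\|)$, whereas the paper writes $d_\phi(u,c)\le|\phi(u)|+|\phi(c)|+\psi(\|c\|)\|u-c\|$ and then controls $|\phi(u)|,|\phi(c)|$ by a separate mean-value step. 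Your version is cleaner and keeps every $\psi$ evaluated at $\le\frac{k}{h}\|X_i\|$, so the moment hypothesis applies verbatim.

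There is, however, one slip you should fix. You introduce $\tilde P_{\hat{\cb}_n}\in\mathcal P_h(\hat{\cb}_n)$, which in the paper's notation is the \emph{$P$-based} trimming measure, and then use it both to pick $j_0$ by pigeonhole \emph{and} to run the centroid argument. But $\hat{\cb}_n$ minimizes the \emph{empirical} trimmed risk $R_{n,h}$, so Proposition~\ref{prop:centroid} only forces $\hat c_{n,j_0}$ to equal the local mean of the \emph{$P_n$-based} trimming measure on $W_{j_0}$; it says nothing about local means of the $P$-trimming measure. The remedy is immediate (and is what the paper does): choose $j_0$ so that $P_n\big(\hat\tau_h(\hat{\cb}_n)\,\1_{W_{j_0}}\big)\ge h/k$, and then the centroid condition gives the norm bound you want. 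The inequality $R_h(\hat{\cb}_n)\le P\,d_\phi(u,\hat c_{n,j_0})$ holds for \emph{any} index $j_0$, since $R_h(\cb)\le P\,d_\phi(u,\cb)\le P\,d_\phi(u,c_j)$ pointwise, so the rest of your argument is unaffected.
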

Equipped with Lemma \ref{lem:expected_distortion_tail}, we may bound $\E(R_h(\hat{\cb}_n)\1_{A^c})$ as follows, using Hölder's inequality,
\begin{align*}
\E(R_h(\hat{\cb}_n)\1_{A^c}) & \leq \left (\mathbb{P}(A^c) \right )^{\frac{1}{p}} \left (\E R_h^q(\hat{\cb}_n) \right )^\frac{1}{q} \leq C_P/{\sqrt{n}}.
\end{align*}

\subsection{Proof of Theorem \ref{thm:FSBP_2}}\label{sec:proof_thm_FSBP2}
A key ingredient of the proof of Theorem \ref{thm:FSBP_2} is the following lemma, ensuring that every cell of a trimmed and corrupted empirical distortion minimizer contains a minimal portion of signal points. In what follows, the $\hat{\tau}$'s are the trimming function with respect to $P_n$ (uncorrupted sample), as defined in Section \ref{sec:proof_thm_slow_rates}.  
\begin{lemma}\label{lem:enough_weight_optimalcells}
Assume that $B_h>0$ (see Definition \ref{def:discernability_factor}), let $b < B_h$ and $b<b_1<B_h$ such that $b = \kappa_1 b_1$, with $\kappa_1 <1$. Denote by $\beta_1 = (1-\kappa_1)b_1\left [h \wedge (1-h)\right ]/2$. Assume that $s/(n+s) \leq b$. Then, for $n$ large enough, with probability larger than $1-n^{-\frac{p}{2}}$, we have, for all $j\in[\![1,k]\!]$, 
\[
P_n \left ( \hat{\tau}_{h_b^-}(\hat{\cb}_{n+s,h})\1_{W_j(\hat{\cb}_{n+s,h})} \right ) \geq \beta_1.
\]
\end{lemma}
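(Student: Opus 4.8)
The plan is to argue by contradiction, adapting the ``delete an under-populated centre'' device of \cite{Cuesta97}. Suppose that for some index $j_0$ the cell $W_{j_0}(\hat{\cb}_{n+s,h})$ satisfies $P_n(\hat{\tau}_{h_b^-}(\hat{\cb}_{n+s,h})\1_{W_{j_0}(\hat{\cb}_{n+s,h})})<\beta_1$; deleting the corresponding codepoint then yields a $(k-1)$-point codebook whose $h_{b_1}^-$-trimmed distortion for $P_n$ is bounded by the optimal $k$-point $h_{b_1}^+$-trimmed distortion, up to a vanishing error, which contradicts $R^*_{k-1,h_{b_1}^-}-R^*_{k,h_{b_1}^+}>0$ (a fixed positive number, since $b_1<B_h$). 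First I would fix the good event $\mathcal{E}$: it is the event of Proposition \ref{prop:bounded_empirical_codebooks} — invoked with internal constants chosen so that the trimming levels $h_b^+$ and $h_{b_1}^-$ appearing below lie in its admissible range — intersected with the events of Proposition \ref{prop:deviations} (taken with $x\asymp\tfrac{p}{2}\log n$, and with $K$, $r^+$ large enough to cover all codebooks and radii used in the proof) and of Lemma \ref{lm: Marcinkiewicz}; after adjusting constants this event has probability at least $1-n^{-p/2}$. On $\mathcal{E}$, the estimate already used in the proof of Theorem \ref{thm:slow_rates}, together with Lemma \ref{lem:bound_radius_balls} and Lemma \ref{lemme: r_borne}, gives $|R_{n,s}(\cb)-R_s(\cb)|\le\varepsilon_n$ with $\varepsilon_n\asymp\sqrt{\log n/n}$, uniformly over codebooks in a fixed compact of $F_0$ and $s\le h$. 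Using Lemma \ref{lm: key lemma h- h+} I also obtain a bounded minimizer $\cb^*$ of $R_{k,h_b^+}$ and bounded population and empirical minimizers of $R_{k-1,h_{b_1}^-}$ (the relevant gaps being positive because $b<b_1<B_h$).

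Next I would establish the chain relating the corrupted and uncorrupted trimmed distortions. Writing $\hat{\cb}=\hat{\cb}_{n+s,h}$, $\hat\gamma=s/(n+s)\le b$ and $P_{n+s}=\tfrac{n}{n+s}P_n+\tfrac{s}{n+s}\hat{N}$ with $\hat{N}$ the empirical measure of the noise, I decompose an optimal mass-$h$ sub-measure of $P_{n+s}$ for $\hat{\cb}$ into a signal part and a noise part; rescaling the signal part by $\tfrac{n+s}{n}$ produces a sub-measure of $P_n$ of mass at least $h-\tfrac{s(1-h)}{n}\ge h_b^-$ (using $h-h_b^-=\tfrac{b(1-h)}{1-b}$ and $\hat\gamma\le b$) and of $d_\phi(\cdot,\hat{\cb})$-integral at most $\tfrac{n+s}{n}R_{n+s,h}(\hat{\cb})$, so the empirical version of Lemma \ref{lm: dtm decreasing} gives $R_{n,h_b^-}(\hat{\cb})\le\tfrac{n+s}{n}R_{n+s,h}(\hat{\cb})=\tfrac{n+s}{n}R^*_{k,h}(P_{n+s})\le\tfrac{n+s}{n}R_{n+s,h}(\cb^*)$. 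Conversely $\tfrac{n}{n+s}P_n\hat\tau_{h_{\hat\gamma}^+}(\cb^*)$ is a valid mass-$h$ sub-measure of $P_{n+s}$, whence $R_{n+s,h}(\cb^*)\le\tfrac{n}{n+s}R_{n,h_{\hat\gamma}^+}(\cb^*)$; combining these, using $h_{\hat\gamma}^+\le h_b^+$ with Lemma \ref{lm: dtm decreasing}, then the deviation estimate and $R_{h_b^+}(\cb^*)=R^*_{k,h_b^+}$, I obtain on $\mathcal{E}$
\[
R_{n,h_b^-}(\hat{\cb})\ \le\ R_{n,h_b^+}(\cb^*)\ \le\ R^*_{k,h_b^+}+\varepsilon_n\ \le\ R^*_{k,h_{b_1}^+}+\varepsilon_n .
\]

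I would then prove the matching lower bound under the contradiction hypothesis. On each cell $W_j(\hat{\cb})$ with $j\neq j_0$ one has $d_\phi(u,\hat{\cb}^{-j_0})=d_\phi(u,\hat{\cb})$, where $\hat{\cb}^{-j_0}$ is $\hat{\cb}$ with its $j_0$-th codepoint removed; hence the restriction of $P_n\hat\tau_{h_b^-}(\hat{\cb})$ to $\bigcup_{j\neq j_0}W_j(\hat{\cb})$ is a sub-measure of $P_n$ of mass $m:=h_b^--P_n(\hat\tau_{h_b^-}(\hat{\cb})\1_{W_{j_0}(\hat{\cb})})>h_b^--\beta_1$ witnessing $R_{n,m}(\hat{\cb}^{-j_0})\le R_{n,h_b^-}(\hat{\cb})$. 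A short computation from $\beta_1=(1-\kappa_1)b_1(h\wedge(1-h))/2$ and $b=\kappa_1b_1$ shows $\beta_1\le\tfrac12(h_b^--h_{b_1}^-)$, so $m>h_{b_1}^-$; by monotonicity of $s\mapsto R^*_{k-1,s}(P_n)$ (Lemma \ref{lm: dtm decreasing}) and then the deviation estimate applied to the bounded population and empirical optimal $(k-1)$-point codebooks at level $h_{b_1}^-$, I get $R^*_{k-1,h_{b_1}^-}-\varepsilon_n\le R^*_{k-1,h_{b_1}^-}(P_n)\le R^*_{k-1,m}(P_n)\le R_{n,m}(\hat{\cb}^{-j_0})\le R_{n,h_b^-}(\hat{\cb})$. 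Chaining with the previous display yields $R^*_{k-1,h_{b_1}^-}-R^*_{k,h_{b_1}^+}\le2\varepsilon_n$ on $\mathcal{E}$, which is impossible for $n$ large since the left-hand side is a fixed positive constant; this proves the lemma.

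I expect the main obstacle to be organisational rather than conceptual: carrying out cleanly the bookkeeping of the nested trimming levels $h_{b_1}^-<h_b^-<h<h_b^+\le h_{b_1}^+$ while passing back and forth between sub-measures of $P_n$ and of $P_{n+s}$, making sure the mass lost or gained at each passage is exactly absorbed by the gaps $h-h_b^-=\tfrac{b(1-h)}{1-b}$ and $h_b^+-h$, and checking that every auxiliary codebook required to be bounded has its trimming level inside the admissible range of Lemma \ref{lm: key lemma h- h+} or Proposition \ref{prop:bounded_empirical_codebooks} for a suitable choice of their internal constants. The probabilistic content is light and entirely inherited from Proposition \ref{prop:deviations}, Proposition \ref{prop:bounded_empirical_codebooks} and Lemma \ref{lm: Marcinkiewicz}.
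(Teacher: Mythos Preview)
Your proposal is correct and follows essentially the same approach as the paper: set up the good event from Propositions \ref{prop:deviations}, \ref{prop:bounded_empirical_codebooks} and Lemma \ref{lm: Marcinkiewicz}; upper-bound via optimality of $\hat{\cb}_{n+s,h}$ against $\cb^*$ at level $h_{b_1}^+$; lower-bound, under the contradiction hypothesis, by deleting the under-populated cell to reach the $(k-1)$-point problem at level $\ge h_{b_1}^-$ (using exactly the inequality $2\beta_1\le h_b^- - h_{b_1}^-$ that you check); and conclude by the fixed positive gap $R^*_{k-1,h_{b_1}^-}-R^*_{k,h_{b_1}^+}$. The only cosmetic difference is that the paper keeps $\hat{R}_{n+s,h}(\hat{\cb}_{n+s,h})$ as the pivot quantity throughout, whereas you first pass to $R_{n,h_b^-}(\hat{\cb})$ and bound both sides of it; the mass bookkeeping and the invocation of Proposition \ref{prop:bounded_empirical_codebooks} (with its internal ``$b$'' taken slightly above $b_1$, as you note) are identical.
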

The proof of Lemma \ref{lem:enough_weight_optimalcells} is postponed to Section \ref{tecsec:proof_lem_enough_weight_optimalcells}. We are now in a position to prove Theorem \ref{thm:FSBP_2}.
\begin{proof}[Proof of Theorem \ref{thm:FSBP_2}]
We adopt the same notation and assumptions as in the proof of Lemma \ref{lem:enough_weight_optimalcells}. We may write
\begin{align*}
(n+s)\hat{R}_{n+s,h} (\hat{\cb}_{n+s,h}) & \leq n \left ( R^*_{h_b^+ + \beta_n} + \alpha_n \right ).
\end{align*}
On the other hand, recall that for all $j\in[\![1,k]\!]$, 
$
P_n \left ( \hat{\tau}_{h_b^-}(\hat{\cb}_{n+s,h})\1_{W_j(\hat{\cb}_{n+s,h})} \right ) \geq \beta_1
$,
and denote by $\mathbf{m}=(m_1, \hdots, m_k)$ the codebook such that
\[
m_j = \left [{P_n ( u \hat{\tau}_{h_b^-}(\hat{\cb}_{n+s,h})\1_{W_j(\hat{\cb}_{n+s,h})} )}\right ] / \left [{P_n \left ( \hat{\tau}_{h_b^-}(\hat{\cb}_{n+s,h})\1_{W_j(\hat{\cb}_{n+s,h})} \right )} \right ].
\]
Then, for all $j$, $\|m_j\| \leq {P_n \|u\|}/{\beta_1} \leq C_P$. Using Proposition \ref{prop:bregman_bias_variance}, we may write
\begin{align*}
\hat{R}_{n+s,h} (\hat{\cb}_{n+s,h}) & \geq n\hat{R}_{n,h_b^-}(\hat{\cb}_{n+s,h})/(n+s) \\ 
& \geq \begin{multlined}[t] \frac{n}{n+s} \left [ \sum_{j=1}^{k} P_n \left ( \hat{\tau}_{h_b^-}(\hat{\cb}_{n+s,h})\1_{W_j(\hat{\cb}_{n+s,h})} \right ) d_\phi(m_j,\hat{c}_{n+s,h,j}) \right . \\
 + \left . \vphantom{\sum_{j=1}^{k}} \hat{R}_{n,h^-_b}(\mathbf{m}) \right ].
 \end{multlined}
\end{align*} 
The last term satisfies 
\begin{align*}
\hat{R}_{n,h^-_b}(\mathbf{m})  & \geq P_n d_\phi(u,\mathbf{m})\hat{\tau}_{h_b^-}(\mathbf{m})  \geq P_n d_\phi(u,\mathbf{m})\tau_{h_b^- -\beta_n}(\mathbf{m})  \\
                            &  \quad \geq P d_\phi(u,\mathbf{m})\tau_{h_b^- -\beta_n}(\mathbf{m}) - \alpha_n \geq R^*_{h_b^- - \beta_n} - \alpha_n. 
\end{align*}
Thus, for all $j\in[\![1,k]\!]$, 
\begin{align}\label{eq:PSBP_bregman_closeness}
\beta_1 d_\phi(m_j,\hat{c}_{n+s,h,j}) & \leq \left [ 2 \alpha_n + R^*_{h_b^+ + \beta_n} - R^*_{h_b^- - \beta_n} \right ].
\end{align}
\end{proof}

\subsection{Proof of Corollary \ref{cor:FSBP_3}}\label{sec:proof_cor_FSBP3}
With the same setting as Theorem \ref{thm:FSBP_2}, according to \eqref{eq:PSBP_bregman_closeness}, we have, almost surely, for $n$ large enough
$
d_\phi(K,\hat{\cb}_{n+s,j}) \leq   2[ 2 \alpha_n + R^*_{h_b^+ + \beta_n} - R^*_{h_b^- - \beta_n} ]/[b(1-\kappa_1)(h \wedge (1-h)]
$,
whenever $ s/(n+s) \leq b$ and $B_h \kappa_1 > b$. Now let  $c$ be defined as
\[
c=\sup \{ r >0 \mid \{x \mid d_\phi(K,x) \leq r \} \subset B(K,1) \}.
\]
If $c>0$, then requiring $b$ small enough and $s/(n+s) \leq b$ ensures that $d_\phi(K,\hat{\cb}_{n+s,j}) \leq c/2$, almost surely, for $n$ large enough, hence the result. 
 
Thus, it remains to prove that $c >0$. Assume that for every $r >0$, $\{x \mid d_\phi(K,x) \leq r \} \nsubseteq B(K,1)$. Then there exists $x_0 \in K$ and a sequence $v_n$  satisfying $d_\phi(x_0,v_n) \rightarrow 0$, along with $\|x_0 - v_n\| > 1$. Noting that, for $t\geq 0$, $d_{\phi}(x_0 ,v_n + t(v_n-x_0)) \geq d_{\phi}(x_0 ,v_n )$ yields $d_\phi(x_0,v_n) \geq d_\phi(x_0,v'_n)$, where $v'_n = x_0 + (v_n - x_0)/\|v_n-x_0\|$. Therefore, $d_\phi(v_n,x_0) \geq \inf_{\|u-x_0\|=1} d_\phi(x_0,u) >0$, hence the contradiction. 

At last, if $c\mapsto d_\phi(x,c)$ is a proper map, then $
d_\phi(K,\hat{\cb}_{n+s,j}) \leq   2[ 2 \alpha_n + R^*_{h_b^+ + \beta_n} - R^*_{h_b^- - \beta_n} ]/[b(1-\kappa_1)(h \wedge (1-h)]
$,
whenever $ s/(n+s) \leq b$ and $B_h \kappa_1 > b$ entails that, almost surely, for $n$ large enough, $\|\hat{\cb}_{n+s}\|_2 < + \infty$, thus $\widehat{BP}_{n,h} > b$. Hence $\widehat{BP}_{n,h} \geq B_h$, almost surely for $n$ large enough.   
\qed

\printbibliography


\section{Technical proofs for Section \ref{sec:clustering_with_bregman_div}}\label{tecsec:proofs_definition_DTM}
\subsection{Proof of Lemma \ref{lm: ecriture DTM}}\label{tecsec_proof_lemma_ecriture_DTM}
\begin{lemma*}[\ref{lm: ecriture DTM}]
For all $\cb\in\Omega^{(k)}$, $h\in(0,1]$, $\tilde P\in \mathcal{P}_h$ and $\tilde{P}_\cb \in \mathcal{P}_h(\cb)$,
\[R(\tilde{P}_\cb,\cb) \leq R(\tilde{P},\cb).\]
Equality holds if and only if $\tilde P\in \mathcal{P}_h(\cb)$.
\end{lemma*}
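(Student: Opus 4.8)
The plan is to compare the two sub-measures pointwise on the relevant pieces of $\R^d$ determined by the Bregman ball $B_\phi(\cb, r_h(\cb))$ and its boundary $\partial B_\phi(\cb, r_h(\cb))$. Write $r = r_h(\cb)$ for brevity and recall that, by definition of $r_h(\cb)$ in \eqref{eq: def rayon r}, we have $P(B_\phi(\cb,r)) \le h \le P(\bar B_\phi(\cb,r))$. The key observation is that on the open ball $B_\phi(\cb,r)$, the function $x \mapsto d_\phi(x,\cb)$ is strictly smaller than $r^2$, on the boundary it equals $r^2$, and on the complement of $\bar B_\phi(\cb,r)$ it is strictly larger than $r^2$. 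By definition of $\mathcal{P}_h(\cb)$, the measure $\tilde P_\cb$ coincides with $\frac{1}{h}P$ on $B_\phi(\cb,r)$ and is supported in $\bar B_\phi(\cb,r)$; since $\tilde P_\cb$ is a probability measure with $h \tilde P_\cb \ll P$, it must put the remaining mass $h - P(B_\phi(\cb,r))$ on the boundary sphere.

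First I would decompose $R(\tilde P,\cb) - R(\tilde P_\cb,\cb) = \tilde P\, d_\phi(u,\cb) - \tilde P_\cb\, d_\phi(u,\cb)$ over the three regions $B_\phi(\cb,r)$, $\partial B_\phi(\cb,r)$, and $\R^d \setminus \bar B_\phi(\cb,r)$. On $B_\phi(\cb,r)$, since $\tilde P_\cb = \frac{1}{h}P \gg \tilde P$ there (any $\tilde P \in \mathcal{P}_h$ satisfies $h\tilde P \ll P$, so $\tilde P \le \frac1h P = \tilde P_\cb$ on that set), and since $d_\phi(u,\cb) < r^2$ there, the contribution of $\tilde P_\cb$ minus $\tilde P$ is nonnegative and weighted by values $< r^2$; on the outer region $\R^d \setminus \bar B_\phi(\cb,r)$, $\tilde P_\cb$ vanishes while $\tilde P$ may be positive, weighted by values $> r^2$. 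The mass that $\tilde P$ lacks on the inner ball relative to $\tilde P_\cb$, plus any mass $\tilde P$ places on the boundary beyond what $\tilde P_\cb$ does, is exactly compensated (in total mass, both being $1$) by mass $\tilde P$ places outside $\bar B_\phi(\cb,r)$ or on the boundary. Rearranging, $R(\tilde P,\cb) - R(\tilde P_\cb,\cb)$ equals an integral of $(d_\phi(u,\cb) - r^2)$ against a signed measure that is $\le 0$ on $B_\phi(\cb,r)$, $=0$ on $\partial B_\phi(\cb,r)$ up to sign bookkeeping, and $\ge 0$ outside — and in each region the integrand has the matching sign, so the whole expression is $\ge 0$.

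The cleanest way to organize the arithmetic is to write $d_\phi(u,\cb) = r^2 + (d_\phi(u,\cb) - r^2)$ and use that $\tilde P_\cb(\R^d) = \tilde P(\R^d) = 1$ to cancel the $r^2$ term, reducing to showing $\int (d_\phi(u,\cb)-r^2)\, d\tilde P - \int (d_\phi(u,\cb)-r^2)\, d\tilde P_\cb \ge 0$; the second integral is $\le 0$ because $\tilde P_\cb$ is supported on $\bar B_\phi(\cb,r)$ where $d_\phi(u,\cb) \le r^2$, and then one shows the first integral dominates the second using $\tilde P \le \frac1h P$ everywhere together with the mass constraint. For the equality case: equality forces the signed-measure integral to vanish, which (since $d_\phi(u,\cb) - r^2 \ne 0$ off the boundary) forces $\tilde P$ and $\tilde P_\cb$ to agree off $\partial B_\phi(\cb,r)$ — in particular $\tilde P$ must equal $\frac1h P$ on $B_\phi(\cb,r)$ and vanish outside $\bar B_\phi(\cb,r)$ — which is precisely the definition of $\tilde P \in \mathcal{P}_h(\cb)$; conversely any such $\tilde P$ trivially gives equality.

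The main obstacle is purely bookkeeping: handling the boundary sphere $\partial B_\phi(\cb,r)$ carefully, since $P$ need not be continuous there and $\tilde P$, $\tilde P_\cb$ may split that mass differently. The point that rescues the argument is that on the boundary $d_\phi(u,\cb) = r^2$ exactly, so the integrand $d_\phi(u,\cb) - r^2$ vanishes and the boundary contributes nothing to the difference regardless of how the two measures allocate mass there — which is also why equality does not pin down $\tilde P$ on the boundary, consistent with the statement that equality holds precisely for $\tilde P \in \mathcal{P}_h(\cb)$ (a class that imposes no constraint on the boundary beyond the support condition).
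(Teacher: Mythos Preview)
Your proof plan is correct and complete in its essentials; the arithmetic you sketch with the centering $d_\phi(u,\cb)-r^2$ goes through exactly as you say, and the equality analysis is right (the strict inequality $d_\phi(u,\cb)-r^2<0$ on the open ball, together with the nonnegativity of the signed measure $\tfrac1h P-\tilde P$ there, forces $\tilde P=\tfrac1h P$ on $B_\phi(\cb,r)$; symmetrically outside $\bar B_\phi(\cb,r)$).

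However, your route is \emph{different} from the paper's. The paper does not decompose over the three regions at all: instead it passes to the quantile function of the real random variable $d_\phi(X,\cb)$. Writing $F_\cb^{-1}(u)=r_u^2(\cb)$ for the $u$-quantile under $P$, the paper observes that for $\tilde X^*\sim\tilde P_\cb$ the quantile function of $d_\phi(\tilde X^*,\cb)$ is exactly $u\mapsto F_\cb^{-1}(hu)$, so $R(\tilde P_\cb,\cb)=\int_0^1 F_\cb^{-1}(hu)\,du$. For an arbitrary $\tilde P\in\mathcal P_h$, the sub-measure condition $h\tilde P\le P$ gives $h\tilde F_\cb(s)\le F_\cb(s)$ for all $s$, hence $\tilde F_\cb^{-1}(u)\ge F_\cb^{-1}(hu)$ for all $u\in[0,1]$; integrating in $u$ yields the inequality, and equality forces the quantile functions to agree a.e., which unwinds to $\tilde P\in\mathcal P_h(\cb)$.

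What each buys: the quantile argument is shorter and situates the lemma in the distance-to-measure tradition (it is exactly the ``integrate the smallest quantiles'' characterization). Your direct measure comparison is more elementary---no quantile machinery, no appeal to $\tilde F^{-1}(U)\stackrel{d}{=}d_\phi(\tilde X,\cb)$---and it makes the boundary bookkeeping and the equality case completely explicit, which is a genuine pedagogical advantage. Both handle the possibly atomic boundary $\partial B_\phi(\cb,r)$ without extra hypotheses.
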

For $u\in[0,1]$, let $F_\cb^{-1}(u)=r_{u}^2(\cb)$ denote the $u$-quantile of the random variable $d_{\phi}(X,\cb)$ for $X\sim P$. That is,
\begin{align*}
F_\cb^{-1}(u)&=\inf\left\{s\geq 0\mid  \mbox{ with probability }\geq u, d_\phi(X,\cb)\leq s \right\}\\
&=\inf\left\{r^2\geq 0\mid P(\bar{B}_\phi(\cb,r))\geq u\right\}.
\end{align*}
If $\tilde F_\cb^{*\ -1}(u)$ denotes the $u$-quantile of $d_{\phi}(\tilde X^*,\cb)$, for $\tilde X^*\sim\tilde P_{\cb}\in \mathcal{P}_{h}(\cb)$, then $\tilde F_\cb^{*\ -1}(u)=F_\cb^{-1}(hu)$.
Let $U$ be a random variable uniform on $[0,1]$, then $\tilde F_\cb^{*\ -1}(U)$ and $d_{\phi}(\tilde X^*,\cb)$ have the same distribution. Thus, we may write:
\begin{align*}
R(\tilde{P}_\cb,\cb)&=\E_{\tilde X^*}d_{\phi}(\tilde X^*,\cb)=\int_{0}^1F_\cb^{-1}(hu) du.
\end{align*}
Let $\tilde P\in \mathcal{P}_h(P)$ be a Borel probability measure on $\Omega$ such that $h\tilde P$ is a sub-measure of $P$, and let $\tilde F_\cb^{-1}(u)$ denote the $u$-quantile of $d_{\phi}(\tilde X,\cb)$ for $\tilde X\sim\tilde P$. Since $P(B_\phi(\cb,u))\geq h\tilde P(B_\phi(\cb,u))$, it holds that $\tilde F_\cb^{-1}(u)\geq F_\cb^{-1}(hu)$ . Thus, we may write 
\begin{align*}
R(\tilde{P}, \cb) & = \int_{0}^1\tilde F_\cb^{-1}(u) du \geq R(\tilde{P}_\cb,\cb).
\end{align*}
Note that equality holds if and only if $\tilde F_\cb^{-1}(u)=\tilde F_\cb^{*\ -1}(u)$ for almost all $u\in[0,1]$, that is $\tilde{P} \in \mathcal{P}_h(\cb)$. \qed
\section{Technical proofs for Section \ref{sec:theoretical_results}}\label{tecsec_proofs_sec_theoretical_results}
\subsection{Proofs for Example \ref{Ex:FSBP}}\label{tecsec:proof_ex_FSBP}
\begin{example*}[\ref{Ex:FSBP}]
Let $\phi_1=\|.\|^2$, $\phi_2 = \exp(-.)$, $\Omega = \R$, $P = (1-p) \delta_{-1} +  p\delta_{1}$, with $p\leq 1/2$. Then, for $\phi = \phi_j$, $j \in \{1,2\}$, $k=2$ and $h > (1-p)$, we have $B_h = \frac{h+p-1}{p} \wedge (1-h)$. Let $Q_{\gamma,N} = (1-\gamma)P + \gamma \delta_{N}$. The following holds.
\begin{itemize}
\item If $(1+p)h>1$, $B_h = 1-h$, and for every $\gamma > 1-h$, any sequence of optimal $2$-points $h$-trimmed codebook $\cb^{*}_2(Q_{\gamma,N})$ for $Q_{\gamma,N}$ satisfies 
\[
\lim_{ N \rightarrow + \infty} \|\cb^{*}_2 (Q_{\gamma,N})\| = + \infty.
\]
\item If $(1+p)h \leq 1$, then $B_h = \frac{h+p-1}{p}$, and, for $\gamma=B_h$, $(-1,N)$ is an optimal $2$-points $h$-trimmed codebook for $Q_{\gamma,N}$.
\end{itemize}
\end{example*}
We have, for $P$ and any $s \in [0,1]$, $R^*_{2,s}=0$, and $R^*_{1,s} > 0$ if and only if $s >(1-p)$. Thus, for any $h>(1-p)$ and $b \leq (1-h)$, $R^*_{1,h_b^-} > R^*_{2,h_b^+}$ if and only if $h_{b}^- > (1-p)$ that is equivalent to $b < (h+p-1)/p$. We deduce that $B_h = \frac{h+p-1}{p}\wedge (1-h)$. 

Assume that $(h+p-1)/p \leq (1-h)$. Then $B_h = (h+p-1)/p$. For $\gamma = B_h$ and $\cb=(-1,N)$, we have that $Q_{\gamma,N}(\{-1,N\}) = (1-\gamma)(1-p) + \gamma=h$, hence $R_{h,\gamma,N}(\cb)=0$, where $R_{h,\gamma,N}$ denotes the $h$-trimmed distortion with respect to $Q_{\gamma,N}$. 

Now assume that $(h+p-1)/p>(1-h)$. Let $\gamma >(1-h)$. Then, for any $\cb \in \R^{(2)}$, if $\tau_h(\cb)$ is such that $Q_{\gamma,N} \tau_h(\cb)$ is a submeasure of $Q_{\gamma,N}$ with total mass $h$, then $Q_{\gamma,N} \tau_{h}(\cb)(u) \1_{\{N\}}(u) \geq \gamma - (1-h)$. Now, if $\cb^*_{2}(Q_{\gamma,N})$ is an optimal two-points quantizer for $Q_{\gamma,N}$, Proposition \ref{prop:centroid} ensures that for every $j \in \{1,2\}$, 
\[
c^*_{2,j}(Q_{\gamma,N}) = \frac{P u \tau_h(\cb^*_{2}(Q_{\gamma,N}))(u) \1_{W_j(\cb^*_{2}(Q_{\gamma,N}))}(u)}{P \tau_h(\cb^*_{2}(Q_{\gamma,N}))(u) \1_{W_j(\cb^*_{2}(Q_{\gamma,N}))}(u)}.
\]
Thus, we may assume that $Q_{\gamma,N}\tau_h(\cb^*_{2}(Q_{\gamma,N}))(u) \1_{W_2(\cb^*_{2}(Q_{\gamma,N}))}(u) \1_{\{N\}}(u)  \geq \gamma - (1-h)$, without loss of generality. Hence, for $N$ large enough,
\[
c^*_{2,2}(Q_{\gamma,N}) \geq {-(1-p) + N(\gamma - (1-h))} \underset{N \rightarrow \infty}{\longrightarrow} + \infty.
\]
\qed

\section{Technical proofs for Section \ref{sec:proofs_section_clusteringwithBregmanD}}\label{tecsec:inter_results_section_clustering_with_BD}
\subsection{Proof of Lemma \ref{lem:bound_radius_balls}}
\begin{lemma*}[\ref{lem:bound_radius_balls}]
Assume that $\phi$ is $\mathcal{C}^2$ and $F_0 = \overline{conv(supp(P))}\subset\mathring{\Omega}$. Then, for every $h\in(0,1)$ and $K>0$, there exists $r^+<\infty$ such that 
\begin{align*}
\sup_{c \in F_0 \cap \bar{B}(0,K), s \leq h} r_{s}(c) \leq r^+.
\end{align*}
As a consequence, if $\cb$ is a codebook with a codepoint $c_{j_0} \in F_0$ satisfying $\|c_{j_0}\| \leq K$ and $s\leq h$, then $r_{s}(\cb) \leq r^+$.
\end{lemma*}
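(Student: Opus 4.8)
The plan is to reduce the statement to a compactness argument, exploiting the strict inequality $s\le h<1$: the quantity $r_s(c)^2$ is the $s$-quantile of $d_\phi(X,c)$ for $X\sim P$ (this is exactly what \eqref{eq: def rayon r} encodes, cf. the identity $r_s(\cb)^2=F_\cb^{-1}(s)=\inf\{t\ge 0\mid P(\bar B_\phi(\cb,\sqrt t))\ge s\}$), so to bound it uniformly it suffices to find a large but compact region carrying $P$-mass at least $h$ on which $d_\phi(\cdot,c)$ stays bounded uniformly over $c\in F_0\cap\bar B(0,K)$.

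Concretely, I would first choose $K'>0$ large enough that $P(\bar B(0,K'))\ge h$, which is possible since $P$ is a probability measure and $h<1$. Since $\operatorname{supp}(P)\subset F_0$ we have $P(F_0)=1$, hence $P(\bar B(0,K')\cap F_0)=P(\bar B(0,K'))\ge h$. Set $L=\bar B(0,K')\cap F_0$ and $L_0=\bar B(0,K)\cap F_0$; both are compact, and by the hypothesis $F_0\subset\mathring\Omega$ they are contained in $\mathring\Omega$. Because $\phi$ is $\mathcal{C}^2$, the map $(x,c)\mapsto d_\phi(x,c)=\phi(x)-\phi(c)-\langle\nabla_c\phi,x-c\rangle$ is continuous on $\mathring\Omega\times\mathring\Omega$, hence attains a finite maximum $M$ on the compact set $L\times L_0$.

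Next, for any $c\in L_0=F_0\cap\bar B(0,K)$, this bound gives $L\subset\bar B_\phi(c,\sqrt M)$, so $P(\bar B_\phi(c,\sqrt M))\ge P(L)\ge h\ge s$ for every $s\le h$; by the quantile characterisation of $r_s(c)$ recalled above, this forces $r_s(c)^2\le M$, i.e. $r_s(c)\le\sqrt M$. Thus $r^+:=\sqrt M$ works, and the bound is uniform in $c\in F_0\cap\bar B(0,K)$ and in $s\le h$. For the stated consequence, if $\cb$ has a codepoint $c_{j_0}\in F_0$ with $\|c_{j_0}\|\le K$, then $d_\phi(x,\cb)=\min_i d_\phi(x,c_i)\le d_\phi(x,c_{j_0})$, hence $\bar B_\phi(\cb,r)\supseteq\bar B_\phi(c_{j_0},r)$ for all $r$; therefore $P(\bar B_\phi(\cb,r))\ge P(\bar B_\phi(c_{j_0},r))$, and comparing quantiles yields $r_s(\cb)\le r_s(c_{j_0})\le r^+$.

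There is no deep obstacle here; the only points needing care are the measure-theoretic bookkeeping that lets one keep $\bar B(0,K')$ inside $F_0\subset\mathring\Omega$ while retaining $P$-mass $\ge h$ (so that joint continuity of $d_\phi$ is available on a compact set), and making the passage from "$P(\bar B_\phi(c,\sqrt M))\ge s$" to "$r_s(c)\le\sqrt M$" rigorous via the definition \eqref{eq: def rayon r} of the radius $r_s$.
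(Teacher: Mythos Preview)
Your proof is correct and follows essentially the same approach as the paper's: choose a large Euclidean ball carrying $P$-mass at least $h$, bound $d_\phi(\cdot,c)$ uniformly on that ball for $c$ ranging over the compact set $F_0\cap\bar B(0,K)$, and conclude via the quantile definition of $r_s$. The only cosmetic difference is that the paper obtains the uniform bound on $d_\phi$ via the mean value theorem (getting $d_\phi(x,y)\le C_+\|x-y\|$ on the relevant compact), whereas you invoke joint continuity of $(x,c)\mapsto d_\phi(x,c)$ on the compact $L\times L_0$ directly; both arguments are equivalent here.
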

\begin{proof}[Proof of Lemma \ref{lem:bound_radius_balls}]
Let $K_+$ be such that $P(B(0,K_+)) > h$.
Thus, if $c \in \bar{B}(0,K)$, $P(B(c,K+K_+)) > h$.
Since $B(c,K+K_+) \subset B(0,2K + K_+)$, and $\phi$ is $\mathcal{C}^2$, according to the mean value theorem, there exists $C_+$ such that, 
for all $x,\,y\in B(c,K+K_+) \cap F_0$,  $d_{\phi}(x,y) \leq C_+ \|x-y\|$.
Therefore, for every $c \in \bar{B}(0,K)$, $P\left(B_\phi\left(c,\sqrt{C_+(2K+K_+)}\right)\right) > h$.
Hence $r_{s}(c) \leq r_h(c) \leq \sqrt{C_+(2K+K_+)} = r^+$.

At last, if $\cb$ is such that $c_{j_0} \in \bar{B}(0,K) \cap F_0$, then $\bar{B}_\phi(c_{j_0},r_{h}(c_{j_0})) \subset \bar{B}_\phi(\cb,r^+)$.
Therefore $P(\bar{B}_\phi(\cb,r^+)) \geq s$ for every $s\leq h$, hence $r_{s}(\cb) \leq r^+$. 
\end{proof}

\subsection{Proof of Lemma \ref{lm:eq_normes_compact}}
\begin{lemma*}[\ref{lm:eq_normes_compact}]
      Assume that $F_0 \subset\mathring{\Omega}$ and $\phi$ is $\mathcal{C}^2$ on $\Omega$.
     Then, for every $K>0$, there exists $C_K >0$ such that for every $\cb$ and $\cb'$ in $\bar{B}(0,K) \cap F_0$, and $x \in \Omega$, 
     \[
     \left | d_{\phi}(x,\cb) - d_{\phi}(x,\cb') \right | \leq C_K D(\cb,\cb') \left ( 1 + \|x\| \right ),
     \]
     where $D(\cb,\cb')=\min_{\sigma\in\Sigma_k}\max_{j\in[\![1,k]\!]}|c_j-c'_{\sigma(j)}|$ (cf. Theorem \ref{thm: convergence ps des codebooks}).
      \end{lemma*}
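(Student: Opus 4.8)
The plan is to reduce the difference of two minima to a maximum of pointwise differences, then control each pointwise difference by a first-order Taylor expansion of $c\mapsto d_\phi(x,c)$, and finally make the resulting gradient bound uniform by a compactness argument. First I would use the elementary inequality $\left|\min_j a_j-\min_j b_j\right|\le\max_j|a_j-b_j|$, valid for any finite families of reals. Let $\sigma\in\Sigma_k$ realize the minimum in the definition of $D(\cb,\cb')$, so that $\max_{j}|c_j-c'_{\sigma(j)}|=D(\cb,\cb')$. Since $d_\phi(x,\cb')=\min_j d_\phi(x,c'_{\sigma(j)})$, this gives
\[
\left|d_\phi(x,\cb)-d_\phi(x,\cb')\right|\le\max_{j\in[\![1,k]\!]}\left|d_\phi(x,c_j)-d_\phi(x,c'_{\sigma(j)})\right|,
\]
so it suffices to bound each term on the right.

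To bound the pointwise difference I would differentiate $c\mapsto d_\phi(x,c)=\phi(x)-\phi(c)-\langle\nabla_c\phi,x-c\rangle$ with respect to $c$; the two first-order contributions cancel and one obtains $\nabla_c\,d_\phi(x,c)=-\nabla^2\phi(c)\,(x-c)$, where $\nabla^2\phi(c)$ is the Hessian of $\phi$ at $c$, which is well defined and continuous in $c$ because $\phi$ is $\mathcal{C}^2$ on $\Omega$. Both $c_j$ and $c'_{\sigma(j)}$ lie in $\bar B(0,K)\cap F_0$, and $F_0=\overline{conv(supp(P))}$ is convex, so the segment $[c_j,c'_{\sigma(j)}]$ is contained in $F_0\subset\mathring{\Omega}$; applying the mean value inequality to $c\mapsto d_\phi(x,c)$ along this segment yields
\[
\left|d_\phi(x,c_j)-d_\phi(x,c'_{\sigma(j)})\right|\le\Bigl(\sup_{c\in\bar B(0,K)\cap F_0}\|\nabla^2\phi(c)\|_{\mathrm{op}}\Bigr)\Bigl(\sup_{c\in\bar B(0,K)}\|x-c\|\Bigr)\,D(\cb,\cb').
\]

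Finally I would close the argument by compactness. The set $\bar B(0,K)\cap F_0$ is closed, bounded and contained in the open set $\mathring{\Omega}$, hence compact, so the continuous map $c\mapsto\|\nabla^2\phi(c)\|_{\mathrm{op}}$ attains a finite maximum $M_K$ there; moreover $\|x-c\|\le\|x\|+K\le(1+K)(1+\|x\|)$ for $c\in\bar B(0,K)$. Combining the three displays and setting $C_K=M_K(1+K)$ gives the claimed bound. I do not expect a genuine obstacle: the only point requiring care is verifying that the connecting segments stay inside $\mathring{\Omega}$, which is precisely where the convexity of $F_0$ and the hypothesis $F_0\subset\mathring{\Omega}$ enter, allowing the $\mathcal{C}^2$-bound on $\phi$ to be used uniformly on a fixed compact set that does not depend on $x$.
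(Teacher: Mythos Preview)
Your proposal is correct and follows essentially the same approach as the paper: reduce to a pointwise estimate via $|\min_j a_j-\min_j b_j|\le\max_j|a_j-b_j|$, then control $|d_\phi(x,c_j)-d_\phi(x,c'_{\sigma(j)})|$ by the mean value theorem along the segment, which stays in the convex compact set $\bar B(0,K)\cap F_0\subset\mathring\Omega$. The only cosmetic difference is that the paper splits $d_\phi(x,c)$ into its three constituent terms and applies the mean value theorem to $\phi$ and $\nabla\phi$ separately, whereas you compute the full gradient $\nabla_c d_\phi(x,c)=-\nabla^2\phi(c)(x-c)$ in one stroke; your version makes the dependence on the Hessian and on $\|x\|$ slightly more transparent, but the substance is identical.
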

\begin{proof}[Proof of Lemma \ref{lm:eq_normes_compact}]
      The set $F_0 \cap \bar{B}(0,K)$ is a convex compact subset of $\mathring{\Omega}$. Let $x \in \R^d$ and $\cb,\cb' \in\left(F_0 \cap \bar{B}(0,K)\right)^{(k)}$. Since $\phi$ and $x\mapsto\nabla\phi(x)$ are $\mathcal{C}^1$, the mean value theorem yields that for every $j\in[\![1,k]\!]$,
      \begin{align*}
      \left | d_{\phi}(x,c_j) - d_{\phi}(x,c'_j) \right | & \leq \begin{multlined}[t] \left | \phi(c'_j) - \phi(c_j) \right | + \left | \left\langle x, \nabla_{c'_j} \phi - \nabla_{c_j}\phi \right\rangle \right | \\ + \left | \left\langle \nabla_{c'_j} \phi, c'_j \right\rangle - \left\langle \nabla_{c_j} \phi,c_j \right\rangle \right | \end{multlined} \\
      & \leq C_K \|c_j - c'_j \| (1+\|x\|),      
      \end{align*}
      for some constant $C_K$. Thus, 
      \[\left | d_{\phi}(x,\cb) - d_{\phi}(x,\cb') \right | \leq C_K (1+ \|x\|) \max_j \| c_j - c'_j\|. \]
      \end{proof}      

\subsection{Proof of Lemma \ref{lm: continuite de VPhc}}
\begin{lemma*}[\ref{lm: continuite de VPhc}]
Assume that $F_0 \subset\mathring{\Omega}$, $P\|u\|<\infty$ and $\phi$ is $\mathcal{C}^2$ on $\Omega$.
Then the map $(s,\cb)\rightarrow R_s(\cb)$ is continuous. Moreover, for every $h\in(0,1)$, $\epsilon>0$ and $K>0$, there is $s_0<h$ such that
\[\forall s_0<s<h,\,\sup_{\cb\in \left(F_0\cap\bar{B}(0,K)\right)^{(k)}}R_h(\cb)-R_s(\cb)\leq\epsilon.\]
\end{lemma*}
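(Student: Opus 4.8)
The plan is to reduce everything to the quantile representation of $R_s$ already extracted in the proof of Lemma \ref{lm: ecriture DTM}. Writing $F_\cb^{-1}(v)=r_v(\cb)^2$ for the $v$-quantile of $d_\phi(X,\cb)$ with $X\sim P$, the substitution $v=su$ turns the identity $R(\tilde P_\cb,\cb)=\int_0^1 F_\cb^{-1}(su)\,du$ of that proof into
\[
R_s(\cb)=s\,R(\tilde P_\cb,\cb)=\int_0^s F_\cb^{-1}(v)\,dv ,
\]
valid for every $s\in(0,1)$ and $\cb\in\Omega^{(k)}$, with a nonnegative integrand that is bounded by $r_{s'}(\cb)^2<\infty$ on $[0,s']$ for any $s'\in(s,1)$ (finiteness of $r_{s'}(\cb)$ because $P(\bar B_\phi(\cb,r))\to1$ as $r\to\infty$). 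Continuity of $s\mapsto R_s(\cb)$ on $(0,1)$ is then immediate: for $s,s_0\le s'<1$ one has $|R_s(\cb)-R_{s_0}(\cb)|\le|s-s_0|\,r_{s'}(\cb)^2$.

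The second ingredient I would establish is a Lipschitz bound in $\cb$ that is uniform in $s$. Fix $s\in(0,1)$ and, as in the proof of Theorem \ref{thm:slow_rates}, pick a $[0,1]$-valued trimming function $\tau_s(\cb)=\mathbbm{1}_{B_\phi(\cb,r_s(\cb))}+\delta\,\mathbbm{1}_{\partial B_\phi(\cb,r_s(\cb))}$ with $P\tau_s(\cb)=s$, so that $R_s(\cb)=P[\tau_s(\cb)\,d_\phi(u,\cb)]$ and $\tfrac1s P\tau_s(\cb)\in\mathcal{P}_s$. Applying Lemma \ref{lm: ecriture DTM} to the sub-measure $\tfrac1s P\tau_s(\cb)$ and the codebook $\cb'$ gives $R_s(\cb')\le P[\tau_s(\cb)\,d_\phi(u,\cb')]$, hence
\[
R_s(\cb')\le R_s(\cb)+P\big[\tau_s(\cb)\,|d_\phi(u,\cb')-d_\phi(u,\cb)|\big].
\]
Bounding the last term by means of (the proof of) Lemma \ref{lm:eq_normes_compact} — which only uses that $\cb,\cb'$ have their codepoints in a fixed compact convex subset of $\mathring\Omega$, while $u$ ranges freely — by $C\,D(\cb,\cb')\,P[\tau_s(\cb)(1+\|u\|)]\le C\,D(\cb,\cb')(1+P\|u\|)$, and symmetrizing in $\cb,\cb'$, yields $|R_s(\cb)-R_s(\cb')|\le C\,D(\cb,\cb')(1+P\|u\|)$ with a constant that is locally uniform in $\cb$ and, crucially, independent of $s$. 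Joint continuity of $(s,\cb)\mapsto R_s(\cb)$ at any $(s_0,\cb_0)$ then follows from the triangle inequality $|R_s(\cb)-R_{s_0}(\cb_0)|\le|R_s(\cb)-R_s(\cb_0)|+|R_s(\cb_0)-R_{s_0}(\cb_0)|$, the first term controlled by the Lipschitz estimate and the second by the $s$-continuity above.

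For the ``moreover'' part, the representation gives, using Lemma \ref{lm: dtm decreasing} for positivity, $0\le R_h(\cb)-R_s(\cb)=\int_s^h F_\cb^{-1}(v)\,dv$. By Lemma \ref{lem:bound_radius_balls} there is $r^+<\infty$ with $r_v(\cb)\le r^+$ for all $v\le h$ and all $\cb$ having a codepoint in $F_0\cap\bar B(0,K)$, so $R_h(\cb)-R_s(\cb)\le(h-s)(r^+)^2$ uniformly over $\cb\in(F_0\cap\bar B(0,K))^{(k)}$; choosing $s_0<h$ close enough to $h$ (any $s_0\in(0,h)$ if $r^+=0$) finishes the proof. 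I expect the only mildly delicate point to be the uniform-in-$s$ Lipschitz estimate: one must make sure that Lemma \ref{lm:eq_normes_compact} (or a local version of it, since a general $\cb\in\Omega^{(k)}$ need not lie in $F_0$) applies with the constant depending only on a compact neighbourhood of the codepoints and not on the trim level; everything else is bookkeeping with the quantile integral.
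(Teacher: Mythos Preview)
Your proposal is correct and follows essentially the same approach as the paper. Both arguments use Lemma \ref{lm: ecriture DTM} together with Lemma \ref{lm:eq_normes_compact} to obtain the $\cb$-Lipschitz bound $|R_s(\cb)-R_s(\cb')|\le C_K D(\cb,\cb')(1+P\|u\|)$, and both bound $R_h(\cb)-R_s(\cb)$ by $(h-s)\,r_h(\cb)^2$ (you write it as $\int_s^h F_\cb^{-1}(v)\,dv$, which is the same thing) and invoke Lemma \ref{lem:bound_radius_balls} for the uniform control; your explicit separation into $s$-continuity, $\cb$-Lipschitzness, and a triangle-inequality gluing is slightly more detailed than the paper's write-up but not substantively different.
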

\begin{proof}[Proof of Lemma \ref{lm: continuite de VPhc}]
According to Lemma \ref{lm: ecriture DTM} and Lemma \ref{lm:eq_normes_compact}, for every $h\in(0,1)$, $\cb,\cb'\in\left(F_0\cap\bar{B}(0,K)\right)^{(k)}$ and $\tilde P_{\cb'}\in\mathcal{P}_h(\cb')$,
\begin{align*}
R_h(\cb)-R_h(\cb')\leq h (\tilde P_{\cb'}d_{\phi}(u,\cb)-\tilde P_{\cb'}d_{\phi}(u,\cb')) 
&\leq h \tilde P_{\cb'}|d_{\phi}(u,\cb)-d_{\phi}(u,\cb')|\\
&\leq C_K D(\cb,\cb')(1+P\|u\|),
\end{align*}
for some $C_K>0$.
As a consequence, $|R_h(\cb)-R_h(\cb')|\rightarrow 0$ when $D(\cb,\cb')\rightarrow 0$.
Now, let $s<h$, and let $\alpha_s$ and $\alpha_h$ be such that $\frac{1}{h}(P \1_{B_\phi(\cb,r_h(\cb))} + \delta_h P \1_{\partial B_\phi(\cb,r_h(\cb))}) \in \mathcal{P}_h(\cb)$ (resp. $\frac{1}{s}(P \1_{B_\phi(\cb,r_s(\cb))} + \delta_s P \1_{\partial B_\phi(\cb,r_s(\cb))}) \in \mathcal{P}_s(\cb)$). Then 
\begin{align*}
R_h(\cb)-R_s(\cb)& = \begin{multlined}[t] P d_{\phi}(u,\cb) \left(\1_{B_\phi(\cb,r_{h}(\cb))}(u) + \delta_h \1_{\partial B_\phi(\cb,r_{h}(\cb))}(u) \right ) \\ 
- P d_{\phi}(u,\cb) \left (\1_{B_\phi(\cb,r_{s}(\cb))}(u) +  \delta_s \1_{\partial B_\phi(\cb,r_{s}(\cb))} (u)\right) \end{multlined} \\
&  \leq r_{h}^2(\cb)\left(h-s\right).
\end{align*}
Moreover, according to Lemma \ref{lem:bound_radius_balls}, $\sup_{\cb\in\left(F_0\cap\bar{B}(0,K)\right)^{(k)}}r_{h}(\cb)\leq r^+$ for some $r^+<\infty$, hence the result.
\end{proof}

\subsection{Proof of Lemma \ref{lm: key lemma h- h+}}\label{tecsec:proof_lm_keylemma_h-h+}
Throughout this section, for any $\cb \in \Omega^{(k)}$ and $s \in ]0,1]$, we denote by $\tau_s(\cb)$ a map in $[0,1]$ such that $\frac{1}{s}P \tau_s(\cb) \in \mathcal{P}_s(\cb)$, and by $T_s$ the operator
\begin{align*}
T_s: \left \{ \begin{array}{@{}ccl}
\Omega^{(k)} & \rightarrow & F_0^{(k)} \\
\cb & \mapsto & \left ( \frac{Pu \1_{W_j(\cb)}(u) \tau_s(\cb)(u)}{P \tau_s(\cb)(u) \1_{W_j(\cb)}(u)} \right )_{j\in[\![1,k]\!]},
\end{array}
\right .
\end{align*}
with the convention $T_s(\cb)_j = c_j$ whenever $P \tau_s(\cb) \1_{W_j(\cb)} = 0$. The statement of Lemma \ref{lm: key lemma h- h+} is recalled below.
\begin{lemma*}[\ref{lm: key lemma h- h+}]
Assume that the requirements of Theorem \ref{thm:existence_optimal} are satisfied. For every $k\geq 2$,
if $R^*_{k-1,h} - R^*_{k,h} >0$, then
\[
\alpha := \min_{j\in[\![2,k]\!]} R^*_{j-1,h} - R^*_{j,h} >0.
\]
Moreover there exists $0<h^- < h < h^+<1$ such that, for every $j \in [\![2,k]\!]$, $R^*_{j-1,h^-} - R^*_{j,h^+} \geq \frac{\alpha}{2}$.

For any $h\wedge(1-h)\geq b>0$, denote by $h_b^- = (h-b)/(1-b)$, $h_b^+ = h/(1-b)$. Let $b$ be such that $\min_{j\in[\![2,k]\!]} R^*_{j-1,h_b^-} - R^*_{j,h_b^+} >0$, and, for $\kappa_1$ in $(0,1)$, denote by $b_1 = \kappa_1b$. Then, for any $s \in [h_{b_1}^-,h_{b_1}^+]$ and $j \in [\![1,k]\!]$, there exists a minimizer $\cb^*_{j,s}$ of $R_{j,s}$ satisfying
\[
\forall p \in [\![1,j]\!], \quad \| c^*_{j,s,p} \| \leq \frac{P \|u\|}{b(1-h)(1-\kappa_1)}. 
\] 
\end{lemma*}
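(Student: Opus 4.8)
The plan is to prove the three assertions in turn, the third being the bulk of the work.

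\emph{First assertion.} Since adding a code point can only decrease $d_\phi(\cdot,\cb)$ pointwise, $j\mapsto R^*_{j,h}$ is non-increasing, so each difference $R^*_{j-1,h}-R^*_{j,h}$ is non-negative. The crux is the collapse phenomenon mentioned after Theorem~\ref{thm:slow_rates}: if $R^*_{j_0-1,h}=R^*_{j_0,h}$ for some $j_0\le k$, then there is a sub-measure of $P$ of mass $h$ supported by at most $j_0-1$ points. To see this, take a near-optimal $(j_0-1)$-points codebook with a trimming sub-measure $Q$ of mass $h$ realising $R^*_{j_0-1,h}$ up to $\epsilon$, and replace its code points by the local means of $Q$ (which does not increase $R(Q,\cdot)$, by Propositions~\ref{prop:bregman_bias_variance}--\ref{prop:centroid}); if some trimmed Voronoi cell carried more than a point mass of $Q$, splitting it into two pieces with distinct means would produce a $j_0$-points codebook with strictly smaller value of $R(Q,\cdot)$, so letting $\epsilon\to0$ forces $Q$ to be supported by at most $j_0-1$ atoms (this is in the spirit of \cite{Cuesta97,Gordaliza91}). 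Using these $\le j_0-1\le k-1$ points as a $(k-1)$- and as a $k$-points codebook then yields $R^*_{k-1,h}=R^*_{k,h}=0$, contradicting the hypothesis; hence no difference vanishes and $\alpha>0$.

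\emph{Second assertion.} With the notation of the proof of Lemma~\ref{lm: dtm decreasing}, $R_s(\cb)=\int_0^s r_v^2(\cb)\,dv$ is non-decreasing in $s$, hence so is $s\mapsto R^*_{\ell,s}$; and $R^*_{\ell,\cdot}$ is upper semicontinuous as the infimum over $\cb$ of the continuous maps $s\mapsto R_s(\cb)$ (Lemma~\ref{lm: continuite de VPhc}). For the reverse inequality, one restricts the infimum defining $R^*_{\ell,s}$ to codebooks whose code points lie in a fixed compact subset of $F_0$ — legitimate up to an arbitrarily small error after discarding empty cells and passing to local means (Proposition~\ref{prop:centroid}) — and then uses the uniform bound on the radii $r_v(\cb)$ over such compacts (Lemma~\ref{lem:bound_radius_balls}) together with the uniform left-continuity statement of Lemma~\ref{lm: continuite de VPhc} to conclude that $R^*_{\ell,s}\to R^*_{\ell,h}$ as $s\to h$. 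Since $R^*_{1,\cdot},\dots,R^*_{k,\cdot}$ are finitely many continuous functions, I may choose $0<h^-<h<h^+<1$ close enough to $h$ that $|R^*_{\ell,h^\pm}-R^*_{\ell,h}|<\alpha/4$ for every $\ell$, whence $R^*_{j-1,h^-}-R^*_{j,h^+}\ge(R^*_{j-1,h}-R^*_{j,h})-\alpha/2\ge\alpha/2$ for all $j\in[\![2,k]\!]$.

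\emph{Third assertion.} For $j=1$, the centroid condition (Proposition~\ref{prop:centroid}) identifies an optimal $s$-trimmed one-point codebook with the mean $\tfrac1s Q u$ of its trimming sub-measure $Q$ of mass $s$, so its norm is at most $P\|u\|/s$; since a short computation gives $h_{b_1}^-\ge b(1-h)(1-\kappa_1)$, the bound $s\ge h_{b_1}^-$ yields the claim. For $j\ge2$, I first record the nesting $h_b^-<h_{b_1}^-\le h\le h_{b_1}^+<h_b^+$ together with
\[
h_{b_1}^--h_b^-=\frac{b(1-h)(1-\kappa_1)}{(1-b)(1-\kappa_1 b)}\ge b(1-h)(1-\kappa_1)=:\beta ,
\]
and set $\delta:=\min_{j'\in[\![2,k]\!]}\bigl(R^*_{j'-1,h_b^-}-R^*_{j',h_b^+}\bigr)>0$. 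The key step is: if a $j$-points codebook $\cb$ satisfies $R_{j,s}(\cb)<R^*_{j,s}+\delta/2$ and $Q$ denotes the sub-measure of $P$ of mass $s$ with $Q/s\in\mathcal{P}_s(\cb)$, then $Q(W_i(\cb))>\beta$ for every $i$. Indeed, if $Q(W_i(\cb))\le s-h_b^-$ for some $i$, removing $c_i$ and restricting $Q$ to the complement of $W_i(\cb)$ (a sub-measure of $P$ of mass $\ge h_b^-$) gives $R_{j,s}(\cb)\ge R^*_{j-1,\,s-Q(W_i(\cb))}\ge R^*_{j-1,h_b^-}\ge R^*_{j,h_b^+}+\delta\ge R^*_{j,s}+\delta$ (using monotonicity of $R^*_{j,\cdot}$ and $s\le h_b^+$), a contradiction, while $s-h_b^-\ge h_{b_1}^--h_b^-\ge\beta$. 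Taking a minimizing sequence $\cb^{(n)}$ for $R_{j,s}$, from some rank on it satisfies the above, so by Proposition~\ref{prop:centroid} the codebooks $\mathbf m^{(n)}=T_s(\cb^{(n)})$ of local means still minimize $R_{j,s}$, lie in $F_0$, and obey $\|m_p^{(n)}\|\le P\|u\|/\beta=:M$; extracting a subsequence converging in the compact set $(\bar{B}(0,M)\cap F_0)^{(j)}$ and using the continuity of $R_{j,s}$ (Lemma~\ref{lm: continuite de VPhc}) gives a minimizer $\cb^*_{j,s}$ of $R_{j,s}$ with $\|c^*_{j,s,p}\|\le M=P\|u\|/\bigl(b(1-h)(1-\kappa_1)\bigr)$ for all $p$.

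The main obstacle I expect is the lower bound $Q(W_i(\cb))>\beta$ on every cell of a near-optimal codebook in the third assertion: it requires the precise nesting of the trimming levels $h_b^\pm$ and $h_{b_1}^\pm$ around $h$, the quantitative estimate $h_{b_1}^--h_b^-\ge b(1-h)(1-\kappa_1)$, and the interplay of the gap hypothesis, the ``remove a light code point'' inequality, and the monotonicity of $R^*_{j,\cdot}$ in the trimming level. The collapse argument behind the first assertion is the other delicate ingredient, but it is classical in the trimmed-clustering literature.
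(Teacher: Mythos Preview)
Your first and third assertions are correct and follow the paper's ideas. In fact your third-assertion argument --- proving a uniform lower bound $\beta=b(1-h)(1-\kappa_1)$ on every trimmed cell of a $\delta/2$-near-minimizer directly from the gap hypothesis, then passing to local means along a minimizing sequence and extracting a limit in $(\bar B(0,M)\cap F_0)^{(j)}$ --- is slightly more streamlined than the paper's, which separates existence (its Lemma~\ref{lem:keylemmah-h+_part1}) from the cell-mass bound (its Lemma~\ref{lem:keylemmah-h+_part2}) and only applies $T_s$ afterwards.

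The gap is in your second assertion, specifically the left-continuity of $s\mapsto R^*_{\ell,s}$ at $h$. You write that restricting the infimum to codebooks in a fixed compact is ``legitimate up to an arbitrarily small error after discarding empty cells and passing to local means''. But passing to local means only bounds the codepoints by $P\|u\|$ divided by the mass of the smallest non-empty cell, and you have no a priori lower bound on that mass for near-optimal $\ell$-point codebooks at levels $s<h$. Such a lower bound is exactly what your third-assertion argument delivers --- yet that argument takes as input a gap $R^*_{\ell-1,h_b^-}-R^*_{\ell,h_b^+}>0$ at levels straddling $h$, which is what the second assertion is supposed to establish. Upper semicontinuity plus monotonicity gives only right-continuity of $R^*_{\ell,\cdot}$; the left-continuity direction genuinely needs the compactness you have not yet secured, so the reasoning is circular as written.

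The paper breaks this circularity by an induction on $k$ (its Lemma~\ref{lem:keylemmah-h+_part1}): at step $k$, boundedness of near-optimal $(k-1)$-point codebooks on an interval $[h^{--},h^{++}]$ around $h$ is available from the inductive hypothesis, so the uniform left-continuity of Lemma~\ref{lm: continuite de VPhc} applies to $R^*_{k-1,\cdot}$ and yields the gap $R^*_{k-1,h^-}-R^*_{k,h^+}\ge\alpha/2$; this gap then, via the very mechanism of your third assertion, forces every cell of a near-optimal $k$-point codebook to carry enough mass, giving boundedness at level $k$. Your argument can be repaired along the same recursive lines.
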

The proof of Lemma \ref{lm: key lemma h- h+} proceeds from two intermediate results, Lemma \ref{lem:keylemmah-h+_part1} and Lemma \ref{lem:keylemmah-h+_part2} below. First, Lemma \ref{lem:keylemmah-h+_part1} ensures that there exists bounded optimal codebooks whenever $R^*_{k-1,h} - R^*_{k,h} >0$.
\begin{lemma}\label{lem:keylemmah-h+_part1}
For every $k\geq 2$,
if $R^*_{k-1,h} - R^*_{k,h} >0$, then
\[
\alpha := \min_{j\in[\![2,k]\!]} R^*_{j-1,h} - R^*_{j,h} >0.
\]
Moreover there exists $0<h^- < h < h^+<1$ and $C_{h^-,h^+}$ such that, for every $j \in [\![2,k]\!]$ and $s \in [h^-,h^+]$,
\begin{itemize}
\item $R^*_{j-1,h^-} - R^*_{j,h^+} \geq \frac{\alpha}{2}$.
\item For every $\frac{\alpha}{4}$-minimizer $\cb^*_{j,s}$ of $R^*_{j,s}$, $\sup_{p\in[\![1,j]\!]} \|T_s(\cb^*_{j,s})_p\| \leq C_{h^-,h^+}$.
\item There is a minimizer $\cb^*_{j,s}$ of $R^*_{j,s}$ such that $\forall p\in[\![1,j]\!]$, $\|c^*_{j,s,p}\|\leq C_{h^-,h^+}$ and $c^*_{j,s,p}\in F_0$.
\end{itemize}
\end{lemma}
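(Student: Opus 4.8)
The plan is to prove the three bullet points of Lemma~\ref{lem:keylemmah-h+_part1} in order, using the continuity and monotonicity properties of the trimmed distortion established in Lemmas~\ref{lm: dtm decreasing} and~\ref{lm: continuite de VPhc}, together with Proposition~\ref{prop:centroid} (the centroid condition) which forces code points of near-optimal codebooks to be means of weighted Bregman--Voronoi cells.

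\textbf{Positivity of $\alpha$ and existence of $h^-<h<h^+$.} First I would argue that $R^*_{k-1,h}-R^*_{k,h}>0$ implies $R^*_{j-1,h}-R^*_{j,h}>0$ for every $j\in[\![2,k]\!]$. Indeed, if $R^*_{j-1,h}=R^*_{j,h}$ for some $j<k$, then an optimal $j$-points codebook has an empty (or redundant) cell, so one could add $k-j$ more points to get $R^*_{k,h}=R^*_{j,h}=R^*_{j-1,h}\geq R^*_{k-1,h}$, contradicting the hypothesis (using monotonicity $R^*_{\ell,h}$ nonincreasing in $\ell$). Hence $\alpha>0$. Next, by Lemma~\ref{lm: continuite de VPhc} the map $s\mapsto R^*_{j,s}=\inf_\cb R_s(\cb)$ is continuous on $(0,1)$ (it is an infimum of a jointly continuous function, and using Lemma~\ref{lm: dtm decreasing} together with the boundedness of optimal codebooks from Lemma~\ref{lm: key lemma h- h+}'s predecessor one can restrict the infimum to a compact set to preserve continuity). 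Since there are finitely many $j$, I can choose $h^-<h<h^+$ close enough to $h$ so that $R^*_{j-1,h^-}\geq R^*_{j-1,h}-\alpha/4$ and $R^*_{j,h^+}\leq R^*_{j,h}+\alpha/4$ for all $j\in[\![2,k]\!]$; this yields $R^*_{j-1,h^-}-R^*_{j,h^+}\geq \alpha - \alpha/2 = \alpha/2$.

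\textbf{Bound on $T_s$ of near-minimizers.} Fix $j\in[\![2,k]\!]$, $s\in[h^-,h^+]$, and let $\cb^*_{j,s}$ be an $\alpha/4$-minimizer of $R_{j,s}$. The key point is that every cell $W_p(\cb^*_{j,s})$ must carry enough trimmed mass. If the $p$-th cell had trimmed $P\tau_s$-mass below some threshold, then deleting $c^*_{j,s,p}$ and keeping the remaining $j-1$ centers would change the trimmed distortion by at most that small mass times the (bounded, by Lemma~\ref{lem:bound_radius_balls}) squared radius, producing a $(j-1)$-points codebook whose distortion is within less than $\alpha/2$ of $R_{j,s}^*$, contradicting $R^*_{j-1,s}-R^*_{j,s}\geq R^*_{j-1,h^-}-R^*_{j,h^+}\geq\alpha/2$. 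Making this quantitative gives a lower bound $\beta=\beta(h^-,h^+,\alpha)>0$ on $P\tau_s(\cb^*_{j,s})(u)\1_{W_p(\cb^*_{j,s})}(u)$ for every $p$. Then $T_s(\cb^*_{j,s})_p$ is the average of $u$ against a measure of total mass $\geq\beta$ dominated by $P$, so $\|T_s(\cb^*_{j,s})_p\|\leq P\|u\|/\beta=:C_{h^-,h^+}$, and since it is an average of points of $\mathrm{supp}(P)$ it lies in $F_0$.

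\textbf{Existence of a bounded minimizer in $F_0$.} For the last bullet, I would combine the second bullet with an iteration/compactness argument: starting from any minimizing sequence (or any $\alpha/4$-minimizer) and applying $T_s$ once, Proposition~\ref{prop:centroid} ensures $R_s(T_s(\cb))\leq R_s(\cb)$, so $T_s(\cb^*_{j,s})$ is still a minimizer (or $\alpha/4$-minimizer) but now has all code points in $F_0\cap\bar B(0,C_{h^-,h^+})$. Taking a minimizing sequence of such codebooks, which lives in the compact set $(F_0\cap\bar B(0,C_{h^-,h^+}))^{(j)}$, and using joint continuity of $(s,\cb)\mapsto R_s(\cb)$ from Lemma~\ref{lm: continuite de VPhc}, a convergent subsequence has limit a genuine minimizer $\cb^*_{j,s}$ with $\|c^*_{j,s,p}\|\leq C_{h^-,h^+}$ and $c^*_{j,s,p}\in F_0$.

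\textbf{Main obstacle.} I expect the delicate step to be the quantitative lower bound $\beta$ on the trimmed cell masses (second bullet): one must carefully verify that removing a light cell genuinely produces a valid $(j-1)$-points trimming set of mass at least $s$ and control the resulting distortion increase uniformly in $s\in[h^-,h^+]$ using Lemma~\ref{lem:bound_radius_balls}, and that the radius bound applies since at least one code point stays in a fixed compact set. The remaining topological steps (continuity, compactness, Proposition~\ref{prop:centroid}) are routine once this uniform mass bound is in hand. The passage from Lemma~\ref{lem:keylemmah-h+_part1} to the full Lemma~\ref{lm: key lemma h- h+} then only requires translating between the $(h^-,h^+)$ window and the $(h_b^-,h_b^+)$, $(h_{b_1}^-,h_{b_1}^+)$ windows, which is the content of Lemma~\ref{lem:keylemmah-h+_part2}.
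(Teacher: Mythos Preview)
Your outline captures the right ingredients (positivity of $\alpha$, a mass lower bound on trimmed cells, then compactness), but there is a circularity you have not resolved, and it is precisely the reason the paper's proof proceeds by \emph{induction on $k$} rather than directly. In your first bullet you assert that $s\mapsto R^*_{j,s}$ is continuous by ``restricting the infimum to a compact set'' via ``boundedness of optimal codebooks''; in your second bullet you need Lemma~\ref{lem:bound_radius_balls} to bound the radius, which in turn requires at least one code point of the $\alpha/4$-minimizer to lie in a fixed compact set. But boundedness of (near-)optimal $j$-point codebooks is exactly the conclusion of the lemma, so you cannot invoke it as an input. Concretely, after dropping the light cell you obtain a $(j-1)$-point codebook at trim level $s-\varepsilon$; to turn this into a contradiction you need $R^*_{j-1,s-\varepsilon}\ge R^*_{j-1,h}-\alpha/4$, and the uniform-continuity statement in Lemma~\ref{lm: continuite de VPhc} that delivers this is only available over codebooks in a fixed compact set.

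The paper breaks this loop by induction: for $j=2$, one-point near-optimal codebooks are automatically bounded by $P\|u\|/s$, so Lemma~\ref{lm: continuite de VPhc} applies and yields an intermediate level $h_2<h$ with $R^*_{1,h_2}\ge R^*_{1,h}-\alpha/4$; setting $h^-=(h+h_2)/2$ then gives the cell-mass lower bound $h-h^-$ and hence the bound on $T_s$. At step $k$, one uses the already-established bound $C_{h^{--},h^{++}}$ for $(k-1)$-point optimal codebooks to run Lemma~\ref{lm: continuite de VPhc} on $(k-1)$-point codebooks, produce the intermediate level $h_1$, and repeat. Your ``small mass times bounded squared radius'' phrasing is also not quite the right mechanism: the paper does not reassign the light cell at the same trim level (which could cost arbitrarily much), it simply drops it and compares at the lower trim level $s-(h-h^-)\ge h_1$. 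If you restructure your argument inductively in this way, the rest of your plan (the compactness argument for the third bullet via $T_s$ and Lemma~\ref{lm: continuite de VPhc}) goes through and matches the paper.
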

A proof of Lemma \ref{lem:keylemmah-h+_part1} is given in Section \ref{sec:proof_lemma_key_lemma_h+h-_part1}. The other intermediate result, Lemma \ref{lem:keylemmah-h+_part2}, ensures that optimal codebooks cells have enough mass.
\begin{lemma}\label{lem:keylemmah-h+_part2}
Assume that $R^*_{k-1,h} - R^*_{k,h} >0$, and let $b$ be such that $\min_{j\in[\![2,k]\!]} R^*_{j-1,h_b^-} - R^*_{j,h_b^+} >0$. Let $\kappa_1 <1$ and $b_1 = \kappa_1 b$. Then, for any $s \in [h_{b_1}^-, h_{b_1}^+]$ and $j \in [\![1,k]\!]$, if $\cb^*_{j,s}$ is a minimizer of $R_{j,s}$, we have
\[
\forall p \in [\![1,j]\!] \quad P \tau_s(\cb^*_{j,s})\1_{W_p(\cb^*_{j,s})} \geq b(1-h)(1-\kappa_1).
\]
\end{lemma} 
The proof of Lemma \ref{lem:keylemmah-h+_part2} is given in Section \ref{sec:proof_lem:keylemmah-h+_part2}. Equipped with these two lemmas, we are in position to prove Lemma \ref{lm: key lemma h- h+}.
\begin{proof}[Proof of Lemma \ref{lm: key lemma h- h+}]
Assume that $R^*_{k-1,h} - R^*_{k,h} >0$, then Lemma \ref{lem:keylemmah-h+_part1} entails that there exists $b$ such that $\min_{j\in[\![2,k]\!]}R^*_{j-1,h_b^-}- R^*_{j,h_b^+} >0$. Moreover, for any $s \in [h_{b_1}^-, h_{b_1}^+]$, and $j\in[\![1,k]\!]$, Lemma \ref{lem:keylemmah-h+_part1} provides a minimizer $\cb^*_{j,s}$ of $R_{j,s}$. According to Proposition \ref{prop:centroid}, $T_s(\cb^*_{j,s})$ is a $R_{j,s}$-minimizer. According to Lemma \ref{lem:keylemmah-h+_part2}, it satisfies, for all $p \in [\![1,j]\!]$,
\begin{align*}
\|(T_s(\cb^*_{j,s}))_p\| \leq \frac{P\|u\|}{b(1-h)(1-\kappa_1)}.
\end{align*}
\end{proof}

\subsection{Proof of Lemma \ref{lem:keylemmah-h+_part1}}\label{sec:proof_lemma_key_lemma_h+h-_part1}
First note that if there exists $j \leq k$ such that $R^*_{j-1,h} - R^*_{j,h}=0$, then there exists a set $A$ with $P(A)\geq h$ such that the restriction of $P$ to the set $A$, $P_{|A}$, is supported on $j-1$ points. Thus, $R^*_{k-1,h}=R^*_{k,h}=0$. As a consequence, when $R^*_{k-1,h} - R^*_{k,h}>0$, $\alpha$ is positive.

Note also that the third point follows on from the second point. Indeed, for every sequence $\cb^{*\,(n)}_{k,s}$ of $\frac{\alpha}{4n}$-minimizers of $R^*_{k,s}$, for every $i\in[\![1,k]\!]$, $\|T_s(\cb^{*\,(n)}_{k,s})_i\|\leq C_{h^-,h^+}$. Since $\left(\bar{B}(0,C_{h^-,h^+})\cap F_0\right)^{(k)}$ is a compact set, the limit in $\left(\bar{B}(0,C_{h^-,h^+})\cap F_0\right)^{(k)}$ of any converging subsequence of $(T_s(\cb^{*\,(n)}_{k,s}))_n$ is a minimizer of $R_{k,s}$.

Now assume that $R^*_{k-1,h}-R^*_{k,h}>0$. In order to prove the other points, we proceed recursively. Assume that $k=2$. Since, for $s>0$ and any $1$-point codebook $c$, $\|T_s(c)\| \leq P \|u\|/s$, optimal $1$-point codebooks can be found in $\bar{B}(0,C_1) \cap F_0$, with $C_1 = \frac{P\|u\|}{s}$. From a compactness argument there exists an optimal $1$-point codebook $\cb^*_{1,s}$ satisfying $\|\cb^*_{1,s}\| \leq P \|u\|/s$.

Denote by $\cb^*_{1,h^-}$ a minimizer of $R^*_{1,h^-}$, and $\cb^*_{2,h}$ an $\frac{\alpha}{8}$-minimizer of $R^*_{2,h}$. According to Lemma \ref{lm: continuite de VPhc}, for a fixed $\cb$, $s\mapsto R_s(\cb)$ is continuous, thus we may choose $h^+$ such that $R_{h^+}(\cb^*_{2,h}) \leq R_h(\cb^*_{2,h}) + \frac{\alpha}{8}$. Then,
\begin{align*}
R^*_{2,h^+}  \leq R_{h^+}(\cb^*_{2,h})  
             \leq R_{h}(\cb^*_{2,h}) + \frac{\alpha}{8} 
             \leq R^*_{2,h} + \frac{\alpha}{4}.
\end{align*}
On the other hand, set $h_1 = \frac{h}{2}$. Then $\sup_{s \geq h_1} \|\cb^*_{1,s}\| \leq \frac{P\|u\|}{h_1}= C_{h_1}$. According to Lemma \ref{lm: continuite de VPhc}, there exists $h>h_2 \geq h_1$ such that $\sup_{\|c\| \leq C_{h_1}} (R_h(c) - R_{h_2}(c)) \leq \frac{\alpha}{4}$. For such an $h_2$, we may write
\begin{align*}
R^*_{1,h_2}  =  R_{h_2}(\cb^*_{1,h_2})  \geq R_h( \cb^*_{1,h_2}) - \frac{\alpha}{4}  \geq R_{1,h}^* - \frac{\alpha}{4}. 
\end{align*}
Since $R^*_{1,h}-R^*_{2,h}\geq\alpha$, it comes that  $R^*_{1,h_2} - R^*_{2,h^+} \geq \frac{\alpha}{2}$.
 
Now, if $\cb=(c_1,c_2)$ is an $\alpha/4$-minimizer of $R^*_{2,s}$, for $h^+\geq s \geq h-(h-h_2)/2=:h^-$, it holds $P\tau_s(\cb) \1_{W_j(\cb)} \geq h-h^-$, for $j \in \{1,2\}$. Indeed, suppose that $P\tau_s(\cb) \1_{W_1(\cb)} < h-h^-$. Then
\begin{align*}
R^*_{2,h^+}  \geq R_s(\cb) - \frac{\alpha}{4} 
             \geq P d_{\phi}(u,c_2) \tau_s(\cb)(u)\mathbbm{1}_{W_{2}(\cb)}(u) - \frac{\alpha}{4} 
             \geq R^*_{1,h_2} - \frac{\alpha}{4},
\end{align*}
since $s-h+h^- \geq h_2$, hence the contradiction. Choosing $h^+\geq s \geq h^-$ entails that $\|T_s(\cb^*_{j,s})_p\|\leq\frac{P\|u\|}{h-h^-}$ for every $p\in\{1,2\}$, this gives the result for $k=2$.

Assume that the proposition is true for index $k-1$, we will prove that it is also true for index $k$.
Set $\alpha = \min_{j\in[\![2,k]\!]} R^*_{j-1,h}-R^*_{j,h}>0$.
Let $h^{--}$ and $h^{++}$ be the elements $h^-$ and $h^+$ associated with step $k-1$.
Set $\cb^*_{k-1,h^{--}}$ a minimizer of $R^*_{k-1,h^{--}}$ and $\cb^*_{k,h}$ an $\frac{\alpha}{8}$-minimizer of $R^*_{k,h}$.
According to Lemma \ref{lm: continuite de VPhc}, there exists  $h<h^+<h^{++}$ such that $R_{h^+}(\cb^*_{k,h})\leq R_h(\cb^*_{k,h})+\frac{\alpha}{8}$. Thus $R^*_{k,h^+}\leq R^*_{k,h}+\frac{\alpha}{4}$.
On the other hand, Lemma \ref{lm: continuite de VPhc} provides $h>h_1>h^{--}$ such that
\[\sup_{\cb\in\left(\bar{B}(0,C_{h^{--},h^{++}})\cap F_0\right)^{(k)}}(R_h(\cb)-R_{h_1}(\cb))\leq\frac{\alpha}{4}.\]
Then, according to step $k-1$, since $\|(\cb^*_{k-1,h_1})_j\| \leq C_{h^{--}, h^{++}}$ for $j\in[\![1,k]\!]$, we may write
\[R^*_{k-1,h_1} = R_{h_1}(\cb^*_{k-1,h_1}) \geq R^*_{k-1,h} - \frac{\alpha}{4}.\]
As a consequence, since $R^*_{k-1,h} - R^*_{k,h}\geq\alpha$, we have $R^*_{k-1,h_1} - R^*_{k,h^+}\geq\frac{\alpha}{2}$.
Now, let $\cb$ be an $\frac{\alpha}{4}$-minimizer of $R^*_{k,s}$, for $h^+\geq s\geq h^-=\frac{h+h_1}{2}$, and assume that $P\tau_s(\cb)\1_{W_1(\cb)}< h - h^-$. Then
\begin{align*}
R^*_{k,h^+}&\geq R_s(\cb) - \frac{\alpha}{4}
\geq P\sum_{j=2}^k d_{\phi}(u,c_j)\tau_s(\cb)(u)\1_{W_j(\cb)}(u) - \frac{\alpha}{4}.
\end{align*}
Then, $R^*_{k,h^+} \geq R^*_{k-1,h_1}- \frac{\alpha}{4}$,
hence the contradiction. Thus, for such a choice of $h^-$ and $h^- \leq s \leq h^+$, $P\tau_s(\cb)\1_{W_p(\cb)}\geq h - h^-$, which entails $\|(T_s(\cb))_p\| \leq P\|u\|/(h-h^-)$, for every $p\in[\![1,k]\!]$.
\qed

\subsection{Proof of Lemma \ref{lem:keylemmah-h+_part2}}\label{sec:proof_lem:keylemmah-h+_part2}
Let $s \in [h_{b_1}^-, h_{b_1}^+]$, $j\in [\![1,k]\!]$ and $\cb^*_{j,s}$ be a $R_{j,s}$ minimizer. If $j=1$, then $P \tau_s(\cb^*_{j,s}) = s \geq h_{b_1}^- \geq b(1-\kappa_1)(1-h)$. Now assume that $j \geq 2$, and, without loss of generality, that $P \tau_s(\cb^*_{j,s})\1_{W_1(\cb^*_{j,s})} < b(1-\kappa_1)(1-h) < h_{b_1}^- - h_{b}^-$. We may write
\begin{align*}
R^*_{j,h_b^+} & \geq R^*_{j,s} \geq \sum_{p=2}^j P d_{\phi}(u,c^*_{j,s,p})\tau_s(\cb^*_{j,s})(u)\1_{W_p(\cb^*_{j,s})}(u) \\
              & \geq R^*_{j-1,s - (h_{b_1}^- - h_b^-)} \geq R^*_{j-1,h_b^-},
\end{align*}
hence the contradiction.
\qed
\section{Technical proofs for Section \ref{sec:proofs_section_theoretical_results}}\label{tecsec:proofs_inter_theoretical_results}

\subsection{Proof of Proposition \ref{prop:deviations}}\label{tecsec:proof_deviations}
\begin{proposition*}[\ref{prop:deviations}]
With probability larger than $1-e^{-x}$, we have, for $k \geq 2$,
\[
\eqref{eq:deviationVCballs}: \quad  
\begin{array}{@{}ccc}
\sup_{\cb \in (\R^d)^{(k)}, r \geq 0} \left | (P - P_n) \mathbbm{1}_{B_\phi(\cb,r)} \right | & \leq & C \sqrt{\frac{k(d+1) \log(k)}{n}} + \sqrt{\frac{2x}{n}}, \\
\sup_{\cb \in (\R^d)^{(k)}, r \geq 0} \left | (P - P_n) \mathbbm{1}_{\partial B_\phi(\cb,r)} \right | & \leq & C \sqrt{\frac{k(d+1) \log(k)}{n}} + \sqrt{\frac{2x}{n}},
\end{array}
\]
where $\partial B_\phi(\cb,r)$ denotes $\left \{ x \mid d_\phi(x,\cb)=r^2 \right \}$ and $C$ denotes a universal constant. Moreover, if $r^+$ and $K$ are fixed, and $P \|u\|^2 \leq M_2<\infty$, we have, with probability larger than $1-e^{-x}$,
\[
\eqref{eq:deviationcontrastfunction}: \quad 
\begin{array}{@{}ccc}
\sup_{\cb \in \left(\bar{B}(0,K)\cap F_0\right)^{(k)}, r \leq r^+}\left | (P - P_n) d_{\phi}(.,\cb) \mathbbm{1}_{B_\phi(\cb,r)} \right |  & \leq & (r^+)^2 \left [C_{K,r^+,M_2} \frac{\sqrt{kd\log(k)}}{\sqrt{n}} +  \sqrt{\frac{2x}{n}} \right ], \\
\sup_{\cb \in \left(\bar{B}(0,K)\cap F_0\right)^{(k)}, r \leq r^+}\left | (P - P_n) d_{\phi}(.,\cb) \mathbbm{1}_{\partial B_\phi(\cb,r)} \right | & \leq &  (r^+)^2 \left [C_{K,r^+,M_2} \frac{\sqrt{kd\log(k)}}{\sqrt{n}} +  \sqrt{\frac{2x}{n}} \right ], 
\end{array}
\]
where we recall that $\overline{conv(supp(P))}=F_0 \subset \mathring{\Omega}$.
\end{proposition*}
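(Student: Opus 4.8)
The plan is to control the four suprema by standard empirical-process arguments based on VC-dimension and bounded-difference (McDiarmid) concentration, in the following order. First I would establish the combinatorial fact underlying both \eqref{eq:deviationVCballs} and \eqref{eq:deviationcontrastfunction}: the class of sets $\mathcal{B}_k = \{B_\phi(\cb,r) : \cb \in (\R^d)^{(k)}, r\geq 0\}$ has finite VC-dimension, with a bound of order $k(d+1)$ up to logarithmic factors in $k$. This follows because a single Bregman ball $\{x : d_\phi(x,c) < r^2\} = \{x : \phi(x) - \langle \nabla_c\phi, x\rangle < r^2 - \phi(c) + \langle\nabla_c\phi,c\rangle\}$ is, after the fixed nonlinear lift $x \mapsto (x,\phi(x))$, a half-space in $\R^{d+1}$; hence each $B_\phi(c,r)$ lies in a VC class of dimension $d+2$, and a $k$-fold union (a min over $k$ balls) multiplies the VC dimension by at most a factor $O(k\log k)$ by the standard union-of-classes lemma. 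The same argument applied to the complement of the closed ball handles $\partial B_\phi(\cb,r)$ as a difference of two such classes.

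Next I would invoke the classical VC inequality (e.g. the bound $\E \sup_{A\in\mathcal{A}}|(P-P_n)\mathbbm{1}_A| \leq C\sqrt{\mathrm{VC}(\mathcal{A})/n}$ via Dudley's entropy integral or the Vapnik–Chervonenkis bound) to get the in-expectation version of \eqref{eq:deviationVCballs}, namely $\E\sup \leq C\sqrt{k(d+1)\log(k)/n}$, and then upgrade to the high-probability statement by McDiarmid's inequality: the functional $\X_n \mapsto \sup_{\cb,r}|(P-P_n)\mathbbm{1}_{B_\phi(\cb,r)}|$ changes by at most $1/n$ when one sample point is modified, so it concentrates around its mean with a $\sqrt{2x/n}$ Gaussian tail. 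This yields the first two displays.

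For \eqref{eq:deviationcontrastfunction} the extra ingredient is that the envelope is now the truncated loss $d_\phi(\cdot,\cb)\mathbbm{1}_{B_\phi(\cb,r)}$, which on the relevant range is bounded by $(r^+)^2$. I would write $d_\phi(x,\cb)\mathbbm{1}_{B_\phi(\cb,r)}(x) = \int_0^{(r^+)^2}\mathbbm{1}\{t < d_\phi(x,\cb) < r^2\}\,dt$ and pull the supremum inside, reducing to the VC-type control of the class of ``Bregman annuli'' $\{t < d_\phi(\cdot,\cb) < r^2\}$ — again a finite-VC class of dimension $O(k(d+1)\log k)$ by the same lift-to-halfspace argument, now with two thresholds. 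This gives a bootstrap of \eqref{eq:deviationVCballs}-type bounds integrated over $t\in[0,(r^+)^2]$, producing the $(r^+)^2 C_{K,r^+,M_2}\sqrt{kd\log k/n}$ term; the restriction $\cb \in (\bar B(0,K)\cap F_0)^{(k)}$ together with $\phi\in\mathcal{C}^2$ on the compact set $F_0$ and $P\|u\|^2\leq M_2$ is what makes the constant $C_{K,r^+,M_2}$ finite (it absorbs $\sup_{F_0\cap\bar B(0,K)}\|\nabla\phi\|$ and $M_2$). McDiarmid again upgrades to high probability: modifying one point changes the functional by at most $(r^+)^2/n$, giving the $(r^+)^2\sqrt{2x/n}$ term. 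The $\partial B_\phi$ variant is handled identically, writing the loss on the sphere as a limit/difference of annulus indicators.

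The main obstacle, and the step requiring the most care, is the VC-dimension bound for the $k$-fold classes: one must check that the nonlinear lift $x\mapsto(x,\phi(x))$ genuinely linearizes Bregman sublevel sets (it does, since $d_\phi(x,c)<r^2$ is affine in $(x,\phi(x))$ for fixed $c$), and then correctly apply the combinatorial lemma bounding the VC dimension of unions/intersections of $k$ classes — this is where the $\log(k)$ factor enters and where an off-by-a-log error is easy to make. Everything after that (the VC inequality, Dudley's integral, McDiarmid) is routine; the truncation identity for \eqref{eq:deviationcontrastfunction} is the only other place where one must be slightly careful to keep the dependence on $r^+$, $K$ and $M_2$ explicit.
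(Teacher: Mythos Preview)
Your plan for \eqref{eq:deviationVCballs} matches the paper's proof essentially line for line: the paper linearizes a single Bregman ball (its Lemma on $d_{VC}(\mathcal{C})\le d+1$ uses that $d_\phi(x,c_1)-d_\phi(x,c_2)$ is affine in $x$, which is the same observation as your lift $x\mapsto(x,\phi(x))$, and in fact gives the slightly sharper bound $d+1$ rather than your $d+2$), then invokes a union-of-VC-classes lemma for the $k$-fold balls, a VC deviation inequality for the expectation, and bounded differences for the tail. The $\partial B_\phi$ case is handled by the same triangle-inequality split you describe.

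For \eqref{eq:deviationcontrastfunction} your route is genuinely different. The paper does \emph{not} use the layer-cake identity; instead it bounds the Rademacher complexity via Dudley's entropy integral, factoring the function class as a product $\Gamma_1\cdot\Gamma_2$ where $\Gamma_2=\{\mathbbm{1}_{B_\phi(\cb,r)}\}$ is controlled by VC/pseudo-dimension and $\Gamma_1=\{d_\phi(\cdot,\cb)/(r^+)^2\wedge 1\}$ is controlled by a direct $\varepsilon$-net on $\cb\in(\bar B(0,K)\cap F_0)^{(k)}$ using the Lipschitz estimate of Lemma~\ref{lm:eq_normes_compact}. This is precisely where $K$ and $M_2$ enter: the Lipschitz constant involves $\sup_{F_0\cap\bar B(0,K)}\|\nabla\phi\|$, and the $L_2(P_n)$ covering radius involves $\|x\|_{L_2(P_n)}$, whose expectation is bounded via $M_2$ and Jensen. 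Your layer-cake reduction to VC control of Bregman annuli is cleaner and, as you may have noticed but did not say explicitly, actually makes the hypotheses $\cb\in\bar B(0,K)$ and $P\|u\|^2\le M_2$ superfluous for this inequality: the annulus class has VC dimension $O(k(d+1)\log k)$ by intersecting two ball classes, so you recover the bound with a constant depending only on $r^+$. (For the $\partial B_\phi$ variant note simply that $d_\phi(u,\cb)\mathbbm{1}_{\partial B_\phi(\cb,r)}(u)=r^2\mathbbm{1}_{\partial B_\phi(\cb,r)}(u)$, which reduces immediately to part one without any limiting argument.) The paper's covering-number approach is more in the spirit of general contrast-function deviations and would adapt to losses that are not level-set-linearizable; your approach exploits the specific structure here to shortcut the argument.
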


The proof of Proposition \ref{prop:deviations} will make use of the following results. The first one deals with VC dimension of Bregman balls.
\begin{lemma}\label{lem:VCdimbregmanballs}
Let $\mathcal{C}$ (resp. $\bar{\mathcal{C}}$) denote the class of open (resp. closed) Bregman balls $B_\phi(x,r)= \{ y \in \R^d \mid \sqrt{d_{\phi}(y,x)} < r \}$ (resp. $\bar{B}_\phi(x,r)= \{ y \in \R^d \mid \sqrt{d_{\phi}(y,x)} < r \}$), $x \in \R^d$, $r\geq 0$. Then
\begin{align*}
d_{VC}(\mathcal{C}) & \leq d+1, \\ 
d_{VC}(\bar{\mathcal{C}}) & \leq d+1,
\end{align*}
where $d_{VC}$ denotes the Vapnik-Chervonenkis dimension.
\end{lemma}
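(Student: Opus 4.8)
The plan is to reduce Bregman balls to half-spaces in one extra dimension via the classical lifting $\Phi:\R^d\to\R^{d+1}$, $\Phi(y)=(y,\phi(y))$. First I would expand the defining inequality: for $x\in\R^d$ and $r\geq0$,
\[
y\in B_\phi(x,r)\iff \phi(y)-\phi(x)-\langle\nabla_x\phi,\,y-x\rangle<r^2\iff \phi(y)<\langle\nabla_x\phi,\,y\rangle+c_{x,r},
\]
where $c_{x,r}:=r^2+\phi(x)-\langle\nabla_x\phi,x\rangle\in\R$. Hence $B_\phi(x,r)=\Phi^{-1}(H_{x,r})$ with $H_{x,r}=\{(z,w)\in\R^{d+1}\mid w<\langle\nabla_x\phi,z\rangle+c_{x,r}\}$ an open half-space of $\R^{d+1}$; equivalently, $B_\phi(x,r)$ is the open subgraph of the affine function $z\mapsto\langle\nabla_x\phi,z\rangle+c_{x,r}$ on $\R^d$, restricted to the graph of $\phi$. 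The essential point is that $H_{x,r}$ is not an arbitrary half-space: its inward normal $(-\nabla_x\phi,1)$ has last coordinate pinned to $1$. Since $\Phi$ is a fixed map, any finite subset of $\R^d$ shattered by $\mathcal{C}$ lifts (injectively, as first coordinates already separate points) to a subset of $\R^{d+1}$ shattered by the class $\mathcal{S}$ of open subgraphs of affine functions on $\R^d$, so $d_{VC}(\mathcal{C})\leq d_{VC}(\mathcal{S})$; the closed-ball case is identical with ``$<$'' replaced by ``$\leq$'' and open subgraphs by closed ones.

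It then remains to show $d_{VC}(\mathcal{S})\leq d+1$, i.e.\ that the pseudo-dimension of the $(d+1)$-dimensional vector space $\mathcal{A}$ of affine functions on $\R^d$ is at most $d+1$. I would run the standard Dudley-type argument: suppose points $z_1,\dots,z_{d+2}\in\R^d$ together with thresholds $t_1,\dots,t_{d+2}\in\R$ were pseudo-shattered by $\mathcal{A}$. The evaluation map $T:\mathcal{A}\to\R^{d+2}$, $f\mapsto(f(z_i)-t_i)_{i=1}^{d+2}$, is affine, so its image is an affine subset of $\R^{d+2}$ of dimension at most $\dim\mathcal{A}=d+1<d+2$, hence contained in some hyperplane $\{v\mid\langle\lambda,v\rangle=\gamma\}$ with $\lambda\neq0$. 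After possibly replacing $\lambda$ by $-\lambda$, the dichotomy $S=\{i\mid\lambda_i>0\}$ has a realizer $f_S$ with $T(f_S)_i>0$ for $i\in S$ and $T(f_S)_i\leq0$ otherwise, forcing $\gamma=\langle\lambda,T(f_S)\rangle>0$; applying the same identity to the dichotomy $S'=\{i\mid\lambda_i<0\}$ (or $S'=\emptyset$ when $\lambda\geq0$) forces $\gamma\leq0$, a contradiction. Therefore $d_{VC}(\mathcal{C})\leq d_{VC}(\mathcal{S})\leq d+1$, and likewise $d_{VC}(\bar{\mathcal{C}})\leq d+1$.

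The main obstacle is conceptual rather than computational: one must notice that the lift produces half-spaces with a constrained normal direction, since the class of all half-spaces of $\R^{d+1}$ has VC dimension $d+2$, and it is precisely the frozen degree of freedom (writing the ball as a subgraph rather than an arbitrary half-space) that buys the sharp bound $d+1$. Two minor points deserve a line in the write-up: first, $\nabla_x\phi$ ranges only over $\nabla\phi(\R^d)\subseteq\R^d$ and $c_{x,r}$ is likewise constrained, but restricting the parameter set only shrinks $\mathcal{C}$, so these constraints can be discarded; second, the argument uses nothing about $\phi$ beyond differentiability, which is guaranteed by the standing assumption $\phi\in\mathcal{C}^2$.
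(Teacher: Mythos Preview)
Your proof is correct and rests on the same key observation as the paper's: for fixed $x$, the map $y\mapsto d_\phi(y,x)-\phi(y)$ is affine in $y$, so Bregman balls are governed by the VC class of affine halfspaces. The paper packages this slightly more directly by noting that the \emph{difference} $d_\phi(\cdot,c_1)-d_\phi(\cdot,c_2)-r_1^2+r_2^2$ is already affine in $\R^d$ (no lift needed) and then invoking the known VC dimension $d+1$ of affine halfspaces, whereas you lift to $\R^{d+1}$ and re-derive the pseudo-dimension bound from scratch; the two routes are equivalent.
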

\begin{proof}[Proof of Lemma \ref{lem:VCdimbregmanballs}]
Let $S = \{x_1, \dots, x_{d+2}\}$ be shattered by $\mathcal{C}$, and let $A_1$, $A_2$ be a partition of $S$. Then we may write
\begin{align*}
A_1 &= S \cap B_\phi(c_1,r_1) \cap B_\phi(c_2,r_2)^c \\
A_2 & = S \cap B_\phi(c_2,r_2) \cap B_\phi(c_1,r_1)^c,
\end{align*}
for $c_1$, $c_2$ $\in$ $\R^d$ and $r_1,r_2 \geq 0$. Straightforward computation shows that, for any $x \in A_1$, 
\[
\ell_{1,2}(x) :=d_\phi(x,c_1)-d_\phi(x,c_2)<0. 
\]
Similarly we have that, for any $x \in A_2$, $\ell_{1,2}(x) >0$. 
Since $\ell_{1,2}(x) = \phi(c_2) - \phi(c_1) + \left\langle x , \nabla_{c_2}\phi - \nabla_{c_1}\phi \right\rangle + \left\langle \nabla_{c_1}\phi,c_1 \right\rangle - \left\langle \nabla_{c_2}\phi,c_2 \right\rangle - r_1^2 + r_2^2$, $S$ is shattered by affine halfspaces (whose VC-dimension is $d+1$), hence the contradiction. The same argument holds for $\bar{C}$.
\end{proof}
Next, to bound expectation of suprema of empirical processes, we will need the following result. For any set of real-valued functions $\mathcal{F}$, let $\mathcal{N}(\mathcal{F},\varepsilon, L_2(P_n))$ denote its $\varepsilon$ covering number with respect to the $L_2(P_n)$ norm. In addition, the pseudo-dimension of $\mathcal{F}$, $d_{VC}(\mathcal{F})$, is defined as the Vapnik dimension of the sets $\{ (x,t) \mid f(x) \leq t \}$. 
\begin{theorem}{\cite[Theorem 1]{Mendelson03}}\label{thm:mendelson_l2_VC}
      If $\mathcal{F}$ is a set of functions taking values in $[-1,1]$. Then, for all $\varepsilon \leq 1$
      \[
      \mathcal{N}(\mathcal{F},\varepsilon,L_2(P_n)) \leq \left ( \frac{2}{\varepsilon} \right )^{\kappa d_{VC}(\mathcal{F})},
      \]
      where $\kappa$ denotes a universal constant and with a slight abuse of notation $d_{VC}(\mathcal{F})$ denotes the pseudo-dimension of $\mathcal{F}$.
      \end{theorem}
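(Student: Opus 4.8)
The plan is to deduce the bound from Haussler's sharp packing estimate for classes of bounded Vapnik--Chervonenkis (VC) dimension, after two standard reductions. First, since every $f\in\mathcal{F}$ is $[-1,1]$-valued, for $f,g\in\mathcal{F}$ we have $|f-g|\le 2$, hence $\|f-g\|_{L_2(P_n)}^2 = P_n(f-g)^2 \le 2P_n|f-g| = 2\|f-g\|_{L_1(P_n)}$; thus any $\varepsilon^2/2$-cover of $\mathcal{F}$ for $\|\cdot\|_{L_1(P_n)}$ is an $\varepsilon$-cover for $\|\cdot\|_{L_2(P_n)}$, and it suffices to bound $\mathcal{N}(\mathcal{F},\varepsilon^2/2,L_1(P_n))$. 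Second, to each $f\in\mathcal{F}$ associate its subgraph $A_f=\{(x,t)\in\R^d\times[-1,1]\mid f(x)\le t\}$; by definition of the pseudo-dimension the class $\mathcal{A}=\{A_f\mid f\in\mathcal{F}\}$ has VC dimension $d:=d_{VC}(\mathcal{F})$. Equipping $\R^d\times[-1,1]$ with $Q=P_n\otimes\mathrm{Unif}([-1,1])$, the fiber of $A_f\triangle A_g$ over $x$ is an interval of length $|f(x)-g(x)|$, so $Q(A_f\triangle A_g)=\tfrac12\|f-g\|_{L_1(P_n)}$; hence a $\delta$-separated family of $\mathcal{F}$ in $L_1(P_n)$ maps injectively to a $(\delta/2)$-separated family of $\mathcal{A}$ in $L_1(Q)$, and since covering numbers are bounded by packing numbers at the same scale, $\mathcal{N}(\mathcal{F},\delta,L_1(P_n))\le M(\mathcal{A},\delta/2,L_1(Q))$.

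Next I would invoke Haussler's packing bound: for any probability measure $Q$ and any VC class $\mathcal{A}$ of dimension $d$, $M(\mathcal{A},\eta,L_1(Q))\le e(d+1)(2e/\eta)^d$. With $\delta=\varepsilon^2/2$ (so $\eta=\varepsilon^2/4$) this gives $\mathcal{N}(\mathcal{F},\varepsilon^2/2,L_1(P_n))\le e(d+1)(8e/\varepsilon^2)^d$. Since $\varepsilon\le 1$ we have $2/\varepsilon\ge 2$ and $\varepsilon^{-2d}\le\varepsilon^{-\kappa d}$ as soon as $\kappa\ge 2$, while $e(d+1)(8e)^d\le 2^{\kappa d}$ holds for every $d\ge 1$ once $\kappa$ is a large enough universal constant; combining this with the first paragraph yields $\mathcal{N}(\mathcal{F},\varepsilon,L_2(P_n))\le(2/\varepsilon)^{\kappa d}$, as claimed.

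The only non-elementary ingredient is Haussler's packing bound, and that is where the real work sits: its point is the exponent $d$ with no extra factor logarithmic in $1/\eta$, and I would import it as a black box, its proof being a double-sampling/random-extraction argument combined with the Sauer--Shelah lemma applied to a random subsample of size of order $d/\eta$. If one only needed the weaker estimate $\mathcal{N}(\mathcal{F},\varepsilon,L_2(P_n))\le(2/\varepsilon)^{\kappa d\log(1/\varepsilon)}$, a direct discretization of $[-1,1]$ into $O(1/\varepsilon)$ levels followed by Sauer--Shelah on the resulting finitely-valued class would already suffice for the single use of this theorem in the paper (inside the Dudley-type entropy integral in the proof of Proposition~\ref{prop:deviations}); equivalently, the statement as written is the specialization of the Mendelson--Vershynin entropy bound to the case where the fat-shattering dimension at every scale is bounded by the pseudo-dimension $d$.
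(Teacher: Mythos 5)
The paper does not actually prove this statement: it is imported wholesale as \cite[Theorem 1]{Mendelson03} and used as a black box inside the Dudley entropy integral in the proof of Proposition \ref{prop:deviations}. Your proposal is therefore not a variant of the paper's argument but a genuine derivation, and it is correct. The chain of reductions checks out: since the functions take values in $[-1,1]$, $\|f-g\|_{L_2(P_n)}^2\le 2\|f-g\|_{L_1(P_n)}$, so an $\varepsilon^2/2$-cover in $L_1(P_n)$ is an $\varepsilon$-cover in $L_2(P_n)$; the subgraph map $f\mapsto A_f=\{(x,t)\mid f(x)\le t\}$ matches exactly the paper's definition of pseudo-dimension, and under $Q=P_n\otimes\mathrm{Unif}([-1,1])$ one has $Q(A_f\triangle A_g)=\tfrac12\|f-g\|_{L_1(P_n)}$, so packings transfer with distances exactly halved; Haussler's bound $M(\mathcal{A},\eta,L_1(Q))\le e(d+1)(2e/\eta)^d$ is correctly quoted; and the resulting $e(d+1)(8e/\varepsilon^2)^d$ is absorbed into $(2/\varepsilon)^{\kappa d}$ for $\varepsilon\le 1$ with, say, $\kappa=8$, the only loss being a factor $2$ in the exponent from passing through $L_1$ at scale $\varepsilon^2$ — harmless since $\kappa$ is only claimed to be universal. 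The genuine Mendelson--Vershynin theorem is stronger (it controls $L_2$ entropy by the fat-shattering dimension at scale proportional to $\varepsilon$, with no squaring loss), but since the fat-shattering dimension at every scale is bounded by the pseudo-dimension, the statement as used in the paper is exactly the specialization you describe, and your Haussler-based route delivers it in full. The one ingredient you leave unproved, Haussler's packing lemma, is the genuinely hard step, but it is standard and you flag it honestly; your remark that the weaker bound with an extra $\log(1/\varepsilon)$ in the exponent (obtainable by discretizing the range and applying Sauer--Shelah) would already suffice for the paper's single application of this theorem is also accurate.
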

We are now in a position to prove Proposition \ref{prop:deviations}.               
\begin{proof}[Proof of Proposition \ref{prop:deviations}]
Let $\cb \in \R^{(k)}$. Since ${B_\phi(\cb,r)} = \bigcup_{j=1}^{k} B_\phi(c_j,r)$, according to \cite[Theorem 1.1]{vanderVaart09} and Lemma \ref{lem:VCdimbregmanballs}, we may write
\[
d_{VC} \left ( \left \{ B_\phi(\cb,r) \mid \cb \in (\R^d)^{(k)}, r \geq 0 \right \} \right ) \leq c_1k(d+1) \log(c_2k),
\]
where $c_1$ and $c_2$ are universal constant. Thus, applying \cite[Theorem 3.4]{Boucheron05} gives the first inequality of \eqref{eq:deviationVCballs}. The second inequality of \eqref{eq:deviationVCballs} follows from 
\begin{multline*}
\sup_{\cb \in (\R^d)^{(k)}, r \geq 0} \left | (P - P_n) \mathbbm{1}_{\partial B_\phi(\cb,r)} \right | \leq \sup_{\cb \in (\R^d)^{(k)}, r \geq 0} \left | (P - P_n) \mathbbm{1}_{B_\phi(\cb,r)} \right | \\
 + \sup_{\cb \in (\R^d)^{(k)}, r \geq 0} \left | (P - P_n) \mathbbm{1}_{\bar{B}_\phi(\cb,r)} \right |.
\end{multline*}

      The inequalities of \eqref{eq:deviationcontrastfunction} are more involved. Denote by $Z$ the left-hand side of the first inequality. A bounded difference inequality (see, e.g., \cite[Theorem 6.2]{Massart16}) yields
      \[
      \mathbb{P}\left (Z \geq \mathbb{E}Z + (r^+)^2 \sqrt{\frac{2x}{n}} \right ) \leq e^{-x}.
      \]
      It remains to bound
      \[
      \mathbb{E} Z \leq 2 \mathbb{E}_X \mathbb{E}_\sigma \frac{1}{n} \sup_{\cb \in \left(B(0,K) \cap F_0\right)^{(k)}, r \leq r^+} \sum_{i=1}^{n} \sigma_i d_{\phi}(X_i,\cb) \mathbbm{1}_{B_\phi(\cb,r)}(X_i),
      \]
      according to the symmetrization principle (see, e.g., \cite[Lemma 11.4]{Massart16}), where for short $\mathbb{E}_Y$ denotes expectation with respect to the random variable $Y$. 
       Let $\Gamma_0$ denote the set of functions  $\left \{ \frac{d_{\phi}(.,\cb)}{(r^+)^2} \mathbbm{1}_{B_\phi(\cb,r)} \mid \cb \in B(0,K)^{(k)}, r \leq r^+ \right \}$. 
            We have to assess the covering number $\mathcal{N}(\Gamma_0,\varepsilon, L_2(P_n))$.
           It is immediate that 
      \[
      \mathcal{N}(\Gamma_0,\varepsilon,L_2(P_n)) \leq \mathcal{N}(\Gamma_1,\varepsilon/2,L_2(P_n)) \times \mathcal{N}(\Gamma_2, \varepsilon/2,L_2(P_n)),
      \]
      where $\Gamma_1 = \left \{ \frac{d_{\phi}(.,\cb)}{(r^+)^2} \wedge 1 \right \}$ and $\Gamma_2 = \left \{\mathbbm{1}_{B_\phi(\cb,r)} \right \}$. On one hand, we have
      \begin{align*}
      \mathcal{N}(\Gamma_2,u,L_2(P_n)) & = \mathcal{N}(1-\Gamma_2,u,L_2(P_n)) \\
      & = \mathcal{N} \left ( \left \{ \prod_{j=1}^k \mathbbm{1}_{B_\phi(c_j,r)^c} \mid \cb, r \right \},u,L_2(P_n) \right ) \\
      & \leq \mathcal{N} \left ( \left \{ \mathbbm{1}_{B_\phi(c,r)} \mid c \in \R^d, r \geq 0 \right \}, u/k,L_2(P_n) \right )^k \\
      & \leq \left ( \frac{2k}{u} \right )^{\kappa(d+1)k},
      \end{align*}
      according to Theorem \ref{thm:mendelson_l2_VC}. 
      
      Now turn to $\Gamma_1$. According to Lemma \ref{lm:eq_normes_compact}, we may write
      \begin{align*}
      \mathcal{N} ( \Gamma_1,u,L_2(P_n)) & \leq \mathcal{N} \left ( \left \{ \frac{d_{\phi}(.,\cb)}{(r^+)^2} \mid \cb \in \left ( F_0 \cap B(0,K) \right)^{(k)} \right \},u,L_2(P_n) \right ) \\
      & \leq \mathcal{N} \left ( B(0,K)^{k}, \frac{(r^+)^2 u}{C_K (1+ \|x\|_{L_2(P_n)})},d_H \right ).
      \end{align*}
      Since $\mathcal{N}(B(0,1),u,\|\cdot\|) \leq \left( \frac{3}{u} \right )^d$, it follows that 
      \begin{align*}
      \mathcal{N} ( \Gamma_1,u,L_2(P_n)) \leq \left ( \frac{3K C_K (1+\|x\|_{L_2(P_n) )})}{ (r^+)^2 u} \right )^{kd},
      \end{align*}
      hence
      \begin{align*}
      \mathcal{N}(\Gamma_0,\varepsilon,L_2(P_n)) \leq \left ( \frac{6K C_K  (1+\|x\|_{L_2(P_n)})}{ (r^+)^2 \varepsilon } \right )^{kd} \times \left ( \frac{4k}{\varepsilon} \right )^{\kappa(d+1)k}.
      \end{align*}
Using Dudley's entropy integral (see, e.g., \cite[Corollary 13.2]{Massart16}) yields, for $k\geq 2$,
\begin{multline*}
\mathbb{E}_\sigma \frac{1}{n} \sup_{\cb \in \left(B(0,K) \cap F_0\right)^{(k)}, r \leq r^+} \sum_{i=1}^{n} \sigma_i \frac{d_{\phi}(X_i,\cb)}{(r^+)^2} \mathbbm{1}_{B_\phi(\cb,r)}(X_i) \\ \leq C \frac{(r^+)^2}{\sqrt{n}} \sqrt{kd \log \left ( \frac{C_K(1 + \|x\|_{L_2(P_n)})}{(r^+)^2} \right ) + \kappa(d+1) k \log(4k)}.
\end{multline*}
Thus, applying Jensen's inequality leads to 
\begin{align*}
\mathbb{E}_X \mathbb{E}_\sigma \frac{1}{n} \sup_{\cb \in \left(B(0,K) \cap F_0\right)^{(k)}, r \leq r^+} \sum_{i=1}^{n} \sigma_i \frac{d_{\phi}(X_i,\cb)}{(r^+)^2} \mathbbm{1}_{B_\phi(\cb,r)}(X_i) & \leq (r^+)^2 C_{K,r^+,M_2} \sqrt{\frac{kd\log(k)}{n}}.
\end{align*}
Replacing $\1_{B_\phi(\cb,r)}$ with $\1_{\partial B_ \phi(\cb,r)}$ in the definition of $\Gamma_0$ and $\Gamma_2$ gives the same inequality, and \eqref{eq:deviationcontrastfunction}. 
\end{proof}

\subsection{Proof of Proposition \ref{prop:bounded_empirical_codebooks}}\label{tecsec:Proof_Proposition_bounded_empirical_codebook}
\begin{proposition*}[\ref{prop:bounded_empirical_codebooks}]
Assume that $P\|u\|^p < + \infty $ for some $p \geq 2$, and let $b>0$ be such that $\min_{j\in[\![2,k]\!]} R^*_{j-1,h_b^-}-R^*_{j,h_b^+} >0$, where $h_b^- = (h-b)/(1-b)$, $h^+_b=h/(1-b)$, as in Lemma \ref{lm: key lemma h- h+}. Let $\kappa_2<1$, and denote by $b_2 = \kappa_2 b$.
 Then there exists $C_{P,h,k,\kappa_2}$ such that, for $n$ large enough, with probability larger than $1-n^{-\frac{p}{2}}$, we have, for all $j\in[\![2,k]\!]$, and $i\in[\![1,j]\!]$, 
\begin{align*}
\sup_{ h_{b_2^-} \leq s \leq h} \|\hat{c}_{j,s,i}\| \leq C_{P,h,k,\kappa_2,b},
\end{align*}
where $\hat{\cb}_{j,s}$ denotes a $j$-codepoints empirical risk minimizer with trimming level $s$.
\end{proposition*}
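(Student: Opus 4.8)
The plan is to establish this as the empirical counterpart of Lemma~\ref{lm: key lemma h- h+}, running the same argument as in the proofs of Lemma~\ref{lem:keylemmah-h+_part1} and Lemma~\ref{lem:keylemmah-h+_part2}, but with $P$ replaced by $P_n$ and the gap between the two absorbed by the deviation bounds of Proposition~\ref{prop:deviations} together with Lemma~\ref{lm: Marcinkiewicz}. The engine is the centroid condition: applying Proposition~\ref{prop:centroid} to $P_n$, I may assume each codepoint of $\hat{\cb}_{j,s}$ equals the empirical local mean of its trimmed Bregman--Voronoi cell, so that, with $\hat{\tau}_s(\cb)$ the empirical trimming function of Section~\ref{sec:proof_thm_slow_rates}, one has $\|\hat{c}_{j,s,i}\| \le P_n\|u\| \big/ P_n\bigl(\hat{\tau}_s(\hat{\cb}_{j,s})\1_{W_i(\hat{\cb}_{j,s})}\bigr)$. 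On a good event $P_n\|u\|$ is bounded, so the whole statement reduces to showing that every empirical trimmed cell of $\hat{\cb}_{j,s}$ carries at least some fixed mass $\eta>0$, uniformly in $s$; this I would prove by induction on $j$, mimicking the peeling argument of Lemma~\ref{lem:keylemmah-h+_part1}.

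First I would fix the high-probability event. Choosing $x=\tfrac{p}{2}\log n+\log 5$ in the four inequalities of Proposition~\ref{prop:deviations} and intersecting with the event of Lemma~\ref{lm: Marcinkiewicz} gives an event $\mathcal{E}_n$ with $\mathbb{P}(\mathcal{E}_n)\ge 1-n^{-p/2}$ on which $P_n\|u\|\le C_0$ and all quantities appearing in Proposition~\ref{prop:deviations} are at most some $\delta_n\to 0$. Fix $K_+$ with $P(B(0,K_+))>h$; for $n$ large, $\mathcal{E}_n$ forces $P_n(B(0,K_+))>h$, so the mean-value argument of Lemma~\ref{lem:bound_radius_balls} (applied to $P_n$) yields $r_{n,s}(\cb)\le r_+$ for every $s\le h$ and every codebook $\cb$ with a codepoint in $\bar{B}(0,R)\cap F_0$, for any prescribed $R$ after enlarging $r_+$. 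Then, by exactly the decomposition used in the proof of Theorem~\ref{thm:slow_rates} --- write $R_{n,\ell,s}(\cb)-R_{\ell,s}(\cb)$ as $(P_n-P)d_\phi(\cdot,\cb)\hat{\tau}_s(\cb)$, controlled by \eqref{eq:deviationcontrastfunction}, plus $Pd_\phi(\cdot,\cb)(\hat{\tau}_s(\cb)-\tau_s(\cb))$, supported on a $d_\phi$-shell of radius $\le r_+$ and $P$-mass $O(\delta_n)$ by \eqref{eq:deviationVCballs} --- one obtains on $\mathcal{E}_n$
\[
\sup_{\ell\le k}\ \sup_{s\in[c_0,h]}\ \sup_{\cb\in(\bar{B}(0,R)\cap F_0)^{(\ell)}}\bigl|R_{n,\ell,s}(\cb)-R_{\ell,s}(\cb)\bigr|\le\delta_n,
\]
with $\delta_n\to 0$, for any fixed $c_0>0$ and $R>0$, since \eqref{eq:deviationcontrastfunction}--\eqref{eq:deviationVCballs} are uniform in the radius (hence in $s$) and in the codebook size. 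Applying this to the bounded population minimizers $\cb^*_{\ell,s}$ furnished by Lemma~\ref{lm: key lemma h- h+} (with our $b$ and a single $\kappa_1\in(0,1)$ close enough to $1$, whose radii are $\le r_+$ by Lemma~\ref{lem:bound_radius_balls}) gives $R^*_{n,\ell,s}\le R^*_{\ell,s}+\delta_n$ for all $\ell\le k$ and all relevant $s$, where $R^*_{n,\ell,s}:=R_{n,\ell,s}(\hat{\cb}_{\ell,s})$ is the empirical optimal $\ell$-point $s$-trimmed distortion.

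For the induction, set $\eta:=\dfrac{b(1-h)(1-\kappa_2)}{2k}$ and $a_j:=h_{b_2}^--(k-j)\eta$; since $(1-b_2)(1-b)<1$ one checks $(k-1)\eta<h_{b_2}^--h_b^-$, hence $a_1>h_b^->0$, and fixing $\kappa_1$ close enough to $1$ that $h_{\kappa_1 b}^-<a_1$ (possible since $h_{\kappa_1 b}^-\downarrow h_b^-$) gives $[a_j,h]\subset[h_{\kappa_1 b}^-,h_{\kappa_1 b}^+]$, the interval on which the population minimizers above live. For $j=1$, $\hat{\cb}_{1,s}$ is an empirical $s$-trimmed mean with $\|\hat{c}_{1,s}\|\le P_n\|u\|/s\le C_0/a_1$ on $\mathcal{E}_n$. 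For the step, assume the bound for all indices $<j$ and fix $s\in[a_j,h]$ and an empirical minimizer $\hat{\cb}_{j,s}$, normalized via Proposition~\ref{prop:centroid}. If some cell, say $W_1(\hat{\cb}_{j,s})$, had empirical trimmed mass $<\eta$, then discarding the first codepoint (as in the proof of Lemma~\ref{lem:keylemmah-h+_part2}) yields a $(j-1)$-codebook with $(s-\eta)$-trimmed empirical distortion $\le R^*_{n,j,s}$, so $R^*_{n,j-1,s-\eta}\le R^*_{n,j,s}$; since $s-\eta\in[a_{j-1},h]$, the inductive hypothesis makes the empirical $(j-1,s-\eta)$-minimizer lie in $\bar{B}(0,R_{j-1})\cap F_0$, whence $R^*_{n,j-1,s-\eta}\ge R^*_{j-1,s-\eta}-\delta_n$ while $R^*_{n,j,s}\le R^*_{j,s}+\delta_n$, so $R^*_{j-1,s-\eta}-R^*_{j,s}\le 2\delta_n$. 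But $s-\eta\ge h_b^-$ and $s\le h<h_b^+$, so monotonicity of the optimal trimmed distortion in the trimming level (a consequence of Lemma~\ref{lm: dtm decreasing}) forces $R^*_{j-1,s-\eta}-R^*_{j,s}\ge\gamma:=\min_{2\le\ell\le k}(R^*_{\ell-1,h_b^-}-R^*_{\ell,h_b^+})>0$, a contradiction for $n$ large. Hence every cell of $\hat{\cb}_{j,s}$ has empirical trimmed mass $\ge\eta$, so $\|\hat{c}_{j,s,i}\|\le C_0/\eta$ uniformly over $s\in[a_j,h]\supset[h_{b_2}^-,h]$, which closes the induction with $C_{P,h,k,\kappa_2,b}:=C_0/\eta$.

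The main obstacle I anticipate is precisely the uniform comparison $|R_{n,\ell,s}(\cb)-R_{\ell,s}(\cb)|\le\delta_n$ over all admissible $s$ and all bounded $\ell$-codebooks, because both the trimming function and its supporting Bregman radius are data-dependent; it is handled by the very decomposition already carried out for Theorem~\ref{thm:slow_rates}, the only new preliminary being to verify $r_{n,s}(\cb)\le r_+$ on $\mathcal{E}_n$ (via $P_n(B(0,K_+))>h$ and the argument of Lemma~\ref{lem:bound_radius_balls}). The remaining delicate point is the bookkeeping with the decreasing trimming levels $a_j$ --- the analogue of the nested intervals in the proof of Lemma~\ref{lem:keylemmah-h+_part1} --- which is what guarantees that at each inductive step $s-\eta$ still lies in the range where both the inductive hypothesis and the population gap $\gamma$ remain available.
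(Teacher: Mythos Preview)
Your proposal is correct and follows essentially the same route as the paper: fix a high-probability event via Proposition~\ref{prop:deviations} and Lemma~\ref{lm: Marcinkiewicz}, use the centroid condition to reduce to a lower bound on empirical trimmed cell masses, and prove the latter by induction on $j$ via the peeling argument of Lemma~\ref{lem:keylemmah-h+_part2} combined with deviation bounds. The bookkeeping differs slightly---the paper introduces an intermediate level $b_1=(\kappa_2+\tfrac{1-\kappa_2}{2})b$ and nests the $s$-ranges as $[h_{b_1}^-+\eta+\tfrac{j\eta}{k},h]$, whereas you use $a_j=h_{b_2}^--(k-j)\eta$ and a final monotonicity step---but both schemes serve the same purpose. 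One point deserving a bit more care: your uniform comparison $|R_{n,\ell,s}(\cb)-R_{\ell,s}(\cb)|\le\delta_n$ relies on the term $Pd_\phi(\cdot,\cb)(\hat{\tau}_s(\cb)-\tau_s(\cb))$ being $O(\delta_n)$; the paper sidesteps this by sandwiching $\hat{\tau}_s$ between $\tau_{s\pm O(\beta_n)}$ (since $P_n\tau_{s+2\beta_n}\ge s$ on $\mathcal{E}_n$) rather than bounding the difference directly, and your claim follows from that sandwich together with the Lipschitz bound $|R_s(\cb)-R_{s'}(\cb)|\le (r^+)^2|s-s'|$ implicit in the proof of Lemma~\ref{lm: continuite de VPhc}.
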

\begin{proof}[Proof of Proposition \ref{prop:bounded_empirical_codebooks}]
For a codebook $\cb$, let $\hat{\tau}_{s}(\cb)$ denote the trimming function $\1_{B_\phi(\cb,r_n({\cb}))} + \alpha_{n,s}(\cb)\1_{\partial B_\phi(\cb,r_n(\cb))}$, so that $\frac{1}{s}P_n \hat{\tau}_{s} \in \hat{\mathcal{P}}_s(\cb)$, and let $\tau_s(\cb)$ denote the trimming function for the distribution $P$.
Similarly to the proof of Theorem \ref{thm:existence_optimal}, we denote by $\hat{T}_s$ the operator that maps $\hat{\cb}$ to the empirical means of its Bregman-Voronoi cells, that is
\[
(\hat{T}_s(\cb))_j = \frac{P_n u \hat{\tau}_s(\cb) \1_{W_j(\cb)}}{P_n \hat{\tau}_s(\cb) \1_{W_j(\cb)}}.
\] 
We let $b_2=\kappa_2b$, $\kappa_2<1$, and choose $b_1 = (\kappa_2 + \frac{1-\kappa_2}{2})b$ so that $b_2 < b_1 <b$. At last we denote by $\eta= \left [ (h_{b_2}^- - h_{b_1}^-) \wedge (h_{b_1}^+ - h_{b_2}^+) \right ]/2$, and will prove recursively that, for $j\in[\![1,k]\!]$ and $i\in[\![1,j]\!]$, 
\begin{align*}
\sup_{ h_{b_1}^- + \eta + \frac{j \eta }{k} \leq s \leq h} \|\hat{c}_{j,s,i}\| \leq C_{P,h,k,\kappa_2,b},
\end{align*}
where $\hat{\cb}_{j,s}$ denotes a $j$-codepoints empirical risk minimizer with trimming level $s$.

Since $P\|u\|^p < +\infty$, Lemma \ref{lm: Marcinkiewicz} yields that $P_n\|u\| \leq C_1$, for $C_1$ large enough, with probability larger than $1-\frac{1}{8n^{\frac{p}{2}}}$. We set 
\[
C_{P,h,k,\kappa_2,b} = \frac{C_1}{h_{b_1}^- + \eta(1 + 1/k)}\vee C_{b_1,P,h} \vee \frac{2kC_1}{\eta},
\]
where $C_{b_1,P,h}$ is given by Lemma \ref{lm: key lemma h- h+}.
According to Proposition \ref{prop:deviations}, for $n$ large enough, we have that,
\begin{align}\label{eq:deviation_balls}
\sup_{\cb \in (\mathbb{R}^d)^{(k)}, r \geq 0} \left | (P-P_n) B_\phi(\cb,r) \right | & \leq \frac{\eta}{4k} \\
 \sup_{\cb \in (\mathbb{R}^d)^{(k)}, r \geq 0} \left | (P-P_n) \partial B_\phi(\cb,r) \right | & \leq \frac{\eta}{4k}, \notag
\end{align}
with probability larger than $1-\frac{1}{8n^{\frac{p}{2}}}$. On this probability event, from Lemma \ref{lem:bound_radius_balls} and the fact that $s\mapsto r_{s}(\cb)$ is non-decreasing, we deduce that for some $r^+>0$,
\begin{align*}
\sup_{c \in B(0,C_{P,h,k,\kappa_2,b})\cap F_0, s \leq h_{b_1}^+}{r_{n,s}(c) \vee r_{s}(c)} \leq r^+. 
\end{align*} 
We recall that since $P\|u\|^p < +\infty$, Lemma \ref{lm: Marcinkiewicz} yields that $P_n\|u\| \leq C_1$, for $C_1$ large enough, with probability larger than $1-\frac{1}{8n^{\frac{p}{2}}}$. Besides, choosing $x  = \log(8n^{\frac{p}{2}})$ in Proposition \ref{prop:deviations}, we also have, with probability larger than $1-\frac{1}{8n^{\frac{p}{2}}}$, 
\begin{align}
\label{eq: borne deviation par alphan}
\sup_{\cb \in (\bar{B}(0,C_{P,h,k,\kappa_2,b})\cap F_0)^{(k)},r \leq r^+} \left | (P-P_n) d_{\phi}(u,\cb) \mathbbm{1}_{B_{\phi}(\cb,r)}(u) \right | & \leq \alpha_n \\
\sup_{\cb \in (\bar{B}(0,C_{P,h,k,\kappa_2,b})\cap F_0)^{(k)},r \leq r^+} \left | (P-P_n) d_{\phi}(u,\cb) \mathbbm{1}_{\partial B_{\phi}(\cb,r)}(u) \right | & \leq \alpha_n,\notag
\end{align}
where $\alpha_n = O(\sqrt{\log(n)/n})$. We then work on the global probability event on which all these deviation inequalities are satisfied, that has probability larger than $1-\frac{1}{n^{\frac{p}{2}}}$. We proceed recursively on $j$.

For $j=1$ and $h \geq s \geq h_{b_1}^- + \eta(1 + 1/k)$, according to Proposition \ref{prop:centroid}, $\hat{T}_s(\hat{\cb}_{1,s}) = \hat{\cb}_{1,s}$, hence
\begin{align*}
\| \hat{\cb}_{1,s} \| \leq \frac{P_n\|u\|}{h_{b_1}^- + \eta(1 + 1/k)} \leq \frac{C_1}{h_{b_1}^- + \eta(1 + 1/k)} \leq C_{P,h,k,\kappa_2,b}.
\end{align*}
Now assume that the statement of Proposition \ref{prop:bounded_empirical_codebooks} holds up to order $j-1$. Let $\hat{\cb}_{j,s}$ be a $j$-points empirically optimal codebook with trimming level $h \geq s \geq h_{b_1}^- + \eta (1+j/k)$. 
Assume that there exists one cell (say $W_1$) such that $P_n(\1_{W_1(\hat{\cb}_{j,s})} \hat{\tau}_s(\hat{\cb}_{j,s})) \leq \frac{\eta}{k}$. On the one hand, we may write
\begin{align*}
\hat{R}_s(\hat{\cb}_{j,s})  & \leq \hat{R}_s(\cb^*_{j,h_{b_1}^+}) 
                           \leq P_n d_\phi(u,\cb^*_{j,h^+})  \hat{\tau}_{s}(\cb^*_{j,h_{b_1}^+})(u) \\
                           & \leq P_n d_\phi(u,\cb^*_{j,h^+})  \tau_{s+2 \eta/k}(\cb^*_{j,h_{b_1}^+})(u) \leq R^*_{j,h_{b_1}^+} + \alpha_n, 
\end{align*} 
where $\cb^*_{j,h_{b_1}^+}$ is a $R_{j,h_{b_1}^+}$ minimizer provided by Theorem \ref{thm:existence_optimal}. 

On the other hand, we have
\begin{align*}
\hat{R}_s(\hat{\cb}_{j,s}) & \geq \sum_{p=2}^{j} P_n d_\phi(u,\hat{c}_{j,s,p}) \mathbbm{1}_{W_p(\hat{\cb}_{j,s})} \hat{\tau}_{j,h_{b_1}^- + \eta(1+(j-1)/k)}(u)  \\
& \geq \hat{R}_{h_{b_1}^- + \eta(1+(j-1)/k)}(\hat{\cb}_{j-1,h_{b_1}^- + \eta(1+(j-1)/k)}).
\end{align*}
Thus, 
\begin{align*}
\hat{R}_s(\hat{\cb}_{j,s})
                   & \geq P d_\phi(u,\hat{\cb}_{j-1,h_{b_1}^- + \eta(1+(j-1)/k)}) \tau_{h_{b_1}^- + \eta(1+(j-1)/k) - \eta/2k}(\hat{\cb}_{j-1,h_{b_1}^- + \eta(1+(j-1)/k)}) - \alpha_n,
\end{align*}
according to the recursion assumption and \eqref{eq: borne deviation par alphan}. It comes 
\begin{align*}
\hat{R}_s(\hat{\cb}_{j,s}) & \geq R^*_{j-1,h_{b_1}^-} - \alpha_n,
\end{align*}
hence $R^*_{j-1,h_{b_1}^-} \leq R^*_{j,h_{b_1}^+} + 2 \alpha_n$, that is impossible for $n$ large enough. Therefore, for $n$ large enough and every $p\in[\![1,j]\!]$, 
\begin{align*}
P_n(\1_{W_p(\hat{\cb}_{j,s})} \hat{\tau}_s(\hat{\cb}_{j,s})) \geq 
 \frac{\eta}{k}.
 \end{align*}
 According to Proposition \ref{prop:centroid}, equality $\hat{T}_s(\hat{\cb}_{j,s}) = \hat{\cb}_{j,s}$ holds and entails
$
\|\hat{c}_{j,s,p}\| \leq \frac{2 k P_n \|u\|}{\eta} \leq C_{P,k,b,\kappa_2}
$.
\end{proof}

\subsection{Proof of Lemma \ref{lm: Marcinkiewicz}}\label{tecsec:proof_lm_marcinkiewicz}
\begin{lemma*}[\ref{lm: Marcinkiewicz}]
If $P\|u\|^p<\infty$ for some $p\geq 2$, then, there exists some positive constant $C$ such that with probability larger than $1-n^{-\frac{p}{2}}$,
\[P_n\|u\|\leq C.\]
\end{lemma*}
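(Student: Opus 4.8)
\textbf{Proof proposal for Lemma \ref{lm: Marcinkiewicz}.}

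The plan is to reduce the statement to a $p$-th moment bound on the centered empirical average and then apply Markov's inequality. First I would center: set $Y_i = \|X_i\| - P\|u\|$, so that $P_n\|u\| - P\|u\| = \frac{1}{n}\sum_{i=1}^n Y_i$, and note that the $Y_i$ are i.i.d., centered, with $\E|Y_1|^p \leq 2^{p}\,P\|u\|^p < \infty$ (using $(P\|u\|)^p \leq P\|u\|^p$ by Jensen since $p \geq 1$). It therefore suffices to bound $\E\bigl|\frac{1}{n}\sum_{i=1}^n Y_i\bigr|^p$.

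Next I would invoke the Marcinkiewicz--Zygmund inequality: there is a constant $B_p$ depending only on $p$ such that
\[
\E\Bigl|\sum_{i=1}^n Y_i\Bigr|^p \leq B_p\,\E\Bigl(\sum_{i=1}^n Y_i^2\Bigr)^{p/2}.
\]
Since $p \geq 2$, the power-mean (Jensen) inequality gives $\bigl(\frac{1}{n}\sum_{i=1}^n Y_i^2\bigr)^{p/2} \leq \frac{1}{n}\sum_{i=1}^n |Y_i|^p$, hence $\bigl(\sum_{i=1}^n Y_i^2\bigr)^{p/2} \leq n^{p/2-1}\sum_{i=1}^n |Y_i|^p$. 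Taking expectations yields $\E\bigl(\sum_{i=1}^n Y_i^2\bigr)^{p/2} \leq n^{p/2}\,\E|Y_1|^p$, so
\[
\E\Bigl|\tfrac{1}{n}\sum_{i=1}^n Y_i\Bigr|^p \leq B_p\, n^{-p/2}\,\E|Y_1|^p.
\]

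Finally, by Markov's inequality, for any $t>0$,
\[
\mathbb{P}\Bigl(\bigl|\tfrac{1}{n}\textstyle\sum_{i=1}^n Y_i\bigr| > t\Bigr) \leq \frac{B_p\,\E|Y_1|^p}{t^p\, n^{p/2}}.
\]
Choosing $t_0 = \bigl(B_p\,\E|Y_1|^p\bigr)^{1/p}$ makes the right-hand side at most $n^{-p/2}$, so with probability larger than $1 - n^{-p/2}$ we have $P_n\|u\| \leq P\|u\| + t_0 =: C$, a constant depending only on $p$ and on $P\|u\|^p$, as claimed.

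There is no real obstacle here; the argument is a routine combination of Marcinkiewicz--Zygmund and Markov. The only point that requires a little care is the centering together with the use of $p\geq 2$ in the power-mean step, which is exactly what turns the single-summand $p$-th moment into the $n^{-p/2}$ deviation rate. (If one prefers to avoid Marcinkiewicz--Zygmund, an alternative is a truncation argument combined with Bernstein's inequality for the bounded part and a union bound controlling $\max_i \|X_i\|$ via $P\|u\|^p<\infty$, but the moment route above is shorter.)
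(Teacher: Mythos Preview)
Your proof is correct and follows essentially the same route as the paper: center, apply Marcinkiewicz--Zygmund, reduce $\E\bigl(\sum Y_i^2\bigr)^{p/2}$ to $n^{p/2}\E|Y_1|^p$, and conclude via Markov with a suitable threshold. The only cosmetic differences are that the paper uses Minkowski rather than the convexity/power-mean argument for that reduction, and obtains the slightly sharper constant $\E|Y_1|^p \leq 2\,P\|u\|^p$ (using $|a-b|^p\leq a^p+b^p$ for $a,b\geq 0$) in place of your $2^p\,P\|u\|^p$.
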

\begin{proof}[Proof of Lemma \ref{lm: Marcinkiewicz}]
According to the Markov inequality, we may write
\[\mathbb{P}\left(P_n\|u\|-P\|u\|\geq\epsilon\right)\leq\frac{\E\left[\left|\frac{1}{n}\sum_{i=1}^n\|X_i\|-P\|u\|\right|^p\right]}{\epsilon^p}.\]
That leads to 
\[\mathbb{P}\left(P_n\|u\|-P\|u\|\geq\epsilon\right)\leq\frac{\E\left[\left|\sum_{i=1}^n\left(\|X_i\|-P\|u\|\right)\right|^p\right]}{n^p\epsilon^p}.\]
From the Marcinkiewicz-Zygmund inequality applied to the real-valued centered random variables $Y_i=\|X_i\|-P\|u\|$ and the Minkowski inequality, it follows that
\begin{align*}
\E\left[\left|\sum_{i=1}^n\left(\|X_i\|-P\|u\|\right)\right|^p\right]&= \E\left[\left|\sum_{i=1}^nY_i\right|^p\right]\\
&\leq B_p \E\left[\left(\sum_{i=1}^nY_i^2\right)^{\frac p2}\right]\\
&\leq B_p \left(\sum_{i=1}^n\left(\E|Y_i|^p\right)^{\frac{2}{p}}\right)^{\frac{p}{2}}\\
&= B_p n^{\frac{p}{2}}\E\left[|Y|^p\right]\\
&= B_p n^{\frac{p}{2}}P\left(|\|u\|-P\|u\||^p\right),
\end{align*}
for some positive constant $B_p$.
Since $P\left(|\|u\|-P\|u\||^p\right)\leq P\|u\|^p + (P\|u\|)^p \leq 2P\|u\|^p$, according to Jensen inequality,
the result derives from a suitable choice of $\epsilon$.
\end{proof}  
\subsection{Proof of Lemma \ref{lemme: r_borne}}\label{tecsec:proof_lemma_r_borne}
\begin{lemma*}[\ref{lemme: r_borne}]
Let $(P_n)_{n\in\N}$ be a sequence of probabilities that converges weakly to a distribution $P$. Assume that $supp(P_n)\subset supp(P)\subset\R^d$, $F_0 = \overline{conv(supp(P))}\subset\mathring{\Omega}$ and $\phi$ is $\Ccal_2$ on $\Omega$. Then, for every $h\in(0,1)$ and $K>0$, there exists $K_+>0$ such that for every $\cb\in\Omega^{(k)}$ satisfying $|c_i|\leq K$ for some $i\in[\![1,k]\!]$ and every $n\in\N$,
\[r_{n,h}(\cb)\leq r_+ = \sqrt{4(2K+K_+)\sup_{c\in F_0\cap\bar{B}(0,2K+K_+)}\|\nabla_c\phi\|}.\]
\end{lemma*}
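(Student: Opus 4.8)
The plan is to reduce the statement to a single geometric inclusion that is uniform in $n$ and in $\cb$. By definition $r_{n,h}(\cb)$ is the smallest $r\ge 0$ with $P_n(B_\phi(\cb,r))\le h\le P_n(\bar B_\phi(\cb,r))$, and $r\mapsto P_n(\bar B_\phi(\cb,r))$ is non-decreasing; so it suffices to exhibit a radius $r_+$, not depending on $n$ or $\cb$, with $P_n(\bar B_\phi(\cb,r_+))\ge h$ for every $\cb$ having a codepoint of norm at most $K$. I would get this by fixing a Euclidean ball whose $P_n$-mass exceeds $h$ for all $n$ simultaneously, and then checking that it sits inside $\bar B_\phi(\cb,r_+)$.

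First I would pick $K_+$. Since $(P_n)_n$ converges weakly it is tight (a weakly convergent sequence is relatively compact, hence tight by Prokhorov), so there is $K_+>0$ with $\inf_n P_n(B(0,K_+))>h$; and because $\mathrm{supp}(P_n)\subset\mathrm{supp}(P)\subset F_0$ one also has $P_n(B(0,K_+)\cap F_0)=P_n(B(0,K_+))>h$ for every $n$.

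The core step is the inclusion $B(0,K_+)\cap F_0\subset\bar B_\phi(\cb,r_+)$. Take $x\in B(0,K_+)\cap F_0$ and a codepoint $c_i$ of $\cb$ with $\|c_i\|\le K$; here one uses $c_i\in F_0$ (true in every application of the lemma, where codepoints are barycenters of subsets of the support, and needed since $d_\phi(\cdot,c_i)$ may be unbounded on $F_0$ when $c_i$ is close to $\partial\Omega$, even with $\|c_i\|\le K$). The segment $[c_i,x]$ lies in the convex compact set $F_0\cap\bar B(0,2K+K_+)\subset\mathring\Omega$, on which $\nabla\phi$ is bounded by $M_1:=\sup_{c\in F_0\cap\bar B(0,2K+K_+)}\|\nabla_c\phi\|<\infty$. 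Integrating the gradient along this segment and using Cauchy--Schwarz and the triangle inequality,
\[
d_\phi(x,c_i)=\int_0^1\big\langle\nabla\phi(c_i+t(x-c_i))-\nabla\phi(c_i),\,x-c_i\big\rangle\,dt\le\|x-c_i\|\cdot 2M_1\le 2(2K+K_+)M_1\le r_+^2,
\]
since $\|x-c_i\|\le\|x\|+\|c_i\|\le 2K+K_+$. As $d_\phi(x,\cb)=\min_j d_\phi(x,c_j)\le d_\phi(x,c_i)\le r_+^2$, this puts $x$ in $\bar B_\phi(\cb,r_+)$; hence $P_n(\bar B_\phi(\cb,r_+))\ge P_n(B(0,K_+)\cap F_0)>h$, and by the reduction above $r_{n,h}(\cb)\le r_+$.

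The main obstacle to keep in mind is the uniformity over $n$ in the choice of $K_+$: this is precisely what tightness buys us (equivalently, one treats a weakly convergent tail of the sequence together with finitely many early terms absorbed into a maximum). Everything else is a routine first-order $\mathcal{C}^2$ estimate on a fixed compact subset of $\mathring\Omega$.
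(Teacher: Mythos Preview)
Your argument is correct and follows essentially the same route as the paper: invoke Prokhorov to get a uniform-in-$n$ Euclidean ball of mass larger than $h$, then use a first-order bound on $d_\phi$ over the compact convex set $F_0\cap\bar B(0,2K+K_+)$ to show this Euclidean ball sits inside $\bar B_\phi(\cb,r_+)$. The only cosmetic differences are that the paper works with $B(c_i,K+K_+)$ rather than $B(0,K_+)\cap F_0$, and bounds $d_\phi(x,y)$ via $|\phi(x)-\phi(y)|+|\langle\nabla_y\phi,x-y\rangle|$ rather than the integral representation you use; your observation that the argument tacitly requires $c_i\in F_0$ (so that the segment $[c_i,x]$ stays in the compact where $\nabla\phi$ is bounded) is a valid and useful clarification.
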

\begin{proof}[Proof of Lemma \ref{lemme: r_borne}]
Set $c\in\bar{B}(0,K)\cap F_0$. Since $P_n$ converges weakly to $P$, according to the Prokhorov theorem, $(P_n)_n$ is tight. Thus, there is $K_+>0$ such that $P_n(B(0,K_+))>h$ for all $n\in\N$ and $P(B(0,K_+))>h$.
It comes that $P_n(B(c,K+K_+))>h$.
Moreover, for every $x$, $y$ in $F_0\cap\bar{B}(0,2K+K_+)$, the mean value theorem yields
\begin{align*}
d_{\phi}(x,y)&\leq 2\sup_{c\in F_0\cap\bar{B}(0,2K+K_+)}\|\nabla_c\phi\| \|x-y\|\\
&\leq 4(2K+K_+)C_+ = (r^+)^2,
\end{align*}
for $C_+ = \sup_{c\in F_0\cap\bar{B}(0,2K+K_+)}\|\nabla_c\phi\| < + \infty$.
Thus, it follows that
\begin{equation}
\label{eq: boule eucli dans Bregman}
B(c,K+K_+)\subset B_\phi(c,r_+).
\end{equation}
As a consequence, $P_n(B_\phi(c,r_+))>h$ and $P_n(B_\phi(\cb,r_+))>h$ if $c\in\cb$ and $r_{n,\phi,h}(\cb)\leq r_+$.
\end{proof}
\subsection{Proof of Lemma \ref{lem:expected_distortion_tail}}\label{tecsec:proof_lem_expected_distortion_tail}
\begin{lemma*}[\ref{lem:expected_distortion_tail}]
Under the assumptions of Corollary \ref{cor:slow_rates_expectation}, if $P\|u\|^q \psi^q(k\|u\|/h) < \infty$, then there exists a constant $C_q$ such that $\E R_h^q(\hat{\cb}_n) \leq C_P^q$. 
\end{lemma*}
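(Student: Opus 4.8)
\emph{Strategy.} The plan is to bound $R_h(\hat{\cb}_n)$ pathwise by an explicit function of $P_n\|u\|$, and then integrate, controlling the moments of $P_n\|u\|$ by Jensen's inequality. First I would reduce to a minimizer with no empty cells: if some trimmed Bregman--Voronoi cell $W_j(\hat{\cb}_n)$ carries zero $\hat{\tau}_h(\hat{\cb}_n)$-mass, deleting the codepoint $\hat{c}_{n,j}$ leaves the empirical trimmed distortion unchanged (the discarded cell carries no trimming weight, so the optimal trimming is preserved, and on its support the Bregman divergence to the reduced codebook agrees with that to $\hat{\cb}_n$); iterating yields a codebook $\hat{\cb}_n''$ with $k'\le k$ codepoints, all cells of positive mass, and $R_{n,h}(\hat{\cb}_n'')=R_{n,h}(\hat{\cb}_n)$. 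Since $d_\phi(\cdot,\hat{\cb}_n)\le d_\phi(\cdot,\hat{\cb}_n'')$ pointwise, comparison of quantile functions of $d_\phi(\cdot,\cdot)$ under $P$ (through the representation $R_h(\cb)=\int_0^h F_\cb^{-1}(s)\,ds$ used in the proof of Lemma \ref{lm: ecriture DTM}) gives $R_h(\hat{\cb}_n)\le R_h(\hat{\cb}_n'')$, so it suffices to bound $R_h(\hat{\cb}_n'')$. Applying Proposition \ref{prop:centroid} to $P_n$ (all cells of $\hat{\cb}_n''$ having positive mass and $\hat{\cb}_n''$ being $k'$-optimal), each codepoint of $\hat{\cb}_n''$ is the $\hat{\tau}_h$-weighted mean of its cell; the heaviest of the $k'\le k$ cells has mass at least $h/k$, so the corresponding codepoint $c_{i_0}$ lies in $F_0$ and satisfies $\|c_{i_0}\|\le kP_n\|u\|/h$.

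\emph{Pathwise bound.} Using once more $d_\phi(\cdot,\hat{\cb}_n'')\le d_\phi(\cdot,c_{i_0})$ pointwise, $R_h(\hat{\cb}_n)\le R_h(c_{i_0})$. By Lemma \ref{lm: ecriture DTM}, $R_h(c_{i_0})\le P\,d_\phi(u,c_{i_0})\tilde{\tau}(u)$ for any $[0,1]$-valued $\tilde{\tau}$ with $P\tilde{\tau}=h$; choosing $\tilde{\tau}=\frac{h}{P(\bar{B}(0,Q))}\1_{\bar{B}(0,Q)}$, where $Q$ is the $h$-quantile of $\|u\|$ under $P$ (so $P(\|u\|\le Q)\ge h$ and the prefactor is $\le1$), gives $R_h(c_{i_0})\le h\sup\{\,d_\phi(u,c_{i_0})\mid u\in F_0,\ \|u\|\le Q\,\}$. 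Convexity of $\phi$ yields, for $u,c\in F_0$, $d_\phi(u,c)=\phi(u)-\phi(c)-\langle\nabla_c\phi,u-c\rangle\le\langle\nabla_u\phi-\nabla_c\phi,u-c\rangle\le(\|\nabla_u\phi\|+\|\nabla_c\phi\|)(\|u\|+\|c\|)$, whence, using $\|\nabla_c\phi\|\le\psi(\|c\|)$ (with $\psi$ taken non-negative), the monotonicity of $\psi$, and $\|c_{i_0}\|\le kP_n\|u\|/h$,
\[
R_h(\hat{\cb}_n)\ \le\ h\Bigl(\psi(Q)+\psi\bigl(\tfrac{k}{h}P_n\|u\|\bigr)\Bigr)\Bigl(Q+\tfrac{k}{h}P_n\|u\|\Bigr).
\]

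\emph{Integration.} I would then raise this to the power $q$, expand via $(a+b)^q\le 2^{q-1}(a^q+b^q)$, and take expectations. Since $\psi$ is non-negative, non-decreasing and convex, the maps $r\mapsto\psi(r)^q$, $r\mapsto r^q$ and $r\mapsto\psi(\tfrac{k}{h}r)^q r^q$ are non-negative, non-decreasing and convex on $[0,\infty)$, so Jensen applied to $P_n\|u\|=\tfrac1n\sum_i\|X_i\|$ gives $\E\psi(\tfrac{k}{h}P_n\|u\|)^q\le P\psi(\tfrac{k}{h}\|u\|)^q$, $\E(P_n\|u\|)^q\le P\|u\|^q$ and $\E[\psi(\tfrac{k}{h}P_n\|u\|)^q(P_n\|u\|)^q]\le P[\|u\|^q\psi(\tfrac{k}{h}\|u\|)^q]$. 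The last quantity is finite by hypothesis; $P\|u\|^q<\infty$ since $q=p/(p-1)\le 2\le p$; $P\psi(\tfrac{k}{h}\|u\|)^q<\infty$ by splitting at $\|u\|=1$ and bounding $\psi(\tfrac{k}{h}\|u\|)^q\le\|u\|^q\psi(\tfrac{k}{h}\|u\|)^q$ on $\{\|u\|\ge1\}$; and $\psi(Q)<\infty$ because $Q\le P\|u\|/(1-h)<\infty$. Collecting the four resulting terms bounds $\E R_h^q(\hat{\cb}_n)$ by an explicit constant, which we call $C_P^q$.

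\emph{Main obstacle.} There is no genuine analytic difficulty; the work is organisational. The two points that need care are the reduction to non-empty cells (so that Proposition \ref{prop:centroid} actually supplies a codepoint of controlled norm) and the choice to keep $\psi(Q)+\psi(\|c_{i_0}\|)$ as a sum rather than dominating it by $2\psi(Q+\|c_{i_0}\|)$: only the former leaves the argument of $\psi$ equal to $\tfrac{k}{h}\|u\|$, which is exactly the quantity appearing in the moment hypothesis $P\|u\|^q\psi^q(k\|u\|/h)<\infty$; bounding more crudely would require a strictly stronger moment assumption.
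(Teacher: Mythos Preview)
Your argument is correct and follows the same skeleton as the paper: pick the codepoint $c_{i_0}$ of the heaviest trimmed cell (mass $\ge h/k$), use the centroid condition to get $\|c_{i_0}\|\le (k/h)P_n\|u\|$, control gradients of $\phi$ via $\psi$, and finish by Jensen applied to the convex, non-decreasing functions of $P_n\|u\|$. The convexity of $r\mapsto r^q\psi(kr/h)^q$ (product of two non-negative, non-decreasing, convex functions) and the splitting argument for $P\psi^q(k\|u\|/h)<\infty$ are exactly the devices the paper uses.

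The only genuine difference is the intermediate pathwise bound on $R_h(\hat{\cb}_n)$. The paper simply writes $R_h(\hat{\cb}_n)\le P d_\phi(u,\hat c_{n,\hat j})$, then $(P d_\phi)^q\le P d_\phi^q$ by Jensen, and expands $d_\phi(u,c)^q\le 3^{q-1}[\phi(u)^q+\phi(c)^q+\psi(\|c\|)^q\|u-c\|^q]$, so it has to control $P\phi(u)^q$ (done via $|\phi(u)|\lesssim\|u\|\psi(\|u\|)$). You instead exploit the trimming once more, choosing $\tilde\tau$ supported on $\bar B(0,Q)$ with $Q$ the $h$-quantile of $\|u\|$; this confines $u$ to a bounded ball and turns the bound into a pure supremum, avoiding integration of $\phi(u)^q$ altogether. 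Both routes land on the same Jensen step and the same moment hypothesis; your variant is slightly cleaner in that it never needs $P\phi(u)^q<\infty$ as a separate term. Your explicit reduction to non-empty cells is also a point the paper leaves implicit (there, the centroid identity for the single heavy codepoint follows directly from Proposition~\ref{prop:bregman_bias_variance} without requiring all cells to be non-empty, so the reduction is not strictly necessary, but it does no harm).
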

Let $\hat{\tau}_h(\hat{\cb}_n)$ be such that $\frac{1}{h} P_n \hat{\tau}_h(\hat{\cb}_n) \in \mathcal{P}_{n,h}(\hat{\cb}_n)$, and $\hat{j}$ be such that $P_n( \hat{\tau}_h(\hat{\cb}_n) \1_{W_{\hat{j}}(\hat{\cb}_n)} ) \geq \frac{nh}{k}$. According to the mean-value theorem and since $q \geq 1$ we may write
\begin{align*}
\mathbb{E}\left ( R_h(\hat{\cb}_n) \right )^q & \leq \mathbb{E} P d_\phi^q(u,\hat{c}_{n,\hat{j}}) \\
& \leq \E P 3^{q-1}\left [ \phi^q(u) + \phi(\hat{c}_{n,\hat{j}})^q + \psi^q(\|\hat{c}_{n,\hat{j}}\|)\|u-\hat{c}_{n,\hat{j}}\|^q \right ] \\
& \leq \E P 3^{q-1}\left [ \phi^q(u) + \phi(\hat{c}_{n,\hat{j}})^q + \psi^q(\|\hat{c}_{n,\hat{j}}\|)2^{q-1} \left ( \|u\|^q + \|\hat{c}_{n,\hat{j}}\|^q \right ) \right ] \\
& \begin{multlined}[t] \leq 3^{q-1} P \|u\|^q \psi^q(u) + 3^{q-1}(1+2^{q-1})\E \|\hat{c}_{n,\hat{j}}\|^q \psi^q(\|\hat{c}_{n,\hat{j}}\|) \\ + 6^{q-1} P\|u\|^q \E \psi^q(\|\hat{c}_{n,\hat{j}}\|).
\end{multlined}
\end{align*}
Since $\psi(t) \leq \psi \left ( \frac{k t}{h} \right )$, the first term is bounded. Also, note that since $p\geq 2$, $q\leq 2 \leq p$ so that $P\|u\|^q < \infty$. Next, since $\hat{\cb}_n$ satisfies the centroid condition, we have
\begin{align*}
\|\hat{\cb}_{n,\hat{j}}\| \leq \frac{P_n u \hat{\tau}_h(\hat{\cb}_n) (u)\1_{W_{\hat{j}}(\hat{\cb}_n)}(u) }{P_n\hat{\tau}_h(\hat{\cb}_n) (u)\1_{W_{\hat{j}}(\hat{\cb}_n)}(u) } \leq \frac{k}{nh} \sum_{i=1}^{n} \|X_i\|.
\end{align*}
Since $u \mapsto \|u\|^q \psi^q(u)$ is convex we may write
\begin{align*}
\E \|\hat{c}_{n,\hat{j}}\|^q \psi^q(\|\hat{c}_{n,\hat{j}}\|) & \leq \E \left [ \left (\frac{k}{nh} \sum_{i=1}^{n} \|X_i\|\right )^q \psi^q \left ( \frac{k}{nh} \sum_{i=1}^{n} \|X_i\| \right ) \right ] \\
& \leq \left ( \frac{k}{h} \right )^q P \left ( \|u\|^q \psi^q( k \|u\|/h) \right ) < \infty.
\end{align*}
At last, note that
\begin{align*}
P \psi^q(k\|u\|/h) & \leq P \left (\left ( \|u\|^q \vee 1 \right ) \psi^q(k\|u\|/h) \right ) \\
                & \leq P \left ( \|u\|^q \psi^q(k\|u\|/h) \1_{\|u\|>1} \right ) + P \left ( \psi^q(k\|u\|/h) \1_{\|u\|\leq 1} \right ) \\
                & \leq P\|u\|^q \psi^q(k\|u\|/h) + \psi^q(k/h) < \infty, 
\end{align*}
so that, using convexity of $\psi$, 
\begin{align*}
\E \psi^q(\|\hat{c}_{n,\hat{j}}\|) & \leq \E\left [ \psi^q \left (\frac{k}{nh} \sum_{i=1}^{n} \|X_i\|\right ) \right ] \\
                                  & \leq P \psi^q (k\|u\|/h) < \infty.
\end{align*}
Combinining all pieces entails that $\E(R_h(\hat{\cb}_n)) < \infty$.
\qed

\subsection{Proof of Lemma \ref{lem:enough_weight_optimalcells}}\label{tecsec:proof_lem_enough_weight_optimalcells}
\begin{lemma*}[\ref{lem:enough_weight_optimalcells}]
Assume that $B_h>0$ (see Definition \ref{def:discernability_factor}), let $b < B_h$ and $b<b_1<B_h$ such that $b = \kappa_1 b_1$, with $\kappa_1 <1$. Denote by $\beta_1 = (1-\kappa_1)b_1\left [h \wedge (1-h)\right ]/2$. Assume that $s/(n+s) \leq b$. Then, for $n$ large enough, with probability larger than $1-n^{-\frac{p}{2}}$, we have, for all $j\in[\![1,k]\!]$, 
\[
P_n \left ( \hat{\tau}_{h_b^-}(\hat{\cb}_{n+s,h})\1_{W_j(\hat{\cb}_{n+s,h})} \right ) \geq \beta_1.
\]
\end{lemma*}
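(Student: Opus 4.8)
The plan is to argue by contradiction, along the lines of Lemma~\ref{lem:keylemmah-h+_part2} but transplanted to the corrupted empirical setting. Set $b' = s/(n+s)\leq b$, so that $P_{n+s}$ restricted to the $n$ signal points equals $(1-b')P_n$. I first fix a probability event of mass at least $1-n^{-p/2}$ on which: the deviation bounds of Proposition~\ref{prop:deviations} hold at the relevant scales (yielding a uniform error $\alpha_n = O(\sqrt{\log n/n})$ when comparing $P$- and $P_n$-trimmed distortions of bounded codebooks for trim levels near $[h_{b_1}^-,h_{b_1}^+]$), the Marcinkiewicz bound $P_n\|u\|\leq C_1$ of Lemma~\ref{lm: Marcinkiewicz} holds, Lemma~\ref{lem:bound_radius_balls} bounds the radii, and Proposition~\ref{prop:bounded_empirical_codebooks} bounds the $P_n$-optimal $(k-1)$-point codebooks at trim levels near $h_{b_1}^-$. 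The case $k=1$ is immediate since then the only cell is $\R^d$ and $P_n\hat\tau_{h_b^-}(\hat\cb_{n+s,h}) = h_b^- \geq h-b \geq (1-\kappa_1)b_1 \geq \beta_1$, using $b_1 < B_h = h\wedge(1-h)$; so assume $k\geq 2$.

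Next I establish two sandwich inequalities. For the lower one, restrict the sum defining $(n+s)\hat R_{n+s,h}(\hat\cb_{n+s,h})$ to the signal points; since $\hat\tau_h(\hat\cb_{n+s,h})$ puts at most $s$ mass on the $s$ adversarial points, $P_n\hat\tau_h(\hat\cb_{n+s,h}) \geq (h-b')/(1-b') = h_{b'}^- \geq h_b^-$, and extracting a sub-measure of $P_n\hat\tau_h(\hat\cb_{n+s,h})$ of mass exactly $h_b^-$ and discarding non-negative terms (Lemma~\ref{lm: ecriture DTM}) gives
\[
(n+s)\hat R_{n+s,h}(\hat\cb_{n+s,h}) \ \geq\ n\,\hat R_{n,h_b^-}(\hat\cb_{n+s,h}).
\]
Decomposing $\hat R_{n,h_b^-}(\hat\cb_{n+s,h})$ over the Voronoi partition $\{W_j(\hat\cb_{n+s,h})\}$ and assuming, towards a contradiction, that $P_n(\hat\tau_{h_b^-}(\hat\cb_{n+s,h})\1_{W_{j_0}(\hat\cb_{n+s,h})}) < \beta_1$ for some $j_0$, I drop the $j_0$-th term: the remaining sub-measure has mass $> h_b^- - \beta_1$ and is carried by the cells of the $(k-1)$-point codebook obtained by deleting $\hat c_{n+s,h,j_0}$, whence $\hat R_{n,h_b^-}(\hat\cb_{n+s,h}) \geq \hat R^{*,(k-1)}_{n,h_b^- -\beta_1}$, the $(k-1)$-point empirical optimal trimmed distortion. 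For the upper inequality, evaluate $\hat R_{n+s,h}$ at a bounded $k$-point minimizer $\cb^*_{k,h_b^+}$ of $R_{k,h_b^+}$ (which exists and is bounded by Lemma~\ref{lm: key lemma h- h+}, since $R^*_{k-1,h_b^+}-R^*_{k,h_b^+} \geq R^*_{k-1,h_b^-}-R^*_{k,h_b^+} > 0$ as $b<B_h$), and trim away all $s$ adversarial points:
\[
(n+s)\hat R_{n+s,h}(\hat\cb_{n+s,h}) \ \leq\ (n+s)\hat R_{n+s,h}(\cb^*_{k,h_b^+}) \ \leq\ n\,\hat R_{n,h_b^+ +\beta_n}(\cb^*_{k,h_b^+}) \ \leq\ n\bigl(R^*_{k,h_b^+ +\beta_n} + \alpha_n\bigr),
\]
where $\beta_n = O(1/n)$ absorbs the ceiling in $\lceil(n+s)h\rceil$ and $\alpha_n$ comes from Proposition~\ref{prop:deviations}.

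Combining the two chains yields $\hat R^{*,(k-1)}_{n,h_b^- -\beta_1} \leq R^*_{k,h_b^+ +\beta_n} + \alpha_n$. On the other hand, the deviation argument underlying Theorem~\ref{thm:slow_rates}, applied to the $(k-1)$-point problem — legitimate since $R^*_{k-2,s}-R^*_{k-1,s}>0$ for $s$ near $h_{b_1}^-$ (again because $b_1<B_h$) and since Proposition~\ref{prop:bounded_empirical_codebooks} bounds the relevant empirical minimizers — gives $\hat R^{*,(k-1)}_{n,h_b^- -\beta_1} \geq R^*_{k-1,h_b^- -\beta_1} - \alpha_n$. It then remains to check the trim-level arithmetic: since $h_b^- - h_{b_1}^- = \int_b^{b_1}\tfrac{1-h}{(1-t)^2}\,dt \geq (b_1-b)(1-h) = (1-\kappa_1)b_1(1-h)$ while $\beta_1 \leq (1-\kappa_1)b_1(1-h)/2$, one has $h_b^- -\beta_1 \geq h_{b_1}^- + (1-\kappa_1)b_1(1-h)/2 \geq h_{b_1}^-$; and since $h_b^+ < h_{b_1}^+$ strictly, $h_b^+ +\beta_n \leq h_{b_1}^+$ for $n$ large. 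Monotonicity of $s\mapsto R^*_{\ell,s}$ then forces $R^*_{k-1,h_{b_1}^-} - R^*_{k,h_{b_1}^+} \leq 2\alpha_n$, which is impossible for $n$ large because that gap is a fixed positive number by the definition of $B_h$ and $b_1<B_h$. Hence, for $n$ large enough, every cell satisfies $P_n(\hat\tau_{h_b^-}(\hat\cb_{n+s,h})\1_{W_j(\hat\cb_{n+s,h})}) \geq \beta_1$.

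The main obstacle is the bookkeeping of the trim-level shifts: tracking how the level $h$ for the corrupted measure becomes $h_{b'}^\pm$ after restricting to, or purging, the adversarial points, and then verifying that the $\beta_1$-margin together with the $O(1/n)$ ceiling corrections still leave the resulting levels strictly inside $[h_{b_1}^-,h_{b_1}^+]$, where the discernability gap is genuinely positive. Once the margins are arranged, the probabilistic inputs (Propositions~\ref{prop:deviations} and~\ref{prop:bounded_empirical_codebooks}) are invoked essentially as in the proofs of Theorems~\ref{thm:slow_rates} and~\ref{thm:FSBP_2}, and no bound on $\|\hat\cb_{n+s,h}\|$ itself is needed — which is important, since this lemma feeds into the proof of Theorem~\ref{thm:FSBP_2} that establishes such a bound.
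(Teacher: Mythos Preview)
Your proof follows essentially the same contradiction strategy as the paper: sandwich $(n+s)\hat R_{n+s,h}(\hat\cb_{n+s,h})$ between $n(R^*_{k-1,h_{b_1}^-}-\alpha_n)$ (assuming one cell is too light) and $n(R^*_{k,h_{b_1}^+}+\alpha_n)$, then invoke the discernability gap at $b_1<B_h$. The probabilistic inputs, the restriction to signal points, the removal of one codepoint, and the trim-level arithmetic you carry out are all the same as in the paper.

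There is one small slip in your upper chain. You write
\[
\hat R_{n,h_b^+ +\beta_n}(\cb^*_{k,h_b^+}) \ \leq\ R^*_{k,h_b^+ +\beta_n} + \alpha_n,
\]
but $\cb^*_{k,h_b^+}$ is optimal at level $h_b^+$, not at $h_b^+ +\beta_n$; the deviation bounds of Proposition~\ref{prop:deviations} only yield $\hat R_{n,h_b^+ +\beta_n}(\cb^*_{k,h_b^+}) \leq R_{h_b^+ +\beta_n}(\cb^*_{k,h_b^+}) + \alpha_n$, and the right-hand side need not equal $R^*_{k,h_b^+ +\beta_n}$. The paper avoids this by comparing against $\cb^*_{k,h_{b_1}^+}$ instead: since $h_b^+ +\beta_n \leq h_{b_1}^+$ for $n$ large, monotonicity in the trim level for a fixed codebook gives $R_{h_b^+ +\beta_n}(\cb^*_{k,h_{b_1}^+}) \leq R_{h_{b_1}^+}(\cb^*_{k,h_{b_1}^+}) = R^*_{k,h_{b_1}^+}$. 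With this minor correction (replace $\cb^*_{k,h_b^+}$ by $\cb^*_{k,h_{b_1}^+}$, which is equally bounded by Lemma~\ref{lm: key lemma h- h+}), your argument coincides with the paper's.
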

\begin{proof}[Proof of Lemma \ref{lem:enough_weight_optimalcells}]
As in the proof of Proposition \ref{prop:bounded_empirical_codebooks}, we assume that
\begin{align*}
\sup_{\cb \in (\mathbb{R}^d)^{(k)}, r \geq 0} \left | (P-P_n) B_\phi(\cb,r) \right | \vee \left | (P-P_n) \partial B_\phi(\cb,r) \right |  \leq \beta_n  \leq \beta_1.
\end{align*}
According to Proposition \ref{prop:deviations}, for $n$ large enough, this occurs with $\beta_n = O \left  ( \sqrt{\log(n)/n} \right )$, and with probability larger than $1-\frac{1}{8n^{\frac{p}{2}}}$. On this probability event, we deduce as well that
$
\sup_{c \in B(0,C_P)\cap F_0, s \leq h_{b_1}^+}{r_{n,s}(c) \vee r_{s}(c)} \leq r^+ 
$,
for some $r^+>0$. 
We also assume that $P_n\|u\| \leq C_1$, for $C_1$ large enough, and
\begin{multline*}
\sup_{\cb \in (B(0,C_P)\cap F_0)^{(k)},r \leq r^+} \left | (P-P_n) d_{\phi}(u,\cb) \mathbbm{1}_{B_{\phi}(\cb,r)}(u) \right | \\ \vee \left | (P-P_n) d_{\phi}(u,\cb) \mathbbm{1}_{\partial B_{\phi}(\cb,r)}(u) \right | \leq \alpha_n,
\end{multline*}
where $\alpha_n = O(\sqrt{\log(n)/n})$. We then work on the global probability event on which all these deviation inequalities are satisfied, that have probability larger than $1-\frac{1}{n^{\frac{p}{2}}}$, according to Proposition \ref{prop:deviations} and Lemma \ref{lm: Marcinkiewicz}. 
We let $\alpha_1>0$ be such that $\min_{j\in[\![2,k]\!]} R^*_{k-1,h_{b_1}^-} - R^*_{k,h_{b_1}^+} \geq \alpha_1$, according to Lemma \ref{lm: key lemma h- h+}, and let $b < B_h$ such that $s/(n+s) \leq b = \kappa_1b_1$.
Let $\hat{\cb}_{n+s,h}$ denote an $h$-trimmed empirical risk minimizer based on $\{X_1, \dots, X_n, x_{n+1}, \dots, x_{n+s}\}$, and $\cb^*_{h_{b_1}^+}$ a $h_{b_1}^+$-trimmed optimal codebook. Then
    \begin{align*}
    \hat{R}_{n+s,h} (\hat{\cb}_{n+s,h}) & \leq \hat{R}_{n+s,h}(\cb^*_{h_{b_1}^+}) 
      \leq \frac{1}{n+s} \left [ \sum_{i=1}^{n} d_\phi(X_i,\cb^*_{h_{b_1}^+}) \hat{\tau}_{h_b^+}(\cb^*_{h_{b_1}^+})(X_i) \right ],
    \end{align*}
 since $ (n+s) h \leq  nh_b^+ < nh_{b_1}^+ \leq n$. We may write
 \begin{align*}
 \hat{R}_{n+s,h} (\hat{\cb}_{n+s,h}) & \leq \frac{n}{n+s} \left ( P_n d_\phi(u,\cb^*_{h_{b_1}^+}) \tau_{h_b^+ + \beta_n}(\cb^*_{h_{b_1}^+})(u)  \right ) \\ 
                   & \leq \frac{n}{n+s} \left ( P_n d_\phi(u,\cb^*_{h_{b_1}^+}) \tau_{h_{b_1}^+}(\cb^*_{h_{b_1}^+})(u) \right ) \\
                   & \leq \frac{n}{n+s} \left ( P  d_\phi(u,\cb^*_{h_{b_1}^+}) \tau_{h_{b_1}^+}(\cb^*_{h_{b_1}^+})(u) + \alpha _n \right ) \\
                   & \leq \frac{n}{n+s} \left ( R^*_{h_{b_1}^+} + \alpha_n \right ), 
 \end{align*}
 for $n$ large enough. 
 Now assume that $ P_n \left ( \hat{\tau}_{h_b^-}(\hat{\cb}_{n+s,h})\1_{W_1(\hat{\cb}_{n+s,h})} \right )< \beta_1$.
 Then, 
 \begin{align*}
  \hat{R}_{n+s,h} (\hat{\cb}_{n+s,h}) & \geq \frac{n}{n+s} \hat{R}_{n,h_b^-}(\hat{\cb}_{n+s,h}),
 \end{align*}
 since $n - (n+s)(1-h) \geq  n (1-h_b^-)$. Thus, removing one quantization point,
 \begin{align*}
 \hat{R}_{n+s,h} (\hat{\cb}_{n+s,h}) & \geq \frac{n}{n+s}P_n \left [ d_\phi(u,\hat{\cb}_{n,h_b^- - \beta_1}^{(k-1)}) \hat{\tau}_{h_b^- - \beta_1}(\hat{\cb}_{n,h_b^- - \beta_1}^{(k-1)})(u) \right ] \\
 & \geq \frac{n}{n+s} P_n  \left [ d_\phi(u,\hat{\cb}_{n,h_b^- - \beta_1}^{(k-1)}) \tau_{h_{b_1}^-}(\hat{\cb}_{n,h_b^- - \beta_1}^{(k-1)})(u) \right ],
 \end{align*}
 where $\hat{\cb}_{n,h_b^- - \beta_1}^{(k-1)}$ denotes a $h_b^- - \beta_1$-trimmed empirical risk minimizer with $k-1$ codepoints. Since $h_b^- - \beta_1 \geq h_b^- - 2\beta_1 \geq h_{b_1^-}$,  Proposition \ref{prop:bounded_empirical_codebooks} implies 
 \begin{align*}
 \hat{R}_{n+s,h} (\hat{\cb}_{n+s,h}) \geq &\frac{n}{n+s} P_n  \left [ d_\phi(u,\hat{\cb}_{n,h_b^- - \beta_1}^{(k-1)}) \tau_{h_{b_1}^-}(\hat{\cb}_{n,h_b^- - \beta_1}^{(k-1)})(u) \right ] \\
  & \geq \frac{n}{n+s} P \left [ d_\phi(u,\hat{\cb}_{n,h_b^- - \beta_1}^{(k-1)}) \tau_{h_{b_1}^-}(\hat{\cb}_{n,h_b^- - \beta_1}^{(k-1)})(u) - \alpha_n \right ].  
 \end{align*}
 Thus, $\hat{R}_{n+s,h} (\hat{\cb}_{n+s,h}) \geq  n ( R^*_{k-1,h_{b_1}^-} + \alpha_n  )/(n+s)$ hence the contradiction for $2 \alpha_n < \alpha_1$.
\end{proof}

\section{Supplementary material for Section \ref{sec:numerical_experiments}}\label{tecsec:supp_author}
\subsection{Additional files for the comparison of Bregman clusterings for mixtures with noise}

\subsubsection{Details on the different clustering procedures}
\label{sec: Details sur les procedures de clustering}

In Section \ref{sec: Comparative performances of  Bregman clustering for mixtures with noise}, we compared our trimmed Bregman procedures with the following clustering schemes : trimmed $k$-median \cite{Cardot13}, \texttt{tclust} \cite{Fritz12}, single linkage, ToMATo \cite{Chazal_Oudot} and \texttt{dbscan} \cite{DBSCAN}. 
Trimmed $k$-median denotes the $k$-median clustering trimmed afterwards. Actually, we keep the $q=110$ points which $l_1$-norm to their center is the smallest. In order to compute the centers, we use the function \texttt{kGmedian} from the R package \texttt{Gmedian}, with parameters $gamma = 1$, $alpha = 0.75$ and $nstart = nstartkmeans = 20$. For tclust, we use the function \texttt{tclust} from the R package \texttt{tclust} with parameters $k = 3$ (number of clusters) and $alpha = 10/120$, the proportion of points to consider as outliers. We use the C++ ToMATo algorithm, available at https://geometrica.saclay.inria.fr/data/ToMATo/. We compute the inverse of the distance-to-measure function \cite{Chazal11} with paramter $m0 = 10/120$ (that can be considered as a density) at every sample point, and keep the 110 highest valued points. We use the first parameter 5 (the radius for the Rips graph built from the resulting sample points) and the second parameter 0.01 (related to the number of clusters). For the single linkage method, we first keep the 110 points with the smallest distance to their 10th nearest neighbor, then, cluster points according to the R functions \texttt{hclust} with the method ``single'' and \texttt{cutree} with parameter $h = 4$ (related to the number of clusters). For the dbscan method, we use the \texttt{dbscan} function from the R package \texttt{dbscan}. We set the parameters $eps $ to the 110-th smallest distance to a third nearest neighbor among points in the sample, $minPts = 3$ and $borderPoints = FALSE$. 

For these three last methods, we cannot calibrate the parameters so that the algorithms return 3 clusters, because of the systemmatic presence of many additional small clusters.

\subsubsection{Clustering for 12000 sample points}
\label{sec: Les echantillons de taille 12000}

This section exposes additional experimental results. We proceed exactly like in Section \ref{sec: Comparative performances of  Bregman clustering for mixtures with noise}, but with samples made of 10000 signal points and 2000 noise points. 

\begin{figure}[H]
	\begin{minipage}[h]{.3\linewidth}
	Gaussian
		\centering\includegraphics[scale=0.16]{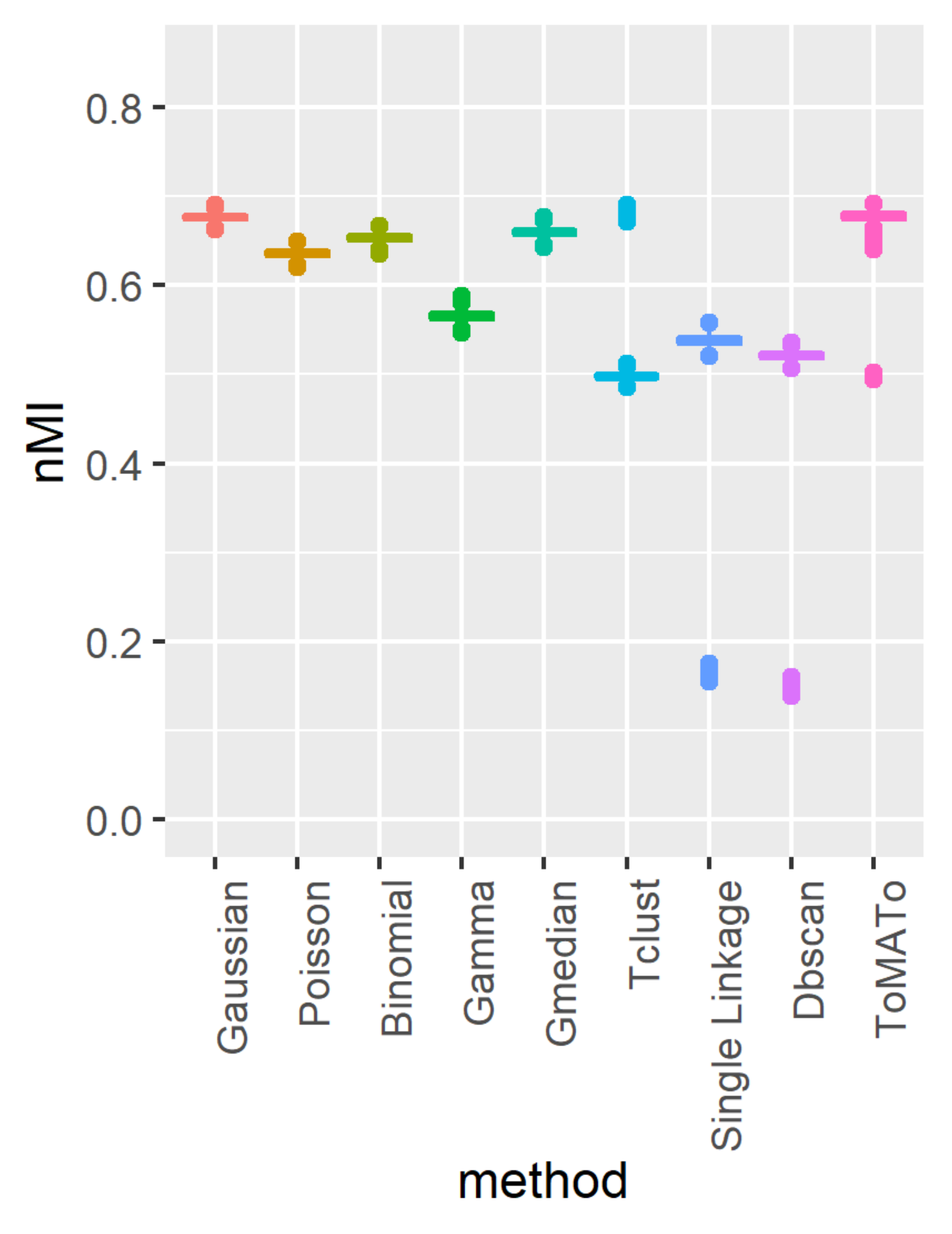}
		
	\end{minipage} 
	\begin{minipage}[h]{.3\linewidth}
	Poisson
\centering\includegraphics[scale=0.16]{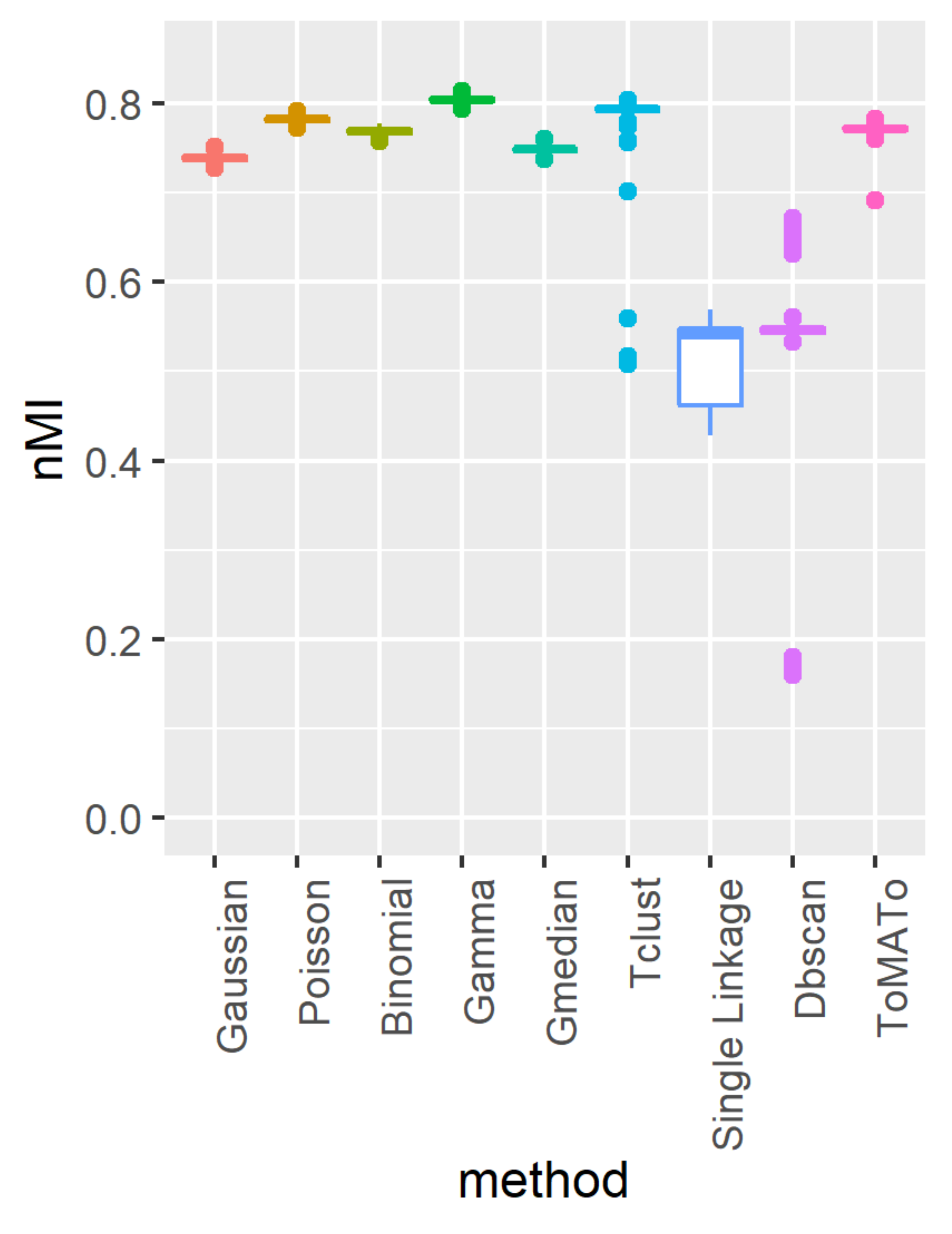}
		
	\end{minipage}
		\begin{minipage}[h]{.38\linewidth}
		Binomial
		\centering\includegraphics[scale=0.16]{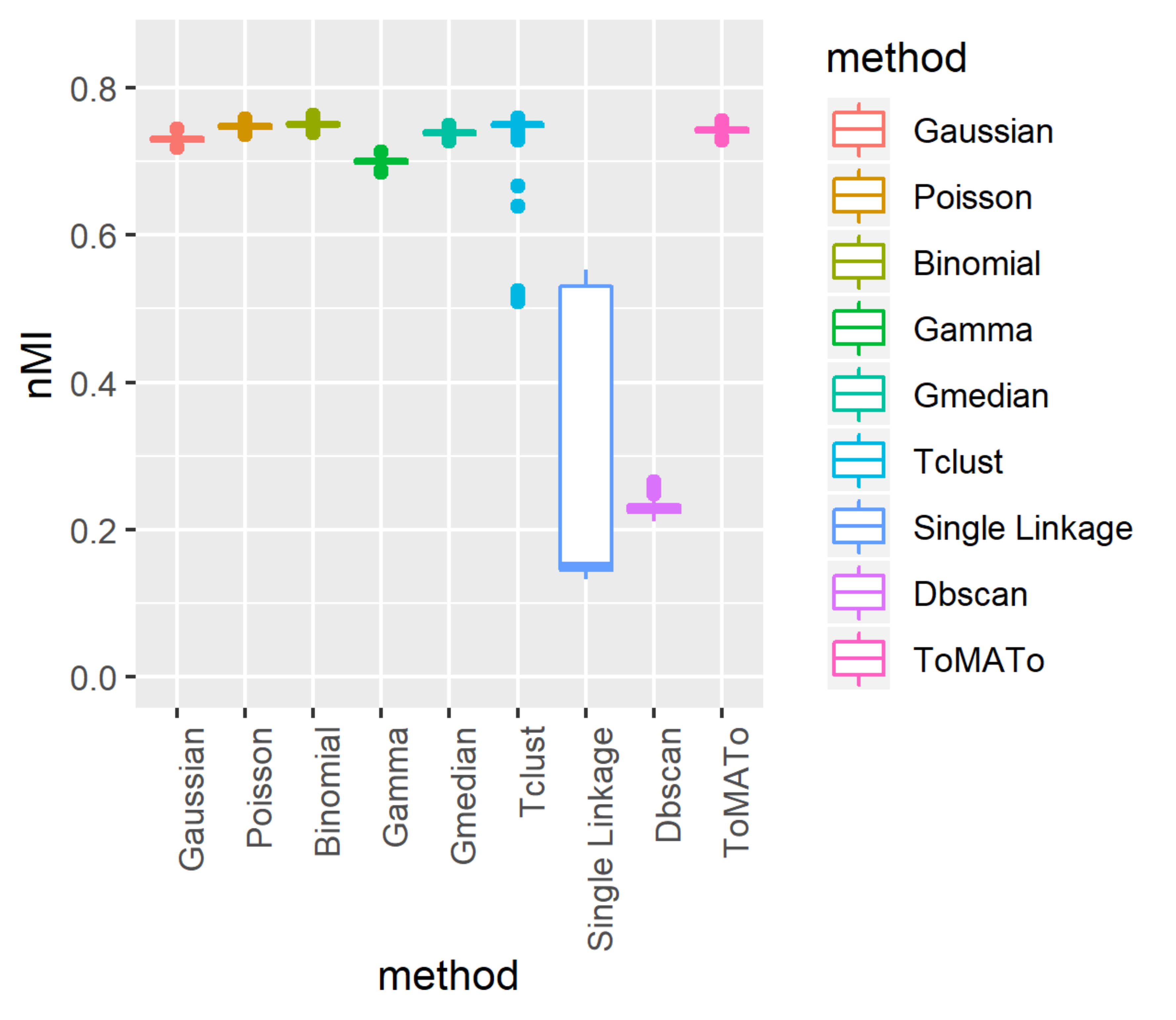}
		
	\end{minipage} 
	\begin{minipage}[h]{.3\linewidth}
	Gamma
\centering\includegraphics[scale=0.16]{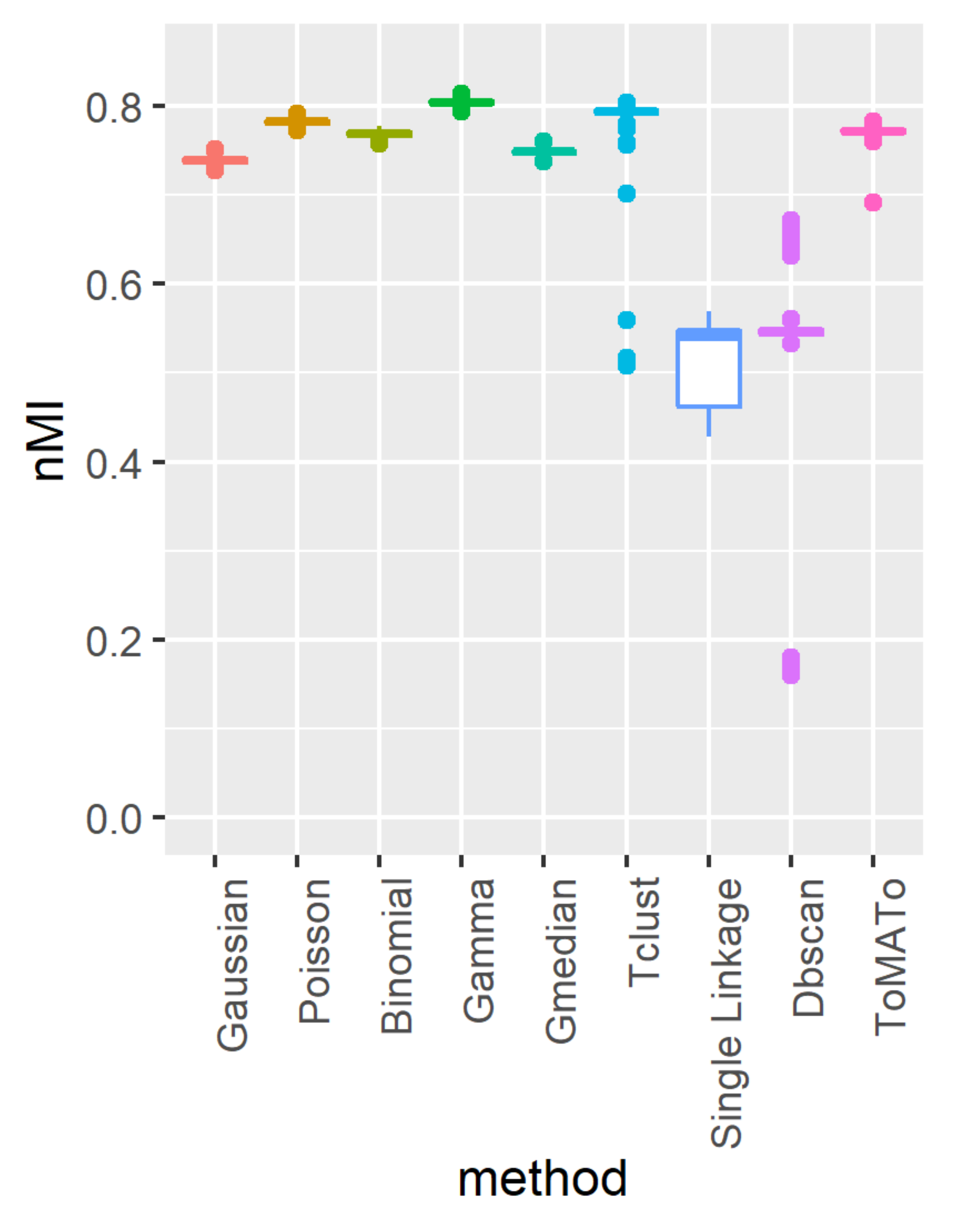}
		
	\end{minipage}
		\begin{minipage}[h]{.3\linewidth}
		Cauchy
		\centering\includegraphics[scale=0.16]{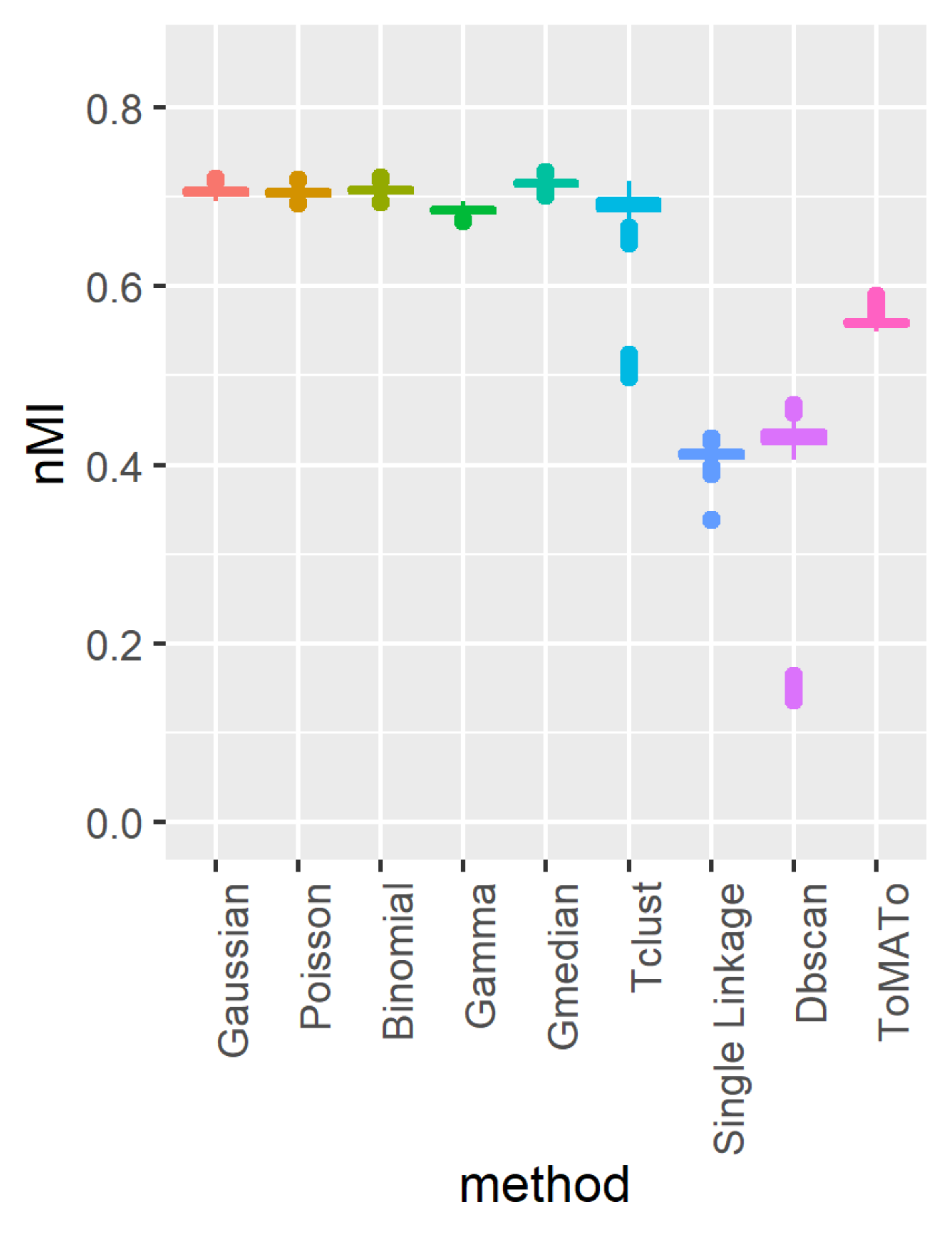}
		
	\end{minipage} 
	\begin{minipage}[h]{.38\linewidth}
	Mixture
\centering\includegraphics[scale=0.16]{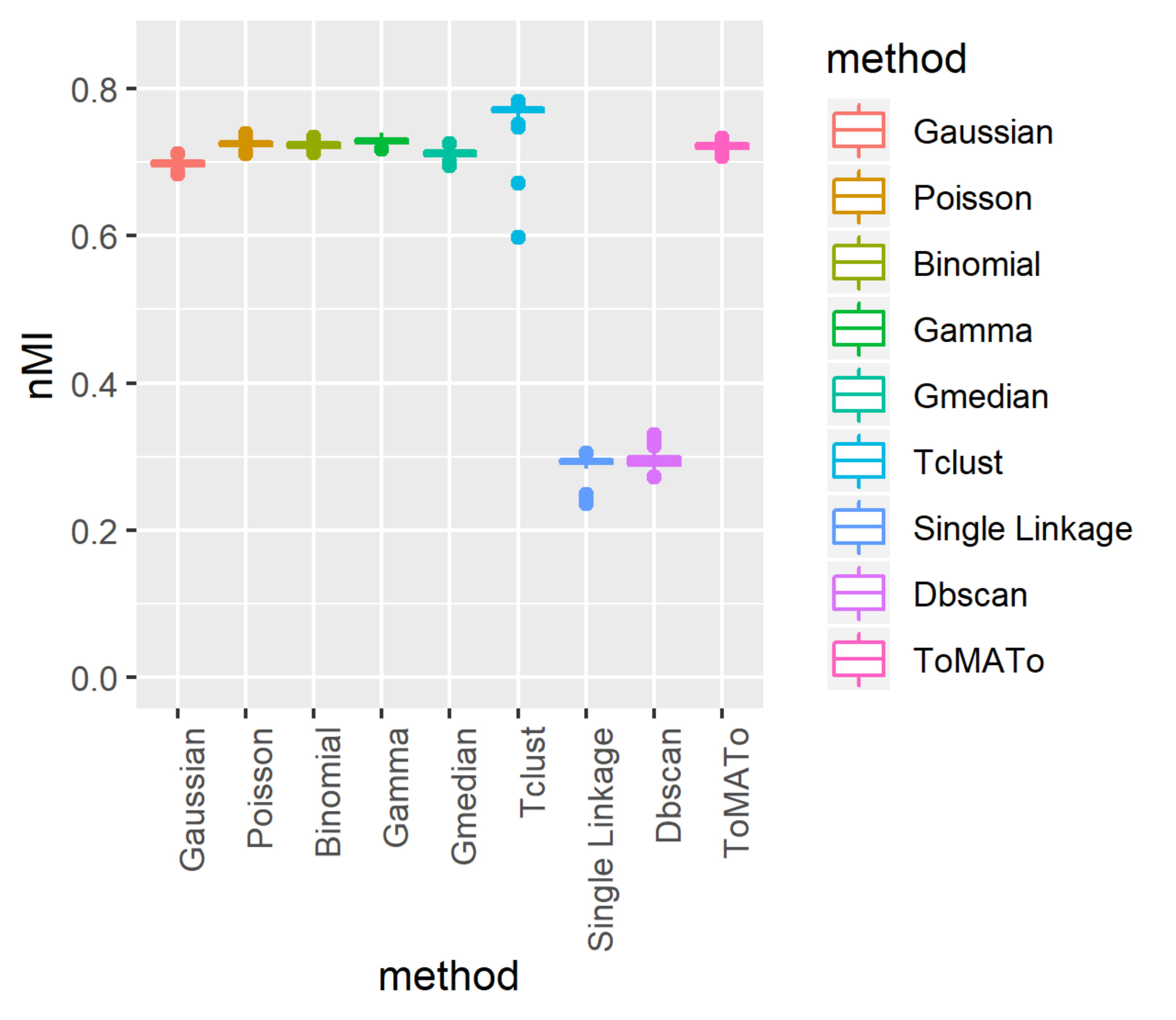}
		
	\end{minipage}
\caption{NMI for samples of 12000 points\label{fig: boxplot 10000 points}}
\end{figure}

For each clustering procedure, we decide to consider 11000 points as signal. 
The parameters for the different procedures are the same as described in Section \ref{sec: Details sur les procedures de clustering}, except for the ToMATo algorithm, we set the parameter $m0 = 200/12000$. As well, the number of nearest neighbors for the single linkage method is set to 200 and the parameter $h$ is set to 0.8 for Gaussian distribution, 0.8 for Poisson, 1.1 for Binomial, 0.6 for Gamma, 0.4 for Cauchy and 0.4 for the mixture of 3 different distributions. For dbscan, $eps$ is the 11000-th smallest distance of a point to its third nearest neighbor.

The NMI over 1000 replications of the experiments are represented via boxplots in Figure \ref{fig: boxplot 10000 points}. Algorithm \ref{algo:BTKM} with the proper Bregman divergence systematically (slightly) outperforms other clustering schemes.

\subsection{Discussion about the choice of the Bregman divergence}
\label{sec: Discussion about the choice of the Bregman divergence}

We consider three mixtures of Gaussian distributions $\Lcal(c,\sigma) = \frac13\Ncal(c_1,\sigma_1I_2)+\frac13\Ncal(c_2,\sigma_2I_2)+\frac13\Ncal(c_3,\sigma_3I_2)$ with $c = (c_1,c_2,c_3)$ for $c_1 = (10,10)$, $c_2 = (25,25)$ and $c_3 = (40,40)$, $I_2$ the identity matrix on $\R^2$ and $\sigma = (\sigma_1,\sigma_2,\sigma_3)$. The first distribution $\Lcal_1$ corresponds to clusters with the same variance, with $\sigma = (5,5,5)$, the second distribution $\Lcal_2$ to clusters with increasing variance, with $\sigma = (1,4,7)$, and the third distribution $\Lcal_3$ to clusters with increasing and decreasing variance, with $\sigma = (2,7,2)$. We cluster samples of 100 points from $\Lcal_1$, $\Lcal_2$ and then $\Lcal_3$. We use Algorithm \ref{algo:BTKM} with the Gaussian, Poisson, Binomial and Gamma Bregman divergences. Note that we set $N = 50$ for the Binomial divergence, so that we expect a clustering with clusters size symmetric with respect to 25.  The performance of the clustering in terms of NMI is represented in Figure \ref{fig:boxplot NMI}, after 1000 replications of the experiments. 

\begin{figure}[H]
\centering\includegraphics[scale = 0.4]{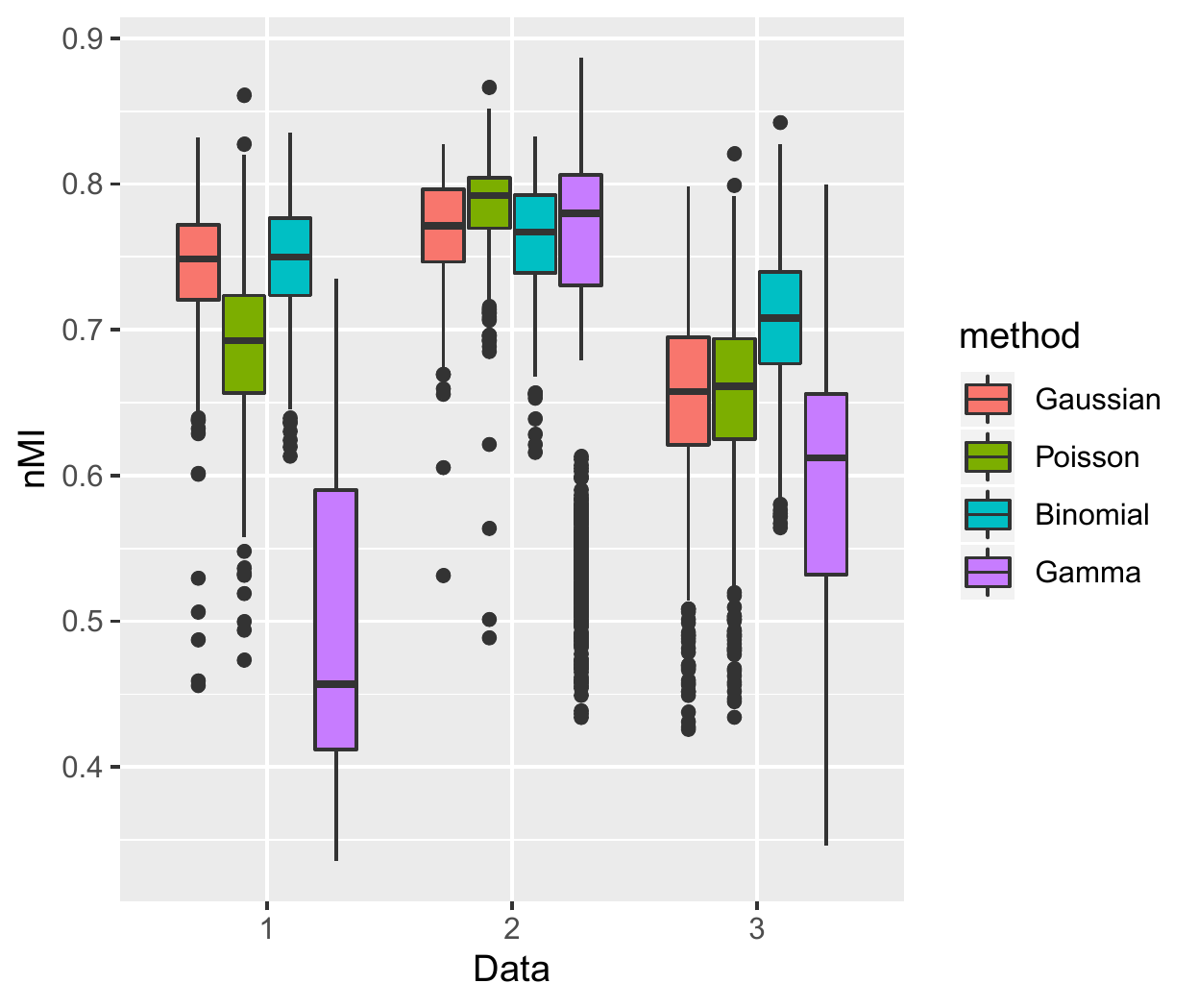}
\caption{Comparison of Bregman divergences efficiency for different clusters variances.}
\label{fig:boxplot NMI}
\end{figure}

The corresponding clustering with the best suited Bregman divergence is represented in Figure \ref{fig:Clustering_best div}. In particular, we used the Gaussian divergence for $\Lcal_1$, the Poisson divergence for $\Lcal_2$ and the Binomial divergence for $\Lcal_3$.

\begin{figure}[H]
	\begin{minipage}[h]{.31\linewidth}
		\centering\includegraphics[scale=0.12]{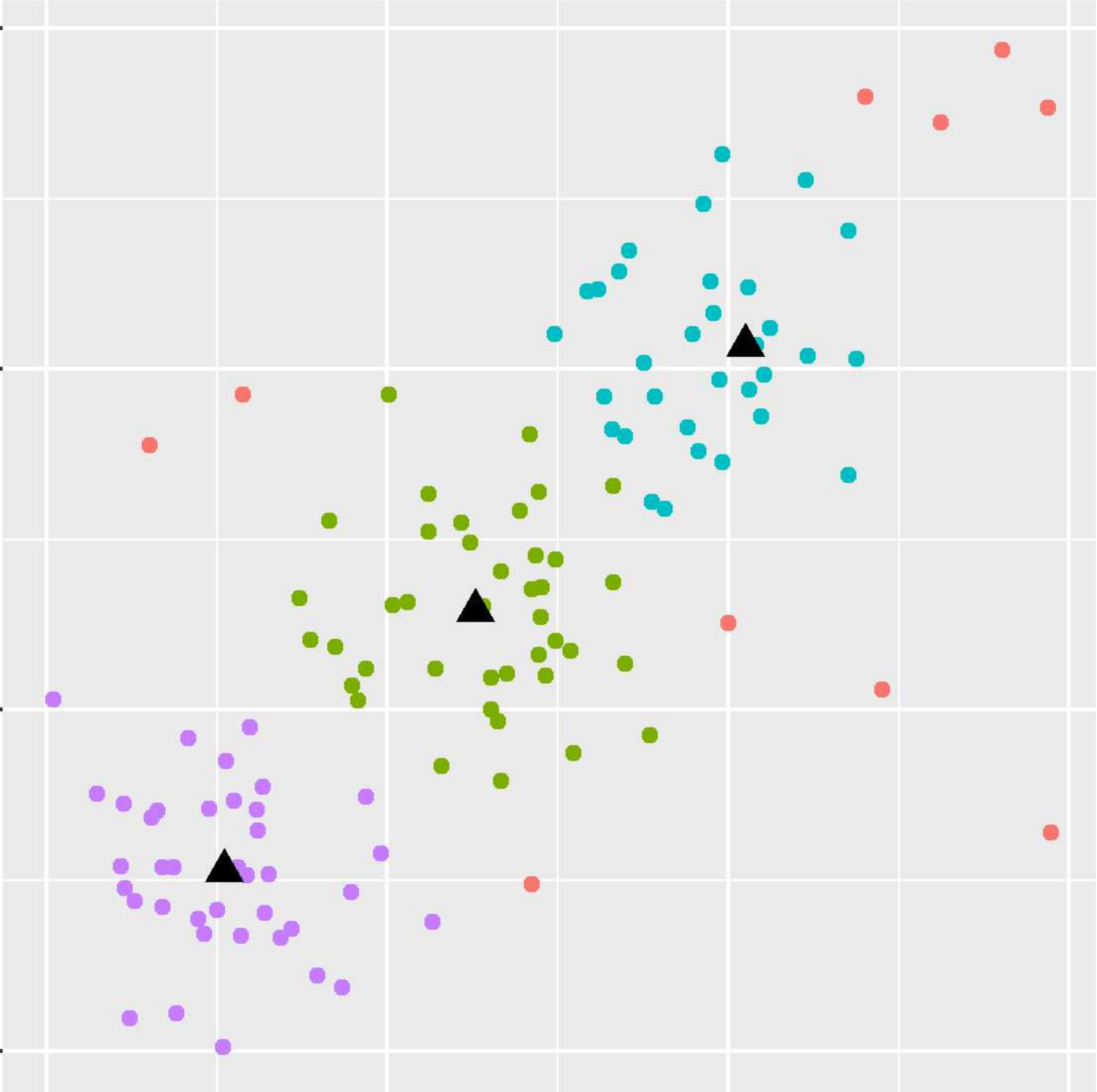}\\
		$\Lcal_1$ -- Gaussian divergence
	\end{minipage}  \hfill
	\begin{minipage}[h]{.31\linewidth}
		\centering\includegraphics[scale=0.12]{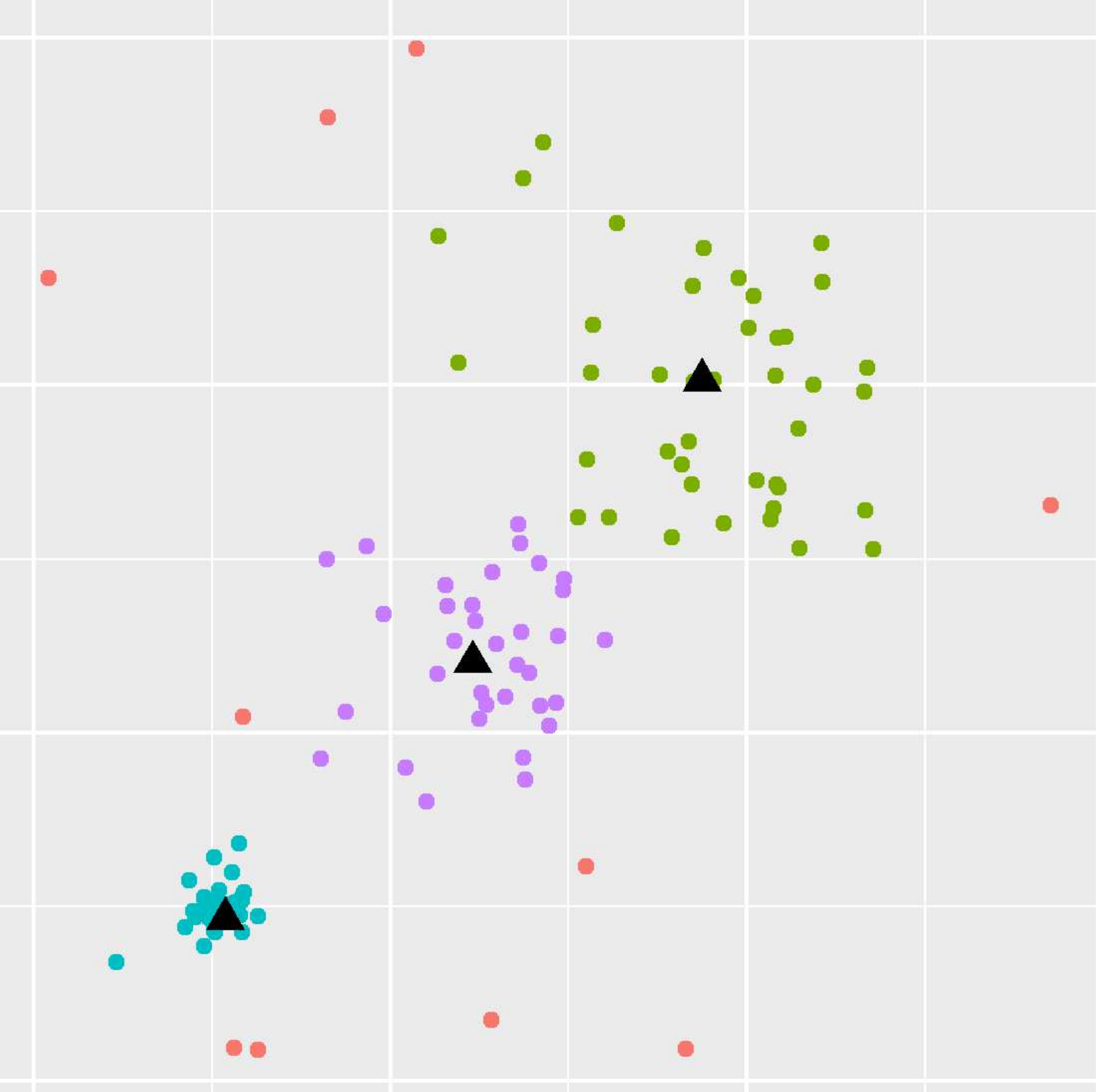}\\
		$\Lcal_2$ -- Poisson divergence
	\end{minipage} \hfill
	\begin{minipage}[h]{.34\linewidth}
		\centering\includegraphics[scale=0.12]{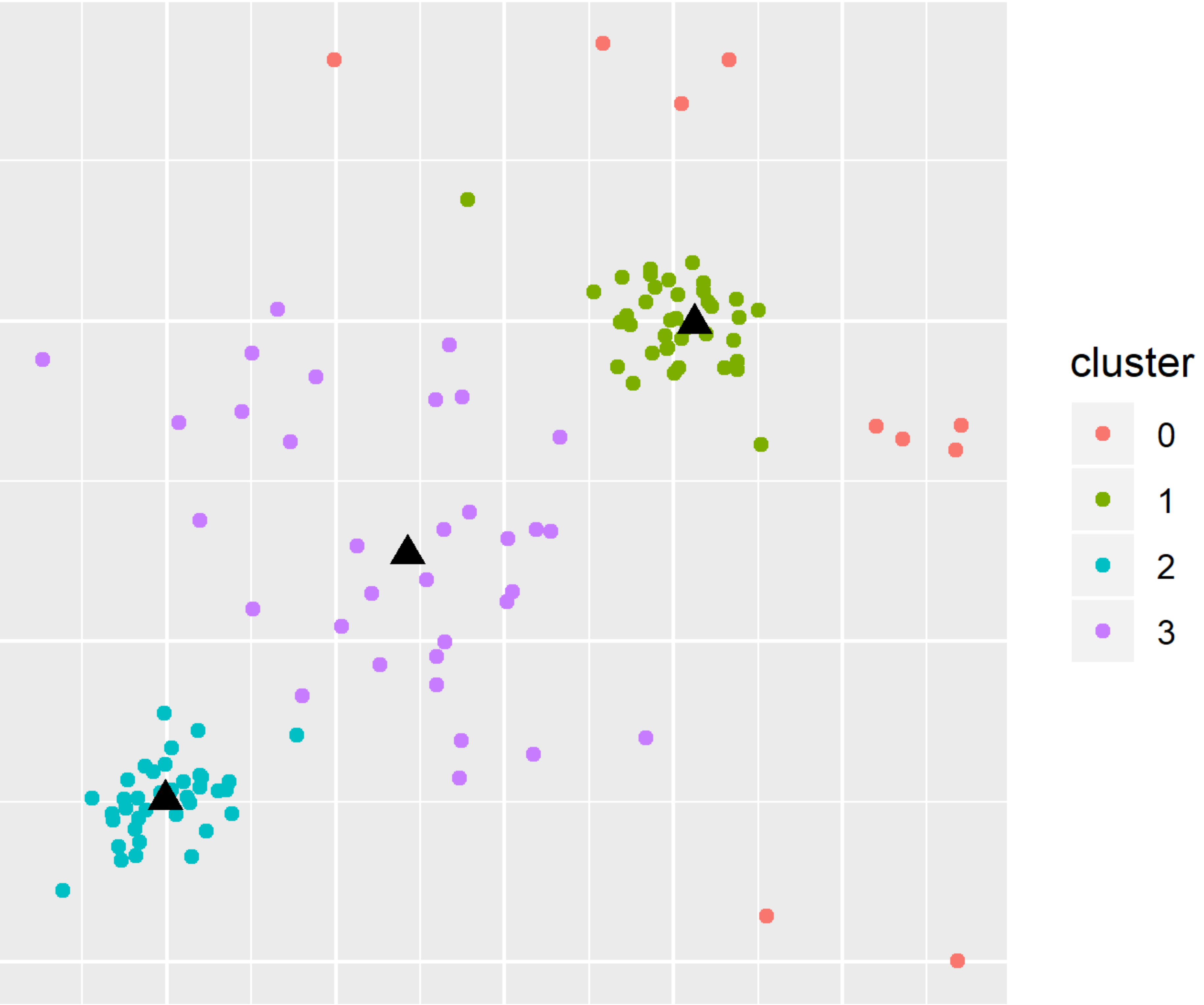}\\
		$\Lcal_3$ -- Binomial divergence
	\end{minipage}
	\caption{Clustering with the best suited Bregman divergence\label{fig:Clustering_best div}}
\end{figure}

Since the sum of two Bregman divergences is a Bregman divergence, it is also possible to cluster data with Algorithm \ref{algo:BTKM}, with a different divergence on the different coordinates. For instance, we sample 100 points from $\frac13\Ncal(c_1,\Sigma_1)+\frac13\Ncal(c_2,\Sigma_2)+\frac13\Ncal(c_3,\Sigma_3)$, with for every $i\in[\![1,3]\!]$, $\Sigma_i$ diagonal with coefficients $(\sigma^{(2)}_i,\sigma^{(3)}_i)$ with $\sigma^{(2)} = (1,4,7)$ and $\sigma^{(3)} = (2,7,2)$. In Figure \ref{Clustering with hybrid Bregman divergence}, we represented the clustering obtained with Algorithm \ref{algo:BTKM} with the Poisson divergence on the first coordinate and the Binomial divergence on the second coordinate. We observe that the shape of the clusters obtained correspond roughly to the shape of the sublevel sets of the sampling distribution. 

\begin{figure}[H]
\centering\includegraphics[scale = 0.12]{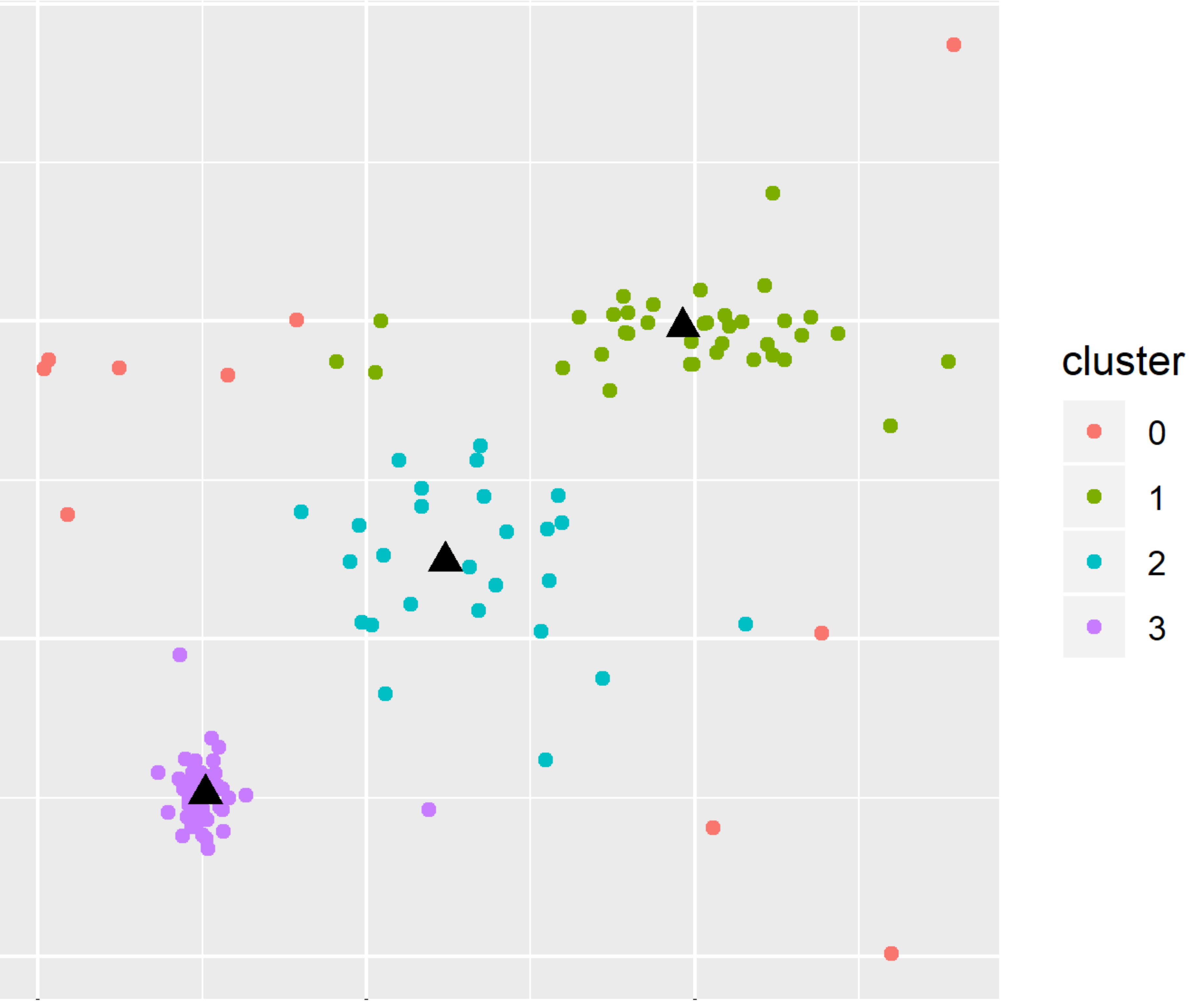}
\caption{Clustering with hybrid Bregman divergence\label{Clustering with hybrid Bregman divergence}}
\end{figure}

\subsection{Additional files for Stylometric author clustering}
This section exposes the graphics and additional numerics that support several results from Section \ref{sec:author_clustering}, for instance about the calibration of parameters.

\noindent\textbf{Trimmed $k$-median}:

In Figure \ref{Author_median_cost_NMI} we plot the cost and the NMI as a function of $q$ for different numbers of clusters $k$, in Figure \ref{Author_median_cost_NMI_k4} we focus on the case $k = 4$. Finally, in Figure \ref{Author_median_clusterings} we plot the best clusterings in terms of NMI for $k=4$ and $k=6$.
\begin{figure}[H]
	\begin{minipage}[h]{.49\linewidth}
		\centering\includegraphics[scale=0.45]{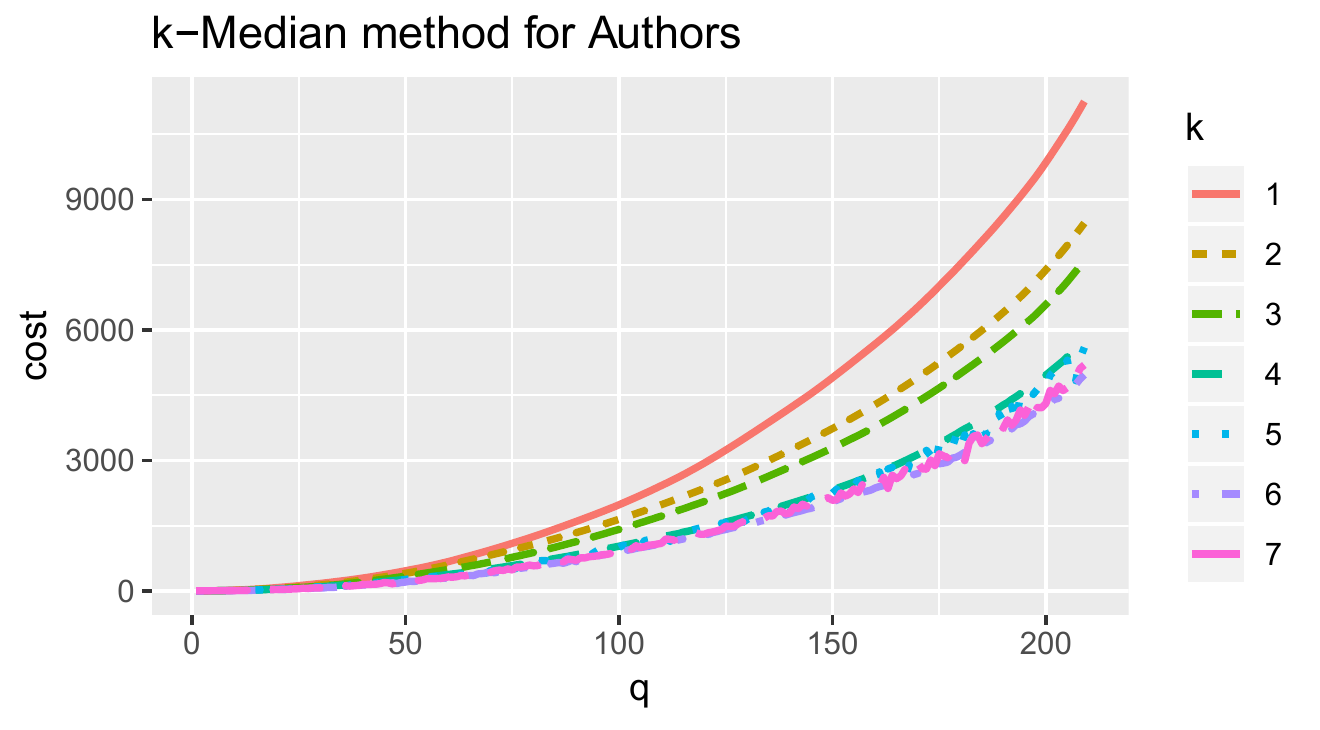}\\
	\end{minipage}
	\begin{minipage}[h]{.49\linewidth}
\centering\includegraphics[scale=0.45]{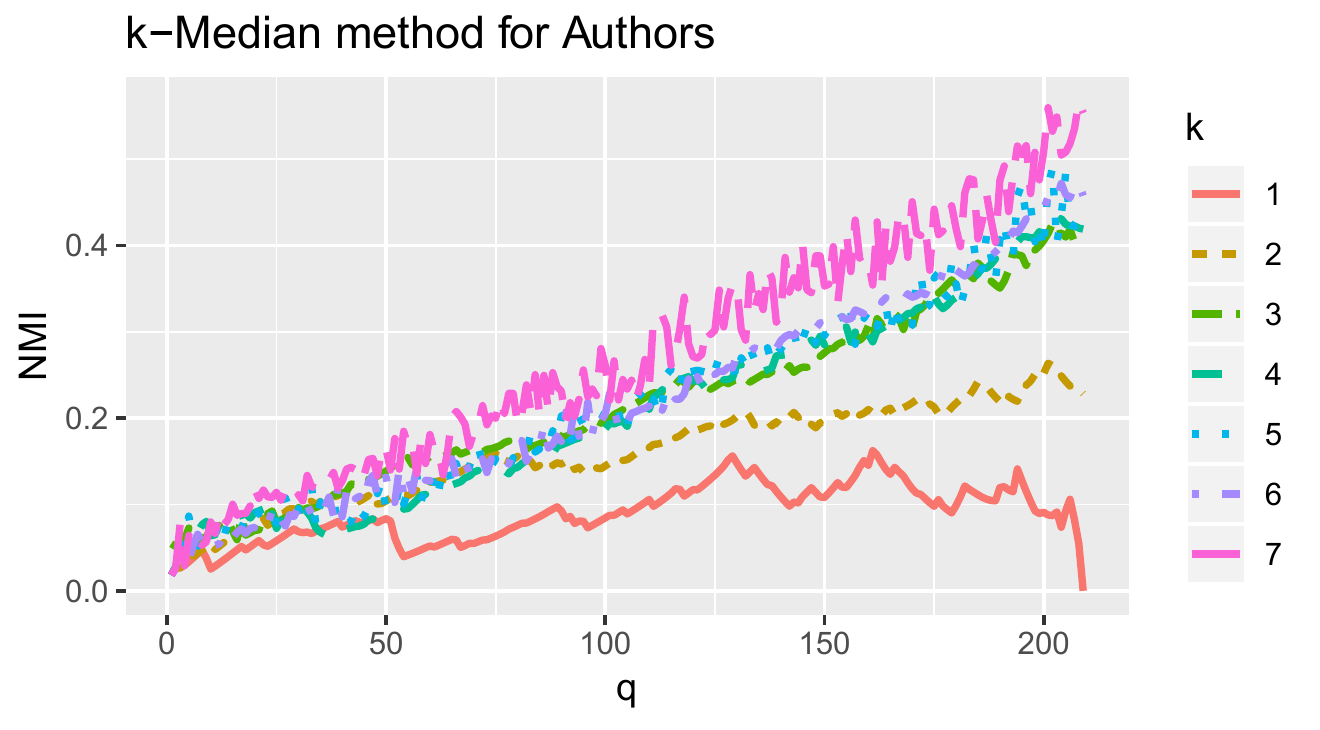}
	\end{minipage}
\caption{Cost and NMI for Author clustering with $k$-Median method\label{Author_median_cost_NMI}}
\end{figure}

\begin{figure}[H]
	\begin{minipage}[h]{.49\linewidth}
		\centering\includegraphics[scale=0.45]{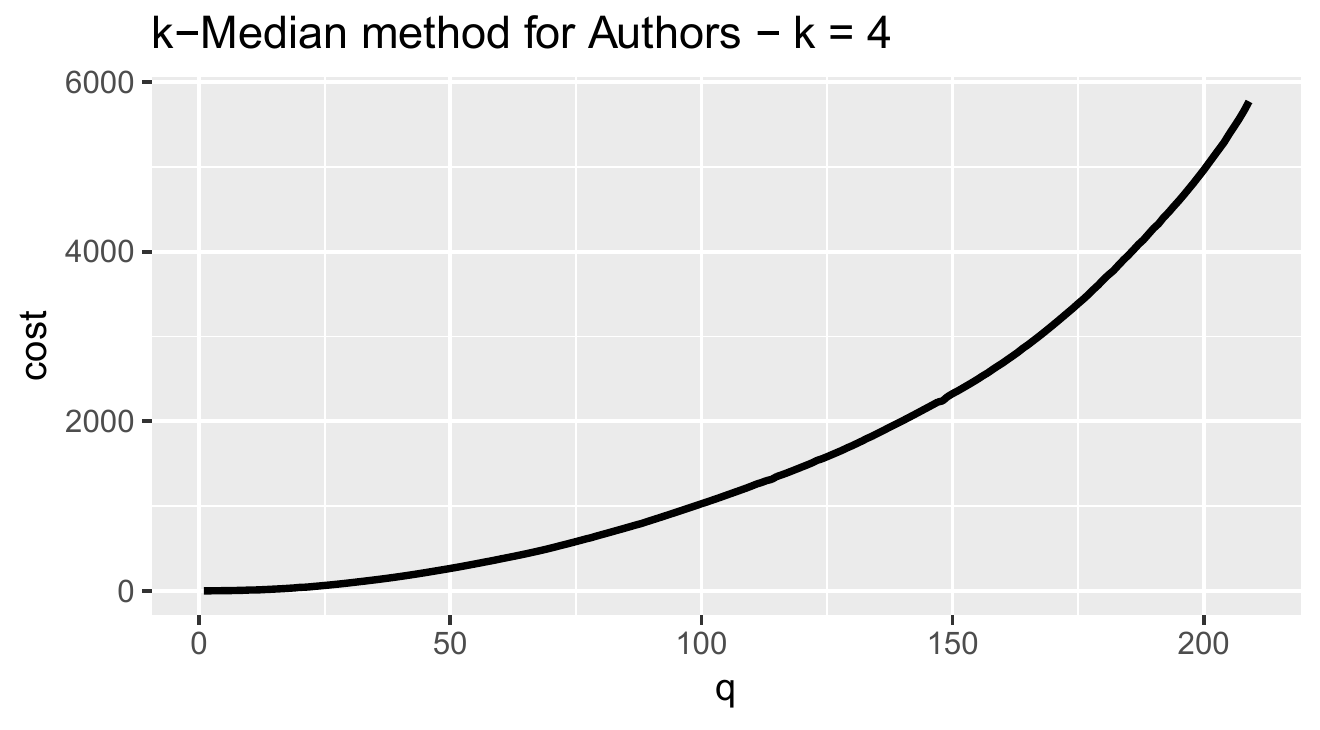}\\
	\end{minipage} 
	\begin{minipage}[h]{.49\linewidth}
\centering\includegraphics[scale=0.45]{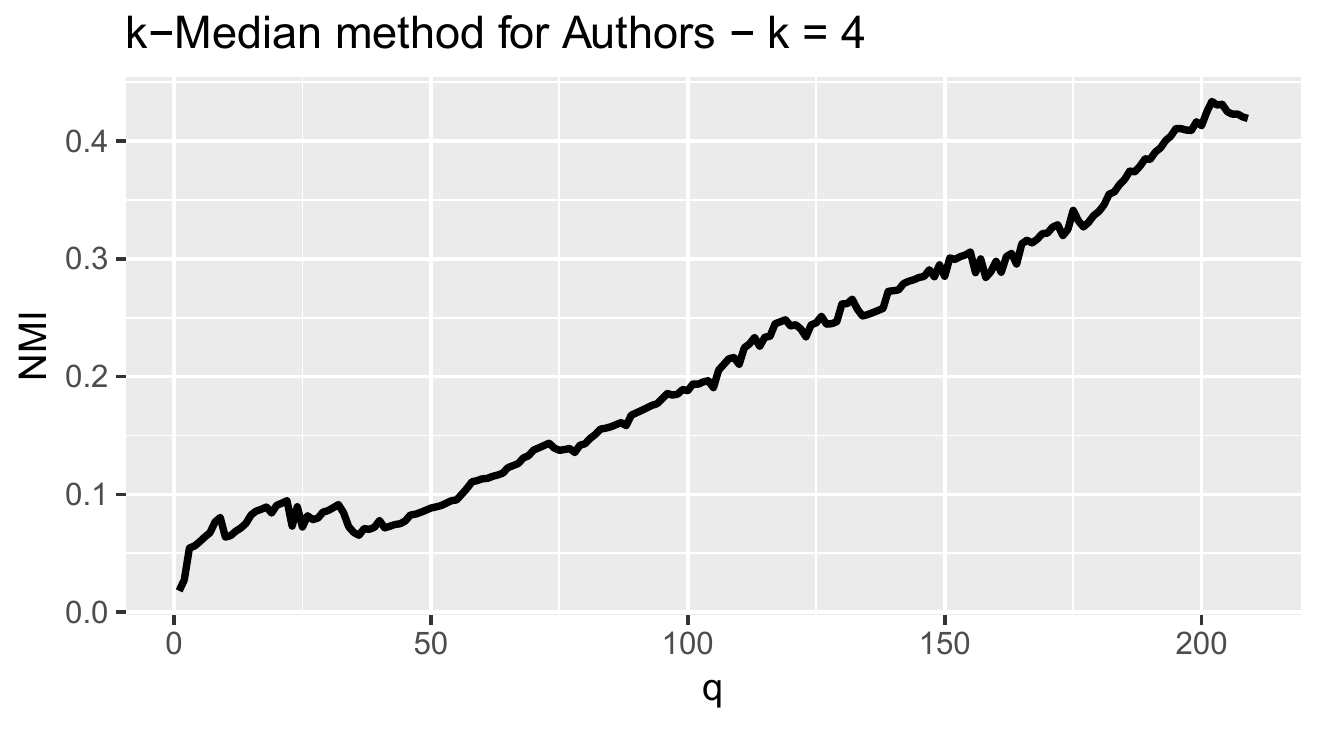}
	\end{minipage}
\caption{Cost and NMI for Author clustering with $k$-Median method -- $k=4$\label{Author_median_cost_NMI_k4}}
\end{figure}

\begin{figure}[H]
	\begin{minipage}[h]{.49\linewidth}
		\centering\includegraphics[scale=0.2]{Author_median_k4_q202.pdf}\\
	\end{minipage}
	\begin{minipage}[h]{.49\linewidth}
\centering\includegraphics[scale=0.2]{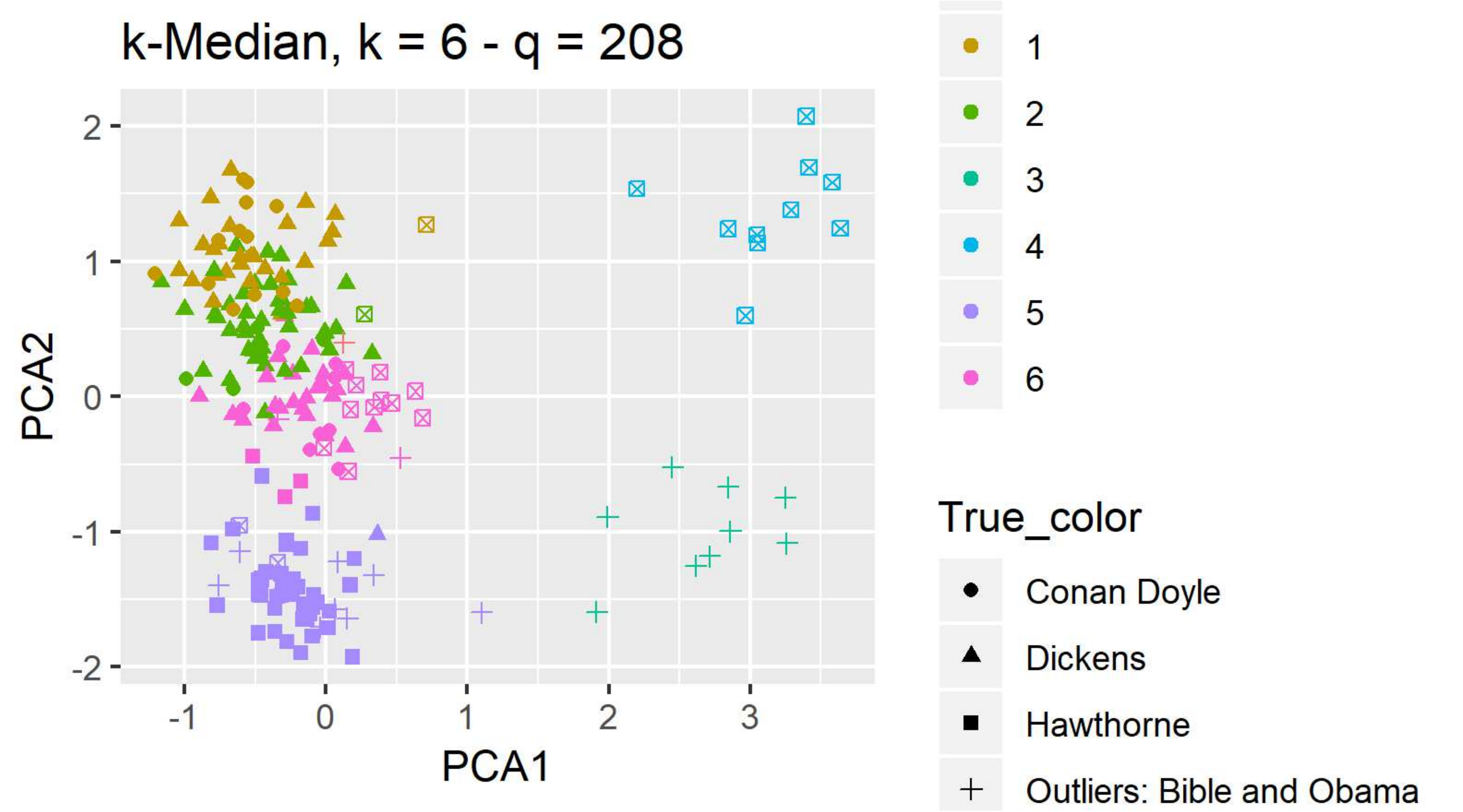}
	\end{minipage}
\caption{Examples of Author clusterings obtained with $k$-Median method\label{Author_median_clusterings}}
\end{figure}
These graphics suggest that $k=4$ and $k=6$ are possible choices. The corresponding $q$ that minimize NMI's are respectively $q = 202$  ($NMI = 0.4334372$), and $q = 208$  ($NMI = 0.4721967$).

\noindent\textbf{tclust}:

Figure \ref{Author_tclust_cost_NMI} and \ref{Author_tclust_cost_NMI_k4} do not allow to select $k$. If $k=4$ is chosen, Figure \ref{Author_tclust_cost_NMI_k4} suggests that $q\simeq 184$  is a relevant choice. Figure \ref{Author_tclust_clusterings} provides the associated clustering, whose $NMI$ is $0.4912537$.
\begin{figure}[H]
	\begin{minipage}[h]{.49\linewidth}
		\centering\includegraphics[scale=0.45]{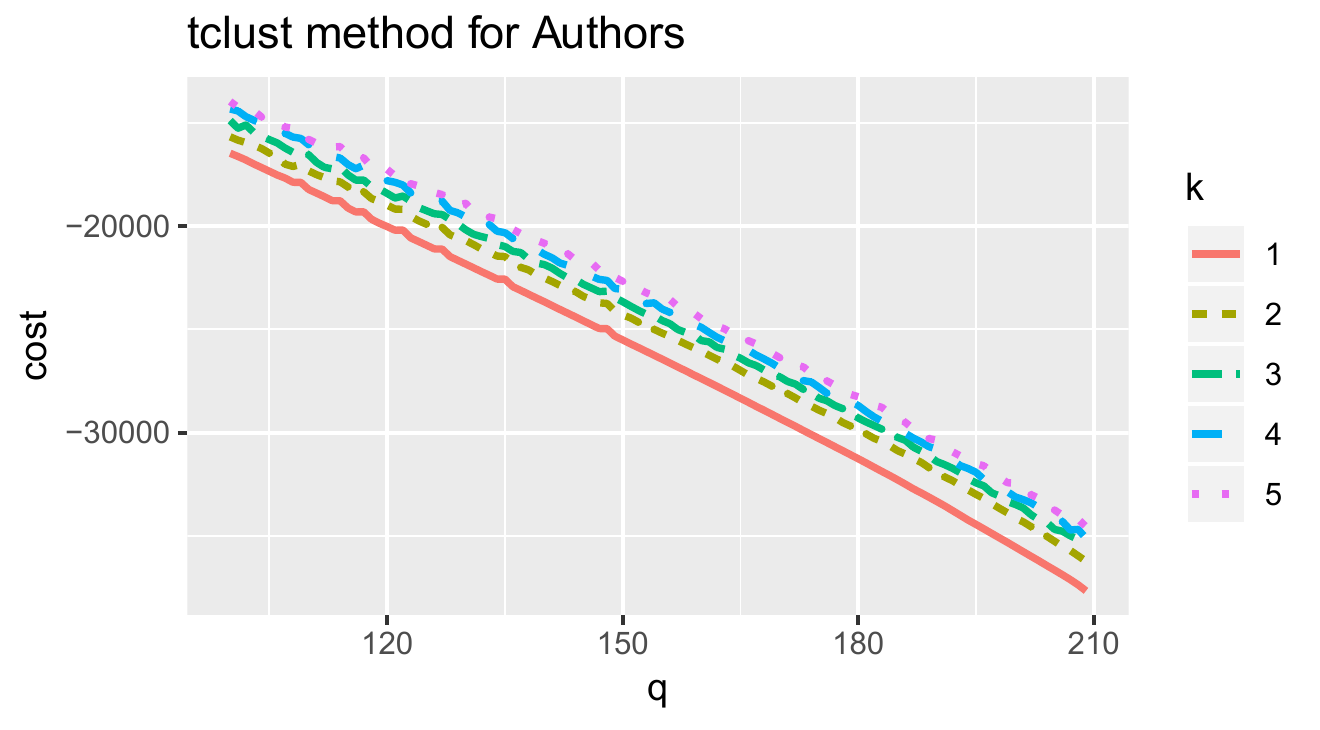}\\
	\end{minipage}
	\begin{minipage}[h]{.49\linewidth}
\centering\includegraphics[scale=0.45]{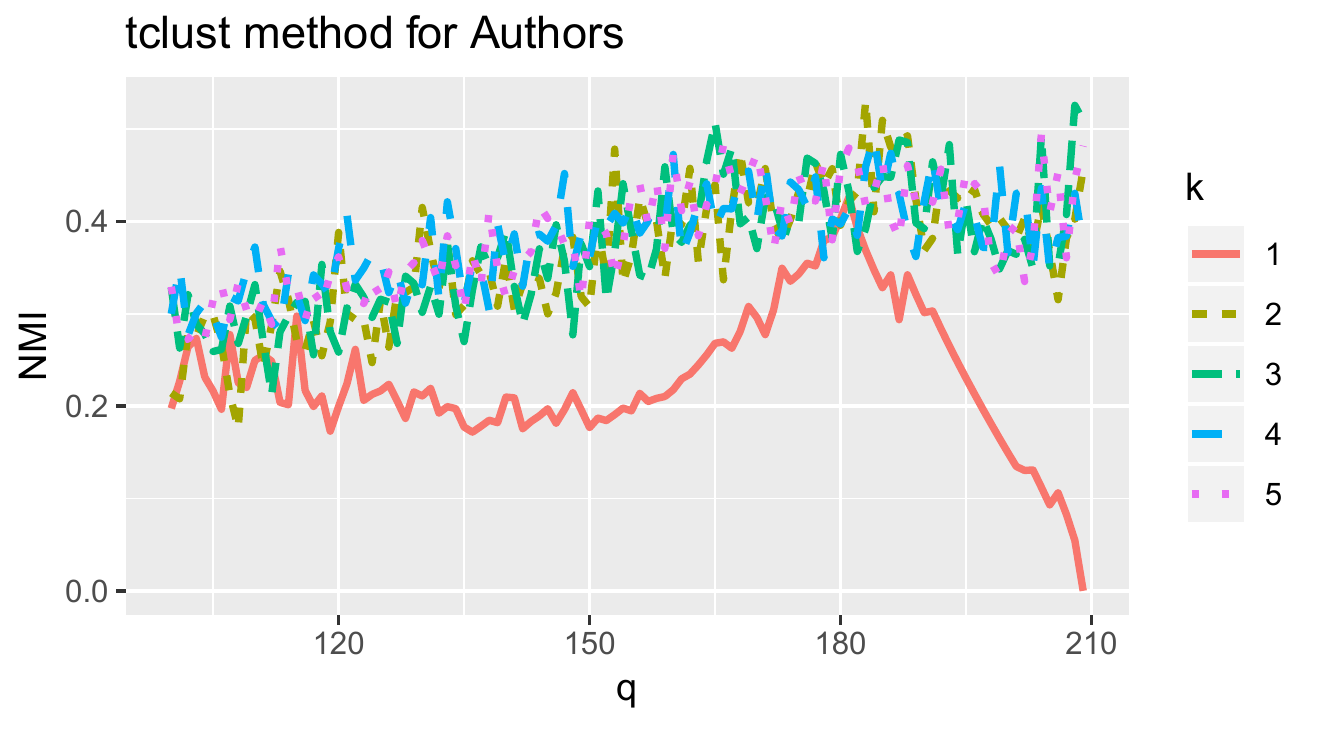}
	\end{minipage}
\caption{Cost and NMI for Author clustering with tclust algorithm\label{Author_tclust_cost_NMI}}
\end{figure}

\begin{figure}[H]
	\begin{minipage}[h]{.49\linewidth}
		\centering\includegraphics[scale=0.45]{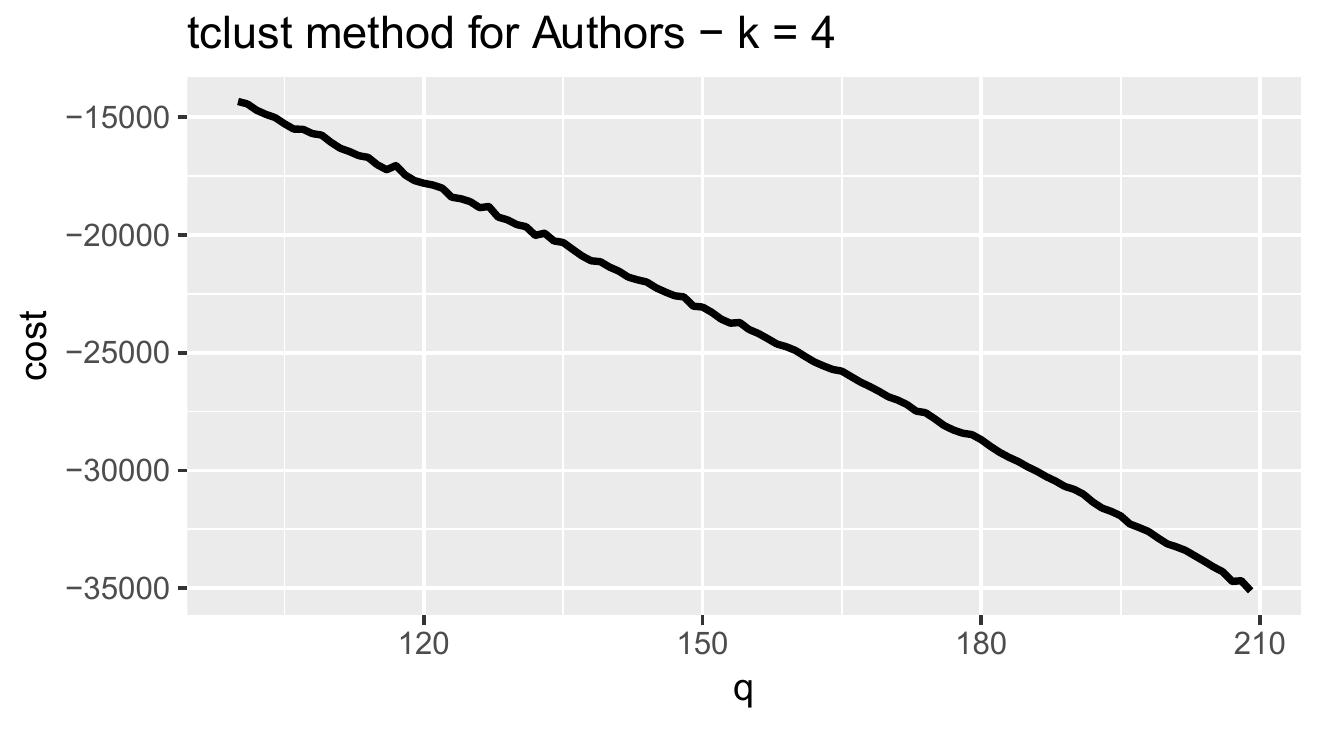}\\
	\end{minipage} 
	\begin{minipage}[h]{.49\linewidth}
\centering\includegraphics[scale=0.45]{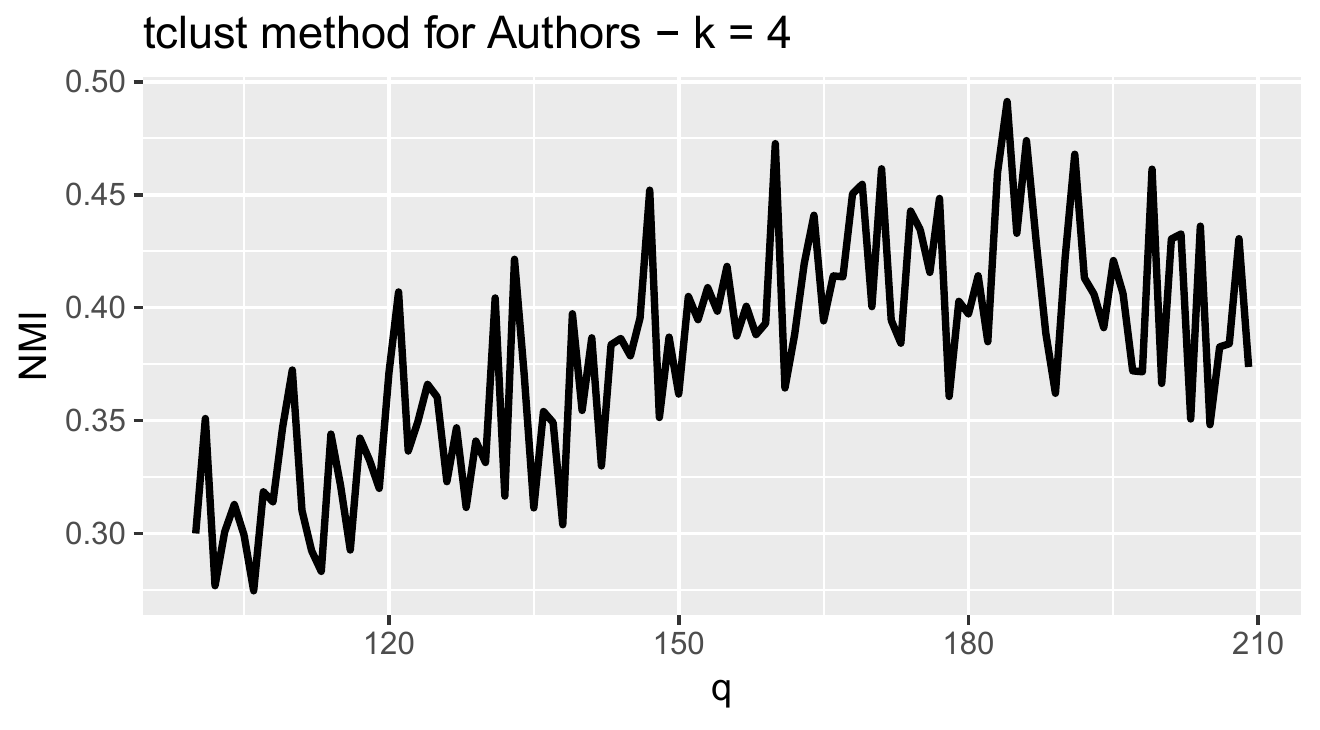}
	\end{minipage}
\caption{Cost and NMI for Author clustering with tclust algorithm -- $k=4$\label{Author_tclust_cost_NMI_k4}}
\end{figure}

\begin{figure}[H]
	\centering\begin{minipage}[h]{.49\linewidth}
		\includegraphics[scale=0.2]{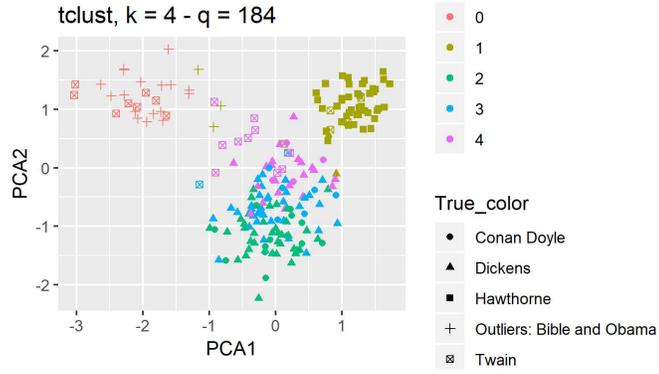}\\
	\end{minipage}
\caption{Examples of Author clusterings obtained with tclust algorithm\label{Author_tclust_clusterings}}
\end{figure}

\noindent\textbf{Trimmed $k$-means}:

Figure \ref{Author_Gauss_cost_NMI} suggests the choice $k=4$, and Figure \ref{Author_Gauss_cost_NMI_k4} shows that $q=190$ yields a slope jump and NMI peak. The associated clustering is depicted in Figure \ref{Author_Gauss_clusterings}, its NMI is  $ 0.5336308$.
\begin{figure}[H]
	\begin{minipage}[h]{.49\linewidth}
		\centering\includegraphics[scale=0.35]{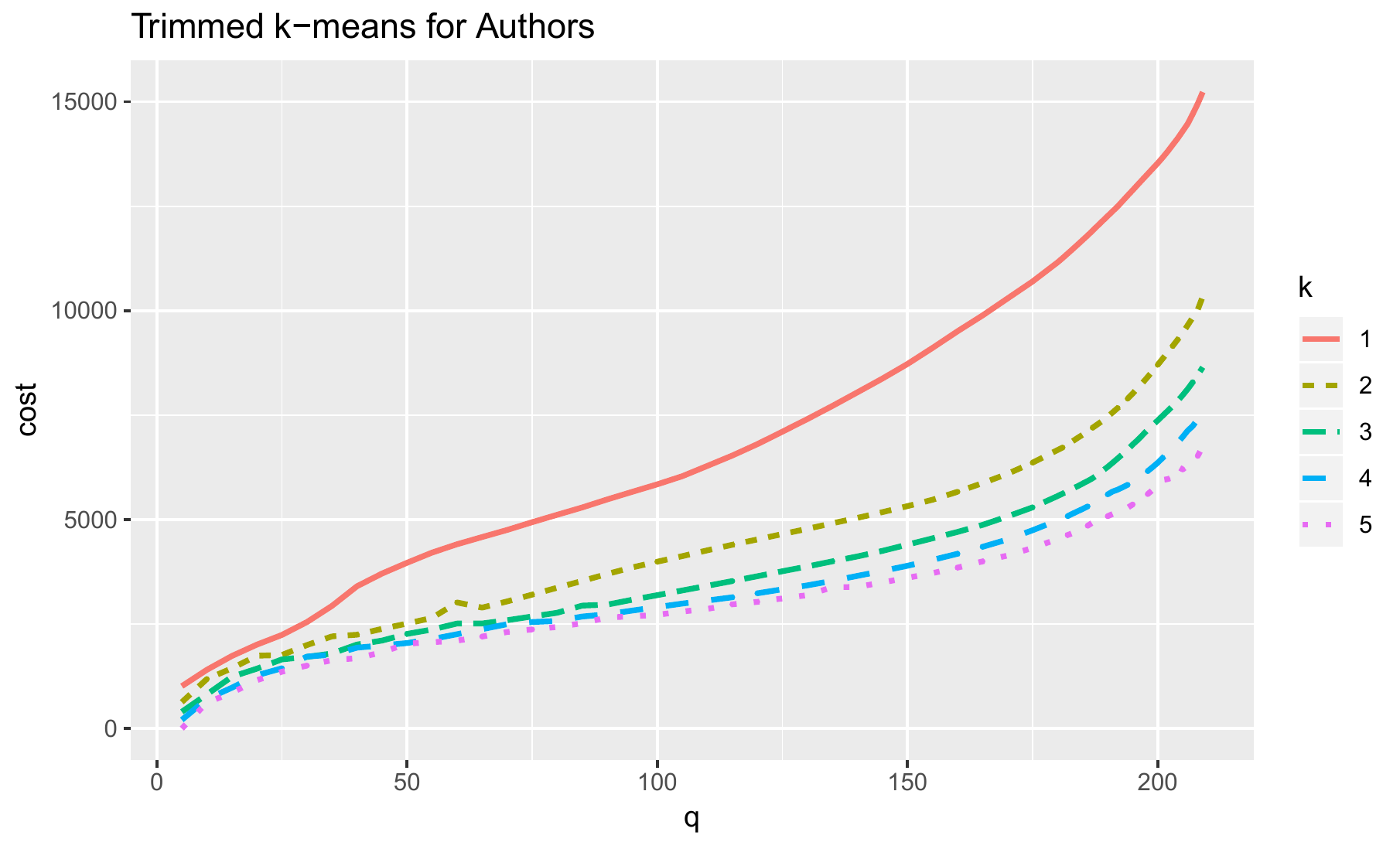}\\
	\end{minipage}
	\begin{minipage}[h]{.49\linewidth}
\centering\includegraphics[scale=0.35]{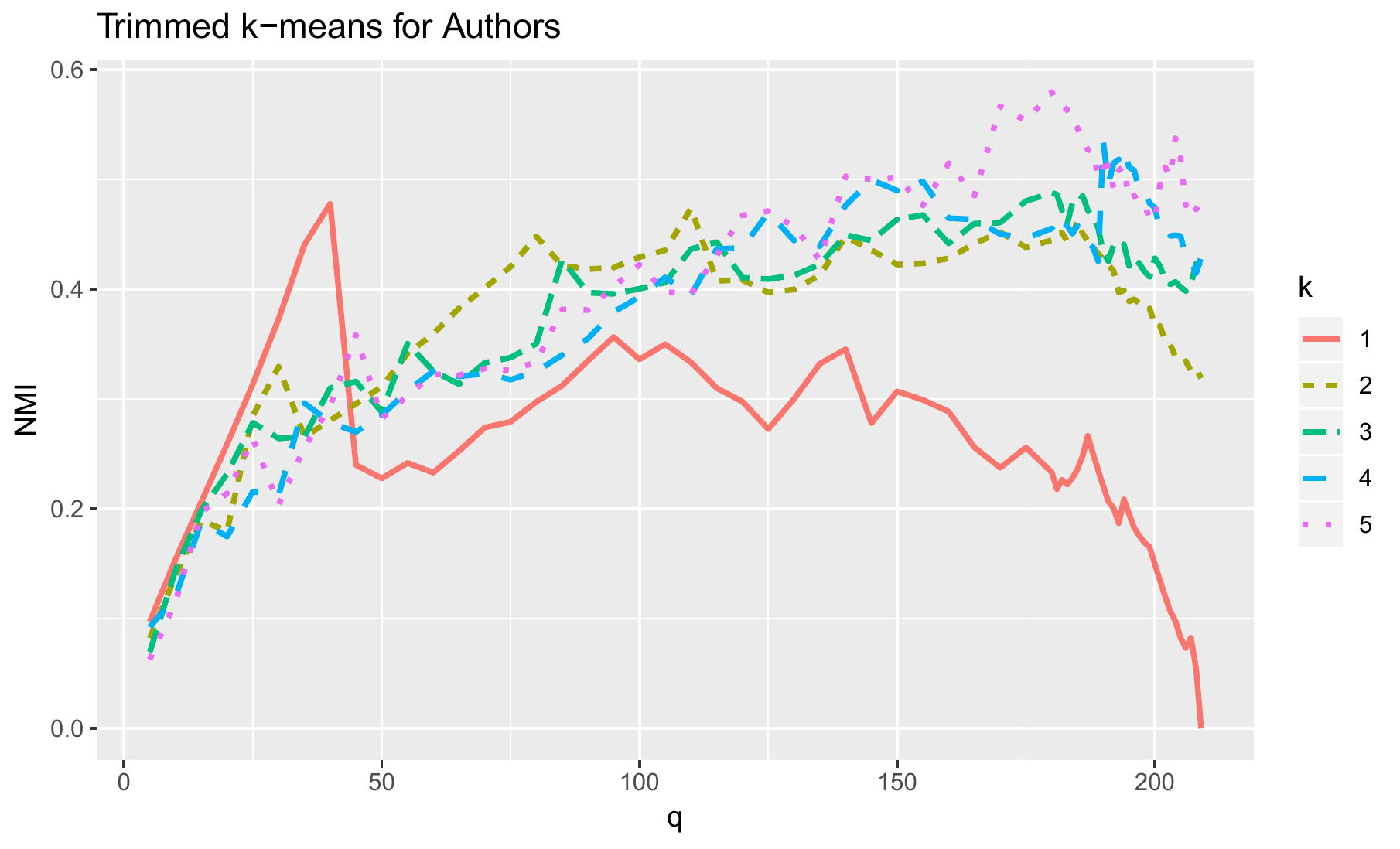}
	\end{minipage}
\caption{Cost and NMI for Author clustering with trimmed $k$-means\label{Author_Gauss_cost_NMI}}
\end{figure}

\begin{figure}[H]
	\begin{minipage}[h]{.49\linewidth}
		\centering\includegraphics[scale=0.35]{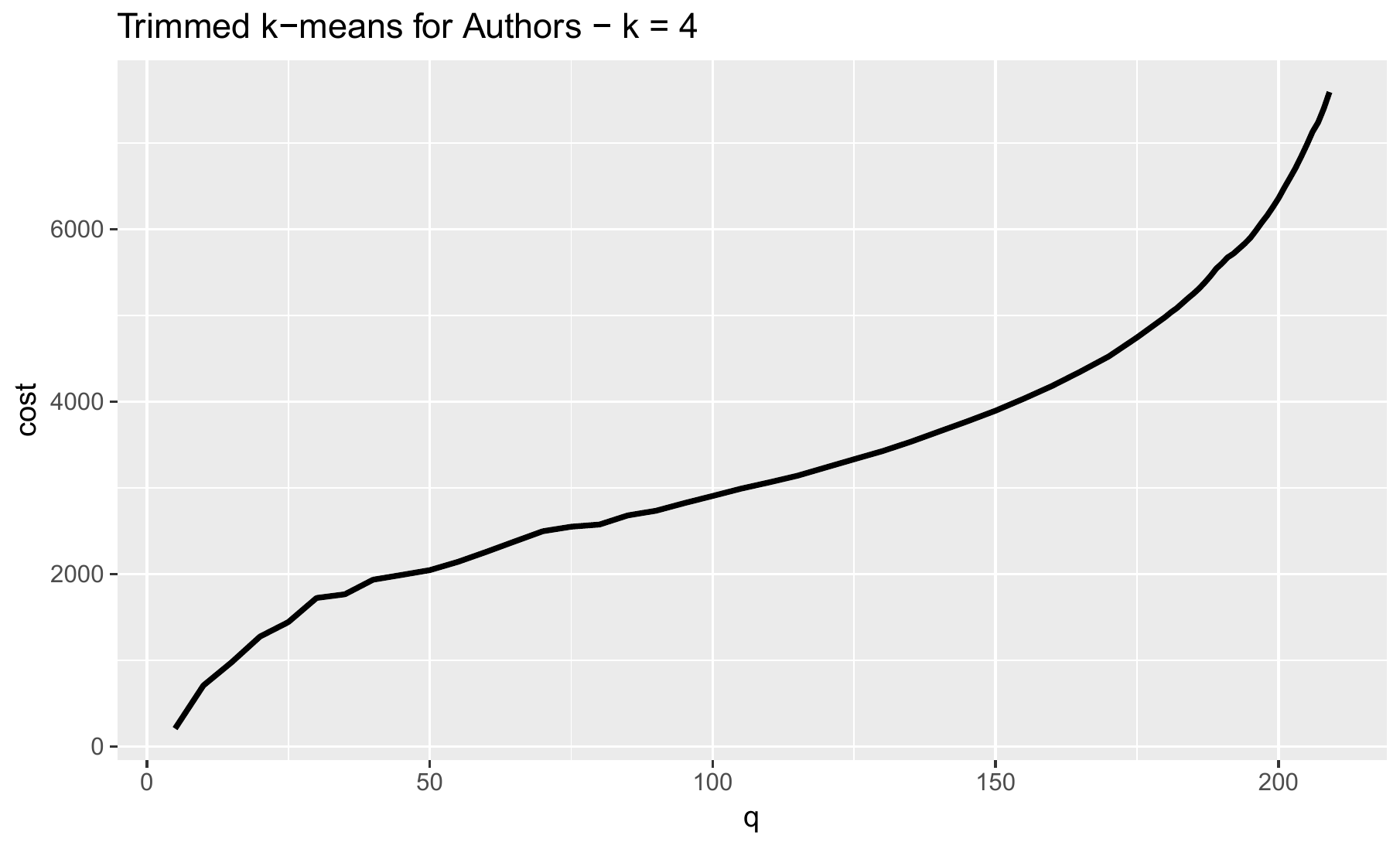}\\
	\end{minipage} 
	\begin{minipage}[h]{.49\linewidth}
\centering\includegraphics[scale=0.35]{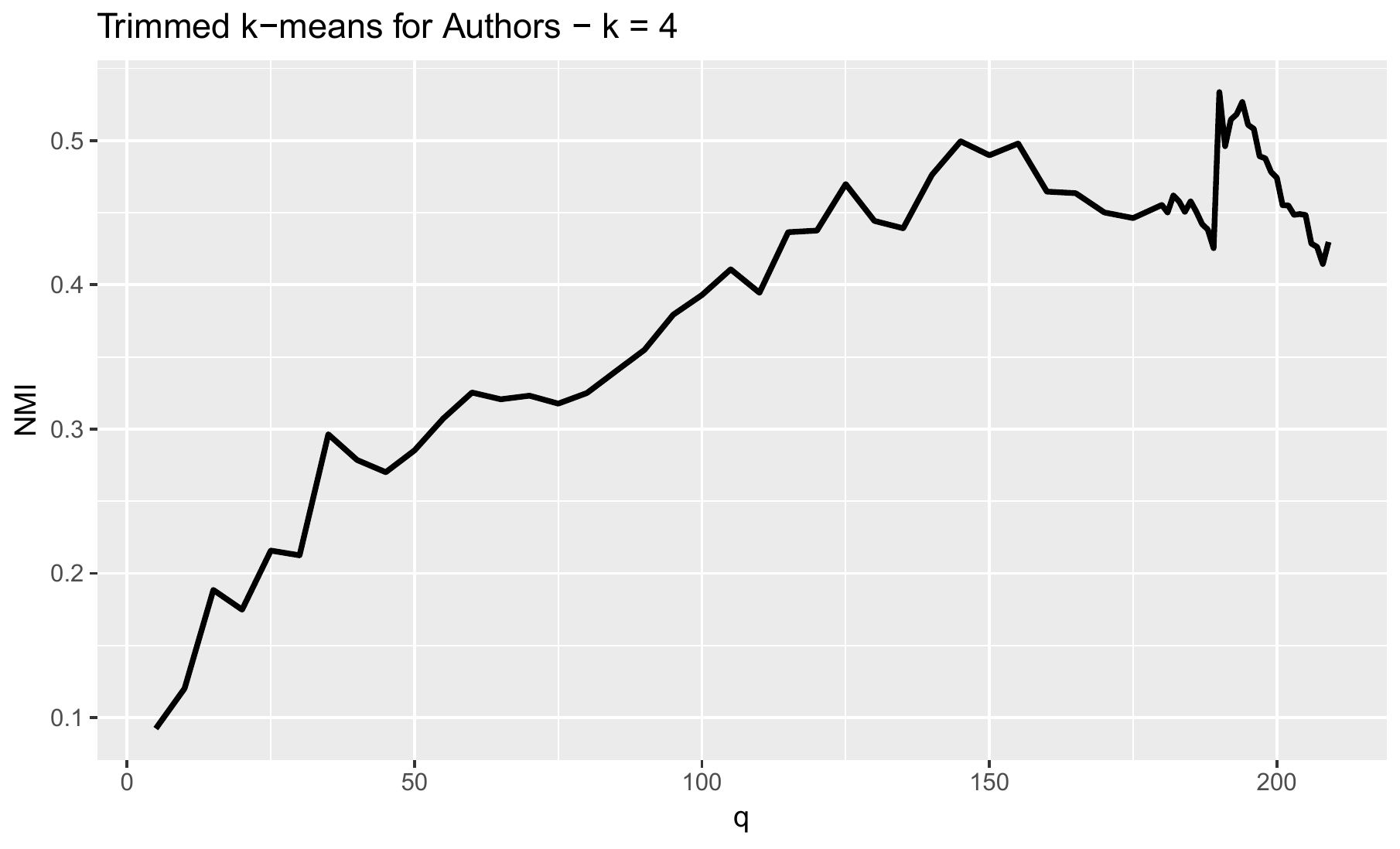}
	\end{minipage}
\caption{Cost and NMI for Author clustering with trimmed $k$-means -- $k=4$\label{Author_Gauss_cost_NMI_k4}}
\end{figure}

\begin{figure}[H]
	\centering\begin{minipage}[h]{.49\linewidth}
		\includegraphics[scale=0.14]{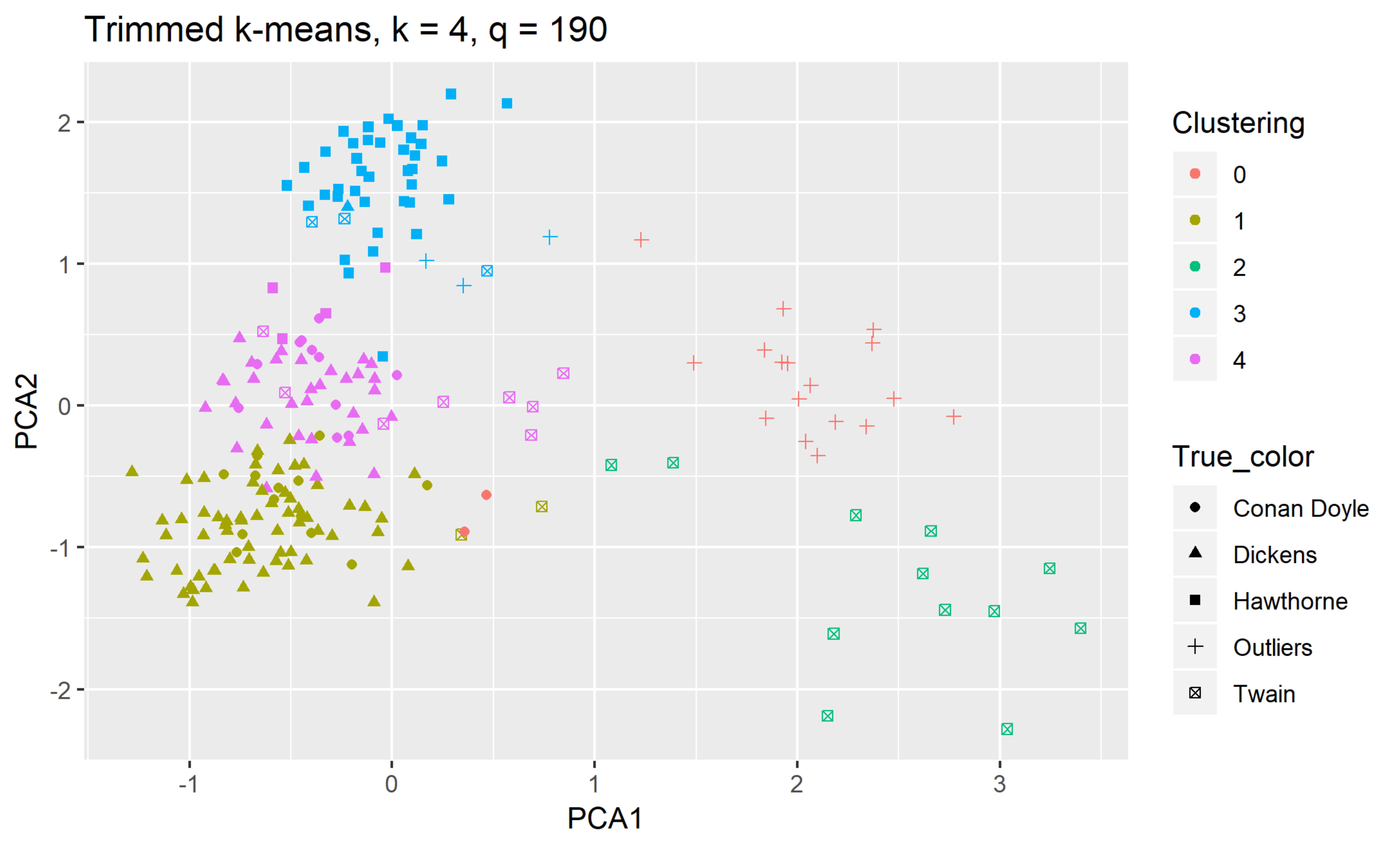}\\
	\end{minipage}
\caption{Examples of Author clusterings obtained with trimmed $k$-means\label{Author_Gauss_clusterings}}
\end{figure}
\end{document}